\documentclass{amsart}
\usepackage[foot]{amsaddr}
\usepackage{graphicx} 
\usepackage{amsmath}
\usepackage{amsfonts}
\usepackage[
backend=biber,
style=alphabetic,
]{biblatex}
\usepackage{xcolor}
\usepackage{quiver}
\usepackage{enumerate}
\usepackage{mathrsfs}
\usepackage[hypertexnames=false]{hyperref}
\usepackage{amsthm}
\usepackage{thmtools}
\usepackage[nameinlink,noabbrev,capitalize]{cleveref}
\usepackage[left=1in, right=1 in, top=1in]{geometry}
\usepackage{mathtools}
\usepackage{tikz}

\theoremstyle{plain}
\newtheorem{thm}{Theorem}[subsection]
\newtheorem{cor}[thm]{Corollary}
\newtheorem{prop}[thm]{Proposition}
\newtheorem{lem}[thm]{Lemma}

\theoremstyle{definition}
\newtheorem{defn}[thm]{Definition}

\newtheorem{exmp}[thm]{Example}

\theoremstyle{remark}
\newtheorem{rem}[thm]{Remark}

\newtheorem{warn}[thm]{Warning}

\bibliography{refs}

\newcommand{\X}{\mathcal{X}}
\newcommand{\Gm}{\mathbb{G}_m}
\newcommand{\gm}{\Gm}
\newcommand{\EE}{\mathcal{E}}
\newcommand{\OO}{\mathcal{O}}

\newcommand{\ul}{\underline}
\newcommand{\wtag}{\widetilde{\ag}}
\newcommand{\wtpg}{\widetilde{\pg}}

\DeclareMathOperator{\op}{op}
\newcommand{\ts}{\textsuperscript}

\newcommand{\lam}{\lambda}
\newcommand{\A}{\mathbb{A}}
\newcommand{\Z}{\mathbb{Z}}
\newcommand{\C}{\mathbb{C}}
\newcommand{\mc}{\mathcal}
\newcommand{\mb}{\mathbb}

\DeclareMathOperator{\Stack}{Stack}
\DeclareMathOperator{\Tot}{Tot}
\DeclareMathOperator{\Hom}{Hom}
\DeclareMathOperator{\HHom}{{\mathcal{H}\kern -.5pt om}}
\DeclareMathOperator{\Mod}{Mod}
\DeclareMathOperator{\St}{St}
\DeclareMathOperator{\st}{st}
\DeclareMathOperator{\sti}{st_i}
\DeclareMathOperator{\Stc}{St^c}
\DeclareMathOperator{\QC}{QC}
\DeclareMathOperator{\Spec}{Spec}
\newcommand{\Coh}{\mathrm{Coh}}
\DeclareMathOperator{\IndCoh}{IndCoh}

\DeclareMathOperator{\Perft}{2Perf}
\DeclareMathOperator{\Pervt}{2Perv}
\DeclareMathOperator{\Coht}{2Coh}
\DeclareMathOperator{\Perf}{Perf}
\DeclareMathOperator{\pt}{pt}
\DeclareMathOperator{\id}{id}
\DeclareMathOperator{\Sph}{Sph}
\DeclareMathOperator{\Ind}{Ind}

\DeclareMathOperator{\colim}{colim}
\DeclareMathOperator{\cone}{cone}
\DeclareMathOperator{\fib}{fib}
\DeclareMathOperator{\cofib}{cofib}
\DeclareMathOperator{\Fun}{Fun}
\DeclareMathOperator{\Fuk}{Fuk}
\DeclareMathOperator{\lcm}{lcm}

\DeclareMathOperator{\oplaxlim}{oplaxlim}

\DeclareMathOperator{\oplaxcolim}{oplaxcolim}
\DeclareMathOperator{\Sing}{Sing}
\DeclareMathOperator{\supp}{supp}

\DeclareMathOperator{\Vect}{Vect}
\newcommand{\ag}{\A^1/\Gm}
\newcommand{\pg}{\pt/\Gm}
\newcommand{\age}{\overline{\A^1/\Gm}}
\newcommand{\pge}{\overline{\pt/\Gm}}

\newcommand{\Am}{\mc{A}}

\newcommand{\xto}[1]{\xrightarrow{#1}}
\newcommand{\xfrom}[1]{\xleftarrow{#1}}
\newcommand{\onto}{\twoheadrightarrow}
\newcommand{\inclto}{\hookrightarrow}
\newcommand{\adjto}{\dashv}

\newcommand{\tofrom}{\rightleftarrows}
\newcommand{\isomfrom}{\xleftarrow{\sim}}
\newcommand{\isomto}{\xrightarrow{\sim}}

\newcommand{\comment}[1]{}

\newcommand{\diskmark}{%
    \tikz[baseline={([yshift=-.5ex]current bounding box.center)}, scale=0.35]{
        \draw (0,0) circle (1);
        \fill (70:1) arc (70:110:1) -- (110:0.75) arc (110:70:0.75) -- cycle;
        \fill (250:1) arc (250:290:1) -- (290:0.75) arc (290:250:0.75) -- cycle;
        \fill (-20:1) arc (-20:20:1) -- (20:0.75) arc (20:-20:0.75) -- cycle;
        \fill (160:1) arc (160:200:1) -- (200:0.75) arc (200:160:0.75) -- cycle;
    }%
}

\newcommand{\crossdash}{%
    \tikz[baseline={([yshift=-.5ex]current bounding box.center)}, scale=0.35]{
        \draw[dash pattern=on 1.5pt off 1.5pt] (-0.5,1) -- (-0.5,0.5) -- (-1,0.5);
        \draw[dash pattern=on 1.5pt off 1.5pt] (-1,-0.5) -- (-0.5,-0.5) -- (-0.5,-1);
        \draw[dash pattern=on 1.5pt off 1.5pt] (0.5,-1) -- (0.5,-0.5) -- (1,-0.5);
        \draw[dash pattern=on 1.5pt off 1.5pt] (1,0.5) -- (0.5,0.5) -- (0.5,1);
        
        \draw (-1,0.5) -- (-1,-0.5);
        \draw (-0.5,-1) -- (0.5,-1);
        \draw (1,-0.5) -- (1,0.5);
        \draw (0.5,1) -- (-0.5,1);
    }%
}

\newcommand{\crossskel}{%
    \tikz[baseline={([yshift=-.5ex]current bounding box.center)}, scale=0.25]{
        \draw[thick] (0,-1) -- (0,1);
        \draw[thick] (-1,0) -- (1,0);
    }%
}

\newcommand{\cornerBL}{\tikz[baseline={([yshift=-.5ex]current bounding box.center)}, scale=0.25]{ \draw[thick] (0,1) -- (0,0) -- (1,0); }}
\newcommand{\cornerBR}{\tikz[baseline={([yshift=-.5ex]current bounding box.center)}, scale=0.25]{ \draw[thick] (-1,0) -- (0,0) -- (0,1); }}
\newcommand{\cornerTR}{\tikz[baseline={([yshift=-.5ex]current bounding box.center)}, scale=0.25]{ \draw[thick] (-1,0) -- (0,0) -- (0,-1); }}
\newcommand{\cornerTL}{\tikz[baseline={([yshift=-.5ex]current bounding box.center)}, scale=0.25]{ \draw[thick] (1,0) -- (0,0) -- (0,-1); }}

\newcommand{\diskmarkphi}{%
    \tikz[baseline={([yshift=-.5ex]current bounding box.center)}, scale=0.35]{
        \draw (0,0) circle (1);
        \fill (70:1) arc (70:110:1) -- (110:0.75) arc (110:70:0.75) -- cycle;
        \fill (250:1) arc (250:290:1) -- (290:0.75) arc (290:250:0.75) -- cycle;
        \fill (-20:1) arc (-20:20:1) -- (20:0.75) arc (20:-20:0.75) -- cycle;
        \fill (160:1) arc (160:200:1) -- (200:0.75) arc (200:160:0.75) -- cycle;
        \fill[blue] (0,0) circle (0.15); 
    }%
}

\newcommand{\crossdashphi}{%
    \tikz[baseline={([yshift=-.5ex]current bounding box.center)}, scale=0.35]{
        \draw[dash pattern=on 1.5pt off 1.5pt] (-0.5,1) -- (-0.5,0.5) -- (-1,0.5);
        \draw[dash pattern=on 1.5pt off 1.5pt] (-1,-0.5) -- (-0.5,-0.5) -- (-0.5,-1);
        \draw[dash pattern=on 1.5pt off 1.5pt] (0.5,-1) -- (0.5,-0.5) -- (1,-0.5);
        \draw[dash pattern=on 1.5pt off 1.5pt] (1,0.5) -- (0.5,0.5) -- (0.5,1);
        
        \draw (-1,0.5) -- (-1,-0.5);
        \draw (-0.5,-1) -- (0.5,-1);
        \draw (1,-0.5) -- (1,0.5);
        \draw (0.5,1) -- (-0.5,1);
        
        \fill[blue] (0,0) circle (0.15); 
    }%
}

\newcommand{\crossskelphi}{%
    \tikz[baseline={([yshift=-.5ex]current bounding box.center)}, scale=0.25]{
        \draw[thick] (0,-1) -- (0,1);
        \draw[thick] (-1,0) -- (1,0);
        \fill[blue] (0,0) circle (0.2); 
    }%
}

\newcommand{\dotlineR}{%
    \tikz[baseline={([yshift=-.5ex]current bounding box.center)}, scale=0.25]{
        \draw[thick] (0,0) -- (1,0);
        \fill[blue] (0,0) circle (0.2);
    }%
}
\newcommand{\dotlineU}{%
    \tikz[baseline={([yshift=-.5ex]current bounding box.center)}, scale=0.25]{
        \draw[thick] (0,0) -- (0,1);
        \fill[blue] (0,0) circle (0.2);
    }%
}

\definecolor{fibregrey}{RGB}{180, 180, 180}
\definecolor{fibrepurple}{RGB}{128, 0, 128}

\title{GIT for root stacks and 3d mirror symmetry}
\author{Swapnil Garg}
\email{swapnilg@berkeley.edu}
\author{Ruoxi Li}
\email{ruoxi\_li@berkeley.edu}
\author{Yuji Okitani}
\email{yuji\_okitani@berkeley.edu}
\date{August 2026}

\begin{document}

\begin{abstract}
Using window theory, Bodzenta--Donovan showed that the derived category of a root stack $\sqrt[n]{X/D}$ has a $2n$-periodic $2$-term semiorthogonal decomposition. We define a categorical generalization of the root stack construction and interpret this as a pullback on the B-side of the 3d mirror symmetry equivalence of Gammage--Hilburn--Mazel-Gee. We analyze this pullback using categorical representation theoretic results of Ben-Zvi--Francis--Nadler and Ben-Zvi--Nadler--Preygel applied to SODs. We also show that the pullback is equivalent to a pushforward of perverse schobers on the A-side, allowing us to deduce periodicity from a simple decomposition of an A-side Lagrangian skeleton. Additionally, we adapt the construction of Bodzenta--Donovan to a new GIT problem, which yields an embedding of $\Coh(\sqrt[m]{X/D})$ into $\Coh(\sqrt[n]{X/D})$ for $m<n$ coprime, and prove $2n$-periodicity of the resulting $2$-term SOD.
\end{abstract}
\maketitle

\tableofcontents

\section{Introduction}
This paper explores the geometry and algebra of the root stack construction, both extending results of \cite{BD} and generalizing the construction to fit within the 3d mirror symmetry equivalence of \cite{3dms}. The paper is roughly split into two parts following these two objectives. The first part comprises \cref{sect:geometryrootstack} through \cref{sect:multiwindow}, and the second part comprises \cref{sect:categorical} through \cref{sect:schob} (along with the Appendix), though \cref{sect:categorical} can be thought of as a transitional chapter.

The main result \cref{thm:main} is an equivalence (at the object level) of B-side pullback and A-side pushforward endofunctors defined on the 3d mirror symmetry equivalence of \cite{3dms}, namely the equivalence between perverse schobers on $\A^1$ and coherent sheaves of categories on $\ag$ (see \cref{subsect:3dmsintro}). The B-side pullback is a categorical version of the root stack functor, and we study it in the framework of ''categorical representation theory of semiorthogonal decompositions'', using results of \cite{BZFN} and \cite{BZNP}. We also give a new GIT realization of the root stack, generalizing the construction in \cite{BD}.

Root stacks are a particularly tractable class of Deligne--Mumford stacks. They arise naturally from a pair $(X, D)$ of a scheme with effective Cartier divisor, allowing one to study how geometric and categorical phenomena on schemes extend to the stacky setting. In this sense they serve as a useful testing ground for questions about derived categories of stacks. In particular, they can be referred to as a ``stacky blowup in codimension 1'', due to the similarity of their semiorthogonal decompositions to that of Orlov's blowup formula \cite{orlovblowup}.

Furthermore, \cite{BD} showed that the above SOD of the derived category of a root stack is periodic under mutation. It is known that $2n$-periodic SODs correspond to $n$-spherical functors as defined and shown by \cite{nspherical}, generalizing the equivalence of $4$-periodic SODs and usual spherical functors of \cite{window2}. Understanding the abstract nature of this periodicity was one of the starting points of the present work. In fact, we show that the root stack construction is abstractly equivalent to the relative Waldhausen S-construction of \cite{DKSS}.

Another motivation comes from 3d mirror symmetry. The root stack story gives a natural endofunctor on categories over $\ag$ (a B-side phenomenon), and the existence of a periodic SOD suggests a mirror-symmetric explanation. Part of our goal in understanding periodicity was to identify the mirror side behavior and explain periodicity geometrically on the A-side.

We now give a summary of the rest of the introduction, which will go into more detail about our results.

In \cref{subsect:vgitintro}, we give a geometric construction that generalizes the construction and results of \cite{BD}. In brief: the root stack $\sqrt[n]{X/D}$ is defined as the pullback of $X \to \ag$ via the $n$\textsuperscript{th} power map $\ag \to \ag$ (hereafter denoted $e_n$). There is also a construction of $\sqrt[n]{X/D}$ as the $\Gm$-stack quotient of a specific scheme, which \cite{BD} noticed can be realized as a certain semistable GIT quotient. We show that this construction comes from the pullback (along $X \to \ag$) of a specific map $\A^2/\Gm^2 \to \ag$, thus situating the latter construction within the original root stack definition. This perspective naturally leads to a generalization by taking other possible maps $\A^2/\Gm^2 \to \ag$. The GIT realization by \cite{BD} allowed the use of the window theory of \cite{window}, giving a natural proof of mutation periodicity in the derived category of a root stack. The same methods give a proof of periodicity in our generalized construction (i.e. periodicity of a different semi-orthogonal decomposition of the same category).

The nature of our construction suggests the possibility of a ``universal'' proof of periodicity, i.e. one that follows from the case $X=\ag$. Indeed, as part of \cref{sect:multiwindow} we show that one can compute semistable loci (and thus windows) for root stack-type constructions universally (i.e. pulled back from $\A^k/\Gm^k$). Meanwhile, one can avoid windows by using categorical representation theory and simple results about the derived category of $\ag$, explained in \cref{sect:categorical}. The latter approach involves viewing $\Perf(X)$ as a module over the monoidal category $(\Perf(\ag), \otimes)$, and is explained in more detail in \cref{subsect:univintro}.


In \cref{subsect:3dmsintro} we explain how the root stack construction appears in 3d mirror symmetry. In brief: the $2$-category $\Perf(\ag)-\Mod$ is also $\Perft(\ag)$, which naturally occurs on the B-side of 3d mirror symmetry. But in this context, it is more natural to analyze the $2$-category $\Coht(\ag)$, which (up to a choice of embedding) contains $\Perft(\ag)$ as a sub-2-category. We extend the root stack operation (which can be viewed as an endofunctor of $\Perft(\ag)$) to an endofunctor of $\Coht(\ag)$; this functor can be thought of as a pullback on the B-side of 3d mirror symmetry. The equivalent operation on the A-side can be viewed as a pushforward of perverse schobers. 3d mirror symmetry also gives an A-side explanation for periodicity that follows from geometry of Lagrangian skeleta.

\subsection{Variation of GIT}
\label{subsect:vgitintro}
The paper \cite{window2} showed that for a stack quotient $X/G$, \textit{balanced} wall crossings in GIT lead to autoequivalences $\phi_w \colon \Coh(X^{-, ss}/G) \xto{\sim} \Coh(X^{+, ss}/G)$, called window-shift autoequivalences. These arise from the window theory of \cite{window}, by viewing the same category $\mc{C}$ in terms of two different stratifications of $X/G$ (coming from two different semistable loci), yielding the equalities $$\langle \Coh(X^{-, ss}/G), \Coh(Z/L) \rangle=\langle \Coh(Z/L)', \Coh(X^{-, ss}/G) \rangle$$ and $$\langle \Coh(Z/L), \Coh(X^{+, ss}/G) \rangle=\langle \Coh(X^{+, ss}/G) , \Coh(Z/L)' \rangle.$$ The window-shift autoequivalences thus arise from a certain 4-periodic semiorthogonal decomposition on $\mc{C}$. It was also shown in \cite{window2} that 4-periodic decompositions of $\langle \mc{A}, \mc{B} \rangle$ correspond to spherical functors $\mc{A} \to \mc{B}$. 

One could imagine a context where, rather than taking an ambient category $\mc{C}$ containing copies of both $\Coh(X^{-, ss}/G)$ and $\Coh(X^{+, ss}/G)$, one takes an ambient category $\mc{C}\simeq \Coh(X^{+, ss}/G)$ which contains a copy of $\Coh(X^{-, ss}/G)$. This context would correspond to a non-balanced wall crossing. Indeed, the paper \cite{BD} showed that for a specific GIT problem explained below (which has a non-balanced wall crossing), this method demonstrates periodicity of a semiorthogonal decomposition of the ambient $\mc{C}$.

Specifically, they showed that the semiorthogonal decomposition of the derived category of a root stack is periodic, by viewing the root construction through the lens of window theory and variation of GIT. A scheme $X$ with an effective Cartier divisor $D$ (along with its canonical section) determines a map $X \to \ag$. The $n$\textsuperscript{th} \textit{root stack} $\sqrt[n]{X/D}$ is the pullback of this map along the map $e_n \colon \ag \to \ag$ which is induced by taking the $n$\textsuperscript{th} power of the coordinate. Then $\sqrt[n]{X/D}$ has a stratification with an open substack $X-D$ and a closed substack that is a $\mu_n$-gerbe over $D$.

Consider the (non-Cartesian) diagram 
\[\begin{tikzcd}
	{\sqrt{O_D(D)}} & {\sqrt[n]{X/D}} \\
	D & X
	\arrow["i", from=1-1, to=1-2]
	\arrow["q", from=1-1, to=2-1]
	\arrow["p"', from=1-2, to=2-2]
	\arrow["{i_D}", from=2-1, to=2-2]
\end{tikzcd}\]

where $\sqrt{O_D(D)}$ is the $\mu_n$-gerbe over $D$.
\begin{thm}\textup{\cite[Theorem 1.2]{BD}} \label{thm:theirSOD}
    There are SODs
    \begin{align*}
        \Coh(\sqrt[n]{X/D})=&\langle p^*\Coh(X), i_*q^*\Coh(D), i_*q^*\Coh(D)\langle 1 \rangle, \dots, i_*q^*\Coh(D)\langle n-2 \rangle \rangle\\
        =&\langle i_*q^*\Coh(D), i_*q^*\Coh(D)\langle 1 \rangle, \dots, i_*q^*\Coh(D)\langle n-2 \rangle, p^*\Coh(X)\langle n-1 \rangle \rangle
    \end{align*}
    with each $p^*$ and $i_*q^*\langle i \rangle$ fully faithful.

    If we let $\mc{D}$ be the subcategory $\langle i_*q^*\Coh(D), i_*q^*\Coh(D)\langle 1 \rangle, \dots, i_*q^*\Coh(D)\langle n-2 \rangle \rangle$, then the SOD $\Coh(\sqrt[n]{X/D})=\langle p^*\Coh(X), \mc{D} \rangle$ is $2n$-periodic.
\end{thm}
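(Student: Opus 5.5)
Following \cite{BD}, I would realize $\sqrt[n]{X/D}$ as a GIT quotient and read off both decompositions and the periodicity from window theory \cite{window}. Let $L=\OO(D)$ with canonical section $s$, and let $\Tot(L)$ be its total space. Set $Y=\{(x,t,u) : u^n t = s(x)\}\subset \Tot(L^{-1})\times \A^1$, or an equivalent variant in which $\Gm$ acts with weight $1$ on $u$ and weight $-n$ on $t$. For one character the semistable locus is $\{u\neq 0\}$, whose quotient is $\Tot$ of the line bundle cut out by $t = s/u^n$ and so recovers $X$. For the other character the semistable locus is $\{t\neq 0\}$, whose quotient is $\{u^n = s/t\}/\mu_n$, i.e.\ $\sqrt[n]{X/D}$. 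The unstable strata are the fixed loci $\{u=0\}$ and $\{t=0\}$. The first has $\Gm$-fixed part $D\times\{0\}$ with the $\mu_n$-gerbe structure. The wall crossing is unbalanced, and the weights of the conormal bundle differ by $n$ on the two sides.

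Next I would apply the window theorem of \cite{window} to the $\Gm$-quotient stack $\mc{Y}=Y/\Gm$. For each integer $w$, the restriction from the window $\mc{G}_w$ (objects whose weights along the fixed locus $Z$ lie in $[w,w+\eta)$) to each semistable quotient is an equivalence. Moreover $\Coh(\mc{Y})$ has SODs of the form $\langle \Coh(Z)_{<w},\mc{G}_w,\Coh(Z)_{\ge w}\rangle$. The window widths on the two sides differ by $n-1$, and the unstable stratum for the $X$ side is empty of extra pieces, so its window is contained in the window for the root-stack side. Comparing, $\mc{G}^{\sqrt[n]{}}_w=\langle \mc{G}^X_w, \text{($n-1$ copies of }\Coh(D)\text{ in weights } w,\dots,w+n-2)\rangle$, and also the same with the extra copies placed on the left, shifted by one weight. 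Identifying the restriction of $\mc{G}^X_w$ with $p^*\Coh(X)\langle w\rangle$, and the weight-$k$ pieces supported on $Z$ with $i_*q^*\Coh(D)\langle k\rangle$ (the twist $\langle k\rangle$ is tensoring by the tautological root line bundle), gives the two SODs with $w=0$, including full faithfulness of the embeddings.

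For periodicity, mutating $p^*\Coh(X)$ past $\mc{D}$ amounts to moving from window $w$ to window $w+1$. Indeed, the left admissible complement of $\mc{D}$ in the second SOD is $p^*\Coh(X)\langle n-1\rangle$, and since $\mc{D}$ is the same subcategory, iterated mutations of $\langle p^*\Coh(X),\mc{D}\rangle$ alternate between $\langle \mc{A}_k,\mc{D}_k\rangle$ and $\langle \mc{D}_k,\mc{A}_{k+1}\rangle$. Here each step twists by $\langle 1\rangle$ on the $D$-pieces and changes the $X$-piece from $\langle k\rangle$ to $\langle k+n-1\rangle$, or similar. After $n$ steps of passing $\mc{D}$, the twist accumulates to $\langle n\rangle=\OO(D)$ pulled back via $p$. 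This is a line bundle from $X$, so $p^*\Coh(X)\langle n\rangle=p^*\Coh(X)$ and $\mc{D}\langle n\rangle=\mc{D}$, since $q^*\OO_D(D)$ is tensoring by a bundle on $D$. The decomposition therefore returns to itself after $2n$ mutations.

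The main obstacle is this weight bookkeeping: verifying that consecutive mutations shift exactly by one weight and not by $n-1$, so that the period is $2n$ rather than some other number. I would first check it in the universal case $X=\ag$, where everything is explicit graded modules over $k[u,t]/(u^nt)$-type rings, and then deduce the general case by base change along $X\to\ag$.
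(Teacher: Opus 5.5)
Your proposal follows the same route as Bodzenta--Donovan's proof, which the paper recalls and reproves as the case $(n,m;a,b)=(n,1;1,0)$ of \cref{thm:mainresult}: the GIT model $\{yz^n=s\}$ with weights $(-n,1)$, window amplification with $\eta_+=n$ and $\eta_-=1$, and periodicity because $\langle n\rangle$ is pulled back from $X$ and so preserves every component. The one fix is to your periodicity bookkeeping: mutating $p^*\Coh(X)$ past all of $\mathcal{D}$ moves the one-weight window from $[0,1)$ to $[n-1,n)$ (it is passing a single copy of $\Coh(D)$ that shifts it by one), so each pair of mutations twists the whole SOD by $\langle n-1\rangle$, and $2n$ mutations return it twisted by $\langle n(n-1)\rangle$ rather than $\langle n\rangle$ --- which is harmless, since any multiple of $\langle n\rangle$ preserves $p^*\Coh(X)$ and each $i_*q^*\Coh(D)\langle j\rangle$, so the ``obstacle'' you flag (a shift by $1$ versus $n-1$) is not actually an issue.
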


The proof of their result involves viewing $\sqrt[n]{X/D} \simeq \X^+/\Gm$ and $X \simeq \X^-/\Gm$ as the positive and negative semistable quotients of a scheme $\mc{X}=\{yz^n=s\} \subset \Tot(\mc{O}_X(D) \oplus \mc{O}_X)$ with a $\Gm$-action (using a construction of the root stack as a global quotient from \cite{abramovich2008gromov}). Here $y$ and $z$ are fiberwise coordinates, $s$ is the canonical section of $\mc{O}_X(D)$, and $\Gm$ acts fiberwise with weights $(-n, 1)$. Then window theory gives a category $\mc{C} \subset \Coh(\X)$ such that $$\mc{C}=\Coh(\sqrt[n]{X/D})$$ and \begin{align*}\mc{C}=&\langle p^*\Coh(X), i_*q^*\Coh(D), i_*q^*\Coh(D)\langle 1 \rangle, \dots, i_*q^*\Coh(D)\langle n-2 \rangle \rangle\\
        =&\langle i_*q^*\Coh(D), i_*q^*\Coh(D)\langle 1 \rangle, \dots, i_*q^*\Coh(D)\langle n-2 \rangle, p^*\Coh(X)\langle n-1 \rangle \rangle.\end{align*} The extra ingredient for periodicity is the fact that a twist $\langle n \rangle$ preserves the components in the decomposition.

In our paper we extend the techniques of \cite{BD} to find new examples of periodicity involving non-balanced wall crossings and the root stack construction. Specifically, given $m<n$ coprime with $am+bn=1$, we construct a scheme $X_{n,m;a,b}$ with a $\Gm$-action such that the positive and negative semistable quotients are $X_{n,m;a,b}^+/\Gm \simeq \sqrt[n]{X/D}$ and $X_{n,m;a,b}^-/\Gm \simeq \sqrt[m]{X/D}$, respectively. We then have a (non-Cartesian) diagram

\[\begin{tikzcd}
	{\sqrt{O_D(aD)}} & {X_{n,m;a,b}^+/\Gm} \\
	D & X.
	\arrow["i", from=1-1, to=1-2]
	\arrow["q", from=1-1, to=2-1]
	\arrow["p", from=1-2, to=2-2]
	\arrow["{i_D}", from=2-1, to=2-2]
\end{tikzcd}\]

Note that the case $(n,m;a,b)=(n,1;1,0)$ recovers the construction of \cite{BD}. Using a similar window-theoretic argument, we show the following result.

\begin{thm}\label{thm:mainresult}
    There are SODs
    \begin{align*}
        \Coh(\sqrt[n]{X/D})=&\langle \mc{C}, i_*q^*\Coh(D), i_*q^*\Coh(D)\langle 1 \rangle, \dots, i_*q^*\Coh(D)\langle n-m-1 \rangle \rangle\\
        =&\langle i_*q^*\Coh(D), i_*q^*\Coh(D)\langle 1 \rangle, \dots, i_*q^*\Coh(D)\langle n-m-1 \rangle, \mc{C}\langle n-m \rangle \rangle
    \end{align*}
    with each $i_*q^*\langle i \rangle$ fully faithful, and with $\mc{C}$ a category abstractly isomorphic to $\Coh(\sqrt[m]{X/D})$.

    If we let $\mc{D}$ be the subcategory $\langle i_*q^*\Coh(D), i_*q^*\Coh(D)\langle 1 \rangle, \dots, i_*q^*\Coh(D)\langle n-m-1 \rangle \rangle$, then the SOD $\Coh(\sqrt[n]{X/D})=\langle \mc{C}, \mc{D} \rangle$ is $2n$-periodic (as the components are stable under $\langle n \rangle$).
\end{thm}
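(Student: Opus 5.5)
We will argue as in \cite{BD}, replacing their $\Gm$-scheme $\X$ by $X_{n,m;a,b}$ and applying the window theory of \cite{window} to the non-balanced wall crossing between its two semistable loci.

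\textbf{Step 1: the GIT problem.} First we write down $X_{n,m;a,b}$ explicitly. We pull back along $X\to\ag$ a map $\A^2/\Gm^2\to\ag$ that restricts to $e_n$ and $e_m$ on the two relevant charts, and then quotient by one of the two $\Gm$ factors. Concretely, we expect $X_{n,m;a,b}=\{y^m z^n = s\}$ inside a rank-two bundle built from $\OO_X(aD)$ and $\OO_X(bD)$ (or a close variant), with $\Gm$ acting fiberwise by weights $(-n,m)$. The integers $a,b$ with $am+bn=1$ are used to untwist the line bundles. We then check three things:
\begin{itemize}
\item The $+$ semistable locus is $\{z\neq 0\}$, and its quotient is $\sqrt[n]{X/D}$, by the global-quotient description of \cite{abramovich2008gromov}.
\item The $-$ semistable locus is $\{y\neq0\}$, and its quotient is $\sqrt[m]{X/D}$.
\item The unstable locus for $+$ is $S^+=\{z=0\}$, with fixed locus $Z=\{y=z=0\}\cong D$. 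Its quotient $Z/\Gm$ is the gerbe $\sqrt{O_D(aD)}$ appearing in the diagram.
\end{itemize}
The relevant weights are the $\Gm$-weights of the conormal bundles to $S^\pm$ along $Z$. We expect $\eta^+ = n$ and $\eta^-=m$, so the wall crossing has asymmetry $n-m$.

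\textbf{Step 2: windows and the two SODs.} Take the whole stack $\X/\Gm$ together with the KN stratifications for both signs. By \cite{window}, for each integer $w$ the window
\[\mc{G}^+_w=\{F : \text{weights of } F|_{Z} \text{ lie in } [w,w+n)\}\]
restricts isomorphically onto $\Coh(\sqrt[n]{X/D})$. The analogous window $\mc{G}^-_w$ of width $m$ restricts isomorphically onto $\Coh(\sqrt[m]{X/D})$. The grade-restriction rule also gives semiorthogonal decompositions of $\mc{G}^+_w$. Its pieces are the ``$m$-window'' $\mc{G}^-_{w'}$ (placed at either end) together with $n-m$ copies of $\Coh(Z/\Gm)_{k}$ for consecutive weights $k$. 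We embed these copies via the pushforwards $i_*q^*(-)\langle k\rangle$, for instance as Koszul-type sheaves supported on $S^-$ (equivalently on $Z$).

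This is the argument of \cite{BD} for $m=1$. It is also the standard ``$\langle \mc{A}, \text{stuff}\rangle = \langle \text{stuff}', \mc{A}\rangle$'' comparison for nested windows, taken with the $m$-window at the left edge or at the right edge of the $n$-window.

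Fully faithfulness of each $i_*q^*\langle k\rangle$ follows from the weight-$k$ part of the local cohomology computation in \cite{window}. Setting $\mc{C}=\mc{G}^-_w$ and $\mc{C}\langle n-m\rangle=\mc{G}^-_{w+n-m}$ gives the two displayed decompositions, with $\mc{C}\simeq\Coh(\sqrt[m]{X/D})$.

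\textbf{Step 3: periodicity.} Mutating $\mc{C}$ across $\mc{D}$ shifts the window from $[w,w+m)$ to $[w+n-m,w+n)$. This identifies the left and right mutations with $\langle n-m\rangle$-type shifts, in the sense that $\mathbb{L}_{\mc{D}}\mc{C}=\mc{C}\langle n-m\rangle$ as subcategories.

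On $\sqrt[n]{X/D}$, the twist $\langle n\rangle$ is tensoring by the pullback $p^*\OO_X(D)$ (up to the $a,b$ bookkeeping). It therefore preserves $\mc{C}$ and each $i_*q^*\Coh(D)\langle k\rangle$, since $\OO(D)$ restricts to $D$ compatibly with $q$. Composing the successive mutations, the SOD returns to itself after $2n$ steps, exactly as in \cite[Theorem 1.2]{BD}.

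\textbf{Main obstacle.} I expect Step 1 to be the hardest part. We must choose the twists by $a,b$ correctly so that both semistable quotients are honestly the root stacks and the fixed gerbe is $\sqrt{O_D(aD)}$, and we must verify that the KN weights are $n$ and $m$ as claimed. I would handle this by first checking the universal case $X=\A^1$ with $D=0$ and then pulling back.
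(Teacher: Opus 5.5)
Your strategy is the paper's: the $\Gm$-scheme $\{y^mz^n=s\}$ with weights $(-n,m)$, the width-$m$ window nested in the width-$n$ window, the $n-m$ amplification pieces, and periodicity from $\langle n\rangle$ being a pullback from $X$. However, Step 1 as written is wrong, and the error feeds into how you identify the $\Coh(D)$ pieces.

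With weights $(-n,m)$ the $+$ unstable locus is $S^+=\{y=0\}$, not $\{z=0\}$. The locus $X^+=\{y\neq 0\}$ has $\mu_n$ stabilizers over $D$ and gives $\sqrt[n]{X/D}$. The locus $\{z\neq 0\}$ has $\mu_m$ stabilizers and gives $\sqrt[m]{X/D}$. Moreover, $\eta_+=n$ precisely because the conormal of $\{y=0\}$ is spanned by $y$, of weight $-n$; your $S^+=\{z=0\}$ would give $\eta_+=m$, contradicting your Step 2.

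Also, $Z/\Gm$ is not the gerbe. $\Gm$ acts trivially on $Z\cong D$, so $Z/\Gm\cong D\times B\Gm$, and $Z\subset S^+$ does not even meet $X^+$. The $\mu_n$-gerbe $\sqrt[n]{\OO_D(aD)}$ is $(S^-\setminus Z)/\Gm=(S^-\cap X^+)/\Gm$, where $S^-=\{z=0\}\cong\Tot(\OO_D(aD))$.

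This geometry is exactly what the step you only assert requires, namely that the amplification pieces become $i_*q^*\Coh(D)\langle k\rangle$ on $\sqrt[n]{X/D}$. Those pieces are $\mc{A}^-_k=j_{-,*}\pi_-^*\tau^*\Coh(Z)\langle k\rangle$, supported on all of $S^-$; your ``equivalently on $Z$'' is false. The paper's ``Embeddings of $D$'' argument uses flat base change along $S^{-,\circ}=S^-\setminus Z\hookrightarrow S^-$ and the projection formula to get $i_+^*\Phi^-_k(-)\simeq j^\circ_{-,*}\sigma_-^{\circ *}(-)\otimes\OO\langle k\rangle$. This equals $i_*q^*(-)\otimes\OO\langle k\rangle$ once $S^{-,\circ}/\Gm$ is identified with $\sqrt[n]{\OO_D(aD)}$, compatibly with the maps to $D$ and to $X^+_{n,m;a,b}/\Gm$. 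Full faithfulness of $i_*q^*\langle k\rangle$ is then inherited from $\Phi^-_k$ and from $i_+^*$ restricted to $\mc{C}_{[w,w+n)}$.

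For periodicity, the bookkeeping you defer is that $\OO_{X^+_{n,m;a,b}/\Gm}\langle n\rangle\simeq p^*\OO_X(-aD)$. The $\langle 1\rangle$ of this presentation is $\langle a\rangle$ in the Bodzenta--Donovan one, so you cannot import their $\OO_X(-D)$ directly. The projection formula with $p\circ i=i_D\circ q$ then shows that $\langle n\rangle$ intertwines with $-\otimes\OO_D(-aD)$, hence preserves each $i_*q^*\Coh(D)\langle k\rangle$. That $\langle n\rangle$ preserves $\mc{C}$ does not follow from ``$\OO(D)$ restricts compatibly with $q$''. It follows from $\mc{C}$ being the orthogonal of the $\langle n\rangle$-stable $\mc{D}$, or from the window being stable under weight-zero line bundles pulled back from $X$. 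With these corrections, each pair of mutations is the twist $\langle n-m\rangle$, and $n$ pairs give $\langle n(n-m)\rangle$, which fixes every component.
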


Our construction gives a new way to think of the root stack $\sqrt[n]{X/D}$ as a global quotient stack that may be of independent interest. The scheme $\X^+$ is a $\mu_a$-torsor over our scheme $X^+_{n,m;a,b}$ with the $a$-to-$1$ cover
$$\X^+ \to X^+_{n,m;a,b}$$
yielding an equivalence of stacks $$\sqrt[n]{X/D} \simeq \X^+/\Gm \simeq X^+_{n,m;a,b}/\Gm .$$ So the components in our SOD match up with the SOD in \cite{BD} in a certain way. Specifically, our $i_*q^*\Coh(D)\langle i \rangle$ is the same subcategory as their $i_*q^*\Coh(D)\langle ia \rangle$.

In \cref{sect:multiwindow} we also explain how to extend our construction to iterated root stacks, as well as to a higher-dimensional parameter space of semistable quotients.

We also compute the gluing functors between the components of the SOD in \cite{BD} (they already computed the first one from $\Coh(X)$ to $\Coh(D)$ as $i_D^*$), showing that they are abstractly equivalent to the identity between adjacent copies of $\Coh(D)$, and zero otherwise. This computation suggests a few things about the abstract structure of this decomposition:
\begin{itemize}
    \item The decomposition is the $(n-1)$st level of the relative Waldhausen S-construction from \cite{DKSS} for the spherical functor $i_D^* \colon \Coh(X) \to \Coh(D)$. So it is known for abstract reasons to be $2n$-periodic (see \cite[Theorem 5.4.2]{nspherical}).
    \item Taking the right (or equivalently, left) orthogonal to any set of copies of $\Coh(D)$ in the SOD from \cite{BD} yields a category that is abstractly equivalent to $\Coh(\sqrt[m]{X/D})$. Our geometric construction, however, picks out a specific one. We do not know whether the other possible embeddings of $\Coh(\sqrt[m]{X/D})$ into $\Coh(\sqrt[n]{X/D})$ have a geometric interpretation.
\end{itemize}

\subsection{SODs and periodicity universally}
\label{subsect:univintro}
The SOD result of \cite{BD} was also stated (without periodicity) by \cite{bls} in the case that $X$ is an algebraic stack in characteristic zero and we take $\Perf$ instead of $\Coh$ (generalizing the smooth DM stack case of \cite{ishiiueda}). Still assuming characteristic zero, we prove all such results (as well as reprove our results from \cref{subsect:vgitintro}) in the case that $X$ is a perfect derived stack (for $\Perf$) or a locally finite type stack over $\ag$ (for $\Coh$) in \cref{sect:categorical}.

The main content is the following calculation, which we combine with base change and reconstruction results from the Appendix.
\begin{thm}[\cref{thm:a1gmsod}]
    We have an SOD $$\Coh(\A^1/\Gm)=\langle e_n^*\Coh(\A^1/\Gm), i_*h_n^*\Coh(\pt/\Gm), i_*h_n^*\Coh(\pt/\Gm)\langle 1 \rangle, \dots, i_*h_n^*\Coh(\pt/\Gm)\langle n-2\rangle \rangle.$$ This decomposition has the following properties:
    \begin{enumerate}
        \item Each component is stable under the action of $\Coh(\A^1/\Gm)$ (acting via $e_n^*$).
        \item The gluing functors are zero between nonadjacent components, and are otherwise $i^* \colon \Coh(\A^1/\Gm) \to \Coh(\pt/\Gm)$ and $\id \colon \Coh(\pt/\Gm) \to \Coh(\pt/\Gm)$, which are functors of $\Coh(\A^1/\Gm)$-modules.
    \end{enumerate}
\end{thm}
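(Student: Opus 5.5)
We work with graded modules: $\Coh(\ag)$ is the category of finitely generated graded $k[x]$-modules ($x$ of weight $1$). The map $e_n$ corresponds to $k[x]\to k[y]$, $x\mapsto y^n$, and the source $\ag$ has coordinate $y$. Let $i\colon \pg\to\ag$ be the origin of the source copy. The map $h_n\colon\pg\to\pg$ is induced by the $n$th power map $\Gm\to\Gm$, so $h_n^*$ sends a weight-$w$ line to weight $nw$. With this notation the components have explicit generators:
\begin{itemize}
\item $e_n^*\Coh(\ag)$ is generated by the objects $e_n^*\OO\langle nj\rangle=k[y]\langle nj\rangle$;
\item $i_*h_n^*\Coh(\pg)\langle r\rangle$ is generated by the skyscrapers $k\langle nj+r\rangle$ at $y=0$.
\end{itemize}

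The plan is first to check full faithfulness and semiorthogonality on these generators, then generation, and finally the module structure and gluing functors. All of these are direct Ext computations in graded $k[y]$-modules.

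\textbf{Full faithfulness.} For $e_n^*$, we have $\Hom(k[y]\langle a\rangle,k[y]\langle b\rangle)=k[y]_{b-a}$, which is one-dimensional exactly when $a\le b$. On the image of $e_n^*$ the shifts are multiples of $n$, and $y^{n(b-a)/n}=x^{b-a}$, which matches $\Hom_{\ag}$. For $i_*h_n^*\langle r\rangle$, compute via the Koszul resolution $k\langle c\rangle\simeq[k[y]\langle c-1\rangle\to k[y]\langle c\rangle]$. This gives
\[
\mathrm{Ext}^\bullet(k\langle a\rangle,k\langle b\rangle)=
\begin{cases}
k & \text{in degree } 0 \text{ if } a=b,\\
k & \text{in degree } 1 \text{ if } b=a-1 \text{ (up to sign convention)},\\
0 & \text{otherwise.}
\end{cases}
\]
Within a single component, $a\equiv b \pmod n$ and $n\ge2$, so only the $a=b$ term survives. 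The resulting Hom space is the graded-vector-space Hom, which is what $\Coh(\pg)$ requires.

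\textbf{Semiorthogonality.} We need:
\begin{itemize}
\item $\Hom(k\langle nj+r\rangle, k[y]\langle nl\rangle)=0$ for $0\le r\le n-2$. Grothendieck duality gives $\mathrm{RHom}(k\langle c\rangle,k[y]\langle d\rangle)=k\langle \cdot\rangle[-1]$, nonzero only when $d=c-1$, that is, when $c\equiv 1$ relative to the multiples of $n$. This forces $r=n-1$, which is excluded.
\item Between skyscraper components $r<s$ in the order given: $\mathrm{RHom}(k\langle c\rangle,k\langle d\rangle)$ with $c\equiv s$, $d\equiv r$ is nonzero only if $d\in\{c,c-1\}$. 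Since $0\le r<s\le n-2$, this happens only when $s=r+1$. This is where the ordering convention has to be matched to the twist convention $\langle 1\rangle$, and I expect some bookkeeping to fix signs.
\end{itemize}

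\textbf{Generation.} $\Coh$ of the source is generated by the objects $k[y]\langle c\rangle$ for $c\in\Z$. Induct on the residue of $c$ modulo $n$ using the triangles $k[y]\langle c-1\rangle\to k[y]\langle c\rangle\to k\langle c\rangle$. This reaches every $k[y]\langle nj+r\rangle$ with $r\le n-1$ starting from $k[y]\langle nj\rangle$ and the skyscrapers $k\langle nj+1\rangle,\dots,k\langle nj+n-1\rangle$, after reindexing residues to match the $\langle 0\rangle,\dots,\langle n-2\rangle$ labelling. Since $\Coh$ is idempotent complete and everything here is finitely generated, this suffices.

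\textbf{Property (1).} By the projection formula, $e_n^*F\otimes e_n^*G=e_n^*(F\otimes G)$ and $e_n^*F\otimes i_*h_n^*G=i_*h_n^*(i^*F\otimes G)$, using $e_n\circ i=i\circ h_n$. Both are compatible with the twists $\langle r\rangle$.

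\textbf{Property (2): gluing functors.} The gluing functor from component $A$ to component $B$ is $\lambda_B^!\circ\iota_A$ (or the left-adjoint version, depending on convention). We compute it on generators using the Ext calculations above. From $e_n^*\Coh(\ag)$ to the first skyscraper component, one gets $\Hom(k\langle\cdot\rangle, e_n^*F)$, which by the duality computation is $i^*F$ up to shift and twist. Between adjacent skyscraper components, one gets the one-dimensional $\mathrm{Ext}^1$, that is, the identity up to shift. Nonadjacent pairs vanish by the semiorthogonality computation. $\Coh(\ag)$-linearity of these functors follows from property (1), because the projection and adjunction functors of a linear SOD are linear.

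The main obstacle is upgrading these generator-level identifications to identifications of functors. I would handle this by observing that each gluing functor is a $\Coh(\ag)$-linear functor between categories generated over $\Coh(\ag)$ by a single object ($\OO$ or $i_*\OO$). Such a functor is therefore determined by its value on that generator together with the induced map on endomorphism algebras, which we have computed.
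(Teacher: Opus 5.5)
Your strategy matches the paper's. You check everything on the generators $\OO\langle nj\rangle$ and $\OO_0\langle nj+r\rangle$ using the Koszul resolution of the origin, which is \cref{lem:a1gmhoms}, and you get linearity from the projection formula along $e_n\circ i=i\circ h_n$. However, the semiorthogonality step fails as written. The sign you defer to ``bookkeeping'' is exactly what decides whether the statement is true.

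\textbf{The sign error.} In your convention the coordinate is a map $k[y]\langle c-1\rangle\to k[y]\langle c\rangle$. Your own computation then gives $\mathrm{RHom}(k\langle c\rangle,k[y]\langle d\rangle)\neq 0$ iff $d=c-1$. With $d=nl$ this means $c\equiv 1 \pmod n$, i.e.\ $r=1$, not $r=n-1$. Likewise $\mathrm{Ext}^1(k\langle c\rangle,k\langle c-1\rangle)\neq 0$ runs from residue $r+1$ to residue $r$, that is, from a later component to an earlier one. So for $n\ge 3$ your ordering $\langle 0\rangle,\dots,\langle n-2\rangle$ is \emph{not} semiorthogonal. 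In your convention the correct list would be $\langle 0\rangle,\langle -1\rangle,\dots,\langle -(n-2)\rangle$.

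\textbf{The correct convention.} The statement holds in the convention fixed by the paper's resolution $0\to\OO\langle 1\rangle\to\OO\to\OO_0\to 0$. There $\mathrm{RHom}(\OO_0\langle c\rangle,\OO\langle d\rangle)\neq 0$ iff $d=c+1$, so only residue $n-1$ maps into $e_n^*\Coh(\ag)$, and it is excluded. The skyscraper $\mathrm{Ext}^1$ goes from $\OO_0\langle c\rangle$ to $\OO_0\langle c+1\rangle$, i.e.\ forward. (The first line of \cref{lem:a1gmhoms} happens to be stated in your convention, but the resolution and the other three lines, which carry the semiorthogonality, are in this one.) With that fixed, full faithfulness goes through. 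So does generation, via the triangles $\OO\langle c+1\rangle\to\OO\langle c\rangle\to\OO_0\langle c\rangle$, climbing from $\OO\langle nj\rangle$ through residues $0,\dots,n-2$.

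\textbf{The gluing step.} This step uses the wrong adjoint. The gluing functor into a later component is $i_j^Li_i$, which is detected by $\Hom(e_n^*F,\OO_0\langle c\rangle)$. What you compute, $\Hom(k\langle\cdot\rangle,e_n^*F)$, is $i_j^Ri_i$. That functor vanishes in any valid SOD, and even in your convention it is zero for the residue-$0$ component.

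\textbf{No reconstruction argument needed.} The adjoints are explicit: since $i^*\dashv i_*$ and $h_{n,*}\dashv h_n^*$, we have $(i_*h_n^*\langle r\rangle)^L\simeq h_{n,*}i^*\bigl((-)\langle -r\rangle\bigr)$.
\begin{itemize}
\item From $e_n^*\Coh(\ag)$ to the first skyscraper component, $h_{n,*}i^*e_n^*\simeq h_{n,*}h_n^*i^*\simeq i^*$.
\item For adjacent skyscraper components, use $i^*i_*G\simeq G\oplus G\langle 1\rangle[1]$ together with the fact that $h_{n,*}$ kills weights not divisible by $n$. This gives $h_{n,*}\bigl((i^*i_*h_n^*G)\langle -1\rangle\bigr)\simeq G[1]$, which is abstractly $\id$.
\item The nonadjacent gluing functors vanish for the same weight reason.
\end{itemize}
These are identifications of functors, not just of values on generators. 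They are $\Coh(\ag)$-linear via the base-change structures, which is how the paper handles property (2) and the linearity.
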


By viewing both $\Coh(X)$ and $\Perf(X)$ as modules over $\Coh(\ag)$, we show that the periodicity on $\Coh(\sqrt[n]{X/D})$ or $\Perf(\sqrt[n]{X/D})$ follows from periodicity of $\Coh(\ag)$ above by taking either the tensor product of the above SOD with $\Perf(X)$ or the Hom with $\Coh(X)$. The main input are the results of \cite{BZFN} and \cite{BZNP} (though we need to slightly generalize \cite{BZNP} to relative DM stacks since $\ag \to \ag$ is one; see \cref{subsect:bznpextend}). These results tell us that the resulting SOD is a decomposition of either $\Perf$ or $\Coh$ (depending on if we do $\otimes$ or $\Hom$) of the fiber product $\sqrt[n]{X/D}$.

To use these base change results, we need to be able to combine categorical representation theory with semi-orthogonal decompositions. Essentially, we need semi-orthogonal decompositions that are $\mc{A}$-stable for a monoidal stable category $\mc{A}$ to be preserved under $\otimes_{\mc{A}}$ and $\Hom_{\mc{A}}$. As explained in \cref{subsect:vgitintro}, we also calculate gluing functors, so we need results about reconstructing the category comprising an SOD from its gluing functors. The necessary technical details are given in Appendices A and B (mainly \cref{thm:preserve}).

Note that \cite[Section 1.4]{yuzhao} hypothesizes a way to do categorical representation theory for the SOD using the monoidal category $\Coh(\ag \times_{e_n, \ag, e_n} \ag)$ (or in the language of \cref{sect:categorical}, $\Coh(\wtag \times_{\ag} \wtag)$), rather than $\Coh(\ag)$. We do not explore this possbility in our paper.

\subsection{3d mirror symmetry}
\label{subsect:3dmsintro}
3d mirror symmetry studies a conjectural equivalence between the 3d A-model (associated to $X$) and the 3d B-model (associated to its mirror $X^!$). While it started out in mathematics as symplectic duality (between pairs of symplectic resolutions), its more fundamental formulation is as an equivalence of $2$-categories (i.e. the assignment to a point by the relevant 3d TFTs).

Defining the relevant $2$-categories is an area of active research. The B-type category was first studied in \cite{kr, krs}, and \cite{potent} recently proposed that a model for the B-side should be a microlocal version of $\Coht(X^!)$. The (non-microlocal version of the) 2-category $\Coht(X^!)$ was defined in \cite{stefanich}, though \cite{arinkintalk1, arinkintalk2, hypertoric} modeled it via proper categorical descent, using closed embeddings into $X^!$ as generating objects.

However there is relatively less known about the A-type category. In his 2014 ICM address \cite{teleman}, Teleman gave a proposal for the A-side $2$-category in the case of pure gauge theory (i.e. $X=T^*(\pt/G)$) using categorical representation theory; a de Rham variant was later proposed in \cite{potent}. The first case beyond pure gauge theory was studied by Gammage, Hilburn, and Mazel-Gee in \cite{3dms}, where they gave a candidate for the A-side category of $T^*\C$ as \textit{perverse schobers} on $(\mb{C}, 0)$ (denoted as $\Pervt(\C, 0)$). They demonstrated that this category is equivalent to modules over the convolution category $\Am=\Coh((\ag \sqcup \pg) \times_{\ag} (\ag \sqcup \pg))$, which for simplicity we call $\Coht(\ag)$ (suppressing the singular support Lagrangian).

One can think of $\Pervt(\mb{C}, 0)$ as the naive A-side $2$-category associated to $T^*\mb{C}$ and the Lagrangian given by conormals to the stratification $\mb{C}, 0$. Meanwhile, $\Coht(\ag)$ is the naive B-side category associated to $T^*(\ag)$ (with Lagrangian given by conormals to $\ag, \pg$). In this paper we construct the (B-side) pullback on $\Coht(\A^1/\Gm)$ along the $n$\textsuperscript{th} power map $e_n \colon \A^1/\Gm \to \A^1/\Gm$ (see \cref{subsect:Bsidepb}). This pullback generalizes the root stack construction (explained in \cref{subsect:vgitintro}). We show in particular that it recovers SODs of both $\Perf(\sqrt[n]{X/D})$ and $\Coh(\sqrt[n]{X/D})$ in \cref{thm:recover} using some theory of singular support of coherent sheaves and indcoherent convolution categories (extending results of \cite{spectralincarnation}), and more abstractly in \cref{prop:commutingrootstackdiagram} using the dualizability of the bimodule defining the root stack operation. See \cref{rem:betterrecover} for a discussion of these two approaches.

We also define an A-side pushforward of perverse schobers in \cref{subsect:Asidepf} on the object level, with a justification for our definition given in \cref{sect:schob}. When the perverse schober comes from a Landau--Ginzburg model with a given superpotential, this pushforward agrees with the Landau--Ginzburg model associated to the pushforward of that superpotential by the $n$\textsuperscript{th} power map on $\C$. Note that the upcoming work \cite{ACJ} provides a $2$-categorical pushforward functor between certain categories of schobers that agrees with ours on the object level.

Our main result is then the following.

\begin{thm}[\cref{thm:main}]
Under the 3d mirror symmetry equivalence (of \textup{\cite{3dms}})
$$\Pervt(\C, 0) \simeq \Coht(\ag),$$
our defined B-side pullback and A-side pushforward are equivalent functors on the object level.
\end{thm}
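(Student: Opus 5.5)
We compare the two functors on objects by writing both sides in terms of explicit gluing data. On the A-side, an object of $\Pervt(\C, 0)$ is, following \cite{3dms}, a spherical functor $F \colon \mc{A} \to \mc{B}$: the category of nearby cycles $\mc{B}$, the vanishing-cycle category $\mc{A}$, and the spherical adjunction. Under the equivalence $\Pervt(\C,0) \simeq \Coht(\ag)$ this corresponds to a $\Am$-module $M$. Its components on $\ag$ and $\pg$ are $M_{\ag}$ and $M_{\pg}$. The spherical functor is recovered as the $\Hom$ out of the bimodule $\Coh(\ag \times_{\ag} \pg)$, i.e.\ through $i^* \colon \Coh(\ag) \to \Coh(\pg)$ and its adjoints. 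The first step is to fix this dictionary precisely on objects, noting that the generic fiber corresponds to $M_{\ag}$ localized away from $\pg$.

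Next we compute the B-side pullback $e_n^* M$. We do this by base-changing along the bimodule $\Coh(\wtag \times_{\ag} \ag)$ (notation of \cref{sect:categorical}) and applying the SOD of \cref{thm:a1gmsod}. The output is a $\Coht(\ag)$-object whose $\ag$-component carries an SOD. In this SOD, one copy of $M_{\ag}$ is followed by $n-1$ copies of the $\pg$-part, glued by $i^*$ and then by identities between adjacent copies, with zero gluing otherwise. The $\otimes$/$\Hom$ preservation result \cref{thm:preserve} and the reconstruction-from-gluing statements of the Appendix ensure that this glued description is exactly the pullback. By the remark relating the \cite{BD} decomposition to \cite{DKSS}, this is the $(n-1)$st relative Waldhausen $S$-construction $S_{n-1}$ of the spherical functor. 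We also need the new $\pg$-component and the new spherical functor to it. The plan is to show that this component is again the $\pg$-part, and that the functor is the projection onto the last $S$-construction slot.

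On the A-side, the pushforward along $z \mapsto z^n$ is defined in \cref{subsect:Asidepf}. We compute it by using the Lagrangian skeleton: the preimage of a cross/star skeleton of $(\C,0)$ under the $n$-fold cover is a star with $2n$ legs. The nearby-cycle category of the pushforward is obtained by gluing the sections over $n$ sectors, each a copy of the original schober's data. The monodromy is the $n$-th power of the original. Cutting the $2n$-leg star into its sectors presents the generic-fiber category as a filtered category with $n$ graded pieces and adjacent gluing. This matches $S_{n-1}$ of the original spherical functor, and the vanishing-cycle category stays $\mc{A}$. We then match the three pieces of data: $\mc{A}$, $\mc{B}$, and the spherical functor with its twist.

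The main obstacle is this last identification, specifically showing that the spherical functor produced on the A-side (the cap functor to the new generic fiber) coincides with the one obtained from the B-side module structure. To handle it, we would compute both as the functor into the first (or last) piece of the $S_{n-1}$ filtration and use the $2n$-periodicity from \cite{nspherical} to fix the ambiguity of which slot is chosen. Since the statement is only at the object level, it suffices to produce the equivalence of categories together with compatible spherical functors, without establishing naturality.
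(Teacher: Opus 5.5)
Writing $\Phi,\Psi$ for your vanishing/nearby categories $\mc{A},\mc{B}$ and $\Am$ for the convolution category: your description of the $\ag$-component of $e_n^*\mc{C}$ as $\langle \Phi,\Psi,\dots,\Psi\rangle$, glued by $F$ and then identities, is correct. It is essentially the paper's \cref{prop:keycalculation1} combined with \cref{thm:preserve}. The plan breaks at the $\pg$-component, which is not ``again the $\pg$-part''. It is computed from ${}_\Psi\mc{M}\simeq\Coh(\wtpg\times_{\ag}X)$. Since $\Perf(\wtpg)\simeq\bigoplus_{i=0}^{n-1}h_n^*\Perf(\pg)\langle i\rangle$ over $\Perf(\pg)$ (the $\mu_n$-gerbe splits by characters), one gets ${}_\Psi\mc{M}\simeq{}_\Psi\Am^{\oplus n}$ right $\Am$-linearly (\cref{prop:keycalculation2}). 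So the new nearby-cycle category is $\Psi^{\oplus n}$. The new spherical functor is $i^*$ read through the two decompositions. It sends $[\varphi; F(\varphi)\to\psi_1\to\dots\to\psi_{n-1}]$ to the sum of \emph{all} graded pieces $\fib(F(\varphi)\to\psi_1)\oplus\dots\oplus\psi_{n-1}$, which is exactly $F'\colon\Phi'\to\Psi'=\Psi^{\oplus n}$ in \cref{def:asidepf}. A ``projection onto the last slot'' has the wrong target, so the obstacle you single out is an artifact of this misidentification. Periodicity could not resolve it anyway: $2n$-periodicity says repeated mutation returns the same decomposition, not that two functors out of $\Phi'$ agree.

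Also make sure you base change along the full bimodule $\mc{M}=\Coh(\tilde{X}\times_{\ag}X)$ with $X=\ag\sqcup\pg$. Use \cref{lem:dual} to rewrite $\Hom_{\tilde{\Am}}(\tilde{\Am}_\Phi,\mc{M}\otimes_{\Am}\mc{C})$ as ${}_\Phi\tilde{\Am}\otimes_{\tilde{\Am}}\mc{M}\otimes_{\Am}\mc{C}$, so that \cref{thm:preserve} applies. Do not base change along $\Perf(\wtag)$ over $\Perf(\ag)$ applied to ${}_\Phi\mc{C}$ alone. That yields copies of $\Perf(\pg)\otimes_{\Perf(\ag)}{}_\Phi\mc{C}$ rather than of ${}_\Psi\mc{C}$; for the spherical functor $0\to\Psi$ it gives $0$.

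Your A-side paragraph has nearby and vanishing cycles interchanged. Under the equivalence, the $\ag$-component ${}_\Phi\mc{C}=\Hom_{\Am}(\Am_\Phi,\mc{C})$ is the vanishing-cycle category and the $\pg$-component is the nearby-cycle category. Under the pushforward, the vanishing-cycle category becomes $S_{n-1}(F)=\langle\Phi,\Psi,\dots,\Psi\rangle$. The generic fiber becomes $n$ disjoint copies of the old one, with no gluing. Your claim that the monodromy becomes the $n$th power of the original describes a pullback, not a pushforward. Likewise, restricting the $\ag$-component to $\Gm/\Gm$ corresponds to $\ker(\Phi\to\Psi)$, the categorified (co)stalk at $0$, not to the generic fiber.

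More fundamentally, no skeleton computation is available for a general object of $\Pervt(\C,0)$, and none is needed. The A-side pushforward is \emph{defined} by the formula in \cref{def:asidepf}, and \cref{sect:schob} is only a heuristic for Landau--Ginzburg models. The paper's proof is entirely B-side. It computes $\mc{M}$ via \cref{prop:keycalculation1}, \cref{prop:keycalculation2} and \cref{prop:keycalculation3}, then tensors with $\mc{C}$. It reads off $\langle{}_\Phi\mc{C},{}_\Psi\mc{C},\dots,{}_\Psi\mc{C}\rangle\to{}_\Psi\mc{C}^{\oplus n}$, which is literally the definition of the $n$th power pushforward.
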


Using 3d mirror symmetry, we show that periodicity for root stack derived categories is manifest geometrically on the A-side by writing the associated perverse schober as a cosheaf over a Lagrangian skeleton and finding a decomposition of the skeleton in \cref{sect:schob}. Similar notions have appeared in \cite{bondaltalk} and \cite[Section 3]{christ}.

Our results also give a new proof of $2n$--periodicity for the $(n-1)$st level of the relative Waldhausen S-construction for a spherical functor (see \cref{subsect:Waldhausen}), as it can be recovered as the base change of a $2n$-periodic SOD of $\mc{A}$-modules.

\subsection{Conventions} \label{subsect:convent}
By category we will always mean $\infty$-category (and functors are automatically derived). In particular, we use $\Coh(X)$ and $\Perf(X)$ to denote the bounded derived ($\infty$-)categories of coherent sheaves and perfect complexes, respectively, on $X$. Note that the (pullback and pushforward) functors from window theory upgrade to canonical functors between $\infty$-categorical enhancements, and \cref{cor:htpycat} tells us that SODs of $\infty$-categories yield SODs of their homotopy categories. If desired, the reader can assume that from \cref{sect:geometryrootstack} to \cref{sect:multiwindow} we just work with the ordinary bounded derived category of coherent sheaves.

We work over a ring of characteristic zero. While the 3d mirror symmetry equivalence can be defined in stable categories over any field (or ring) (see \cite[Remark 0.11]{hypertoric}), and we can define endofunctors on the A-side and B-side in this context, we use results of \cite{BZNP}, which assumes characteristic zero. However we believe that the main theorem \cref{thm:main} should hold in arbitrary characteristic by manual calculation of the SOD.

Note that root stack results (e.g. \cref{thm:recover}) heavily rely on \cite{BZNP} and \cite{BZFN}, so we don't know whether they hold outside characteristic zero. Extending to this case would involve a larger generalization of \cite{BZNP} and \cite{BZFN} (in the former case, showing $\ag$ is perfect outside characteristic zero is enough; however in the latter case, characteristic zero is assumed at the outset of \cite{BZFN} so more care is needed). Note also that the characteristic zero assumption is integral to results involving singular support like \cref{thm:gendual}.

We write our semiorthogonal decompositions (SODs) as $\mc{C}=\langle \mc{D}_1, \dots, \mc{D}_n \rangle$ if there are no maps from $\mc{D}_j$ to $\mc{D}_i$ for $j>i$. Generally we will liberally use definitions and basic results stated in \cite{DKSS}. In particular, coCartesian gluing functors (which we just call gluing functors) are the functors $i_j^Li_i \colon \mc{D}_i \to \mc{D}_j$ (when they exist), which go from left to right.

\subsection{Acknowledgments}
We are grateful to David Nadler (the advisor of S.G. and Y.O.) and Merlin Christ for providing feedback. We would like to thank Will Donovan, Daigo Ito, and John Nolan for helpful discussion regarding window theory and GIT; Merlin Christ, Will Fisher, and Lyne Moser for helpful discussion regarding higher category theory and lax limits, as well as Rune Haugseng for answering a question about lax limits; Ansuman Bardalai for helpful discussion regarding Rozansky--Witten theory and singular support; and David Nadler, Vivek Shende, and Peng Zhou for teaching us about Fukaya--Seidel categories and wrapping. S.G. is supported by an NSF Graduate Research Fellowship. R.L. is supported by a department fellowship funded by The McBeth Family Fellowship and The Manferdelli Family Fund in Mathematics. Y.O. is supported by a Simons Dissertation Fellowship. Google Gemini 3.1 Pro was used to help create TikZ code for the diagrams in \cref{sect:schob}. We also used OpenAI's ChatGPT 5.6-Sol to help with the proof of \cref{prop:properdm} (extending \cite[Theorem 3.0.2]{BZNP} to DM stacks).

\section{Geometry of root stacks}
\label{sect:geometryrootstack}
\subsection{Generalizing the Bodzenta--Donovan construction}\label{subsect:rootstack}
Let $m, n$ be coprime, with $m < n$. In this section we construct a stack $\X$ which is a global quotient of a scheme by $\Gm$, such that taking certain open substacks (corresponding to taking positive and negative GIT quotients) yields $\sqrt[m]{X/D}$ and $\sqrt[n]{X/D}$, respectively. We will use this construction in \cref{sect:window} to demonstrate a $2n$-periodic embedding of $\Coh(\sqrt[m]{X/D})$ into $\Coh(\sqrt[n]{X/D})$. This construction is not unique, but depends on a choice of integers $a, b$ such that $am+bn=1$ (it turns out that each choice of embedding has the same image).
Let $X$ be a locally Noetherian scheme over a field of characteristic $0$, and let $D$ be an effective divisor of $X$, with $s$ the canonical section of $\OO_X(D)$. This data defines a map $X \to \A^1/\Gm$, using the fact that $\A^1/\Gm(X)=\{s \colon \mc{O}_X \to \mc{L}\}$.

\begin{defn}
    The $n^{\text{th}}$ root stack $\sqrt[n]{X/D}$, for a positive integer $n$, is given by the following fiber product:
\[\begin{tikzcd}
	{\sqrt[n]{X/D}} & {\A^1/\Gm} \\
	X & {\A^1/\Gm}.
	\arrow[from=1-1, to=1-2]
	\arrow[from=1-1, to=2-1]
	\arrow["\lrcorner"{anchor=center, pos=0.125}, draw=none, from=1-1, to=2-2]
	\arrow["{e_n}"', from=1-2, to=2-2]
	\arrow[from=2-1, to=2-2]
\end{tikzcd}\]
Here $e_n$ is induced by the $n$th power map on $\A^1$, which we note is not $\Gm$-equivariant. On $T$-points, it sends \begin{align*}
    \A^1/\Gm(T) & \to \A^1/\Gm(T) \\
    (s\colon \mc{O}_T \to \mc{L}) & \mapsto (s^{\otimes n}\colon \mc{O}_T \to \mc{L}^{\otimes n}).
\end{align*}

\end{defn}

We now give a family of constructions of the root stack as a global quotient stack. Let $a, b$ be integers such that $am+bn=1$. We define $T_{n,m; a,b}$ to be the total space of the rank $2$ vector bundle $\OO_X(aD)\oplus\OO_X(bD)$, with fiber coordinates $(y, z)$; we also have a fiberwise $\Gm$-action with weights $(-n, m)$. Let $X_{n,m; a, b}$ be the hypersurface given by $\{y^mz^n=s\}$, with $s$ the canonical section of $\OO_X(D)$. Note that our $a, b, m, n,$ and $\Gm$-action are chosen so that our setup is well-defined (as $y^mz^n$ is a section of $\OO_X(aD)^{\otimes m} \otimes \OO_X(bD)^{\otimes n}=\OO_X(D)$) and that $X_{n,m;a,b}$ is $\Gm$-stable.

If the fixed locus for the $\Gm$-action in $X_{n,m;a,b}$ is $Z$, then the unstable loci for the positive and negative $\Gm$-actions are respectively
\begin{equation}
    S^+ = \{x \in X_{n,m;a,b} \vert \lim_{\lam \to 0} \lam \cdot x \in Z\} \qquad \text{and} \qquad S^- = \{x \in X_{n,m;a,b} \vert \lim_{\lam \to 0} \lam^{-1} \cdot x \in Z\}
\end{equation}\label{eqn:unstableloci}

with semistable loci $X_{n,m;a,b}^{\pm}=X_{n,m;a,b}-S^{\pm}$, respectively. Note that these loci are taken scheme-theoretically. See \cref{subsect:GITconv} for an explanation of our conventions regarding GIT.

In our case, the fixed point locus is just the locus where $y$ and $z$, and thus $s$, are $0$, yielding $$Z \simeq D.$$ (More precisely, the ideal generated by the nonzero weight variables $y$ and $z$ is the defining ideal of the fixed point locus $Z$. The section $s$ happens to be in this ideal.) The unstable locus $S^+$ is the locus $\{y=0\}$ (as we quotient by the negative weight variable $y$), so $$S^+ \simeq \Tot(\OO_D(bD));$$ in particular, $S^+$ lives over $D$. Similarly, $S^-$ is the locus $\{z=0\}$, so $$S^- \simeq \Tot(\OO_D(aD))$$ and also lives over $D$. Note that the intersection of $S^-$ and $S^+$ is $Z$. Over $X-D$, we have $s \neq 0$, and the condition $\gcd(m ,n)=1$ implies that $\Gm$ acts freely; thus $(X_{n,m;a,b}/\Gm)|_{X-D} \simeq X-D$.

If we quotient by $\Gm$, then we have isomorphisms $$(S^+-Z)/\Gm \simeq \sqrt[m]{\OO_D(bD)}$$ and $$(S^--Z)/\Gm \simeq \sqrt[n]{\OO_D(aD)};$$
note that these stacks are a $\mu_m$-gerbe and $\mu_n$-gerbe over $D$, respectively.

We define $\X_{n,m;a,b}$ to be the global quotient stack $[X_{n,m;a,b}/\Gm]$, with associated GIT quotients $\X_{n,m;a,b}^{\pm}=[X_{n,m;a,b}^{\pm}/\Gm]$. Note that $\X_{n,1;1,0}$ is the GIT quotient construction of a root stack that appeared in \cite{BD}; thus $\X_{n,1;1,0}^+ \simeq \sqrt[n]{X/D}$ and $\X_{n,1;1,0}^- \simeq X$ by \cite[Proposition 2.9]{BD}.

We now show that taking GIT quotients (i.e. taking the $\Gm$ stack quotient of the positive and negative semistable loci) of our construction yields the $n$th and $m$th root stacks.
\begin{prop}\label{prop:GITrootstack}
    We have $\X^+_{n,m;a,b} \simeq \sqrt[n]{X/D}$. By symmetry, we also have $\X^-_{n,m;a,b} \simeq \sqrt[m]{X/D}$.
\end{prop}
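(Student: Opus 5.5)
The plan is to identify $\X^+_{n,m;a,b}$ with the fiber product $X\times_{\ag,e_n}\ag$ by a functor-of-points computation. Since $y \neq 0$ on $X^+_{n,m;a,b}$, I will first rigidify the $\Gm$-action using $y$, and then read off the universal $n$th root data.

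First I describe $T$-points of $\X^+_{n,m;a,b}=[X^+_{n,m;a,b}/\Gm]$ for a scheme $T$. Such a point is a map $f\colon T\to X$ together with a line bundle $\mc{M}$ on $T$ (the $\Gm$-torsor) and sections
$$y\in\Gamma(T, f^*\OO_X(aD)\otimes\mc{M}^{-n}),\qquad z\in\Gamma(T,f^*\OO_X(bD)\otimes\mc{M}^{m}).$$
These are required to satisfy $y^mz^n=f^*s$ in $f^*\OO_X(D)$; the $\mc{M}$-twists cancel because $-nm+mn=0$. The semistability condition says that $y$ is nowhere vanishing. A nowhere vanishing $y$ is the same as an isomorphism $\mc{M}^{n}\cong f^*\OO_X(aD)$.

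Next I set $\mc{L}:=\mc{M}^{a}\otimes f^*\OO_X(-bD)^{\vee}$; more precisely $\mc{L}=\mc{M}^{a}\otimes f^*\OO_X(bD)$, chosen so that $z$ together with the trivialization $y$ produces a section $t$ of $\mc{L}$. The key identities are:
\begin{itemize}
\item $\mc{L}^{n}=\mc{M}^{an}\otimes f^*\OO_X(bnD)\cong f^*\OO_X(a^2mD)\otimes\ldots$
\end{itemize}
The correct choice has to be fixed by weights. Rather than guess, I will use $am+bn=1$ to write $\mc{M}=\mc{M}^{am}\otimes\mc{M}^{bn}\cong \mc{M}^{am}\otimes f^*\OO_X(abD)$. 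This leads to the candidate $\mc{L}:=\mc{M}^{m}\otimes f^*\OO_X(bD)$, of which $z$ is a section. Its $n$th power is
$$\mc{L}^{n}\cong \mc{M}^{mn}\otimes f^*\OO_X(bnD)\cong f^*\OO_X(amD+bnD)=f^*\OO_X(D),$$
using $y$ via $\mc{M}^{n}\cong f^*\OO_X(aD)$. Under this isomorphism, $z^{n}$ corresponds to $y^mz^n=f^*s$. So the triple $(\mc{L}, z, \mc{L}^n\cong f^*\OO_X(D))$ with $z^n\mapsto f^*s$ is exactly a $T$-point of $X\times_{\ag,e_n}\ag$.

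Conversely, given $(f,\mc{L},t,\phi\colon\mc{L}^n\cong f^*\OO_X(D))$, I set $\mc{M}:=\mc{L}^{a}\otimes f^*\OO_X(-abD)$, possibly adjusted by a power of $f^*\OO_X(D)$ so that the weights match. Then $\mc{M}^n\cong\mc{L}^{an}\otimes f^*\OO(-abnD)\cong f^*\OO(aD)$ by $\phi$, which gives $y$. Also $\mc{M}^m\otimes f^*\OO(bD)\cong\mc{L}^{am}\otimes f^*\OO(b(1-am)D)=\mc{L}^{am}\otimes\mc{L}^{bn}=\mc{L}$, which gives $z=t$.

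I then check that the two constructions are mutually inverse on objects and on isomorphisms, i.e. that the automorphisms match: $\Gm$ scaling $\mc{M}$ versus $\Gm$ scaling $\mc{L}$ compatibly with $\phi$, which forces $\mu_n$-behavior. This gives an equivalence of groupoids natural in $T$.

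The main obstacle is precisely this bookkeeping of the inverse equivalence on morphisms. Automorphisms of $(\mc{M},y)$ preserving $y$ are $\mu_n$, and automorphisms of $(\mc{L},\phi)$ are $\mu_n$ as well. I must check that $\lambda\mapsto\lambda^{m}$, the induced map on $\mc{L}=\mc{M}^m\otimes\cdots$, is an isomorphism $\mu_n\to\mu_n$. This holds because $\gcd(m,n)=1$, and it is where coprimality enters.

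The statement for $\X^-$ follows by the symmetry that swaps $(y,n,a)$ with $(z,m,b)$ and inverts the $\Gm$-action. The case $(n,1;1,0)$ recovers \cite[Proposition 2.9]{BD}, which serves as a consistency check.
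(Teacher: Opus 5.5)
Your strategy is sound: rigidify the $\Gm$-torsor using the nowhere-vanishing $y$, then read off an $n$th root. The forward functor $(f,\mc{M},y,z)\mapsto(f,\ \mc{L}=\mc{M}^m\otimes f^*\OO_X(bD),\ t=z,\ \phi)$ is correct.

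The inverse, however, is wrong as written. With $\mc{M}:=\mc{L}^a\otimes f^*\OO_X(-abD)$, the isomorphism $\phi$ gives $\mc{M}^n\cong f^*\OO_X((a-abn)D)=f^*\OO_X(a^2mD)$, not $f^*\OO_X(aD)$. Likewise $\mc{M}^m\otimes f^*\OO_X(bD)\cong\mc{L}^{am}\otimes f^*\OO_X(b(1-am)D)=\mc{L}^{am}\otimes f^*\OO_X(b^2nD)$, whereas $\mc{L}^{bn}\cong f^*\OO_X(bD)$. So both verification lines contain a slip, the construction does not in general produce $y$ and $z$ of the required type, and the round-trip check you defer to would fail.

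The fix is to drop the twist and set $\mc{M}:=\mc{L}^a$.
\begin{itemize}
\item $\phi^{\otimes a}\colon \mc{M}^n=(\mc{L}^n)^a\cong f^*\OO_X(aD)$ is the nowhere-vanishing $y$.
\item $\phi^{\otimes b}$ gives $\mc{M}^m\otimes f^*\OO_X(bD)\cong \mc{L}^{am}\otimes\mc{L}^{bn}=\mc{L}$, under which $z:=t$.
\item Locally, if $\phi$ is multiplication by a unit $u$, then $y=u^a$, $z=u^bt$, and $y^mz^n=u^{am+bn}t^n=\phi(t^n)=f^*s$.
\end{itemize}
This choice is forced by your own identity $\mc{M}\cong\mc{M}^{am}\otimes f^*\OO_X(abD)$ (the isomorphism being given by $y^b$), which says exactly $\mc{M}\cong\mc{L}^a$. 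That isomorphism and $\mc{L}\cong(\mc{L}^a)^m\otimes f^*\OO_X(bD)$ (via $\phi^{\otimes b}$) are the two natural isomorphisms to the identity. Once you have them, full faithfulness is automatic, and the check that $\lambda\mapsto\lambda^m$ is an automorphism of $\mu_n$ is only a consistency check. You should also delete the abandoned first attempt at defining $\mc{L}$.

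With this repair, your argument is correct and takes a genuinely different, self-contained route. The paper does not compute the moduli problem. Instead it observes that on the semistable locus $(y,z)\mapsto(y^a,y^bz)$ defines a $\mu_a$-torsor $X^+_{n,1;1,0}\to X^+_{n,m;a,b}$, equivariant along $\lambda\mapsto\lambda^a$ (the weights $(-n,1)$ go to $(-an,1-bn)=(-an,am)$). Hence the two $\Gm$-quotients agree, and it imports $X^+_{n,1;1,0}/\Gm\simeq\sqrt[n]{X/D}$ from \cite[Proposition 2.9]{BD}. Its second, ``universal'' argument identifies $\X^+$ with the preimage of $(\Gm\times\A^1)/\Gm^2\simeq\ag$, over which $e_{m,n}$ restricts to $e_n$. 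Your computation is essentially the functor-of-points unwinding of that second argument, carried out directly on $X$.

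What each approach buys:
\begin{itemize}
\item Yours avoids relying on \cite{BD} and \cite{abramovich2008gromov}.
\item Once corrected, your identification $\mc{M}\cong\mc{L}^a$ of universal line bundles is exactly \cref{prop:matchtwist}, since for $(n,1;1,0)$ your $\mc{L}$ is the tautological bundle itself. The paper has to prove that separately via gerbes.
\item The paper's torsor argument is shorter given \cite{BD}.
\item The paper's universal argument generalizes directly to \cref{thm:multiwindow}.
\end{itemize}
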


Note that $X^+_{n,m;a,b} \not\simeq X^+_{n,1;1,0}$ (for $m > 1$). In general, if one has a map of schemes $X_1 \to X_2$, both of which individually have $\Gm$-actions, such that $X_1/\Gm \xrightarrow{\sim}  X_2/\Gm$, that does not imply that $X_1 \xrightarrow{\sim} X_2$; we would need $\Gm$-equivariance for that. For example, the map $\Gm \xrightarrow{x \mapsto x^2} \Gm$ is a $\Z/2\Z$-torsor (and thus not an isomorphism), but if we quotient each by weight-$1$ $\Gm$-actions (so the map is not equivariant), then $\Gm/\Gm \xrightarrow{\sim} \Gm/\Gm$ is an isomorphism of the point.

This example is a specific kind of failure though: if $X_1$ is a $\mu_a$-torsor over $X_2$, and the $\mu_a$-action on $X_1$ lifts to a $\Gm$ action, then we have an isomorphism $X_1/\Gm \xrightarrow{\sim} X_2/\Gm$, essentially via the sequence of algebraic groups $1 \to \mu_a \to \Gm \to \Gm \to 1$.

We claim that this is our situation regarding semistable loci.
\begin{thm}
    The schemes $S^--Z \hookrightarrow X_{n,1;1,0}^+$ form a map of $\mu_a$-torsors over $S^--Z \hookrightarrow X^+_{n,m;a,b}$, with compatible $\Gm$-actions as above, with all maps and actions fiberwise over $D \hookrightarrow X$.
\end{thm}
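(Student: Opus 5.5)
The plan is to write down the map explicitly in fiber coordinates and then check each claim directly. Write $(y',z')$ for the fiber coordinates on $\Tot(\OO_X(D)\oplus\OO_X)$, so that $X_{n,1;1,0}=\{y'z'^n=s\}$ with weights $(-n,1)$. Write $(y,z)$ for the coordinates on $\Tot(\OO_X(aD)\oplus\OO_X(bD))$, so that $X_{n,m;a,b}=\{y^mz^n=s\}$ with weights $(-n,m)$. By the description of unstable loci above, $X^+_{n,1;1,0}=\{y'\neq 0\}$ and $X^+_{n,m;a,b}=\{y\neq 0\}$. On these loci $y'$ and $y$ are nowhere vanishing sections, i.e.\ trivializations of $\OO_X(D)$ and $\OO_X(aD)$. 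Hence negative powers make sense, which matters because $a$ or $b$ may be negative.

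First I define the map
\[
\pi\colon X^+_{n,1;1,0}\to X^+_{n,m;a,b},\qquad (y',z')\mapsto (y'^{a},\,y'^{b}z').
\]
It lands in the right bundle, since $y'^a$ is a section of $\OO_X(aD)$ and $y'^bz'$ is a section of $\OO_X(bD)$. It lands in the hypersurface, since $y^mz^n=y'^{am+bn}z'^n=y'z'^n=s$. It lands in the semistable locus, since $y'^a$ is invertible. The map is visibly fiberwise over $X$.

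Next I check equivariance. For $t\in\Gm$ acting by $(t^{-n}y',tz')$ on the source, $\pi$ sends the result to $(t^{-na}y,\,t^{-nb+1}z)=(t^{-na}y,\,t^{am}z)$. This is the action of $\lambda=t^a$ on the target. So $\pi$ is equivariant along the homomorphism $\Gm\to\Gm$, $t\mapsto t^a$, whose kernel is $\mu_a$. This is exactly the setup of the exact sequence $1\to\mu_a\to\Gm\to\Gm\to 1$ described before the statement.

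Then I show $\pi$ is a $\mu_a$-torsor; this is the step needing the most care. Given a point $(y,z)$ of the target, I solve for its preimages. A preimage requires $y'^a=y$, and once such a $y'$ is chosen, $z'=y'^{-b}z$ is forced. The relation $y'z'^n=y'^{1-bn}z^n=y'^{am}z^n=y^mz^n=s$ then holds automatically. So the fibers of $\pi$ are the $a$-th roots of the invertible section $y$. Locally these form the étale cover $y'\mapsto y'^a$ of $\Gm$, which is étale because we are in characteristic $0$. The subgroup $\mu_a\subset\Gm$ acts on the source through the given $\Gm$-action, by $(\zeta^{-n}y',\zeta z')$, and preserves the fibers. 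It acts simply transitively on them because $\zeta\mapsto\zeta^{-n}$ is an automorphism of $\mu_a$, as $\gcd(a,n)=1$ follows from $am+bn=1$. Hence $\pi$ is a $\mu_a$-torsor, and its $\mu_a$-action extends to the given $\Gm$-action as required.

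Finally, since $z=y'^bz'$ with $y'$ invertible, we have $z=0$ if and only if $z'=0$. Both copies of $Z$ are excluded from the positive semistable loci, so $\pi^{-1}(S^--Z)=\{z'=0,\ y'\neq0\}$, which is the locus $S^--Z$ of $X_{n,1;1,0}$ (as a subscheme of $X^+_{n,1;1,0}$). Thus the restriction of $\pi$ is the pulled-back $\mu_a$-torsor, compatible with the inclusions and lying over $D\hookrightarrow X$. The main obstacle is the torsor verification, in particular its scheme-theoretic form over a base that need not be reduced. I would handle it by working locally where $\OO_X(D)$ is trivial, which reduces everything to the explicit étale cover $y'\mapsto y'^a$ of $\Gm$ base-changed along $y$.
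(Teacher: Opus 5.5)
Your proposal is correct and follows essentially the same route as the paper. The paper also writes down the fiberwise map $(y,z)\mapsto(y^a,y^bz)$ and checks that it carries the weights $(-n,1)$ to $(-an,am)$, i.e.\ that it is equivariant along $t\mapsto t^a$. Your explicit checks fill in details the paper leaves implicit: the torsor property via solving $y'^a=y$ with $\gcd(a,n)=1$, and the identification of $\pi^{-1}(S^--Z)$.
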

\begin{proof}
    We just need to determine the map on fiberwise coordinates. It sends $(y, z) \mapsto (y^a, y^bz)$.
\end{proof}
\begin{proof}[Proof of \cref{prop:GITrootstack}]
    The fiberwise $\Gm$-action on $X^+_{n,m;a,b}$ is by weights $(-n, m)$, and on $X^+_{n,1;1,0}$ is is by weights $(-n, 1)$. The $\mu_a$-torsor map is also a $\mu_a$-torsor of $\Gm$-actions as it sends the action $(-n, 1)$ to $(-an, -bn+1)=(-an, am)$ which is $a$ times the action $(-n, m)$. Thus we get an equivalence of stacks after quotienting by $\Gm$.
\end{proof}

The reason for our distinction between scheme and stack is due to its effect on the identity of the sheaf $\OO\langle 1 \rangle$, which will play a large role in \cref{sect:window}. This sheaf should be the result of twisting the structure sheaf by the generator $1$ of the character group of $\Gm$, and for a stack $X/\Gm$, it is the pullback along $X/\Gm \to B\Gm$ of $\OO_{B\Gm}\langle 1\rangle$. For $X/\Gm \simeq Y/\Gm$ coming from $X/\mu_a \simeq Y$, we have $\OO_{X/\Gm}\langle a\rangle \simeq \OO_{Y/\Gm}\langle 1\rangle$, yielding the following result in our case.
\begin{prop}\label{prop:matchtwist}
    We have $\OO_{X_{n,1;1,0}^+/\Gm}\langle a \rangle \simeq \OO_{X_{n,m;a,b}^+/\Gm} \langle 1 \rangle$.
\end{prop}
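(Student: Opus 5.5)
The plan is to realize the isomorphism $X_{n,1;1,0}^+/\Gm \simeq X_{n,m;a,b}^+/\Gm$ from the proof of \cref{prop:GITrootstack} explicitly as a composite of a quotient by $\mu_a$ and a change of group, and then to track the character twist through each step. Write $Y = X_{n,1;1,0}^+$ with $\Gm$ acting by weights $(-n,1)$. Write $W = X^+_{n,m;a,b}$ with $\Gm$ acting by weights $(-n,m)$. Let $\pi\colon Y\to W$ be the $\mu_a$-torsor $(y,z)\mapsto (y^a,y^bz)$.

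First I check equivariance. The computation in the proof of \cref{prop:GITrootstack} shows that for $\lambda\in\Gm$ and $v\in Y$ we have $\pi(\lambda\cdot v)=\lambda^a\cdot\pi(v)$. Hence $\pi$ is equivariant along the group homomorphism $\phi\colon \Gm\to\Gm$, $\lambda\mapsto\lambda^a$, whose kernel $\mu_a$ is exactly the structure group of the torsor. It follows that the induced morphism $f\colon Y/\Gm\to W/\Gm$ fits into a commutative square
\[\begin{tikzcd}
	{Y/\Gm} & {W/\Gm} \\
	{B\Gm} & {B\Gm}
	\arrow["f", from=1-1, to=1-2]
	\arrow[from=1-1, to=2-1]
	\arrow[from=1-2, to=2-2]
	\arrow["{B\phi}", from=2-1, to=2-2]
\end{tikzcd}\]
where the vertical maps are the structure maps. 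The map $f$ is an isomorphism; this is the content of \cref{prop:GITrootstack}, which rests on $1\to\mu_a\to\Gm\xto{\phi}\Gm\to 1$ being exact together with $Y\to W$ being a $\mu_a$-torsor.

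Next I compute the pullback of the twist along $B\phi$. The sheaf $\OO_{B\Gm}\langle 1\rangle$ is the line bundle given by the weight-one character $\chi_1$. Pullback along $B\phi$ corresponds to restricting representations along $\phi$, so $\chi_1\circ\phi=\chi_a$ gives $(B\phi)^*\OO_{B\Gm}\langle 1\rangle\simeq\OO_{B\Gm}\langle a\rangle$.

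Finally I combine the two steps using the square:
\[f^*\OO_{W/\Gm}\langle 1\rangle \simeq f^*(\text{pullback of }\OO_{B\Gm}\langle 1\rangle)\simeq \text{pullback of }(B\phi)^*\OO_{B\Gm}\langle1\rangle\simeq \OO_{Y/\Gm}\langle a\rangle.\]
Identifying the two stacks via $f$ then gives the claim.

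The main obstacle is checking commutativity of the square, and in particular making sure the sign and exponent conventions give $\lambda^a$ rather than $\lambda^{-a}$. I would verify this directly on coordinates. The pair $(\lambda^{-n}y,\lambda z)$ maps to $(\lambda^{-an}y^a,\lambda^{-bn+1}y^bz)=(\lambda^{a})^{(-n,m)}\cdot(y^a,y^bz)$, using $1-bn=am$. This confirms the exponent is $\lambda^a$ with the stated weights. Compatibility with the canonical section $s$ is automatic, since $\pi$ is fiberwise over $X$.
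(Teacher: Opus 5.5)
Your proof is correct, but it takes a different route from the paper's.

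\textbf{The paper's route.} The paper passes to the closed stratum $(S^--Z)/\Gm$. There it identifies the $\mu_n$-gerbes $\sqrt[n]{\OO_D(D)}$ and $\sqrt[n]{\OO_D(aD)}$ as stacks (but not as banded gerbes), via the map $\sigma$ obtained by pushing the banding out along $x\mapsto x^a$. It then cites \cite[Proposition 2.1.2.5]{lieblich2007moduli} for how $\sigma$ acts on the tautological twist.

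\textbf{Your route.} You work on all of $X^+/\Gm$. Your coordinate check that $\pi$ is equivariant along $\phi(\lambda)=\lambda^a$ gives the square over $B\phi$, and the comparison reduces to $\chi_1\circ\phi=\chi_a$. This is exactly the principle the paper states just before the proposition (``for $X/\Gm\simeq Y/\Gm$ coming from $X/\mu_a\simeq Y$\dots''), now made rigorous.

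\textbf{What each buys.} Your argument is more elementary and self-contained. It also proves the isomorphism of line bundles on the whole stack, which is what the statement asserts. The paper's $\sigma$ is a map between the gerbes over $D$, so read literally its argument only compares the restrictions of the two twists to that stratum. That restricted statement is what the projection-formula step in \cref{prop:Dintertwine} actually uses. In exchange, the paper's route exhibits the gerbe-class discrepancy $\OO_D(D)$ versus $\OO_D(aD)$. That discrepancy underlies matching the components $i_*q^*\Coh(D)\langle k\rangle$ with those of \cite{BD}.

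\textbf{Points of precision.}
\begin{enumerate}
\item $\pi$ lands in $\{y^mz^n=s\}$ because $(y^a)^m(y^bz)^n=y^{am+bn}z^n=yz^n$. Being fiberwise over $X$ is not by itself the reason.
\item Whenever $m>1$, one of $a,b$ is negative, so $\pi$ involves negative powers of $y$. These make sense only because $y$ is invertible on the $+$-semistable locus.
\item Your equivariance computation already shows that $\ker\phi=\mu_a$ acts by deck transformations of $\pi$. That this action is simply transitive on fibers uses $\gcd(a,n)=1$. You may want to record this rather than defer it entirely to \cref{prop:GITrootstack}.
\end{enumerate}
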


\begin{proof}
In our setup, taking the quotient $(S^--Z)/\Gm$ yields the $\mu_n$-gerbe $\sqrt[n]{\OO_D(aD)}$ as $S^-$ is just the total space of $\OO_D(aD)$. However, we know that $S^{-,\circ}_{n,1;1,0}$ is a $\mu_a$-torsor over $S^{-,\circ}_{n,m;a,b}$, yielding a proof that the $\mu_n$-gerbes $\sqrt[n]{\OO_D(D)}$ and $\sqrt[n]{\OO_D(aD)}$ are isomorphic as stacks. (As $\mu_n$ gerbes over $Z$, they are not isomorphic, as they correspond to the classes $\mathcal{O}_D(D)$ and $\mathcal{O}_D(aD)$ respectively. For more details, see for example \cite[B.1]{abramovich2008gromov}.)

More concretely, the following diagram commutes:
    \[\begin{tikzcd}
	{S^{-}_{n,1;1,0}} & {S^-_{n,m;,a,b}} \\
	{\mathcal{O}_D(D)} & {\mathcal{O}_D(aD)} \\
	{\mathcal{O}_{D}(D)- Z} & {\mathcal{O}_{D}(aD)-Z} \\
	{(\mathcal{O}_{D}(D)- Z)/_{n}\mathbb{G}_m} & {(\mathcal{O}_{aD}(D)- Z)/_{n}\mathbb{G}_m} \\
	{\sqrt[n]{\mathcal{O}_D(D)}} & {\sqrt[n]{\mathcal{O}_D(aD))}}
	\arrow[from=1-1, to=1-2]
	\arrow["\cong", no head, from=1-1, to=2-1]
	\arrow["\cong", no head, from=1-2, to=2-2]
	\arrow[from=2-1, to=2-2]
	\arrow[hook, from=3-1, to=2-1]
	\arrow[from=3-1, to=3-2]
	\arrow[from=3-1, to=4-1]
	\arrow[hook, from=3-2, to=2-2]
	\arrow[from=3-2, to=4-2]
	\arrow[from=4-1, to=4-2]
	\arrow["\cong", no head, from=4-1, to=5-1]
	\arrow["\cong", no head, from=4-2, to=5-2]
	\arrow["\cong", from=5-1, to=5-2]
\end{tikzcd}\]

where the isomorphism in the last row is an isomorphism of stacks (not $\mu_n$-gerbes!). Denote it by $\sigma$. Concretely, $\sigma$ is the map $\sqrt[n]{\mathcal{O}_D(D)} \to f_*(\sqrt[n]{\mathcal{O}_D(D)}) \cong \sqrt[n]{\mathcal{O}_D(aD)}$ where $f_*$ denotes the pushout along the map $f: \mathbb{G}_m \to \mathbb{G}_m, x \mapsto x^a$. This map has an inverse which is precisely given by identifying $\sqrt[n]{\mathcal{O}_D(D)}$ as the pushout of $\sqrt[n]{\mathcal{O}_D(aD)}$ along $\mathbb{G}_m \to \mathbb{G}_m, x \mapsto x^m$. By \cite[Proposition 2.1.2.5]{lieblich2007moduli} we see that $\sigma^*\OO_{X_{n,m;a,b}^+/\Gm} \langle 1 \rangle \simeq \OO_{X_{n,1;1,0}^+/\Gm}\langle a \rangle$. 
\end{proof}

\subsection{The root stack construction universally}
We can prove universally that taking $\Gm$-quotients yields our desired root stacks by interpreting the above construction more abstractly as a fiber product over maps of toric stacks.  (though this method makes less clear the relationship between $\OO\langle 1\rangle$s). We will generalize this construction in \cref{sect:multiwindow}. We claim that $\X$ is given by the following fiber product:

\[\begin{tikzcd}
	{\mathcal{X}} & {\mathbb{A}^2/\Gm^2} \\
	X & {\mathbb{A}^1/\mathbb{G}_m.}
	\arrow[from=1-1, to=1-2]
	\arrow[from=1-1, to=2-1]
	\arrow["{e_{m,n}}", from=1-2, to=2-2]
	\arrow[from=2-1, to=2-2]
    \arrow["\lrcorner"{anchor=center, pos=0.125, rotate=0}, draw=none, from=1-1, to=2-2]
\end{tikzcd}\]

More precisely, we can realize the scheme $X_{n,m;a,b}$ as the fiber product

\[\begin{tikzcd}
	X_{n,m;a,b} & {\mathbb{A}^2/_{a,b}\mathbb{G}_m} \\
	X & {\mathbb{A}^1/\mathbb{G}_m.}
	\arrow[from=1-1, to=1-2]
	\arrow[from=1-1, to=2-1]
	\arrow["{e_{m,n}}", from=1-2, to=2-2]
	\arrow[from=2-1, to=2-2]
    \arrow["\lrcorner"{anchor=center, pos=0.125, rotate=0}, draw=none, from=1-1, to=2-2]
\end{tikzcd}\]

Then as $\mathbb{A}^2/\mathbb{G}_m^2 \simeq (\mathbb{A}^2/_{a,b}\Gm)/_{-n, m}\Gm$, we know that the fiber product of the top diagram is the quotient of the fiber product of the bottom diagram by a $\Gm$-action.

\begin{prop}
    The scheme $X_{n,m;a,b}$ is the fiber product of the bottom diagram, and yields $\X$ as the fiber product of the top diagram after taking $\Gm$-quotients. The positive and negative semistable loci for the $\Gm$-action on $X_{n,m;a,b}$ are the preimages under the map $X_{n,m;a,b} \to \A^2/_{a,b}\Gm$ of $(\Gm \times \A^1)/_{a,b}\Gm$ and $(\A^1 \times \Gm)/_{a,b}\Gm$, respectively.

    In particular, with reference to our construction of $X_{n,m;a,b}$ above, the map to $\A^2/\Gm$ fiberwise sends $(y,z)$ (coordinates on $Tot(\OO_X(aD)\oplus\OO_X(bD))$) to $(y,z)$ (coordinates on $\A^2$).
\end{prop}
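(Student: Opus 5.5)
We will compute the fiber product directly via the functor of points, and then read off the semistable loci from the description of $S^\pm$ already given. First we pin down the maps. We regard $\A^2/_{a,b}\Gm$ as the quotient of $\A^2$ by the $\Gm$ acting with weights $(a,b)$. A $T$-point of it is a line bundle $\mc{M}$ on $T$ together with sections $y\in\Gamma(\mc{M}^{\otimes a})$ and $z\in\Gamma(\mc{M}^{\otimes b})$. The map $e_{m,n}$ sends this to the section $y^mz^n$ of $\mc{M}^{\otimes(am+bn)}=\mc{M}$; it is induced by the monomial $y^mz^n$, which has weight $am+bn=1$.

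A $T$-point of the fiber product with $X \to \ag$ is then the following data: a morphism $f\colon T\to X$, a line bundle $\mc{M}$ with sections $y,z$ as above, and an isomorphism $\phi\colon \mc{M}\simeq f^*\OO_X(D)$ identifying $y^mz^n$ with $f^*s$. Transporting along $\phi$, the triple $(\mc{M},y,z,\phi)$ is the same as a pair of sections $y\in\Gamma(f^*\OO_X(aD))$ and $z\in\Gamma(f^*\OO_X(bD))$ with $y^mz^n=f^*s$. Because $\phi$ rigidifies $\mc{M}$, no automorphisms remain. These are exactly the $T$-points of the hypersurface $\{y^mz^n=s\}\subset\Tot(\OO_X(aD)\oplus\OO_X(bD))$, namely $X_{n,m;a,b}$. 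Under this identification the map to $\A^2/_{a,b}\Gm$ is visibly $(y,z)\mapsto(y,z)$ fiberwise.

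The step requiring the most care is the passage to $\A^2/\Gm^2$. We use $\A^2/\Gm^2\simeq(\A^2/_{a,b}\Gm)/_{-n,m}\Gm$, which comes from the change of basis $\Gm^2\to\Gm^2$ whose first coordinate is $\lambda^a\mu^{-n}$ and second is $\lambda^b\mu^m$. This is an isomorphism because its determinant is $am+bn=1$. Under this change of basis, $e_{m,n}$ factors as $\A^2/_{a,b}\Gm\to\A^2/\Gm^2\to\ag$. The residual $\Gm$ (weights $(-n,m)$) acts trivially on the monomial $y^mz^n$, since $-nm+mn=0$, so the map to $\ag$ is invariant under it. Consequently $X\times_{\ag}\A^2/\Gm^2$ is the quotient of $X\times_{\ag}\A^2/_{a,b}\Gm$ by the induced $\Gm$. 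Concretely, this uses that $\A^2/_{a,b}\Gm\to\A^2/\Gm^2$ is a $\Gm$-torsor, and that torsors pull back along base change. The induced action on $X_{n,m;a,b}$ is fiberwise with weights $(-n,m)$, so it agrees with the action in the earlier construction, and the quotient is $\X$.

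For the semistable loci, we use the earlier computation that $S^+=\{y=0\}$ and $S^-=\{z=0\}$. The complements are $\{y\neq0\}$ and $\{z\neq0\}$, and these are exactly the preimages of $(\Gm\times\A^1)/_{a,b}\Gm$ and $(\A^1\times\Gm)/_{a,b}\Gm$ under the coordinate-preserving map just described.
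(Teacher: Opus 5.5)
Your proof is correct and follows essentially the same route as the paper. The paper, via the proof of \cref{thm:multiwindow}, also splits $\Gm^2$ using the section $(a,b)$ with kernel acting by weights $(-n,m)$, identifies the fiber product with the hypersurface $\{y^mz^n=s\}$, and reads off the semistable loci as $\{y\neq 0\}$ and $\{z\neq 0\}$. The only difference is cosmetic: you compute the fiber product directly on $T$-points, whereas the paper factors the map $\A^2/_{a,b}\Gm\to\ag$ through $\A^3/_{a,b,1}\Gm$, so that the pullback is first $\Tot(\OO_X(aD)\oplus\OO_X(bD))$ and the graph of the monomial then cuts out the hypersurface.
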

\begin{proof}
For a proof of the fiber product and the fiberwise coordinate statements, see the proof of \cref{thm:multiwindow}. Then the $\Gm$-action is via $(-n, m)$ fiberwise. The positive semistable locus consists of points that do not limit to the origin under the $\Gm$-action limiting to $0$; such points are everything with a nonzero first coordinate. The case of the negative semistable locus is similar.
\end{proof}

\begin{thm}
    The GIT quotients $\X^+$ and $\X^-$ are isomorphic to $\sqrt[n]{X/D}$ and $\sqrt[m]{X/D}$, respectively.
\end{thm}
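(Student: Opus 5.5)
The plan is to prove the statement universally. First treat the case $X=\ag$ with $D$ the origin, so that $X\to\ag$ is the identity; then deduce the general case by base change along $X\to\ag$. By the preceding proposition, $\X\simeq X\times_{\ag}\A^2/\Gm^2$, where the map is $e_{m,n}\colon \A^2/\Gm^2\to\ag$, $(y,z)\mapsto y^mz^n$. The semistable loci $\X^\pm$ are the preimages of the open substacks $U^+=(\Gm\times\A^1)/\Gm^2$ and $U^-=(\A^1\times\Gm)/\Gm^2$. Open immersions are stable under base change, so
\[
\X^+\simeq X\times_{\ag}U^+, \qquad \X^-\simeq X\times_{\ag}U^-.
\]
It therefore suffices to identify $U^+\to\ag$ with $e_n\colon\ag\to\ag$, and $U^-\to\ag$ with $e_m$, as stacks over $\ag$. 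Then $\X^+\simeq X\times_{\ag,e_n}\ag=\sqrt[n]{X/D}$ by definition, and likewise $\X^-\simeq\sqrt[m]{X/D}$.

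For the identification, I would use the moduli description of these stacks. A $T$-point of $\A^2/\Gm^2$ is a pair of line bundles with sections $(L_1,y)$, $(L_2,z)$, and $e_{m,n}$ sends it to $(L_1^{\otimes m}\otimes L_2^{\otimes n},\,y^mz^n)$. On $U^+$ the section $y$ is nowhere vanishing, so it trivializes $L_1$. Concretely, $U^+\simeq \A^1/\Gm$ via the slice $y=1$, because the first $\Gm$ factor acts freely and transitively on the $\Gm$-coordinate. Under this identification the composite becomes $(L_2,z)\mapsto(L_2^{\otimes n},z^n)$, which is exactly $e_n$. The symmetric argument with $z$ invertible gives $e_m$ on $U^-$.

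The main obstacle is bookkeeping rather than substance. One must check that the identification $\A^2/\Gm^2\simeq(\A^2/_{a,b}\Gm)/_{-n,m}\Gm$, which is used to compare with the concrete scheme $X_{n,m;a,b}$, is compatible with the stack-level quotient $U^+\simeq\ag$. The point is that $\X^\pm$ as defined (a $\Gm$-quotient of a semistable locus) agrees with the base change of $U^\pm$. This follows from the preceding proposition, since the quotient by the $(-n,m)$-action commutes with restriction to the open preimage. I would verify it by noting that both descriptions are the preimage of the same open substack of $\A^2/\Gm^2$. As a sanity check, the result should agree with \cref{prop:GITrootstack}.
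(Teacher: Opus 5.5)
Your proposal is correct and is essentially the paper's proof: both identify $\X^{\pm}$ with the preimage in $\X \simeq X\times_{\ag}\A^2/\Gm^2$ of the open substack $(\Gm\times\A^1)/\Gm^2$ (resp.\ $(\A^1\times\Gm)/\Gm^2$), observe that this substack is $\ag$ with composite map $e_n$ (resp.\ $e_m$), and conclude by pasting pullback squares. Your moduli-theoretic check that the composite is $e_n$, using the invertible section $y$ to trivialize $L_1$, only makes explicit a step that the paper asserts via its diagram.
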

\begin{proof}
    We do the + case; the - case is similar. The positive GIT quotient is the $\Gm$-quotient of the positive semistable locus, which by the previous proposition is the preimage of $[(\Gm \times \A^1)/_{a,b}\Gm]$. So the stack $\X^+$ is the preimage of $[(\Gm \times \A^1)/_{a,b}\Gm^2]$ in the map $\X \to \A^2/\Gm^2$, which is $\sqrt[n]{X/D}$, due to the map $[(\Gm \times \A^1)/_{a,b}\Gm^2]=\A^1/\Gm \to \A^1/\Gm$ being the map $e_n$, as shown in the following diagram.

\[\begin{tikzcd}
	X & \X & {\sqrt[n]{X/D}} \\
	{\mathbb{A}^1/\mathbb{G}_m} & {\mathbb{A}^2/\mathbb{G}_m^2} & {(\mathbb{G}_m \times \mathbb{A}^1)/\mathbb{G}_m^2=\mathbb{A}^1/\mathbb{G}_m}
	\arrow[from=1-1, to=2-1]
	\arrow[from=1-2, to=1-1]
	\arrow["\lrcorner"{anchor=center, pos=0.125, rotate=-90}, draw=none, from=1-2, to=2-1]
	\arrow[from=1-2, to=2-2]
	\arrow[from=1-3, to=1-2]
	\arrow["\lrcorner"{anchor=center, pos=0.125, rotate=-90}, draw=none, from=1-3, to=2-2]
	\arrow[from=1-3, to=2-3]
	\arrow[from=2-2, to=2-1]
	\arrow["{e_n}", curve={height=-18pt}, from=2-3, to=2-1]
	\arrow[from=2-3, to=2-2]
\end{tikzcd}\]
\end{proof}

\begin{rem}
    One way to see the fact that $\OO \langle 1 \rangle$ in our construction corresponds to $\OO \langle a \rangle$ in the usual case is the following. If we think of the $\Gm$ character as restricted from a $\Gm^2$-character, then $\Gm$ embeds as $(-n, m)$, so a $\Gm^2$-character $(c_1, c_2)$ looks like $-nc_1+mc_2$ from the perspective of the $\Gm$-action. So the $\OO\langle 1 \rangle$ from our construction corresponds to $\OO \langle c_1, c_2 \rangle$ on $\A^2/\Gm^2$ such that $-nc_1+mc_2=1$. On the positive semistable quotient $(\Gm \times \A^1)/\Gm^2$, the $c_1$ is irrelevant and all we see is $\OO \langle c_2\rangle$. This case is just the usual root stack construction via fiber product with $e_n$, so $\OO \langle 1 \rangle$ in our $e_{m, n}$ case should correspond to $\OO \langle c_2 \rangle$ in the usual case. Then by $-nc_1+mc_2=1$, we have $c_2= m^{-1}$ mod $n$.

    See \cref{rem:nothingnew} for an application of this reasoning.
\end{rem}

\section{Periodicity from window theory}\label{sect:window}
\subsection{The HKKN stratification in more generality}\label{subsect:GITconv}
Given a $G$-linearized line bundle $\mc{L}$ on a scheme $X$ (i.e. a line bundle on $X/G$), traditionally a point $x$ is semistable if there is a section $f$ of $\mc{L}^n$ for some $n$ such that both of the following are true:
\begin{itemize}
    \item $f(x) \neq 0$
    \item $\{f \neq 0\}$ is an affine subset of $X$.
\end{itemize}

Often $X$ is assumed to be quasiprojective or $\mc{L}$ is assumed to be ample, and the affine condition is more or less automatic. The above definition of semistability is a global notion.

However, there is also the Hilbert--Mumford numerical criterion, which is a local notion. We use this criterion for the definitions of the semistable and unstable locus. In the case that $\Gm$ acts on $X$, and we take the line bundles $\OO\langle 1 \rangle$ and $\OO\langle -1 \rangle$, the unstable loci are respectively the ones in \cref{eqn:unstableloci}.

\begin{exmp}
    Suppose we take $X=\Gm \times Y$ for $Y$ proper, and $\Gm$ acting with weight $1$ on $\Gm$ and trivially on $Y$. Since there is no fixed point locus, by the Hilbert--Mumford numerical criterion, the whole of $X$ is semistable (for either the positive or negative $\Gm$-weight). However, an actual section of $\OO\langle n \rangle$ is just a section of $\Gm$ pulled back to $X$, and so the locus where it is nonzero is an affine of $\Gm$, times $Y$, which is not affine. In this case we would want the semistable locus to be all of $X$.
\end{exmp}

In other words, given a reductive group action of a group on a scheme $X$ and a character of the group, there is a way to get an open-closed stratification of $X$. This definition is inspired by GIT but not technically the same traditionally; for simplicity we reuse the GIT terminology for our definitions.

One can also extend the definition of the HKKN stratification of the unstable locus, which generalizes the Hilbert--Mumford numerical criterion, by not assuming quasiprojectiveness of $X$ (or fixing an ample line bundle). Traditionally the HKKN stratification is defined for a projective-over-affine variety $X$ with respect to a $G$-linearized ample $\OO(1)$ on this $X$. The procedure of defining the stratification involves ordering possible strata according to numbers arising from the Hilbert--Mumford criterion, given a norm on the cocharacter lattice of $G$. However, any stratification made up of pieces as in \cref{def:HKKNstrata} (not necessarily via the Hilbert--Mumford ordering) yields a valid stratification for window theory as long as the strata satisfy certain properties explained in \cref{prop:windowsize}.

\begin{rem}
    Even the original case of the canonical HKKN for $X$ projective-over-affine must involve scheme-theoretic notions, as shown by the following example. Consider the affine scheme $\Spec k[x, y, z]/(x^2-yz^2)$, acted on by $\Gm$ with weights $(0, -2, 1)$ (note that this comes from the square root stack of $\A^1$ with divisor ideal sheaf $(x^2)$). Then the scheme-theoretic positive unstable locus (which is the same as in usual GIT) is cut out by $(y)$. However the resulting scheme is $\Spec k[x, y, z]/(x^2, y)$ which is not a variety; the variety would be $\Spec k[x, y, z]/(x, y)$. The correct closed subscheme to use for the HKKN stratification is $\Spec k[x, y, z]/(x^2, y)$ which is regularly embedded in $\Spec k[x, y, z]/(x^2-yz^2)$, as $\Spec k[x, y, z]/(x, y)$ does not satisfy property (S3) (which it should by \cite[Remark 2.3]{window}). (The conormal sheaf for $\Spec k[x, y, z]/(x, y)$ has an extra $0$ $\Gm$-weight due to the presence of $x$ which has $0$ $\Gm$-weight in the ideal.)
\end{rem}

Now we explain the conventions/definition for strata that are necessary for window theory. We will work in the case of a torus action as the definition is simplified. The only requirement is that we have a scheme $X/G$ with an HKKN stratification, defined as the following (see \cite[Definition 2.2]{window}).

\begin{defn}\label{def:HKKNstrata}
Given a scheme $X$ with an action of a torus $G$, a closed HKKN stratum is the following data:
\begin{itemize}
    \item a 1-parameter subgroup $\lam \colon \Gm \to G$
    \item a connected component $Z$ of the scheme-theoretic fixed point locus $X^\lam$
    \item the closed subscheme $S$ of $X$ defined as the scheme-theoretic locus $\{x \in X | \lim_{t \to 0} \lam(t) \cdot x \in Z\}$.
\end{itemize}

These fit into a diagram 
\[\begin{tikzcd}
	Z & S & X.
	\arrow["\sigma", shift left, from=1-1, to=1-2]
	\arrow["\pi", shift left, from=1-2, to=1-1]
	\arrow["j", from=1-2, to=1-3]
\end{tikzcd}\]
An HKKN stratification is a stratification of $X$ into locally closed subvarieties $X^{ss} \cup \bigcup_{i=1}^n S_i$ such that each $S_i$ is the $S$ in a closed HKKN stratum (with the relevant data) of $X-\bigcup_{j>i} S_j$. (So $S_1$ is a closed HKKN stratum in $X^{ss} \cup S_1$, etc.)
\end{defn}

\begin{warn}\label{warn:technicalHKKN}
    To use window theory on an HKKN stratification as explained in \cref{sect:window}, one needs the following technical hypothesis as explained in \cite[Definition 2.2]{window}. If $X$ is not smooth along $Z$, there is a $G$-equivariant closed immersion $X \subset X'$ and a KN stratum $S' \subset X'$ such that $S$ is a union of connected components of $S' \cap X$ and $X'$ is smooth in a neighborhood of $Z$.

    We are not sure what conditions on $(X, D)$ satisfy this hypothesis for our construction (or the construction in \cite{BD}). However we believe that in our specific situation, the proofs of window-theoretic results still work and we only need to assume $X$ locally Noetherian. Note that our results are generalized in \cref{sect:categorical} (using categorical representation theory), where the proofs work in more generality (except that we assume locally finite type to apply \cref{cor:dmhom}; it is likely that locally Noetherian is enough though).
\end{warn}

Note that for our main construction there is only one stratum of the unstable locus.

\subsection{Window theory}

We now describe windows on $\X$. Window theory was developed in \cite{window}; we will summarize the parts that we use here, largely adapted from \cite{window2} and \cite{BD}. Specifically, we will focus just on the case of $\Gm$ acting on a scheme $X$. In this case we have unstable loci $S^{\pm}$, semistable loci $X^{\pm}$, and a fixed point locus $Z$. Note that these are two different HKKN stratifications of $X$, each with one stratum besides the open locus.

\begin{prop}\textup{\cite[Proposition 3.1]{BD}\cite[Section 2]{window}}\label{prop:windowsize}
Assume the further conditions on the HKKN strata:

(A): the projections $\pi_{\pm} \colon S^{\pm} \to Z$ are locally trivial bundles of affine spaces.
(R): the inclusions $S^{\pm} \to X$ are regular embeddings.

Then by (R), the relative cotangent complex $L_{S^{\pm}|X}$ is locally free, and is isomorphic to the shifted conormal complex $\mc{N}^{\vee}_{S^{\pm}}X[1]$. It turns out that $\det \sigma_{\pm}^*\mc{N}^{\vee}_{S^{\pm}}X$ has a single positive and negative weight with respect to $\Gm$ in the + and - cases, respectively. Let the absolute values of these be $\eta_{\pm}$. Then there is an equivalence under pullback from $X/\Gm$ to $X^{\pm}/\Gm$:
$$\Coh(X^{\pm}/\Gm) \simeq \mc{C}_{[w, w+\eta_{\pm})} = \{E \in \Coh(X/\Gm) | \mc{H}^{\bullet}(\sigma^*E)\text{ has $\Gm$-weights in }[w, w+\eta_{\pm})\}.$$
Here we abuse notation to let $\sigma$ be the embedding $Z \inclto X$, as well as $Z/\Gm \inclto X/\Gm$.
\end{prop}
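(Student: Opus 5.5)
This proposition is essentially \cite[Theorem 2.10]{window} (the ``window'' form of Halpern-Leistner's main theorem) specialized to a single $\Gm$ and a single closed HKKN stratum. The plan is therefore to reduce to that theorem rather than reprove it, and to check that hypotheses (A) and (R) make the relevant inputs explicit.

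\textbf{Step 1: identify the conormal complex.}
By (R), $j_\pm\colon S^\pm\inclto X$ is a regular embedding. Hence $L_{S^\pm|X}\simeq \mc{N}^\vee_{S^\pm}X[1]$ is locally free; this is standard for regular (lci) closed immersions.

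Next pull back along $\sigma_\pm\colon Z\inclto S^\pm$. The tangent space of $X$ along $Z$ splits into $\Gm$-weight spaces of weight zero, positive weight and negative weight. By definition $S^+$ is the attracting locus for $\lam(t)$ as $t\to 0$, so its tangent directions along $Z$ are exactly the weight-$\geq 0$ part. By (A), these directions are the fibres of $\pi_+\colon S^+\to Z$ together with $TZ$.

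It follows that the normal bundle $\sigma_+^*\mc{N}_{S^+}X$ is the negative-weight part, so its dual $\sigma_+^*\mc{N}^\vee_{S^+}X$ has only positive weights. Symmetrically, $\sigma_-^*\mc{N}^\vee_{S^-}X$ has only negative weights. Its determinant is a line bundle on $Z$ with a single weight, and $\eta_\pm$ is the absolute value of that weight.

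In the singular case one cannot read this off a tangent space directly. There I would use the ambient smooth $X'$ from \cref{warn:technicalHKKN}, or rely on the fact that it is exactly property (S3) of \cite{window}.

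\textbf{Step 2: local cohomology splitting.}
The key claim is that, for $F\in\Coh(Z/\Gm)$ concentrated in weight $w$, the local cohomology $\Gamma_{S^\pm}(\pi^*F)$ has weights only in a half-line bounded by $w\pm\eta_\pm$. This follows from the Koszul resolution of $\mc{O}_{S^\pm}$, which exists by (R), together with the affine-bundle structure from (A).

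From this I obtain the baric decomposition of $\Coh_{S^\pm}(X/\Gm)$ and a semiorthogonal decomposition
\[
\Coh(X/\Gm)=\langle \Coh_{S}(X/\Gm)_{<w},\ \mc{C}_{[w,w+\eta_\pm)},\ \Coh_S(X/\Gm)_{\geq w}\rangle .
\]
Since restriction to the open substack $X^\pm/\Gm$ kills exactly $\Coh_S(X/\Gm)$, the restriction functor $\mc{C}_{[w,w+\eta_\pm)}\to \Coh(X^\pm/\Gm)$ is an equivalence.

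\textbf{Main obstacle.}
The main obstacle is the weight bound in Step 2. Fully faithfulness requires the local cohomology $R\Gamma_S$ of objects in the window to have no weights in $[w,w+\eta)$. This is a Koszul and Ext computation, which is exactly \cite[Lemma 2.9 and Theorem 2.10]{window}. For this reason I would cite that result, as \cite[Proposition 3.1]{BD} does, after verifying that (A) and (R) imply Halpern-Leistner's hypotheses (S1)--(S3).
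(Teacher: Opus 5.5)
Your proposal matches the paper, which gives no argument of its own for this statement and simply cites \cite[Proposition 3.1]{BD} and \cite[Section 2]{window}. Your outline is a faithful sketch of Halpern-Leistner's proof: one-signed conormal weights along $Z$, the baric decomposition of $\Coh_S(X/\Gm)$, and the semiorthogonal decomposition with the window in the middle and outer pieces killed by restriction. One caution: when $X$ is singular along $Z$, (A) and (R) do not by themselves produce the smooth ambient $X'$ required in \cite[Definition 2.2]{window}, so that must be assumed rather than verified, which is exactly the caveat the paper records in \cref{warn:technicalHKKN}.
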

\begin{rem}
Window theory works for a general HKKN stratification as long as (A) and (R) are satisfied for each stratum.
\end{rem}

The exact definition of $\mc{C}_{[w, w+\eta_{\pm})}$ is not relevant; rather we use the amplification property. First, consider the diagram 
\[\begin{tikzcd}
	Z & {Z/\Gm} & {S^{\pm}/\Gm} & {X/\Gm}.
	\arrow["\tau", from=1-2, to=1-1]
	\arrow["{\sigma_{\pm}}", shift left, from=1-2, to=1-3]
	\arrow["{\pi_{\pm}}", shift left, from=1-3, to=1-2]
	\arrow["j_{\pm}", from=1-3, to=1-4]
\end{tikzcd}\]
\begin{lem}\textup{\cite[Remark 2.13]{window}}
    The functor $\Phi^{\pm}_{w}(-) = j_{\pm, *}\pi_{\pm}^*\tau^* \langle w \rangle$ is fully faithful from $\Coh(Z)$ to $\Coh(X/\Gm)$. Let the image be $\mc{A}_{w}^{\pm}$.
\end{lem}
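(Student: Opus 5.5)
We will show that $\Phi^{\pm}_w$ is fully faithful by computing the endomorphism algebra of the composite through its right adjoint. First, $\tau^*\colon \Coh(Z)\to\Coh(Z/\Gm)$ followed by the twist $\langle w\rangle$ is fully faithful onto the weight-$w$ part. Indeed, $\Coh(Z/\Gm)$ splits as $\bigoplus_{k}\Coh(Z)\langle k\rangle$, since $\Gm$ acts trivially on $Z$. Morphisms between sheaves concentrated in a single weight are therefore just morphisms in $\Coh(Z)$. So it suffices to show that $j_{\pm,*}\pi_\pm^*$ is fully faithful when restricted to objects of $\Coh(Z/\Gm)$ of pure weight $w$.

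For $F,G$ in this subcategory we compute
\[
\Hom_{X/\Gm}(j_*\pi^*F, j_*\pi^*G)\simeq \Hom_{S/\Gm}(j^*j_*\pi^*F,\pi^*G).
\]
By (R), $j$ is a regular embedding. Hence $j^*j_*\mc{E}$ admits a filtration, the Koszul filtration, whose associated graded pieces are $\mc{E}\otimes \wedge^k\mc{N}^\vee_S X[k]$ for $k\ge 0$. The $k=0$ piece contributes
\[
\Hom_{S/\Gm}(\pi^*F,\pi^*G)\simeq \Hom_{Z/\Gm}(F,\pi_*\pi^*G).
\]
By (A), $\pi$ is an affine-space bundle with fibre coordinates of strictly positive weight in the $+$ case (and strictly negative in the $-$ case), with respect to the relevant limit. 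So $\pi_*\pi^*G\simeq G\otimes \mathrm{Sym}(\text{fibre duals})$ decomposes as $G$ in weight $w$ plus pieces in weights strictly on one side of $w$. Taking $\Gm$-invariants of $\HHom(F,-)$ kills those extra pieces, since $F$ also has weight $w$. We are left with $\Hom_Z(F,G)$, which is the desired answer.

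The remaining step, and the main obstacle, is to show that the pieces with $k\ge1$ contribute nothing. These are
\[
\Hom_{Z/\Gm}\bigl(F\otimes\sigma^*\wedge^k\mc{N}^\vee_S X[k],\ \pi_*\pi^*G\bigr),
\]
after the same adjunction. The key input is that along $Z$ the conormal bundle $\sigma^*\mc{N}^\vee_S X$ has weights all of the opposite sign to the fibre weights of $S\to Z$. This is the sign statement underlying the claim in \cref{prop:windowsize} that $\det\sigma^*\mc{N}^\vee$ has a single positive (respectively negative) weight. Consequently, $F\otimes\wedge^k\mc{N}^\vee$ has weights strictly on one side of $w$ for $k\ge1$, while $\pi_*\pi^*G$ has weights weakly on the other side. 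The weight-zero (invariant) part of the internal Hom therefore vanishes.

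One must handle the filtration carefully, since $\pi^*$ is not a finite operation. I would first reduce to the case where $S\to Z$ is a trivial affine bundle and the conormal bundle is trivial over $Z$, working locally on $Z$; this is valid because all constructions are local over $Z$ and Hom sheaves glue. I would then argue weight-by-weight, passing to a convergent colimit over the symmetric powers. This is precisely the computation of \cite[Lemma 2.8, Remark 2.13]{window}, which I would cite for the convergence details.
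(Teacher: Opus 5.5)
Your argument is correct. It is the standard proof behind the result the paper simply cites from \cite[Remark 2.13]{window}: restrict to pure weight $w$, use $j^*\dashv j_*$ together with the Koszul filtration of $j^*j_*$, and separate weights using the fact that conormal weights and fibre-coordinate weights have opposite signs.

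Two small remarks. First, the step you flag as the main obstacle is not one. The Koszul filtration has finite length, and $\pi_*\pi^*G\simeq G\otimes\pi_*\OO_S$ is simply a direct sum of weight pieces, so a single-weight coherent source just picks out one of them and no convergence argument is needed. Second, the opposite-sign condition is an input (Halpern-Leistner's property (S3)), not a consequence of the determinant statement in \cref{prop:windowsize}. In this paper it is immediate anyway: the conormal bundles of $S^+$ and $S^-$ are the line bundles spanned by $y$ (weight $-n$) and $z$ (weight $m$), while the fibre coordinates of $S^\pm\to Z$ are $z$ and $y$ respectively.
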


Recall that for a stack that is a global quotient of a scheme by $\Gm$, we denote by $\OO\langle 1 \rangle$ the pullback of the sheaf $\OO\langle 1 \rangle$ on $B\Gm$ (and by $\langle 1 \rangle$ the tensor product with this sheaf).

Now WLOG suppose $\eta_+>\eta_-$.

\begin{prop}\textup{\cite[Proposition 3.3]{BD}\cite[Amplification 2.11]{window}\cite[Equation 3]{window2}}
    We have semiorthogonal decompositions
    \begin{align*}
        \mc{C}_{[w, w+\eta_+)}=&\langle \mc{C}_{[w, w+\eta_-)}, \mc{A}^-_w, \mc{A}^-_{w+1}, \dots, \mc{A}^-_{w+\eta_+-\eta_--1}\rangle \\
        =&\langle \mc{A}^-_w, \mc{C}_{[w+1, w+1+\eta_-)}, \mc{A}^-_{w+1}, \dots, \mc{A}^-_{w+\eta_+-\eta_--1}\rangle\\
        =&\cdots\\
        =&\langle \mc{A}^-_w, \mc{A}^-_{w+1}, \dots, \mc{A}^-_{w+\eta_+-\eta_--1}, \mc{C}_{[w+\eta_+-\eta_-, w+\eta_+)}\rangle.
    \end{align*}
\end{prop}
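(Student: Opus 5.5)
We deduce the chain of decompositions from the window equivalences of \cref{prop:windowsize} together with the fully faithful functors $\Phi^-_v$ and their images $\mc{A}^-_v$. The guiding idea is the standard ``grade restriction'' argument of \cite{window}: an object $E\in\Coh(X/\Gm)$ whose restriction $\sigma^*E$ to $Z/\Gm$ has $\Gm$-weights in $[w,w+\eta_+)$ can be filtered, by peeling off weights from the ends, into a piece with weights in a window of length $\eta_-$ and pieces supported on $S^-$ of a single weight.

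\emph{Step 1: the components lie in the window and the semi-orthogonality holds.} We compute $\sigma^*j_{-,*}\pi_-^*\tau^*F\langle v\rangle$ via the Koszul resolution of $S^-\subset X$, which exists by (R). Its weights are $v$ plus the weights of $\wedge^k\sigma^*\mc{N}^\vee_{S^-}X$. These are $v,\dots,v+(\eta_+-\eta_-)$ up to sign conventions: the conormal of $S^-$ has only weights of the sign opposite to that of $S^-$'s tangent directions, and $\eta_+-\eta_-$ is the total weight of the normal directions after accounting for the $\Gm$-weights of $Z$'s normal bundle. So for $w\le v\le w+\eta_+-\eta_--1$, $\mc{A}^-_v\subset\mc{C}_{[w,w+\eta_+)}$. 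Likewise $\mc{C}_{[w',w'+\eta_-)}\subset\mc{C}_{[w,w+\eta_+)}$ for $w\le w'\le w+\eta_+-\eta_-$.

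Vanishing of Homs between the listed pieces in the wrong direction comes from two facts. First, $\Hom(\mc{C}_{[a,b)},\mc{A}^-_v)=0$ and $\Hom(\mc{A}^-_v,\mc{C}_{[a,b)})=0$ for $v$ on the appropriate side of $[a,b)$; this follows from adjunction $j_*\dashv j^!$ (resp.\ $j^*\dashv j_*$) and the fact that on $S^-/\Gm$, Homs out of or into $\pi^*\tau^*F\langle v\rangle$ reduce to weight-$v$ invariants on $Z$ (assumption (A)). Second, $\Hom(\mc{A}^-_v,\mc{A}^-_{v'})=0$ for $v>v'$ by the same weight computation; this is exactly \cite[Lemma 2.9]{window}.

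\emph{Step 2: generation.} For the first decomposition, take $E\in\mc{C}_{[w,w+\eta_+)}$. We inductively kill its top weight $w+\eta_+-1$ by taking the cone of the counit against the right adjoint of $\Phi^-_{w+\eta_+-\eta_- -1}$, then continue downwards until the weights lie in $[w,w+\eta_-)$. Using the local cohomology triangle along $S^-$ and the quasi-isomorphism $\Gamma_{S^-}$-truncations of \cite[Section 2]{window}, each step lowers the top weight by one while remaining in the big window. After $\eta_+-\eta_-$ steps we land in $\mc{C}_{[w,w+\eta_-)}$. The remaining decompositions follow by the symmetric procedure, killing weights from the bottom with left adjoints. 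Equivalently, applying the first decomposition to the interval $[w+k,w+k+\eta_+)$ and mutating gives the intermediate ones. Full faithfulness of $\mc{C}_{[a,a+\eta_-)}\to\Coh(X^-/\Gm)$ is \cref{prop:windowsize}.

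\emph{Main obstacle.} The delicate point is the weight bookkeeping in Step 1: correctly identifying that the Koszul weights of $j_{-,*}$ occupy a range of length exactly $\eta_+-\eta_-$, which uses $\eta_\pm$ as defined via the determinants of both conormal bundles. In the singular setting one must also justify the local-cohomology truncation functors. Here I would simply invoke \cite[Amplification 2.11]{window}, subject to \cref{warn:technicalHKKN}.
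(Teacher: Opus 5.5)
The paper gives no argument of its own for this statement; it is quoted from \cite[Amplification 2.11]{window}, \cite[Equation 3]{window2} and \cite[Proposition 3.3]{BD}. Your closing appeal to Halpern-Leistner is therefore the same route. The problem is the sketch you give in support of it, and in particular the step you flag as the crux, which is computed incorrectly.

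By (R), $\sigma^*\Phi^-_v(F)$ has graded pieces $\tau^*F\langle v\rangle\otimes\wedge^k\sigma_-^*\mc{N}^\vee_{S^-}X\,[k]$. The weights of $\sigma_-^*\mc{N}^\vee_{S^-}X$ all have one sign and sum to $\eta_-$; that is exactly the definition of $\eta_-$, as the weight of the top exterior power. So, in the sign convention that makes the stated indexing correct, $\sigma^*\mc{A}^-_v$ has weights in $[v,v+\eta_-]$, with both endpoints occurring. The Koszul range therefore has width $\eta_-$, not $\eta_+-\eta_-$.

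The normal bundle of $Z$ does not enter. In the smooth case $\eta_+-\eta_-$ is, up to sign, the weight of the determinant of that normal bundle, which measures how unbalanced the wall-crossing is. But it plays no role in the Koszul complex of $S^-\subset X$. You can see this directly in the paper's example: $S^-=\{z=0\}$ has conormal spanned by $z$, of weight $m=\eta_-$, so $\sigma^*\mc{A}^-_v$ has weights $\{v,v+m\}$, not a range of length $n-m$.

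The number $\eta_+-\eta_-$ is instead the \emph{count} of twists $v$ for which this width-$\eta_-$ block fits inside $[w,w+\eta_+)$, namely $w\le v\le w+\eta_+-\eta_--1$. With the width you wrote, the same containment test gives $w\le v\le w+\eta_--1$. So Step 1 as written does not produce the range you state. Your Step 2, which removes weight $w+\eta_+-1$ using $\mc{A}^-_{w+\eta_+-\eta_--1}$, tacitly uses the correct width.

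Step 2 also asserts, rather than proves, that each peeling step lowers the top weight while staying above $w$. One way to close this, in the same sign convention, is as follows.
\begin{itemize}
\item Start from the infinite decomposition of \cite{window} for the $-$ stratum, $\Coh(X/\Gm)=\langle\dots,\mc{A}^-_{w-1},\mc{C}_{[w,w+\eta_-)},\mc{A}^-_w,\mc{A}^-_{w+1},\dots\rangle$.
\item For $E\in\mc{C}_{[w,w+\eta_+)}$, show $\Hom(E,\mc{A}^-_v)=0$ for $v<w$. This uses $j_-^*\dashv j_{-,*}$ and the fact that the fibre coordinates of $S^-\to Z$ have weights of one sign, so $\pi_{-,*}$ only lowers weights.
\item Also show $\Hom(\mc{A}^-_v,E)=0$ for $v\ge w+\eta_+-\eta_-$. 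This uses $j_{-,*}\dashv j_-^!$, where $j_-^!\simeq j_-^*\otimes\det\mc{N}_{S^-}X[-c]$ (with $c$ the codimension) shifts weights by $-\eta_-$.
\item These two vanishings place $E$ in $\langle\mc{C}_{[w,w+\eta_-)},\mc{A}^-_w,\dots,\mc{A}^-_{w+\eta_+-\eta_--1}\rangle$.
\end{itemize}
The same argument, with the infinite decomposition centred at either end, works for any $\mc{C}_{[u,u+L)}$ with $L\ge\eta_-$. Note that $S^+$ is never used.

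Finally, ``applying the first decomposition to $[w+k,w+k+\eta_+)$'' decomposes a different category. The intermediate decompositions come instead from the case $L=\eta_-+1$, which gives $\langle\mc{C}_{[u,u+\eta_-)},\mc{A}^-_u\rangle=\mc{C}_{[u,u+\eta_-+1)}=\langle\mc{A}^-_u,\mc{C}_{[u+1,u+1+\eta_-)}\rangle$. Substitute this one index at a time.
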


Now we specialize the theory to our case; all the notation is the same except that the scheme $X$ that we do window theory on is now $X_{n,m;a,b}$.

\begin{prop}
    In our case, the assumptions (A) and (R) are satisfied.
\end{prop}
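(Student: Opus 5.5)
We verify (A) and (R) by working locally on $X$, where everything becomes explicit. Choose an affine open $U=\Spec R \subset X$ on which $\OO_X(D)$ is trivialized, so that $\OO_X(aD)$ and $\OO_X(bD)$ are trivialized compatibly and $s$ becomes a function $f\in R$; we may take $f$ to be a nonzerodivisor because $D$ is an effective Cartier divisor. Over $U$ the scheme $X_{n,m;a,b}$ is then $\Spec R[y,z]/(y^mz^n-f)$, with $\Gm$ acting with weights $(-n,m)$ on $(y,z)$ and trivially on $R$. Both conditions are local on $X$, and all the loci involved are $\Gm$-stable and compatible with these trivializations, so it is enough to check them in this chart.

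For (A), note that earlier we identified $S^+=\{y=0\}$ and $S^-=\{z=0\}$ scheme-theoretically. In the chart these are $S^+=\Spec R[y,z]/(y,f)=\Spec (R/f)[z]$ and $S^-=\Spec (R/f)[y]$, with $Z=\Spec R/f\simeq D$. So $\pi_\pm$ is, locally over $D$, projection from a trivial rank-one affine bundle. Globally these are $\Tot(\OO_D(bD))\to D$ and $\Tot(\OO_D(aD))\to D$, which are vector bundles and in particular locally trivial affine bundles. One should also check that the limit map $\lim_{t\to 0}$ agrees with this projection. It does, since on $S^+$ the only remaining coordinate $z$ has positive weight and therefore goes to $0$.

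For (R), take $S^+$. It is cut out of $X_{n,m;a,b}$ by the single equation $y$, so we must show that $y$ is a nonzerodivisor on $A=R[y,z]/(y^mz^n-f)$. I would argue as follows. Since $y^mz^n-f$ is monic in $y$ up to the unit-free coefficient $z^n$, it is cleaner to grade. The element $y^mz^n-f$ is a nonzerodivisor in $R[y,z]$ because its constant term in $y,z$ is $-f$, which is a nonzerodivisor. Hence $(y,\,y^mz^n-f)$ generates the same ideal as $(y,f)$, and the pair $y,f$ is a regular sequence in $R[y,z]$ because $f$ is regular on $R[z]=R[y,z]/(y)$. To conclude that $y$ is regular on $A$, we need the permuted sequence $y^mz^n-f,\,y$ to be regular. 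I would get this from the local criterion: regular sequences of homogeneous elements in positive degree, or in a Noetherian local ring, may be permuted. Alternatively, there is a direct check: if $yg=(y^mz^n-f)h$, reducing mod $y$ gives $fh|_{y=0}=0$, so $h|_{y=0}=0$, hence $y\mid h$ and $g\in(y^mz^n-f)$. This direct argument only needs $f$ to be a nonzerodivisor, with no Noetherian hypothesis, so I will use it. The case of $S^-$, cut out by $z$, is identical. Regularity then gives that the conormal bundle is the line bundle spanned by $y$ (respectively $z$), so the weights are $\eta_+=n$ and $\eta_-=m$, matching the expected window sizes.

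The main subtlety I expect is not (A) or (R) themselves. It is making sure that the \emph{scheme-theoretic} loci coincide with $\{y=0\}$ and $\{z=0\}$, as in the remark about $x^2-yz^2$ earlier. The unstable locus is generated by the negative-weight coordinate, and $f$ lies in the ideal $(y)$ only modulo the equation, so the scheme-theoretic $S^+$ is exactly $V(y)$ inside $X_{n,m;a,b}$. The technical hypothesis in \cref{warn:technicalHKKN} is a separate issue and is not part of (A) and (R); I would not address it here.
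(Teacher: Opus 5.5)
Your argument is correct and is essentially the paper's. The paper deduces this from \cref{prop:strataAR} by identifying $S^{+}\simeq\Tot(\OO_D(bD))$ and $S^{-}\simeq\Tot(\OO_D(aD))$ as affine bundles over $Z\simeq D$, each cut out by a single fiber coordinate; your explicit check that $y$ (resp.\ $z$) is a nonzerodivisor on $R[y,z]/(y^mz^n-f)$ supplies the regularity step that the paper only asserts.
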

\begin{proof}
    This follows from our \cref{prop:strataAR}.
\end{proof}
\begin{prop}
    We have $\eta_+=n$ and $\eta_-=m$.
\end{prop}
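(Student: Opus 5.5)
Here $\eta_\pm$ is the absolute value of the $\Gm$-weight of $\det \sigma_\pm^*\mc{N}^\vee_{S^\pm}X_{n,m;a,b}$, where $\sigma_\pm\colon Z \inclto S^\pm$. So the plan is to compute the conormal bundles of the two unstable loci, restrict them to $Z \simeq D$, and read off weights. We already know $S^+=\{y=0\}$ and $S^-=\{z=0\}$ inside the hypersurface $X_{n,m;a,b}=\{y^mz^n=s\}\subset T_{n,m;a,b}$. By \cref{prop:strataAR}, each of these is a regular embedding, hence Cartier of codimension one, so each conormal bundle is a line bundle and its determinant is itself.

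For $S^+$: the ideal of $S^+$ in $X_{n,m;a,b}$ is generated by the single function $y$. Since the embedding is regular, $\mc{N}^\vee_{S^+}X = (y)/(y)^2$ is locally free of rank one, generated by the class of $y$. Because $y$ is a fiber coordinate on $\OO_X(aD)$, this conormal bundle is $\OO(-aD)|_{S^+}$ up to the equivariant twist, and $\Gm$ acts on the generator $y$ with weight $-n$.

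One must check carefully which sign convention turns "coordinate of weight $-n$" into "conormal weight $\pm n$". The cleanest way is to use the statement of \cref{prop:windowsize}: the determinant has a positive weight in the $+$ case. Since only the absolute value matters, this gives $\eta_+=|-n|=n$. Symmetrically, $S^-=\{z=0\}$ has conormal bundle generated by $z$, of weight $m$, so $\eta_-=m$.

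The one subtle point, and the step I expect to be the main obstacle, is justifying that the conormal of $S^+$ in the singular hypersurface really is generated by $y$ alone. The relation $y^mz^n=s$ might instead force a contribution from $s$ or $z$. I would handle this locally on $X$. Trivialize the bundles and write $s$ as a local equation $f$ of $D$. Then the ideal of $S^+$ in $\OO_X[y,z]/(y^mz^n-f)$ is $(y)$, since $f\in(y)$ automatically. The quotient is $\OO_D[z]$, which matches $S^+\simeq \Tot(\OO_D(bD))$.

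Regularity from \cref{prop:strataAR} then says $y$ is a nonzerodivisor. Hence $(y)/(y^2)\cong \OO_{S^+}\cdot \bar y$ is free on $\bar y$, so the weight is exactly that of $y$, and restricting along $\sigma_+$ does not change it.

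Finally, I would sanity-check the result against the case $(n,1;1,0)$ of \cite{BD}, where $\eta_+=n$ and $\eta_-=1$. I would also check it against the window count: $\eta_+-\eta_-=n-m$ copies of $\mc{A}^-$ matches the $n-m$ copies of $i_*q^*\Coh(D)$ in \cref{thm:mainresult}.
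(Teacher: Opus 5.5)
Your proposal is correct and follows the paper's own argument: the conormal sheaves of $S^+=\{y=0\}$ and $S^-=\{z=0\}$ are locally spanned by $y$ and $z$, whose $\Gm$-weights are $-n$ and $m$, so $\eta_+=n$ and $\eta_-=m$ after taking absolute values. Your explicit check that $y$ (resp.\ $z$) is a nonzerodivisor in $\OO_X[y,z]/(y^mz^n-f)$, using that $f$ is a nonzerodivisor, supplies a detail the paper leaves implicit.
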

\begin{proof}
    Locally $I_{S^+}/I_{S^+}^2$ is spanned by $y$ and $I_{S^-}/I_{S^-}^2$ is spanned by $z$. The $\Gm$-weights on these are $-n$ and $m$.
\end{proof}
Consider the diagram
\[\begin{tikzcd}
	{Z/\mathbb{G}_m} & {S^-/\mathbb{G}_m} & X_{n,m;a,b}/\Gm \\
	{Z/\mathbb{G}_m} & {S^{-,\circ}/\mathbb{G}_m} & {X_{n,m;a,b}^+/\Gm}
	\arrow[no head, from=1-1, to=2-1]
	\arrow[shift left, no head, from=1-1, to=2-1]
	\arrow["\pi_-", from=1-2, to=1-1]
	\arrow["j_-", hook, from=1-2, to=1-3]
	\arrow["k", hook, from=2-2, to=1-2]
	\arrow["\pi^\circ_-", from=2-2, to=2-1]
	\arrow["j^\circ_-", hook, from=2-2, to=2-3]
	\arrow["i_+", hook, from=2-3, to=1-3]
    \arrow["\lrcorner"{anchor=center, pos=0.125, rotate=90}, draw=none, from=2-2, to=1-3]
\end{tikzcd}\]

and the diagram (implied in \cref{subsect:rootstack})
\[\begin{tikzcd}
	{\sqrt[n]{\OO_D(aD)}} & {X_{n,m;a,b}^+/\Gm} \\
	D & X.
	\arrow["i", from=1-1, to=1-2]
	\arrow["q", from=1-1, to=2-1]
	\arrow["p", from=1-2, to=2-2]
	\arrow["{i_D}", from=2-1, to=2-2]
\end{tikzcd}\]

Note that this diagram is not Cartesian, but is pulled back from the diagram

\[\begin{tikzcd}
	{\pt/\Gm} & {\A^1/\Gm=(\A^1 \times \Gm)/\Gm^2} \\
	{\pt/\Gm} & {\A^1/\Gm}
	\arrow[from=1-1, to=1-2]
	\arrow["h_n", from=1-1, to=2-1]
	\arrow["{e_n}", from=1-2, to=2-2]
	\arrow[from=2-1, to=2-2]
\end{tikzcd}\]

via the map $X \to \A^1/\Gm$.

\begin{prop}[Embeddings of $D$]
    We have $i_+^* \circ \Phi_\omega^-(-) \simeq i_*q^*(-) \otimes \OO_{X_{n,m;a,b}/\Gm}\langle \omega \rangle$.
\end{prop}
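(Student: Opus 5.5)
Unwinding the definition, $\Phi^-_\omega(-)=j_{-,*}\pi_-^*\tau^*(-)\langle\omega\rangle$. The plan is to push $i_+^*$ through this composite in three moves: flat base change past $j_{-,*}$, an identification of the restricted bundle projection with $q$, and finally moving the twist to the end.

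\textbf{Base change.} The square in the first diagram is Cartesian, since $S^{-,\circ}=S^-\cap X^+_{n,m;a,b}$. The map $i_+$ is an open immersion, hence flat, so flat base change gives
\[
i_+^*j_{-,*}\simeq j^\circ_{-,*}k^*.
\]
This step is not an issue: $j_-$ is a closed immersion, so $j_{-,*}$ already preserves $\Coh$, and open restriction does too. Using $\pi_-\circ k=\pi_-^\circ$, we get
\[
i_+^*\Phi^-_\omega(E)\simeq j^\circ_{-,*}\bigl((\pi_-^\circ)^*\tau^*E\otimes \OO\langle\omega\rangle\bigr),
\]
where the twist is the pullback of $\OO_{B\Gm}\langle\omega\rangle$ along the structure map to $B\Gm$. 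That twist commutes with all the pullbacks involved.

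\textbf{Identifying the composite with $q$.} Next I identify $j^\circ_-$ with $i$ and the composite $\tau\circ\pi_-^\circ\colon S^{-,\circ}/\Gm\to Z/\Gm\to Z\simeq D$ with $q$. In \cref{subsect:rootstack}, $(S^--Z)/\Gm\simeq\sqrt[n]{\OO_D(aD)}$ is exactly the closed substack $i$ of $X^+_{n,m;a,b}/\Gm$. The map $q$ is the gerbe structure map, which is induced by the projection $\Tot(\OO_D(aD))-Z\to D$. That projection factors as $S^{-,\circ}\to Z\simeq D$, so $q=\tau\circ\pi^\circ_-$ and $(\pi^\circ_-)^*\tau^*=q^*$. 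One could alternatively check this universally by pulling back the square involving $h_n$ and $e_n$ from $\ag$, but the direct description suffices.

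\textbf{Moving the twist.} Finally, the projection formula for the closed immersion $j^\circ_-=i$ gives
\[
i_*(q^*E\otimes i^*\OO\langle\omega\rangle)\simeq i_*q^*E\otimes\OO_{X^+_{n,m;a,b}/\Gm}\langle\omega\rangle.
\]
This uses $i^*\OO\langle\omega\rangle=\OO\langle\omega\rangle$, since both are pulled back from $B\Gm$. On the right-hand side of the statement, $\OO_{X_{n,m;a,b}/\Gm}\langle\omega\rangle$ should be read as its restriction to the open substack.

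The only step needing care is the second one. There the twist $\langle\omega\rangle$ must be taken with respect to the specific $\Gm$ of weights $(-n,m)$, not transported through the $\mu_a$-torsor. Since every identification above stays within $X_{n,m;a,b}$ with its own $\Gm$, no mismatch of the type in \cref{prop:matchtwist} arises.
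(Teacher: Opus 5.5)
Your proposal is correct and follows the paper's proof essentially step for step: flat base change along the open immersion $i_+$ through the Cartesian square, the projection formula for the closed immersion $j^\circ_-$ to move $\langle\omega\rangle$ outside, and the identification of $j^\circ_-$ and $\tau\circ\pi^\circ_-$ with $i$ and $q$ via $S^{-,\circ}/\Gm\simeq\sqrt[n]{\OO_D(aD)}$ (you simply do the last two steps in the opposite order). The paper's diagram additionally records compatibility with the Bodzenta--Donovan maps $i',q'$ through the $\mu_a$-torsor, which is not needed for the statement itself.
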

\begin{proof}
Our proof is essentially the same as the ``Embeddings of $\Coh(D)$'' part of the proof of \cite[Theorem 3.7]{BD}. For clarity, we explain their proof in our setting. Let $S^{-, \circ}$ be the open subscheme $S^--Z$ of $S^-$. Note that this is the intersection of $S^-$ and $X^+$ inside $X_{n,m;a,b}$. Let $\tau \colon Z/\Gm \to Z$. Then by flat base change, \begin{align*}
    i_+^* \circ \Phi^-_\omega(-) := &i_+^*j_{-,*}\pi_-^*(\tau^*(-) \otimes \OO_{Z/\Gm} \langle \omega \rangle) \\
    \simeq & j_{-,*}^\circ k^* \pi_-^*(\tau^*(-) \otimes \OO_{Z/\Gm} \langle \omega \rangle) \\
    \simeq & j_{-,*}^\circ \pi_-^{\circ *}(\tau^*(-) \otimes \OO_{Z/\Gm} \langle \omega \rangle).
\end{align*}

The projection formula yields
\begin{align*}
    i_+^* \circ \Phi^-_\omega(-) \simeq & j_{-,*}^\circ \pi_-^{\circ *}\tau^*(-) \otimes \OO_{X_{n,m;a,b}^+/\Gm}\langle \omega \rangle \\
    \simeq & j_{-,*}^\circ \sigma_-^{\circ, *}(-) \otimes \OO_{X_{n,m;a,b}^+/\Gm}\langle \omega \rangle
\end{align*}
where $\sigma_-^\circ=\tau \pi_-^\circ \colon S^{-,\circ}/\Gm \to Z$.

Now we want to compare $j_{-,*}^\circ \sigma_-^{\circ, *}$ to $i_*q^*$. We also use $i'$ and $q'$ to refer to the maps called $i$ and $q$ in \cite[Diagram 3.C; Theorem 3.7]{BD}. Indeed, we have a commutative diagram with vertical isomorphisms:

\[\begin{tikzcd}
	D & {\sqrt[n]{\OO_D(D)}} & {X_{n,1;1,0}/\Gm \simeq\sqrt[n]{X/D}} \\
	D & {\sqrt[n]{\OO_D(aD)}} & {X_{n,m;a,b}^+/\Gm} \\
	Z & {S^{-,\circ}/\Gm} & {X_{n,m;a,b}^+/\Gm}.
	\arrow[no head, from=1-1, to=2-1]
	\arrow[shift left, no head, from=1-1, to=2-1]
	\arrow["q'", from=1-2, to=1-1]
	\arrow["i'", hook, from=1-2, to=1-3]
	\arrow["\sim", from=1-2, to=2-2]
	\arrow["\sim"', from=1-3, to=2-3]
	\arrow["q", from=2-2, to=2-1]
	\arrow["i", hook, from=2-2, to=2-3]
	\arrow[no head, from=2-3, to=3-3]
	\arrow[shift left, no head, from=2-3, to=3-3]
	\arrow["\sim"', from=3-1, to=2-1]
	\arrow["\sim"', from=3-2, to=2-2]
	\arrow["\sigma_-^\circ", from=3-2, to=3-1]
	\arrow["{j_-^\circ}"', hook, from=3-2, to=3-3]
\end{tikzcd}\]

\end{proof}

The following proposition is immediate, yielding a proof of the SODs (without periodicity) appearing in \cref{thm:mainresult}.
\begin{prop}[Embeddings of $m$th root stack]
\begin{enumerate}
\item We have an isomorphism $\mc{C}_{[0, m)} \simeq \Coh(\sqrt[m]{X/D})$ induced by pullback along $i_-\colon \X^-_{n,m;a,b} \to \X_{n,m;a,b}$.
\item As subcategories of $\Coh(X_{n,m;a,b}/\Gm)$, we have an equality $\mc{C}_{[k, k+m)}=\mc{C}_{[0, m)} \otimes \OO_{X_{n,m;a,b}/\Gm}\langle w \rangle$.
\item Pullback along $i_+ \colon \X^+_{n,m;a,b} \to \X_{n,m;a,b}$ is fully faithful on any $\mc{C}_{[k, k+m)}$ and yields an equality $i_+^*\mc{C}_{[k, k+m)}=i_+^*\mc{C}_{[0, m)} \otimes \OO_{X^+_{n,m;a,b}/\Gm}\langle w \rangle$.
\end{enumerate}
\end{prop}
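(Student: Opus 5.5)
We prove the three parts in order, each directly from the window theory above together with \cref{prop:GITrootstack}. Note that the twist in (2) and (3) should read $\langle k\rangle$; we write it that way below.

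\textbf{Part (1).} Apply \cref{prop:windowsize} to the negative HKKN stratification of $X_{n,m;a,b}$. Assumptions (A) and (R) hold by \cref{prop:strataAR}, and we have computed $\eta_-=m$. Taking $w=0$, restriction along $i_-$ gives an equivalence
$$\mc{C}_{[0,m)}\simeq \Coh(\X^-_{n,m;a,b}).$$
\cref{prop:GITrootstack} identifies $\X^-_{n,m;a,b}\simeq \sqrt[m]{X/D}$, which gives (1).

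\textbf{Part (2).} This is a weight computation. The line bundle $\OO_{X_{n,m;a,b}/\Gm}\langle k\rangle$ is pulled back from $B\Gm$, so $\sigma^*$ commutes with tensoring by it. Since $\sigma^*$ is exact up to the usual derived issues, we get
$$\mc{H}^\bullet\bigl(\sigma^*(E\otimes\OO\langle k\rangle)\bigr)=\mc{H}^\bullet(\sigma^*E)\otimes\OO_{Z/\Gm}\langle k\rangle.$$
Twisting shifts every $\Gm$-weight by exactly $k$. So $E$ has weights in $[0,m)$ if and only if $E\langle k\rangle$ has weights in $[k,k+m)$. Tensoring by $\langle k\rangle$ is an autoequivalence with inverse $\langle -k\rangle$, so this yields the equality of subcategories $\mc{C}_{[k,k+m)}=\mc{C}_{[0,m)}\otimes\OO\langle k\rangle$.

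\textbf{Part (3).} This is the one step that needs a genuine input. We use the amplification SOD with $\eta_+=n>m=\eta_-$:
$$\mc{C}_{[k,k+n)}=\langle \mc{A}^-_k,\dots,\mc{A}^-_{k+j-1},\,\mc{C}_{[k+j,k+j+m)},\,\mc{A}^-_{k+j},\dots\rangle.$$
This SOD exhibits every $\mc{C}_{[k',k'+m)}$ with $k\le k'\le k+n-m$ as a full subcategory of $\mc{C}_{[k,k+n)}$. By \cref{prop:windowsize} for the positive stratification, $i_+^*$ is an equivalence on $\mc{C}_{[k,k+n)}$. Hence $i_+^*$ is fully faithful on each such $\mc{C}_{[k',k'+m)}$, and every integer $k'$ lies in such a range for a suitable choice of $k$.

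Finally, $i_+^*$ is monoidal and $i_+^*\OO_{X_{n,m;a,b}/\Gm}\langle k\rangle=\OO_{X^+_{n,m;a,b}/\Gm}\langle k\rangle$, since both line bundles are pulled back from $B\Gm$. Applying $i_+^*$ to the equality in (2) therefore gives the equality in (3).

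I expect no real obstacle. The only care needed is the bookkeeping of the twist index, and making sure the cited window equivalence is a genuine pullback functor rather than an abstract equivalence.
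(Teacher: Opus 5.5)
Your proposal is correct and takes the same approach as the paper, which simply says all three parts follow from \cref{prop:windowsize} (with $\eta_-=m$, $\eta_+=n$) together with \cref{prop:GITrootstack}. You are also right that the twist in (2) and (3) should read $\langle k\rangle$; and in (3) the containment $\mc{C}_{[k,k+m)}\subseteq\mc{C}_{[k,k+n)}$ already follows from the definition of windows as weight conditions, so the amplification SOD is optional there.
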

\begin{proof} Note that this result is akin to the ``Embeddings of $\Coh(X)$'' part of \cite[Theorem 3.7]{BD}. However our case is simpler as we don't need to match a window with $p^*\Coh(X)$.

Part (1) is automatic from \cref{prop:windowsize} as $\Coh(\sqrt[m]{X/D})$ is equivalent to the - semistable quotient. Parts (2) and (3) are automatic from \cref{prop:windowsize}.
\end{proof}

For periodicity, we need to show that tensoring with $\OO_{X^+_{n,m;a,b}/\Gm}\langle n \rangle$ preserves the embeddings of $\Coh(D)$. (It will follow that it preserves the embedding of $\Coh(\sqrt[m]{X/D})$; see the end of the proof of \cref{thm:mainresult}.)

\begin{prop}\label{prop:Dintertwine}
    The autoequivalence $- \otimes \OO_{X_{n,m;a,b}^+/\Gm}\langle n \rangle$ preserves $i_*q^*\Coh(D) \otimes \OO_{X_{n,m;a,b}^+/\Gm}\langle k \rangle$ for any $k$, as it intertwines with $- \otimes \OO_D(-aD)$ via $i_*q^*$.
\end{prop}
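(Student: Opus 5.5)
The plan is to reduce everything to the projection formula together with one identification of line bundles on the gerbe $\sqrt[n]{\OO_D(aD)}$. For any $F \in \Coh(D)$ the projection formula gives
\[ i_*q^*F \otimes \OO_{X^+_{n,m;a,b}/\Gm}\langle n \rangle \simeq i_*\bigl(q^*F \otimes i^*\OO_{X^+_{n,m;a,b}/\Gm}\langle n \rangle\bigr), \]
so it suffices to show $i^*\OO\langle n\rangle \simeq q^*\OO_D(-aD)$ on the gerbe. Granting this, $i_*q^*F \otimes \OO\langle n\rangle \simeq i_*q^*(F \otimes \OO_D(-aD))$, which is the claimed intertwining. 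Since $-\otimes\OO_D(-aD)$ is an autoequivalence of $\Coh(D)$, the subcategory $i_*q^*\Coh(D)$ is preserved. Twisting by $\langle k\rangle$ commutes with twisting by $\langle n\rangle$, so each $i_*q^*\Coh(D)\otimes\OO\langle k\rangle$ is preserved as well.

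To compute $i^*\OO\langle n\rangle$, I identify the gerbe with $S^{-,\circ}/\Gm$ as in the proof of the ``Embeddings of $D$'' proposition. Here $S^{-,\circ} = \Tot(\OO_D(aD)) - Z$ is the complement of the zero section, with coordinate $y$ of weight $-n$. The function $y$ is a nowhere-vanishing section on $S^{-,\circ}$ of $\pi^*\OO_D(aD)$. The $\Gm$-weight of $y$ is $-n$, so $y$ is an equivariant trivialization of $\pi^*\OO_D(aD)\otimes\OO\langle \pm n\rangle$; the sign depends on the paper's convention for $\OO\langle 1\rangle$. This gives $\OO\langle n\rangle|_{S^{-,\circ}/\Gm} \simeq q^*\OO_D(\mp aD)$. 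The sign then has to be fixed so that it agrees with the statement's $\OO_D(-aD)$, for instance by checking against the case $(n,1;1,0)$ of \cite{BD}. As a consistency check, $\OO\langle n\rangle$ restricted to a $\mu_n$-gerbe has trivial $\mu_n$-character, so it must be pulled back from $D$; this is why the twist by $\langle n\rangle$, and not a smaller one, descends.

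I expect the main obstacle to be the sign and convention bookkeeping: matching the weight of $y$ with the definition of $\OO\langle 1\rangle$ as pullback from $B\Gm$, and checking that the descended bundle is $\OO_D(-aD)$ rather than $\OO_D(aD)$. I would settle this by working fiberwise over a trivializing open set of $\OO_X(D)$ and then gluing with the transition functions of $\OO_X(aD)$. All remaining steps are formal applications of the projection formula and \cref{prop:windowsize}.
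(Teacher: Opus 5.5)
Your proof is correct. It agrees with the paper in using the projection formula to reduce everything to computing $i^*\OO\langle n\rangle$, but you compute that restriction by a different route.

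The paper never computes on the gerbe. It takes the global isomorphism $p'^*\OO_X(-D)\simeq\OO_{X^+_{n,1;1,0}/\Gm}\langle n\rangle$ from \cite[Proposition 4.1]{BD} and raises it to the $a$-th power. It then transports the result across the $\mu_a$-torsor comparison using \cref{prop:matchtwist}: $\OO\langle an\rangle$ in the Bodzenta--Donovan presentation is $\OO\langle n\rangle$ in the new one. This gives $\OO\langle n\rangle\simeq p^*\OO_X(-aD)$ on all of $X^+_{n,m;a,b}/\Gm$. The paper then restricts via $i^*p^*\simeq q^*i_D^*$ and finishes with the projection formula, exactly as you do.

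Your equivariant trivialization by the tautological coordinate $y$ (a nowhere-vanishing section of the pullback of $\OO_D(aD)$, of weight $-n$) bypasses both the torsor comparison and \cref{prop:matchtwist}. You use \cite{BD} only to pin down the sign convention for $\OO\langle 1\rangle$. That is legitimate, because the convention is uniform: in the case $(n,1;1,0)$ the same computation must reproduce $q'^*\OO_D(-D)$, and this forces the sign in general.

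Your argument also gives more than you claim. Since the unstable locus is $\{y=0\}$, $y$ is invertible on all of $X^+_{n,m;a,b}$, so the same one-line argument yields the paper's global statement $\OO\langle n\rangle\simeq p^*\OO_X(-aD)$, not just its restriction to the gerbe. The paper's route, by contrast, reuses established results instead of redoing the local computation.

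The preservation claim itself does not depend on the sign, since either sign gives intertwining with an autoequivalence of $\Coh(D)$; only the precise form $\OO_D(-aD)$ does. Your closing appeal to \cref{prop:windowsize} is not needed anywhere in the argument.
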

\begin{proof}
Our proof is similar to \cite[Proposition 4.1]{BD}.
We have

    \begin{align*}
        i_*q^*(- \otimes \OO_D(-aD)) \simeq &i_*q^*(- \otimes i_D^*\OO_{X}(-aD)) \\
        \simeq&i_*(- \otimes q^*i_D^*\OO_{X}(-aD))q^*\\
        \simeq&i_*(- \otimes i^*p^*\OO_{X}(-aD))q^*\\
        \simeq&i_*(- \otimes i^*\OO_{X_{n,m;a,b}^+/\Gm}\langle n \rangle)q^*\\
        \simeq& (- \otimes \OO_{X_{n,m;a,b}^+/\Gm}\langle n \rangle)i_*q^*.
    \end{align*}

    Here we use the fact that $p'^*\OO_X(-D) \simeq \OO_{X_{n,1;1,0}^+/\Gm}\langle n \rangle$ from \cite[Proposition 4.1]{BD}. Then $p'^*\OO_X(-aD) \simeq \OO_{X_{n,1;1,0}^+/\Gm}\langle an \rangle \simeq \OO_{X_{n,m;a,b}^+/\Gm}\langle n \rangle$ by \cref{prop:matchtwist} (note that $p'$ and $p$ are the same map under $X_{n,1;1,0}^+/\Gm \simeq X_{n,m;a,b}^+/\Gm$).
    \end{proof}

We can now finish the proof of \cref{thm:mainresult}.

\begin{proof}[Proof of \cref{thm:mainresult}.]
    Via our amplifications, we have
    \begin{align*}
        \Coh(\X^+) &= \langle i_+^*\mathcal{C}_{[0, m)}, i_*q^*\Coh(D), i_*q^*\Coh(D) \otimes \OO_{\X^+}\langle 1 \rangle, \dots, i_*q^*\Coh(D) \otimes \OO_{\X^+}\langle n-m-1 \rangle \rangle \\
        &=\langle i_*q^*\Coh(D), i_+^*\mathcal{C}_{[1, m+1)}, i_*q^*\Coh(D) \otimes \OO_{\X^+}\langle 1 \rangle, \dots, i_*q^*\Coh(D) \otimes \OO_{\X^+}\langle n-m-1 \rangle \rangle\\
        &=\cdots\\
        &=\langle i_*q^*\Coh(D), i_*q^*\Coh(D) \otimes \OO_{\X^+}\langle 1 \rangle, \dots, i_*q^*\Coh(D) \otimes \OO_{\X^+}\langle n-m-1 \rangle, i_+^*\mathcal{C}_{[n-m, n)} \rangle \\
        &=\langle i_*q^*\Coh(D), i_*q^*\Coh(D) \otimes \OO_{\X^+}\langle 1 \rangle, \dots, i_*q^*\Coh(D) \otimes \OO_{\X^+}\langle n-m-1 \rangle, i_+^*\mathcal{C}_{[0, m)} \otimes \OO_{\X^+}\langle n-m \rangle \rangle.
        \end{align*}
Now the right orthogonal of $i_+^*\mathcal{C}_{[0, m)} \otimes \OO_{\X^+}\langle n-m \rangle$ can be obtained by simply tensoring our first SOD with $\OO_{\X^+}\langle n-m \rangle$. Using \cref{prop:Dintertwine}, we get
\begin{align*}\Coh(\X^+) = \langle i_+^*\mathcal{C}_{[0, m)}\otimes \OO_{\X^+}\langle n-m \rangle , i_*q^*\Coh(D)\otimes \OO_{\X^+}\langle n-m \rangle, i_*q^*\Coh(D) \otimes \OO_{\X^+}\langle n-m+1 \rangle\\ \dots, i_*q^*\Coh(D) \otimes \OO_{\X^+}\langle 2n-2m-1 \rangle \rangle .\end{align*}
Performing this mutation a total of $n$ times (so $2n$ total mutations) yields the result after twisting the original SOD $n$ times by $\OO_{\X^+}\langle n-m \rangle$. (Note that a twist by $\OO_{\X^+}\langle n \rangle$ preserves all the copies of $\Coh(D)$, as well as $\Coh(\X^+)$; thus it preserves $\mc{C}_{[0, m)}$ (the copy of $\Coh(\sqrt[m]{X/D})$) as well.)
\end{proof}

Let us now compare our SOD to the one in \cite{BD} and previous papers. We use their definition of $\OO_{\sqrt[n]{X/D}}\langle 1 \rangle$, i.e. by \cref{prop:matchtwist} we have $\OO_{\sqrt[n]{X/D}}\langle a \rangle \simeq \OO_{X^+_{n,m;a,b}/\Gm}\langle 1 \rangle$. Note also that $i_*q^*\Coh(D)=i'_*q'^*\Coh(D)$ as subcategories under the isomorphism $\sqrt[n]{X/D} \simeq X^+_{n,m;a,b}/\Gm$. Then we have the following theorem.

\begin{thm}
    We have a $2n$-periodic SOD
    $$\Coh(\sqrt[n]{X/D})=\langle \Coh(\sqrt[m]{X/D}), i'_*q'^*\Coh(D), i'_*q'^*\Coh(D) \otimes \OO_{\sqrt[n]{X/D}}\langle a \rangle, \dots, i'_*q'^*\Coh(D) \otimes \OO_{\sqrt[n]{X/D}}\langle (n-m-1)a \rangle \rangle.$$
\end{thm}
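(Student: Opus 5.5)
The plan is to transport \cref{thm:mainresult} along the identification $\sqrt[n]{X/D} \simeq \X^+_{n,m;a,b} = X^+_{n,m;a,b}/\Gm$ of \cref{prop:GITrootstack}. The theorem is then essentially a relabeling of the first SOD there.

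First, I will write out the first decomposition in \cref{thm:mainresult}:
$$\Coh(\X^+)=\langle i_+^*\mc{C}_{[0,m)}, i_*q^*\Coh(D), i_*q^*\Coh(D)\otimes\OO_{\X^+}\langle 1\rangle,\dots, i_*q^*\Coh(D)\otimes\OO_{\X^+}\langle n-m-1\rangle\rangle.$$
Here $i_+^*\mc{C}_{[0,m)}\simeq \Coh(\sqrt[m]{X/D})$ by part (1) of the ``Embeddings of $m$th root stack'' proposition, composed with the fully faithful pullback of part (3).

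Next, I will translate each $\Coh(D)$-component. Under the isomorphism $\sqrt[n]{X/D}\simeq X^+_{n,m;a,b}/\Gm$, the commutative diagram with vertical isomorphisms in the proof of ``Embeddings of $D$'' shows that $i_*q^*\Coh(D)=i'_*q'^*\Coh(D)$ as subcategories. The point is that the vertical map of gerbes $\sqrt[n]{\OO_D(D)}\isomto\sqrt[n]{\OO_D(aD)}$ lies over the identity of $D$, so $q^*\Coh(D)$ and $q'^*\Coh(D)$ correspond. By \cref{prop:matchtwist}, $\OO_{X^+_{n,m;a,b}/\Gm}\langle k\rangle \simeq \OO_{\sqrt[n]{X/D}}\langle ka\rangle$: apply the proposition and take tensor powers, which is compatible because pullback of line bundles is monoidal. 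Tensoring the $k$th component accordingly turns it into $i'_*q'^*\Coh(D)\otimes\OO_{\sqrt[n]{X/D}}\langle ka\rangle$ for $k=0,\dots,n-m-1$. Substituting these into the SOD gives exactly the displayed decomposition.

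Periodicity is not a property of the labels, so the $2n$-periodicity of $\langle\mc{C},\mc{D}\rangle$ established in \cref{thm:mainresult} transfers directly along the equivalence. As a check, the twist by $\OO_{\X^+}\langle n\rangle$ becomes a twist by $\OO_{\sqrt[n]{X/D}}\langle an\rangle\simeq p^*\OO_X(-aD)$. This preserves each component by \cref{prop:Dintertwine}, and is consistent with the corresponding statement in \cite{BD}.

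The main subtle point is making sure that the identification of twists in \cref{prop:matchtwist} holds globally on $\X^+$ and not only on the gerbe $\sqrt[n]{\OO_D(aD)}$. The proof given there works through the gerbe and \cite{lieblich2007moduli}. To handle this, I would instead argue directly from the $\mu_a$-torsor $X^+_{n,1;1,0}\to X^+_{n,m;a,b}$. It is equivariant along $\Gm\xrightarrow{t\mapsto t^a}\Gm$, so pulling back the character $\langle 1\rangle$ of the target $\Gm$ yields the character $\langle a\rangle$ of the source. This gives the isomorphism of line bundles on all of $\X^+$.
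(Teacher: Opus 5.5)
Your proposal is correct and follows the paper's argument: the theorem is obtained by transporting the first SOD of \cref{thm:mainresult} along $\sqrt[n]{X/D}\simeq X^+_{n,m;a,b}/\Gm$, using $i_*q^*\Coh(D)=i'_*q'^*\Coh(D)$ and $\OO_{X^+_{n,m;a,b}/\Gm}\langle k\rangle\simeq\OO_{\sqrt[n]{X/D}}\langle ka\rangle$ from \cref{prop:matchtwist}, with periodicity carried along by the equivalence. Your justification of the twist identification on all of $\X^+$, via equivariance of the $\mu_a$-torsor $(y,z)\mapsto(y^a,y^bz)$ along $t\mapsto t^a$, tightens the gerbe-based proof of \cref{prop:matchtwist}; it is the same reasoning the paper sketches just before that proposition and in the proof of \cref{prop:GITrootstack}.
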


\begin{rem}
    In the original SOD of \cite{BD}, all pairs of embeddings of $\Coh(D)$ that have nonzero maps between them are ordered correctly: namely, $i_*q^*\Coh(D)$ is before $i_*q^*\Coh(D) \otimes \OO_{\sqrt[n]{X/D}}\langle 1 \rangle$. However, here the residues mod $n$ are somewhat scrambled; yet there are no issues. As $m$ is the inverse of $a$ mod $n$, we know that $ka+1 \equiv a(k+m) \equiv a(k+m-n)$, and this residue cannot appear before $ka$ since we take only $n-m$ consecutive multiples of $a$.

    By matching, we can even ``compute'' what the subcategory $\Coh(\sqrt[m]{X/D})$ is of our category. It is the category generated by the original copy of $\Coh(X)$ as well as all the $i_*q^*\Coh(D) \otimes \OO_{\sqrt[n]{X/D}}\langle \ell a \rangle$, for $n-m \le \ell \le n-1$, after mutating each of these copies all the way to the left.
\end{rem}

\subsection{Gluing functors and abstract embeddings} \label{subsect:abstractgluing1}
It turns out that the periodicity under mutation of the embedding $\Coh(\sqrt[m]{X/D}) \inclto \Coh(\sqrt[n]{X/D})$ actually happens in much more generality than in the case of the window theory construction above, for formal reasons involving gluing functors. It is shown in \cite[Proposition 4.4]{BD} that the gluing functor from the first copy of $\Coh(X)$ to $\Coh(D)$ is $i_D^*$, and that the gluing functors from other copies of $\Coh(X)$ to $\Coh(D)$ are zero.

We will show that the gluing functors between copies of $\Coh(D)$ are zero for nonadjacent copies and $[1]\langle 1 \rangle$ otherwise (note that the latter is abstractly equivalent to $\id$).

It is well known that a coherent sheaf on a $\mu_n$-gerbe over $D$ (as $\mu_n$ is commutative) splits into a direct sum of coherent sheaves for each character of $\mu_n$. This result then easily yields that the left and right adjoints to the pullback $q'^* \colon \Coh(D) \to \Coh(\sqrt[n]{\OO_D(D)})$ are the same.

\begin{prop}
    There is a natural isomorphism $[1] \simeq q'_*i'^*i'_*q'^*\langle -1 \rangle \colon \Coh(D) \to \Coh(D)$.
\end{prop}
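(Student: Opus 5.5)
We prove the isomorphism by computing $i'^*i'_*$ through the Koszul resolution of the divisor $\sqrt[n]{\OO_D(D)} \subset \sqrt[n]{X/D}$, then applying the weight-by-weight description of $q'_*$ on the $\mu_n$-gerbe.

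\textbf{Setting up the divisor.} The gerbe $\sqrt[n]{\OO_D(D)}$ is the zero locus of the tautological section $t$ of the root line bundle $\mc{M}$ on $\sqrt[n]{X/D}$, where $\mc{M}^{\otimes n}\simeq p^*\OO_X(D)$. In the global-quotient model $X^+_{n,1;1,0}/\Gm$, this divisor is cut out by $z$, which has $\Gm$-weight $1$. So $i'$ is an effective Cartier divisor whose ideal sheaf is $\OO(-i'(\text{divisor})) \simeq \OO_{\sqrt[n]{X/D}}\langle -1\rangle$, up to the sign convention for weights. One should double-check this against the convention $p'^*\OO_X(-D)\simeq \OO\langle n\rangle$ of \cite[Proposition 4.1]{BD}; this gives $\mc{M}^{-1}\simeq\OO\langle 1\rangle$, i.e.\ the ideal sheaf is $\OO\langle 1\rangle$.

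\textbf{Computing $i'^*i'_*$.} For a Cartier divisor $i'$, the standard Koszul computation gives a natural exact triangle
\[ F\otimes \mc{N}^\vee[1] \to i'^*i'_*F \to F, \]
where $\mc{N}^\vee = i'^*(\text{ideal sheaf})$ is the conormal bundle, here $i'^*\OO\langle 1\rangle = \OO\langle 1\rangle$ restricted to the gerbe.

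\textbf{Applying $q'_*$ weight by weight.} Take $F = q'^*G\langle -1\rangle$ for $G \in \Coh(D)$, and apply $q'_*$. By the splitting of sheaves on the $\mu_n$-gerbe into character summands, $q'_*$ extracts the $\mu_n$-weight-$0$ part. The restriction of $\OO\langle 1\rangle$ to the gerbe carries a nontrivial $\mu_n$-character, namely the generator. Applying $q'_*$ to the triangle therefore gives:
\begin{itemize}
\item the third term $q'_*(q'^*G\langle -1\rangle)$ has pure weight $-1 \not\equiv 0 \pmod n$ and vanishes, which uses $n\ge 2$;
\item the first term $q'_*(q'^*G\langle -1\rangle\otimes\OO\langle 1\rangle[1]) = q'_*q'^*G[1] = G[1]$, since $q'_*q'^*\simeq \id$.
\end{itemize}
Hence $q'_*i'^*i'_*q'^*\langle -1\rangle G \simeq G[1]$, naturally in $G$ because the Koszul triangle is functorial.

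\textbf{Main obstacle.} The hard part is pinning down the signs and twists, so that it is $\langle -1\rangle$ (rather than $\langle 1\rangle$) that makes the conormal term land in weight $0$. This requires matching the weight of $z$, the conormal weight $\eta_-$ computed earlier, and the \cite{BD} convention for $\OO\langle 1\rangle$. I would fix this first, locally on $X = \A^1$ with $D = 0$, where everything is explicit graded modules over $k[z]$ with $z$ in weight $1$.

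A secondary point is that the Koszul triangle should be used in its functorial form, e.g.\ via $i'^*i'_*F \simeq F\otimes_{\OO} i'^*\OO_{\text{divisor}}$. This ensures that the resulting isomorphism is natural rather than merely objectwise.
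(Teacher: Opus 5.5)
Your proposal is correct and is essentially the paper's argument: both use the Koszul resolution $\OO\langle 1\rangle \to \OO$ of the gerbe inside $\sqrt[n]{X/D}$, restrict along $i'$ after the twist $\langle -1\rangle$ to get pieces $\OO[1]$ and $\OO\langle -1\rangle$, and then use that $q'_*$ kills the nontrivial $\mu_n$-character $\langle -1\rangle$ (for $n\ge 2$), leaving $\mc{F}[1]$. The only difference is packaging. The paper works with locally free resolutions of $\mc{F}$ on $D$ and then glues, whereas your natural triangle $F\otimes\mc{N}^\vee[1]\to i'^*i'_*F\to F$ gives naturality directly and does not need resolutions of $\mc{F}$.
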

\begin{proof}
Suppose we have a coherent sheaf $\mc{F}$ on $D$. We take a locally-free resolution, which locally on $D$ is free. We will show that locally $q'_*i'^*i'_*q'^*\mc{F} \simeq \mc{F}$ using this resolution.

So assume $D$ is such that all the bundles forming the resolution of $\mc{F}$ are free. If we resolve $\mc{F}$ and take $i'_*q'^*$ of this resolution, the result is no longer a free resolution but a complex of direct sums of $\mc{O}_D$. On $\sqrt[n]{X/D}$, using the complex $\OO_{\sqrt[n]{X/D}}\langle 1 \rangle \to \OO_{\sqrt[n]{X/D}} \to \OO_D$, we can replace each $\mc{O}_D$ with a two-term complex.

Now if we take $i'^*\langle -1 \rangle$, then we get a complex on $\sqrt[n]{\OO_D(D)}$ where each row is a two term complex consisting of (a direct sum of copies of) $\OO_{\sqrt[n]{\OO_D(D)}}\langle -1 \rangle$ and $\OO_{\sqrt[n]{\OO_D(D)}}$. Finally, by pushing forward to $D$, the $1$-graded pieces vanish, and we are left with the original resolution of $\mc{F}$ (cohomologically shifted by $1$).

All our operations are functorial and respect gluing, giving us a natural isomorphism.
\end{proof}

\begin{rem}
    Later we will give a proof using categorical representation theory (see \cref{thm:categoricalgluing}).
\end{rem}

The gluing functors of the SOD are thus only between adjacent components and are the pullback $\Coh(X) \to \Coh(D)$ and the the map $\langle 1 \rangle[1] \colon \Coh(D) \to \Coh(D)$; note that the latter is abstractly equivalent to the identity.

We can compute gluing functors under mutation, as explained in \cref{thm:mutation} (see also \cite[3.4.12]{infrared}) (here it is important that we work with $\infty$-categories). The key facts that we need are the following:
\begin{itemize}
    \item for a gluing functor $M_{i,i+1} \colon \mc{V}_i \to \mc{V}_{i+1}$, after mutating $\mc{V}_{i+1}$ over $\mc{V}_i$, the new gluing functor is $M^L_{i, i+1} \colon \mc{V}_{i+1} \to \mc{V}_i$;
    \item given an SOD $\langle \mc{V}_i, \mc{V}_{j}, \mc{V}_{j+1} \rangle$, if we mutate $\mc{V}_{j+1}$ over $\mc{V}_{j}$, the new gluing functor is $\fib(M_{i, j+1} \to M_{j, j+1} \circ M_{i, j}) \colon \mc{V}_i \to \mc{V}_{j+1}$; and
    \item given an SOD $\langle \mc{V}_i, \mc{V}_{i+1}, \mc{V}_{j} \rangle$, if we mutate $\mc{V}_{i+1}$ over $\mc{V}_i$, the new gluing functor is
    $\cofib(M_{i, j} \circ M_{i, i+1}^R \to M_{i+1, j}) \colon \mc{V}_{i+1} \to \mc{V}_j$.
\end{itemize}

In our case, all the relevant gluing functors are just the identity and $0$ (as are all adjoints of these functors).

\begin{thm}\label{thm:mutatearb1}
    Consider the SOD $$\Coh(\sqrt[n]{X/D})=\langle p^*\Coh(X), i_*q^*\Coh(D), i_*q^*\Coh(D)\langle 1\rangle, \dots, i_*q^*\Coh(D)\langle n-2 \rangle \rangle$$ from \textup{\cite{BD}} (so we have replaced the notation $i', q'$ with $i, q$ here). If we take any set of elements $0 \le a_1 < a_2 < \dots < a_{m-1} \le n-2$ and mutate each $i_*q^*\Coh(D)\langle a_i \rangle$ to the left, then the resulting category (generated by $p^*\Coh(X)$ and the left-mutated versions of the $i_*q^*\Coh(D)\langle a_i\rangle$) is equivalent to $\Coh(\sqrt[m]{X/D})$.
\end{thm}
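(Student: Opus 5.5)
The plan is to reduce the statement to a purely formal computation with gluing functors, and then use the reconstruction of a category from an SOD together with its gluing data (\cref{thm:preserve} and the appendix results). By \cite[Proposition 4.4]{BD} and the proposition just proved, the SOD
$$\Coh(\sqrt[n]{X/D})=\langle p^*\Coh(X), i_*q^*\Coh(D), \dots, i_*q^*\Coh(D)\langle n-2\rangle\rangle$$
has the following gluing data:
\begin{itemize}
\item $i_D^*$ from $\Coh(X)$ to the first copy of $\Coh(D)$;
\item $[1]\langle 1\rangle\simeq \id$ between adjacent copies of $\Coh(D)$;
\item zero otherwise.
\end{itemize}
The same description, with $n$ replaced by $m$, applies to $\Coh(\sqrt[m]{X/D})$. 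So it suffices to show two things. First, the subcategory $\mc{E}$ generated by $p^*\Coh(X)$ and the left-mutated copies $L(i_*q^*\Coh(D)\langle a_i\rangle)$ carries an SOD $\langle \Coh(X), \Coh(D), \dots, \Coh(D)\rangle$ with $m-1$ copies of $\Coh(D)$. Second, its gluing functors are (abstractly) $i_D^*$, identities between adjacent copies, and zero otherwise.

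For the first point, I would mutate the chosen components one at a time in increasing order $a_1, a_2, \dots$. Each $\Coh(D)\langle a_i\rangle$ is moved left past all unchosen copies $\Coh(D)\langle b\rangle$ with $b<a_i$ that still sit to its left, stopping just to the right of $p^*\Coh(X)$ and of the previously mutated chosen copies. After all mutations the SOD reads
$$\langle p^*\Coh(X), L\Coh(D)\langle a_1\rangle, \dots, L\Coh(D)\langle a_{m-1}\rangle, \text{(unchosen copies)}\rangle.$$
Hence $\mc{E}$ is the left part of an SOD, and so is an admissible subcategory with an induced SOD.

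For the second point, I would track the gluing functors through each elementary mutation using the three rules listed before the statement (\cref{thm:mutation}). The argument is an induction on the number of elementary mutations. The invariant to maintain is that every gluing functor is a shift of $\id$, a shift of $i_D^*$, or $0$. More precisely:
\begin{itemize}
\item the gluing functor from $\Coh(X)$ into the block being moved is nonzero (a shift of $i_D^*$) exactly when that block is the first $\Coh(D)$ block;
\item the gluing functor between two $\Coh(D)$ blocks is a shift of $\id$ exactly when they are adjacent in the current order, and zero otherwise.
\end{itemize}
The key local check is mutating $\Coh(D)\langle b+1\rangle$ past $\Coh(D)\langle b\rangle$. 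The fiber rule $\fib(M_{i,b+1}\to M_{b,b+1}\circ M_{i,b})$ transfers the nonzero gluing from the left neighbour of $\langle b\rangle$ onto the moved block, up to a shift. The cofiber rule $\cofib(M_{b,j}\circ M^R_{b,b+1}\to M_{b+1,j})$ has only one nonzero term, since $M_{b,j}$ and $M_{b+1,j}$ are never simultaneously nonzero. In both cases the invariant is preserved, and the moved block simply "takes over" the adjacency role of the block it passed. Shifts are harmless: I would absorb them by precomposing each component embedding with a shift, which is an autoequivalence of $\Coh(D)$ commuting with everything in sight.

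The main obstacle is this bookkeeping. In particular, one has to check that the fiber/cofiber formulas really produce equivalences with the identity (and $i_D^*$), not merely nonzero functors, and that the resulting natural maps are the expected ones. The reconstruction result needs the gluing data as a functor to the lax limit, not just objectwise. If the direct computation becomes unwieldy, I would instead use the identification of the BD SOD with the $(n-1)$st level of the relative Waldhausen S-construction of $i_D^*$ (\cite{DKSS}). In that model, left-mutating the copies indexed by $a_i$ corresponds to the subcategory of filtered objects whose other transition maps are equivalences, and that subcategory is visibly the $(m-1)$st level. Either way, the final step is to apply the reconstruction theorem to conclude $\mc{E}\simeq\Coh(\sqrt[m]{X/D})$.
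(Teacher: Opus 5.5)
Your overall strategy is the paper's. You follow the gluing functors through the elementary mutations using \cref{thm:mutation}, reach the linear chain $\Coh(X)\to\Coh(D)\to\cdots\to\Coh(D)$, and conclude by reconstruction. The relevant reconstruction result is \cref{thm:linearSOD}, not \cref{thm:preserve}; it needs only the pairwise gluing functors once the non-adjacent ones vanish, which answers your lax-limit worry.

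The gap is your induction hypothesis: it is false, so the inductive step breaks at the first mutation. Write $\mc{B}_j=i_*q^*\Coh(D)\langle j\rangle$. The gluing functor $M_{ij}=i_j^Li_i$ represents $\Hom_{\mc{C}}(d_i,-)$ on $\mc{D}_j$, so it depends only on the two subcategories, not on the current order.
\begin{itemize}
\item Take $a_1=1$, $n\ge 4$. Mutating $\mc{B}_1$ past $\mc{B}_0$ gives the order $\langle p^*\Coh(X),\mc{B}_1',\mc{B}_0,\mc{B}_2,\dots\rangle$. Now $\mc{B}_0\to\mc{B}_2$ is still $0$ although these blocks are adjacent. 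Meanwhile the cofiber rule makes $\mc{B}_1'\to\mc{B}_2$ equal to $\cofib(0\to\id)\simeq\id$ although they are not adjacent.
\item Your claim that $M_{b,j}$ and $M_{b+1,j}$ are never both nonzero also fails. For $a_1=2$, mutate $\mc{B}_2'$ (already past $\mc{B}_1$) past $\mc{B}_0$. The cofiber formula for its new gluing into $\mc{B}_1$ has both terms nonzero: $M_{\mc{B}_0,\mc{B}_1}\circ M^R_{\mc{B}_0,\mc{B}_2'}$ and $M_{\mc{B}_2',\mc{B}_1}$.
\item Your local check only treats passing $\langle b+1\rangle$ over $\langle b\rangle$. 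Later chosen blocks must also pass previously displaced blocks, e.g.\ for $a_1=1,a_2=2$, $\mc{B}_2$ must pass $\mc{B}_0$.
\end{itemize}

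What actually persists is the paper's invariant. A passed-over unchosen block $\mc{B}_u$ becomes \emph{dead}: it has zero gluing into every block not yet moved. Those blocks are $\mc{B}_j$ with $j>a_k\ge u+1$, and $\Hom(\mc{B}_u,\mc{B}_j)=0$ for $j\notin\{u,u+1\}$. It is also semiorthogonal in the other direction, so it is mutually orthogonal to every live block to its right. Hence it can be commuted to the far end, mutating past it does nothing, and it can be ignored. This is the paper's ``we ignore $\mc{B}_1$ as it has no nonzero gluing functors to its right''.

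On the live blocks you then always have a linear chain. One elementary mutation of $\mc{R}$ past $\mc{Q}$ turns $\mc{P}\to\mc{Q}\to\mc{R}\to\mc{S}$ into $\mc{P}\to\mc{R}'\to\mc{S}$, with $\mc{Q}$ dead:
\begin{itemize}
\item the new gluings are $\fib(0\to\id\circ G)\simeq G[-1]$ and $\cofib(0\to\id)\simeq\id$;
\item every other gluing between $\mc{R}'$ and a live block is $\fib(0\to 0)$ or $\cofib(0\to 0)$.
\end{itemize}
Since one term is always zero, you never need to identify maps between functors, which removes the obstacle you flag. Iterating gives $\Coh(X)\to\mc{B}'_{a_1}\to\cdots\to\mc{B}'_{a_{m-1}}$ with gluings $i_D^*$ and $\id$ up to shifts, which can be absorbed into autoequivalences of the components. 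Then \cref{thm:linearSOD} finishes the proof.
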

\begin{proof}
    We will demonstrate this by induction. For simplicity, let us denote the $i$th copy of $\Coh(D)$ by $\Coh(D)_i$ (using $0$-indexing). Then we want to determine the gluing functors after we mutate each $\Coh(D)_{a_i}$ to the left. We claim that after doing this for $1 \le i \le k$, the gluing functors are $i_D^* \colon \Coh(X) \to \Coh(D)_{a_1}$, then $\id \colon \Coh(D)_{a_i} \to \Coh(D)_{a_{i+1}}$ for $i<k$, and finally $\id \colon \Coh(D)_{a_k} \to \Coh(D)_{a_{k+1}}$, with $\Coh(D)_{a_{k+1}}$ not connected to anything else between it and $\Coh(D)_{a_k}$. We will prove this by induction. Using the mutation rules above, it is easy to check that if we start with a diagram 
\[\begin{tikzcd}
	{\mc{A}} & {\mc{B}_1} & {\mc{B}_2} & {\mc{B}_3} 
	\arrow["F", from=1-1, to=1-2]
	\arrow["\id", from=1-2, to=1-3]
	\arrow["\id", from=1-3, to=1-4]
\end{tikzcd}\]

and mutate $\mc{B}_2$ to the left we get

\[\begin{tikzcd}
	{\mc{A}} & {\mc{B}_2} & {\mc{B}_3}
	\arrow["F", from=1-1, to=1-2]
    \arrow["\id", from=1-2, to=1-3]
\end{tikzcd}\]

where we ignore $\mc{B}_1$ as it has no nonzero gluing functors to its right. so every time we mutate a category to the left, we just keep a linear chain of gluing functors but shorten it by one. Thus the end result in our case is
$$\Coh(X) \xto{i_D^*} \Coh(D)_{a_1} \xto{\id} \Coh(D)_{a_2} \xto{\id} \cdots \xto{\id} \Coh(D)_{a_k}.$$

These are the same gluing functors making up $\Coh(\sqrt[m]{X/D})$; thus the category generated by these components is abstractly equivalent to $\Coh(\sqrt[m]{X/D})$ by \cref{thm:linearSOD}.
\end{proof}

It turns out that these embeddings are also $2n$-periodic. We will show this when we give a proof using categorical representation theory (see \cref{thm:categoricalgluing}).

\section{Generalizations of the root stack construction}\label{sect:multiwindow}
\subsection{Windows in higher dimensions}\label{subsect:multiwindow}
Suppose we have a map $\A^k/\Gm^{k} \xrightarrow{e_{n_1, n_2, \dots, n_k}} \A^1/\Gm$, generalizing the maps in \cref{subsect:rootstack} for $k=2$. We assume that $\gcd(n_1, \dots, n_k)=1$. This map can be defined as the composition of the product of maps $\A^1/\Gm \xrightarrow{e_{n_i}} \A^1/\Gm$ and the convolution product on $\A^1/\Gm$. Note that if we take the pullback of this map along $\A^1 \to \A^1/\Gm$, then the result is $\A^k/\Gm^{k-1}$, where $\Gm^{k-1}$ is the kernel of the map of tori $\Gm^k \xrightarrow{\begin{pmatrix} n_1 & n_2 & \cdots & n_k \end{pmatrix}} \Gm$.

The map $\A^k/\Gm^{k-1} \to \A^k/\Gm^k$ is just a quotient by $\Gm$. The set of unstable points of any $\Gm^{k-1}$-character $\chi$ on $\A^k$ witnessed by a 1-parameter subgroup $\lam \colon \Gm \to \Gm^{k-1}$ will be $V(x_{a_1}, x_{a_2}, \dots, x_{a_\ell})$ for some set of $1 \le a_i \le n$, i.e. a product of some coordinate axes. Thus the semistable set for $\chi$ is the preimage of some locus on $\A^k/\Gm^k$, as products of coordinate axes are stable under the $\Gm^k$-action. One can find the semistable loci for various $\Gm^{k-1}$-characters on $\A^k/\Gm^{k-1}$ by e.g. using the secondary fan. We claim that the GIT quotients in this case determine them for a general $(X, D)$.

\begin{thm}\label{thm:multiwindow}
    Let $X$ be a scheme with a map to $\A^1/\Gm$ given by an effective Cartier divisor $D$ on $X$, with $\X$ given by the following fiber product:

    \[\begin{tikzcd}
	{\mathcal{X}} & {\mathbb{A}^k/\Gm^k} \\
	X & {\mathbb{A}^1/\Gm.}
	\arrow[from=1-1, to=1-2]
	\arrow[from=1-1, to=2-1]
	\arrow["{e_{n_1, n_2, \dots, n_k}}", from=1-2, to=2-2]
	\arrow[from=2-1, to=2-2]
    \arrow["\lrcorner"{anchor=center, pos=0.125, rotate=0}, draw=none, from=1-1, to=2-2]
\end{tikzcd}\]

Then $\X$ is the quotient of a scheme $Y$ by $\Gm^{k-1}$, with the GIT quotients given by the corresponding GIT quotients if $X$ were replaced with $\A^1$ (and $Y$ with $\A^k$). More precisely, the GIT quotients in the $\A^1$ case are preimages of specific open substacks of $\A^k/\Gm^k$, and preimages of the same substacks give the GIT quotients of $\X$.

Furthermore, if we fix a norm on the cocharacter lattice of $\Gm^{k-1}$, then the HKKN strata coming from the Hilbert--Mumford procedure are the same.
\end{thm}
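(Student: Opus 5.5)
The plan is to factor the fiber product through the intermediate $\Gm^{k-1}$-quotient and reduce every GIT statement to one about the base $\A^k/\Gm^k$.

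\textbf{Step 1: the scheme $Y$.} Write $K=\ker\bigl(\Gm^k \xrightarrow{(n_1,\dots,n_k)} \Gm\bigr)$. Since $\gcd(n_1,\dots,n_k)=1$, $K$ is a torus of rank $k-1$ and the character is surjective with connected kernel. The map $\A^k/K \to \A^k/\Gm^k$ is a $\Gm$-torsor, namely $\Gm^k/K\cong\Gm$. We have $e_{n_1,\dots,n_k}^{-1}(\A^1)\simeq \A^k/K$, and under this identification the map to $\A^1$ is the monomial $x_1^{n_1}\cdots x_k^{n_k}$.

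We do not have $X\to\A^1$, only $X\to\A^1/\Gm$. So instead take the $\Gm$-torsor $P=\Tot(\OO_X(D))^\times\to X$ given by the frame bundle of $\OO_X(D)$. It lifts to $P\to\A^1$, and $P/\Gm=X$. Set $\widetilde Y=P\times_{\A^1}\A^k$, with the equation $x_1^{n_1}\cdots x_k^{n_k}=s$. This carries a $\Gm^k$-action, and $\X=\widetilde Y/\Gm^k$.

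Next choose integers $c_i$ with $\sum c_in_i=1$. This gives a splitting $\Gm^k\cong K\times\Gm$, where the $\Gm$ factor acts freely on $P$. We then define $Y=\widetilde Y/\Gm$ and get $\X=Y/K$. Concretely, as in \cref{subsect:rootstack}, $Y$ is the hypersurface $\{\prod y_i^{n_i}=s\}$ in $\Tot\bigl(\bigoplus_i\OO_X(c_iD)\bigr)$. This also gives the fiberwise coordinate statement: the $y_i$ map to the $x_i$. Setting $k=2$ and $(c_1,c_2)=(a,b)$ recovers $X_{n,m;a,b}$.

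\textbf{Step 2: semistable loci are pulled back.} The map $f\colon Y\to\A^k$, defined $\Gm^k$-equivariantly upstairs, is $K$-equivariant. For a cocharacter $\lam$ of $K$, the limit $\lim_{t\to0}\lam(t)y$ exists in $Y$ if and only if the limit of $\lam(t)f(y)$ exists in $\A^k$. Two facts make this work:
\begin{itemize}
\item the base coordinates on $X$ are $\lam$-fixed;
\item $Y\to X\times\A^k$ is a closed immersion, locally over $X$ after trivializing the line bundles.
\end{itemize}
The same argument shows that $y$ lies in the fixed locus $Y^\lam$ (respectively the attracting locus) exactly when $f(y)$ does, and this holds scheme-theoretically. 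The reason is that the ideals involved are generated by the coordinates $y_i$ of nonzero (respectively negative) $\lam$-weight, pulled back from $\A^k$.

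The Hilbert--Mumford criterion for a character $\chi$ of $K$ depends only on the pairs $(\lam, Z)$ together with the value $\langle\chi,\lam\rangle$. So the unstable locus of $Y$ is $f^{-1}$ of the unstable locus of $\A^k$, which is a union of coordinate subspaces $V(x_{a_1},\dots,x_{a_\ell})$. These subspaces are $\Gm^k$-stable, so they descend to closed substacks of $\A^k/\Gm^k$. Taking preimages under $\X\to\A^k/\Gm^k$ then gives the GIT quotients.

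\textbf{Step 3: HKKN strata, the main obstacle.} Fix a norm on the cocharacter lattice of $K$. The Hilbert--Mumford procedure picks, in order, the maximal value of $-\langle\chi,\lam\rangle/\|\lam\|$ over pairs $(\lam,Z)$ for which $Z$ is a nonempty fixed component meeting the current unstable locus. Under $f$, the fixed components of $Y$ are preimages of the coordinate-subspace components $Z_0\subset\A^k$, and the numerical invariants agree. The procedure on $Y$ therefore runs through the same sequence of pairs $(\lam,Z_0)$, and by induction on the removed strata it produces $S_i=f^{-1}(S_i^{\A^k})$.

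The delicate point is nonemptiness. A stratum present for $\A^k$ might pull back to the empty set, for instance when $D=\emptyset$ or $D$ misses a component. In that case the statement should be read as saying the strata agree with empty ones discarded. I would also check that preimages of connected components stay unions of connected components. Both of these are handled by working fiberwise over $X$ and using that $f^{-1}$ commutes with taking the ideal of weights.
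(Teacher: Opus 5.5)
Your proposal is correct and follows essentially the same route as the paper. Both split $\Gm^k\cong K\times\Gm$ using integers $c_i$ with $\sum c_in_i=1$, identify $Y$ with the hypersurface $\{\prod y_i^{n_i}=s\}\subset\Tot(\bigoplus_i\OO_X(c_iD))$ with $K$ acting fiberwise, and observe that fixed loci, blades, and the Hilbert--Mumford ordering (which depends only on $\langle\chi,\lambda\rangle$ and the norm) are pulled back from coordinate subspaces of $\A^k$.

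Your frame-bundle construction of $Y$, the weight-ideal argument for the scheme-theoretic loci, and your handling of empty strata and of components of $f^{-1}(Z_0)$ make explicit what the paper only asserts. One small correction: $f$ exists globally only as a map $Y\to\A^k/\Gm$ (or as $\widetilde Y\to\A^k$), not as a map $Y\to\A^k$.
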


\begin{proof}
The kernel of the map $\Gm^k \to \Gm$ gives a short exact sequence of tori $\Gm^{k-1} \to \Gm^k \to \Gm$. Let us fix a section $a$ of $\Gm^k \to \Gm$, given by $a_1, a_2, \dots, a_k$, so $\sum a_in_i=1$. We then have $\A^k/\Gm^k=\A^k/(\Gm \times \Gm^{k-1})$, where the $\Gm$ acts via the section $a$, and the $\Gm^{k-1}$ is the inclusion of the kernel. Thus $\X$ is the quotient of $Y$ by $\Gm^{k-1}$, where $Y$ is given by the following fiber products:

\comment{
\[\begin{tikzcd}
	{Y} & {[\mathbb{A}^k/_{a_1, a_2, \dots, a_k}\mathbb{G}_m]} \\
	X & {[\mathbb{A}^1/\mathbb{G}_m]}
	\arrow[from=1-1, to=1-2]
	\arrow[from=1-1, to=2-1]
	\arrow["{e_{n_1, n_2, \dots, n_k}}", from=1-2, to=2-2]
	\arrow[from=2-1, to=2-2]
    \arrow["\lrcorner"{anchor=center, pos=0.125, rotate=0}, draw=none, from=1-1, to=2-2]
\end{tikzcd}\]
}

\[\begin{tikzcd}
	{Y} & {\mathbb{A}^k/_{a_1, a_2, \dots, a_k}\mathbb{G}_m} \\
    {\Tot(\EE)} & {\mathbb{A}^{k+1}/_{a_1, a_2, \dots, a_k,1}\mathbb{G}_m} \\
	X & {\mathbb{A}^1/\mathbb{G}_m}
	\arrow[from=1-1, to=1-2]
	\arrow[from=1-1, to=2-1]
	\arrow["{(y_1,\dots,y_k,y_1^{n_1}y_2^{n_2}\cdots y_k^{n_k})}", from=1-2, to=2-2]
	\arrow[from=2-1, to=2-2]
    \arrow[from=2-1, to=3-1]
	\arrow["{x_{k+1}}", from=2-2, to=3-2]
    \arrow[from=3-1, to=3-2]
    \arrow["\lrcorner"{anchor=center, pos=0.125, rotate=0}, draw=none, from=1-1, to=2-2]
    \arrow["\lrcorner"{anchor=center, pos=0.125, rotate=0}, draw=none, from=2-1, to=3-2]
\end{tikzcd}\]

The vertical arrow in the bottom square is given by total space of the vector bundle $\EE := \OO_X(a_1D)\oplus \cdots \oplus \OO_X(a_kD)$. The top square cuts out the locus where $\{y_1^{n_1}y_2^{n_2}\cdots y_k^{n_k}=s\}$. As such, $Y$ is a scheme.

\[\begin{tikzcd}
	Y & {\A^k/_{a_1, \dots, a_k}\Gm} \\
	\X & {\A^k/\Gm^k}
	\arrow[from=1-1, to=1-2]
	\arrow[from=1-1, to=2-1]
	\arrow["\lrcorner"{anchor=center, pos=0.125}, draw=none, from=1-1, to=2-2]
	\arrow[from=1-2, to=2-2]
	\arrow[from=2-1, to=2-2]
\end{tikzcd}\]

Fix a character $\chi$ of $\Gm^{k-1} \subset \Gm^k$. We claim that the scheme-theoretic HKKN stratification of $Y$ for $\chi$ (under the Hilbert--Mumford procedure) is determined by the case of $X=\A^1$, $Y=\A^k$, and that there is no dependence on $a$. Specifically, over $\A^k/\Gm^{k-1}$, we claim there is a stratification, such that pulling back this stratification yields the HKKN stratification on $Y$.

The key fact is that the map $Y \to \A^k/\Gm$ is just projecting to the coordinates of bundle $\EE$, with $\Gm^{k-1}$ acting fiberwise.

The HKKN stratification is determined by a procedure involving $1$-parameter subgroups $\lam \colon \Gm \to \Gm^{k-1}$. Given such a $\lam$, let us view it as a $\Gm^k$ cocharacter with integer weights $\lam(1), \dots, \lam(k)$. Then the fixed point set $Y^\lam$ is determined by the set of coordinates $1 \le i \le k$ such that $\lam(i)=0$. Namely, pulling back $(\prod_{\lam(i)=0} \A^1_i)/\Gm$ yields $Y^\lam$. The ``blade'' $Y_{\lam, Z}$ for a (set of) component(s) $Z$ of $Y^\lam$ is then the scheme-theoretic locus of $Y$ where the $\Gm$-action by $\lam$ limits to $Z$ at $0$. For $Z$ being the entire $X^\lam$, this locus is just the pullback of $(\prod_{\lam(i) \ge 0}\A^1_i)/\Gm$.

Finally, the stratification is constructed from an ordering of possible $(\lam, Z)$ via the value of the weight under $\lam$ of $\OO\langle \chi\rangle$ at $Z$, which is just $\langle \chi, \lam \rangle$. This ordering doesn't depend on the identity of $X$ or $Y$; only on the norm on the cocharacter lattice and the tuple $(n_1, \dots, n_k)$. Note that no step uses the section $a$.

So the HKKN stratification is specified by a pullback of a stratification on $\A^k/\Gm$ (or equivalently on $\A^k/\Gm^k$ since every stratum is a product of coordinate hyperplanes and coordinate hyperplane complements), and this pullback can be determined from the case $X=\A^1$.
\end{proof}



\begin{rem}
    The secondary fan for $\A^k/\Gm^{k-1}$ gives an analogous one for $Y/\Gm^{k-1}$, in the sense that it still determines how characters of $\Gm^{k-1}$ associate to semistable loci. In the notation of \cite[Section 14.3]{cls}, we have vectors $\beta_1, \dots, \beta_k \in \Z^{k-1}$ giving the rays of the secondary fan such that the $\beta_i$ are the columns of a matrix whose rows are a basis for the map $\Gm^{k-1} \subset \Gm^k$. Note that we have $n_1\beta_1+\cdots+n_k\beta_k=0$. Since the $\beta_i$ span $\mb{R}^{k-1}$ and a positive linear combination of them is zero, the fan they generate must be all of $\mb{R}^{k-1}$. So every character of $\Gm^{k-1}$ gives a nonempty semistable locus. In fact, each semistable locus is a product of $\A^1$ and $\Gm$, with the $\Gm$ corresponding to the $\beta_i$ that $\chi$ is in the cone of. 
\end{rem}

\begin{prop}\label{prop:strataAR}
    The HKKN strata above satisfy the conditions (A) and (R).
\end{prop}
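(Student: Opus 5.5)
The plan is to reduce both conditions to explicit local coordinates, using the description in the proof of \cref{thm:multiwindow}. There, $Y \subset \Tot(\EE)$ is the hypersurface $\{y_1^{n_1}\cdots y_k^{n_k}=s\}$, and each HKKN stratum (for a cocharacter $\lam$ and a component $Z$) is the preimage of a coordinate locus on $\A^k$. Concretely, $Z$ is cut out by $\{y_i=0 : \lam(i)\neq 0\}$ inside the current open piece, and the blade $S$ is cut out by $\{y_i=0 : \lam(i)<0\}$. The question is local on $X$, so I will work over an affine open $U=\Spec R$ that trivializes $\OO_X(D)$, with $s\in R$ a non-zero-divisor. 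Then $Y|_U=\Spec R[y_1,\dots,y_k]/(y_1^{n_1}\cdots y_k^{n_k}-s)$. Later strata live in open subsets where some of the $y_j$ are inverted, and the same argument applies there after localizing.

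\textbf{Condition (A).} Let $I_-=\{i:\lam(i)<0\}$ and $I_0=\{i:\lam(i)=0\}$. Since $\gcd(n_1,\dots,n_k)=1$ and $\lam$ lies in the kernel $\sum n_i\lam(i)=0$, a nonzero $\lam$ has both positive and negative weights. Hence $I_-\neq\emptyset$, so $s$ vanishes on $S$, and
\[ S=\Spec (R/s)[y_i : i\notin I_-] \]
(with the appropriate localizations). On the same affine, $Z=\Spec (R/s)[y_i : i\in I_0]$. The projection $\pi\colon S\to Z$ forgets the coordinates $y_i$ with $\lam(i)>0$, so it is a trivial affine-space bundle locally. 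Globally it is the restriction to $Z$ of the vector bundle $\bigoplus_{\lam(i)>0}\OO(a_iD)$, and $\pi$ is the limit map $t\to 0$.

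\textbf{Condition (R).} I must show that the ideal $J=(y_i : i\in I_-)$ of $S$ in $A=R[y]/(y^{n}-s)$ is generated by a regular sequence. I will check this through the exact sequence of ideals. The ideal $(f,\,y_i : i\in I_-)$ in the polynomial ring $R[y]$, where $f=y^n-s$, equals $(s,\,y_i : i\in I_-)$. The elements $y_i$ ($i\in I_-$) followed by $s$ form a regular sequence in $R[y]$, because $s$ is a non-zero-divisor on $R$ and hence on $R[y_j : j\notin I_-]$. Permuting a regular sequence in a Noetherian graded or local setting keeps it regular; after localizing at the points of $S$ it stays regular in any order. So $f,y_{i_1},\dots,y_{i_r}$ is regular, and therefore $y_{i_1},\dots,y_{i_r}$ is regular on $A=R[y]/f$. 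This is exactly the regular-embedding condition. It also gives the conormal bundle $\bigoplus_{i\in I_-}\OO\cdot y_i$, whose weights are as used in \cref{prop:windowsize}.

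\textbf{Main obstacle.} The delicate step is justifying the permutation of the regular sequence, since Noetherianity is needed to pass to local rings. If $X$ is not Noetherian, I would instead argue directly that $f$ is a non-zero-divisor modulo $(y_i : i\in I_-)$ by computing that this quotient ring is $(R/s)[y_j]$ flat over $R/s$. A second, smaller obstacle is the bookkeeping for later strata, where the open complement inverts some of the coordinates. There I will note that inverting a coordinate $y_j$ with $j\notin I_-$ commutes with all the steps above. If instead some $y_j$ with $\lam(j)>0$ is inverted so that $\prod y^{n}$ is no longer forced to vanish, the same computation goes through with $R$ replaced by $R[y_j^{\pm1}]$.
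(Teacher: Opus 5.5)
Your proposal is correct and takes the same route as the paper. Both arguments identify $Z$ and $S$ as total spaces of sub-sums of $\EE$ over $D$, using that some coordinate $y_i$ with $i \in I_-$ vanishes and hence forces $s=0$. Both then read off (A) from the coordinate projection, and both note that $S$ is cut out by the negative-weight coordinates. You go further than the paper's one-line assertion for (R): your regular-sequence check, via $(f, y_i : i\in I_-) = (s, y_i : i \in I_-)$ and permuting in Noetherian local rings, is valid under the paper's standing locally Noetherian hypothesis.
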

\begin{proof}
    Each stratum $Z \subset S$ is the pullback of some coordinate hyperplanes on $\A^k/_{a_1, \dots, a_k} \Gm$. But if we set at least one $y_i$ to $0$, then the section $y_1^{n_1}\cdots y_k^{n_k}$ (which cuts out $D$) is also zero. So every $Z$ in $S$ is the total space of a sum of line bundles inside another sum of line bundles that are part of the direct sum decomposition of $\EE=\OO_X(a_1) \oplus \cdots \oplus \OO_X(a_k)$. It is clear that $S$ is a locally trivial affine bundle over $Z$, and $S$ is cut out by the coordinates $y_i$ where $\OO_X(a_i)$ is not part of the bundle making up $S$. This is a regular embedding.
\end{proof}

\begin{rem}
    It is not important to choose HKKN strata according to the Hilbert--Mumford procedure for window theory (as explained in \cref{subsect:GITconv}); however the proof of \cref{prop:strataAR} demonstrates that a stratum cannot be simply the point $0$.
\end{rem}

This result suggests a way to compute GIT quotients ``universally'' under certain conditions.

\begin{rem}\label{rem:nothingnew}

The multi-dimensional construction above gives a potentially new way via window theory to embed one root stack inside another root stack. However it appears that no new embeddings appear this way.

Consider  the root stack for $n_1$ arising from the diagram 
\[\begin{tikzcd}
	{\sqrt[n_1]{X/D}} & {(\A^1 \times \Gm^{k-1})/\Gm^k} \\
	\X & {\A^k/\Gm^k} \\
	X & {\A^1/\Gm}
	\arrow[from=1-1, to=1-2]
	\arrow[from=1-1, to=2-1]
	\arrow["\lrcorner"{anchor=center, pos=0.125}, draw=none, from=1-1, to=2-2]
	\arrow[from=1-2, to=2-2]
	\arrow[from=2-1, to=2-2]
	\arrow["p", from=2-1, to=3-1]
	\arrow["\lrcorner"{anchor=center, pos=0.125}, draw=none, from=2-1, to=3-2]
	\arrow["{e_{n_1, n_2, \dots, n_k}}", from=2-2, to=3-2]
	\arrow[from=3-1, to=3-2]
\end{tikzcd}\]

which comes from a GIT semistable locus that is pulled back from $\A^1 \times \Gm^k$.

This semistable locus corresponds to a window inside $\Coh(X)$. We have a choice of $w_i$ and $\lambda_i$, and we can say that the window is generated by $p^*\Coh(X)\langle c_1, \dots, c_k \rangle$ for vectors $c$ such that each $\lambda_i(c)$ lies within an interval $[w_i, w_i+\eta_i)$. In particular, we care about what the twist of the line bundle is when we pull back to $\sqrt[n_1]{X/D}$, which is determined by the value of $c_1$.

Now suppose we embed one window inside another. WLOG let the embedded window correspond to the semistable locus for $\Gm \times \A^1 \times \Gm^{k-2}$. We want to see which vectors $c$ are part of the second window.

One way to set our $\lam_i$s to get an embedding is to set $\lambda_1=(-n_2, n_1, 0, \dots, 0)$ for the first window and $\lambda'_1=(n_2, -n_1, 0, \dots, 0)$ for the second window, with all the other $\lambda_i$ equal. Then we want $-n_2c_1+n_1c_2$ to be in a range of length $n_1$ for the first window, and to be in a range of length $n_2$ in the second window (with the same constraints for the other $\lam_i$). The set of $c_1$ in the first window but not the second correspond to a consecutive range of values for $n_2c_1$, i.e. $c_1$ changes by $(n_2)^{-1}$ mod $n_1$. This value is exactly the value $a$ from Section 2. So the embedding is essentially the same as before (as the line bundle twist is the same).

\end{rem}

\begin{exmp}
Suppose we have $(n_1, n_2, n_3)=(7,3,10)$. We can set $\lam_1=(-3, 7, 0)$ (so $\lambda_1'=(3, -7,0)$) and $\lam_2=(-10, 0, 7)$.

First let us find $(c_1, c_2, c_3)$ such that $-3c_1+7c_2 \in [0, 7)$ and $-10c_1+7c_3 \in [0, 7)$. Note that two vectors $c$ that differ by a multiple of $(7, 3, 10)$ are equivalent since they give the same character the kernel $\Gm^2$ of the map $\Gm^3 \xto{7,3,10} \Gm$. The set of vectors is \{000 112 213 325 426 538 639\}.

Now let us find $(c_1, c_2, c_3)$ such that $-3c_1+7c_2 \in [0, 3)$ and $-10c_1+7c_3 \in [0, 7)$. The set of vectors is \{000 213 426\}.

The missing items are 112, 325, 538, 639. Only looking at $c_1$, these are in an arithmetic sequence $5, 3, 1, 6$ modulo $7$, with difference $5$.
\end{exmp}

\subsection{Iterated root stacks}
The analogous result to \cref{thm:multiwindow} holds in the case that we take the fiber product of a general map $\A^n/\Gm^n \to \A^m/\Gm^m$ with a map $X \to \A^m/\Gm^m$. This construction has a more straightforward interpretation in the case of an iterated root stack.
\begin{defn}
    Let $r$ be a multi index $(r_1, r_2, \dots, r_n)$, and suppose we have $n$ maps $X \to \A^1/\Gm$ given by a tuple of effective Cartier divisors $D=(D_1, \dots, D_n)$. Suppose also that every intersection is also ``effective'', i.e. for any intersection $D_{i_1} \cap D_{i_2} \cap \dots \cap D_{i_k}$, the intersection with another $D_j$ is effective (one can think of this as generalizing the notion of a snc divisor). Then the iterated root stack $\sqrt[r]{X/D}$ is given by the following fiber product:
    \[\begin{tikzcd}
	{\sqrt[r]{X/D}} & {\A^n/\Gm^n} \\
	X & {\A^n/\Gm^n}.
	\arrow[from=1-1, to=1-2]
	\arrow[from=1-1, to=2-1]
	\arrow["\lrcorner"{anchor=center, pos=0.125}, draw=none, from=1-1, to=2-2]
	\arrow["{e_r}"', from=1-2, to=2-2]
	\arrow[from=2-1, to=2-2]
\end{tikzcd}\]
\end{defn}

\begin{rem}
    The use of ``iterated'' is somewhat of a misnomer since the order of $r_i$ and $D_i$ does not matter. In fact, the iterated root stack is just the fiber product over $X$ of the ordinary root stacks. By ordering the ordinary root stack constructions, though, we get an iterated root stack.
\end{rem}

We claim that there is a periodic embedding of the $r$th root stack inside the $s$th root stack (fixing a tuple $D$) if $s_i \ge r_i$ for all $i$.

First we will give a proof by iterating the ordinary root stack case. We will write the $n=2$ case, as the general case is similar. First assume we just have $r_1=r_2=1$. Then we have an SOD

\begin{align*}\Coh(\sqrt[s_1,s_2]{X/(D_1,D_2)})=&\langle \Coh(\sqrt[s_1]{X/D_1}), \Coh(\sqrt[s_1]{D_2/(D_1 \cap D_2)}), \dots, \Coh(\sqrt[s_1]{D_2/(D_1 \cap D_2)})\rangle\\
=&\langle \langle \Coh(X), \Coh(D_1), \Coh(D_1), \dots, \Coh(D_1)\rangle,\\
&\langle \Coh(D_2), \Coh(D_1 \cap D_2), \dots, \Coh(D_1 \cap D_2)\rangle,\\
&\dots,\\
&\langle \Coh(D_2), \Coh(D_1 \cap D_2), \dots, \Coh(D_1 \cap D_2)\rangle \rangle.\end{align*}
as well as an SOD 
\begin{align*}\Coh(\sqrt[s_1,s_2]{X/(D_1,D_2)})=&\langle \Coh(\sqrt[s_2]{X/D_2}), \Coh(\sqrt[s_2]{D_1/(D_1 \cap D_2)}), \dots, \Coh(\sqrt[s_2]{D_1/(D_1 \cap D_2)})\rangle\\
=&\langle \langle \Coh(X), \Coh(D_2), \Coh(D_2), \dots, \Coh(D_2)\rangle,\\
&\langle \Coh(D_1), \Coh(D_1 \cap D_2), \dots, \Coh(D_1 \cap D_2)\rangle,\\
&\dots,\\
&\langle \Coh(D_1), \Coh(D_1 \cap D_2), \dots, \Coh(D_1 \cap D_2)\rangle \rangle.\end{align*}

We know that $\OO\langle 0, s_2 \rangle$ preserves the components in the first SOD (namely $\Coh(\sqrt[s_1]{X/D_1})$ and the copies of $\Coh(\sqrt[s_1]{D_2/(D_1 \cap D_2)}$), while $\OO\langle s_1, 0 \rangle$ preserves the subcomponents within each component. Similarly $\OO\langle s_1, 0 \rangle$ preserves the components in the second SOD, while $\OO\langle 0, s_2 \rangle$ preserves the subcomponents in each component. It turns out that really we have a rectangular grid of components making up an SOD, so the components above are embedded the same way in the two linear SODs (see \cite[Theorem 4.9]{bls}; though they assume snc divisor and take quasicoherent sheaves, the condition of checking the two embeddings above agree is the same). So all components are preserved by $\Gm^2$-twists by $\langle s_1, 0 \rangle$ and $\langle 0, s_2\rangle$.

Then by a similar argument to the previous section, to find $\Coh(\sqrt[r]{X/D})$ inside $\Coh(\sqrt[s]{X/D})$, we can use the fact that the left orthogonal is made up of a subset of the components we have already defined. We know under mutation that they change by a $\Gm^2$-twist, and so taking $\lcm(s_1, s_2)$ twists will yield the original SOD.

\begin{rem}
    We can think about this construction in the context of \cref{subsect:multiwindow}. Consider the global quotient stack $\X$ defined as the following fiber product:

\[\begin{tikzcd}
	{\X} & {\A^n/\Gm^n \times \A^n/\Gm^n} \\
	X & {\A^n/\Gm^n}.
	\arrow[from=1-1, to=1-2]
	\arrow[from=1-1, to=2-1]
	\arrow["\lrcorner"{anchor=center, pos=0.125}, draw=none, from=1-1, to=2-2]
	\arrow["{e_{r,s}}"', from=1-2, to=2-2]
	\arrow[from=2-1, to=2-2]
\end{tikzcd}\]

Then the secondary fan for $\X$ looks like a product of the $n$ coordinate axes, separating $\mb{R}^n$ into orthants. Crossing each coordinate hyperplane corresponds to a wall-crossing from the $r_i$th root stack to the $s_i$th root stack on the divisor $D_i$. So crossing all $n$ hyperplanes gives a variation of GIT embedding of the derived category of one iterated root stack inside the other.
\end{rem}

\section{Universal categorical representations}\label{sect:categorical}
Key theorems in \cite{BZFN,BZNP} state that whenever $U\xto{p_U} Y\xfrom{p_V}V$ are maps of stacks with $U,Y$ perfect,
\begin{enumerate}
    \item if moreover $V$ is perfect then we have an equivalence \[\Perf(U)\otimes_{\Perf(Y)}\Perf(V)\simeq \Perf(U\times_YV),\]
    and
    \item and if moreover $p_U:U\to Y$ is a proper relative DM stack, and $p_V$ is locally finite type, then we have an equivalence,
    \[\Hom_{\Perf(Y)}(\Perf(U),\Coh(V))\simeq \Coh(U\times_YV).\]
\end{enumerate}
The second statement as stated in \cite{BZNP} requires that $p_U$ is an algebraic space. This condition is weakened in \cref{cor:dmhom}.

Using techniques from categorical representation theory, we will prove root stack SOD results analogously to \cref{thm:theirSOD} for all perfect stacks (in the case of $\Perf$), and all locally finite type stacks (in the case of $\Coh$).
We will do this by defining a categorical root stack operation, taking as an input any category over $\ag$, which generalizes the geometric root stack in a suitable sense. We then prove that all categorical root stacks admit an SOD. Integral transform formulae provide geometric interpretations of the SOD components.

\subsection{Categories over stacks}
In this subsection, we will make precise what we mean by a category over $\A^1/\Gm$. Let $Y$ be any smooth stack. Recall $\Perf(Y)$ is naturally monoidal under $\otimes$.

\begin{defn}
    A category over $Y$ is a module category for $\Perf(Y)$. We write
    \[\Perft(Y):=\Perf(Y)-\Mod(\st)\]
    for the 2-category of categories over $Y$.
\end{defn}

Let $X$ be a stack living over $Y$.
We define the object $\ul{\Perf}(X)\in\Perft(Y)$
to be $\Perf(X)$ with the natural $\Perf(Y)$-module structure (via pullback). We likewise define the object $\ul{\Coh}(X)$ to be $\Coh(X)$ with its natural $\Perf(Y)$-module structure.

These upgrade to functors of 2-categories
\begin{align*}
\ul{\Perf},\ul{\Coh}&:\Stack^{\op}_{/Y}\to\Perft(Y)
\end{align*}
(here Stack is just a category of stacks such that these functors are well-defined (e.g. pullbacks preserve coherence); our application will only involve the diagrams in \cref{prop:compatiblesmall}).
We also denote by
\begin{align*}
\Gamma(Y,-)&:\Perft(Y)\to \st
\end{align*}
the ``global sections functor'' which forgets the module structure.

We now define two pullback functors for categories over stacks. We begin by recalling some classical constructions which we will categorify to our setting. Let $A\to\tilde{A}$ be a homomorphism of commutative rings, defining $f:\Spec(\tilde{A})\to\Spec(A)$. Then there are two natural pullback functors on quasi-coherent sheaves:
\begin{align*}
    f^*: A-\Mod &\to\tilde{A}-\Mod\\
    M&\mapsto \tilde{A}\otimes_{A}M
\end{align*}
and 
\begin{align*}
    f^!: A-\Mod &\to\tilde{A}-\Mod\\
    M&\mapsto \Hom_A(\tilde{A},M).
\end{align*}
These are the left and right adjoints to the functor $f_*$.

Suppose now that $f:\tilde{Y}\to Y$ is a map of stacks. Pullback of perfect complexes defines a monoidal functor,
$\Perf(Y)\to\Perf(\tilde{Y})$.
We define
\begin{align*}
    f^*: \Perft(Y) &\to\Perft(\tilde{Y})\\
    \mc{M}&\mapsto \Perf(\tilde{Y})\otimes_{\Perf(Y)}\mc{M}
\end{align*}
and 
\begin{align*}
    f^!: \Perft(Y) &\to\Perft(\tilde{Y})\\
    \mc{M}&\mapsto \Hom_{\Perf(Y)}(\Perf(\tilde{Y}),\mc{M}).
\end{align*}
These are the left and right adjoints to the functor $f_*$, which is defined by pulling back the module structure under the monoidal functor.

\subsection{The categorical root stack functor}
Let $e_n:\widetilde{\ag}\to\ag$, where $\wtag = \ag$ (as abstract stacks). The notation is to distinguish the source and target of $e_n$. We also have the $n$th power map $h_n \colon \wtpg \to \pg$.

\begin{defn}
    We call the functors
        $$\sqrt[n,*]{-}:=e_n^*:\Perft(\ag)\to\Perft(\wtag)$$
        and
        $$\sqrt[n,!]{-}:=e_n^!:\Perft(\ag)\to\Perft(\wtag)$$
    the $\ast$-categorical and $!$-categorical root stack functors, respectively. We also define the functor $$\sqrt[n]{-}:=h_n^* \colon \Perft(\pg) \to \Perft(\wtpg).$$
\end{defn}
Note that $h_n^* \colon \Perft(\pg) \to \Perft(\pg)$ can be written as $\Perf(\wtpg) \otimes_{\Perf(\pg)} (-)$, and that as a $\Perf(\pg)$-module, $\Perf(\wtpg)$ is a direct sum of $n$ copies of $\Perf(\pg)$.

\subsection{Compatibility with geometric root stack}
In order to relate our categorical root stack functors to the geometric root stack construction, we require some integral transform results.

\begin{prop}\label{prop:compatiblesmall}
    There are naturally commuting diagrams of 2-categories as follows.
    \[\begin{tikzcd}
    	{\Stack^{\operatorname{perfect}}_{/\ag}} & {\Stack^{\operatorname{perfect}}_{/\wtag}} \\
    	{\Perft(\ag)} & {\Perft(\wtag)}
    	\arrow["{\sqrt[n]{-}}", from=1-1, to=1-2]
    	\arrow["{\ul{\Perf}}", from=1-1, to=2-1]
    	\arrow["{\ul{\Perf}}", from=1-2, to=2-2]
    	\arrow["{\sqrt[n,*]{-}}"', from=2-1, to=2-2]
    \end{tikzcd}
    \;\;\;
    \begin{tikzcd}
    	{\Stack^{\textup{locally finite type}}_{/\ag}} & {\Stack^{\textup{locally finite type}}_{/\wtag}} \\
    	{\Perft(\ag)} & {\Perft(\wtag)}
    	\arrow["{\sqrt[n]{-}}", from=1-1, to=1-2]
    	\arrow["{\ul{\Coh}}", from=1-1, to=2-1]
    	\arrow["{\ul{\Coh}}", from=1-2, to=2-2]
    	\arrow["{\sqrt[n,!]{-}}"', from=2-1, to=2-2]
    \end{tikzcd}\]
\end{prop}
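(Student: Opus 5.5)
The plan is to reduce both squares to the two base change theorems of \cite{BZFN,BZNP} recalled at the start of this section, with \cref{cor:dmhom} providing the extension to relative DM stacks. The top horizontal functor $\sqrt[n]{-}$ sends $X \to \ag$ to the fiber product $X\times_{\ag,e_n}\wtag$, viewed over $\wtag$ via the second projection. So for each object the task is to identify $e_n^*\ul{\Perf}(X)$ with $\ul{\Perf}(X\times_{\ag}\wtag)$, and $e_n^!\ul{\Coh}(X)$ with $\ul{\Coh}(X\times_{\ag}\wtag)$, as $\Perf(\wtag)$-modules. We then upgrade these identifications to natural equivalences of functors of 2-categories.

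For the left square, set $U=\wtag$, $Y=\ag$ and $V=X$. All three are perfect: $\ag$ is perfect in characteristic zero, and $X$ is perfect by assumption. Statement (1) then gives $\Perf(\wtag)\otimes_{\Perf(\ag)}\Perf(X)\simeq\Perf(\wtag\times_{\ag}X)$. We check that this equivalence is the natural exterior product functor $\mc{F}\boxtimes\mc{G}\mapsto \mathrm{pr}_1^*\mc{F}\otimes\mathrm{pr}_2^*\mc{G}$, which is evidently $\Perf(\wtag)$-linear through the first factor. This gives the identification on objects.

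Naturality comes from the same construction. A morphism $X\to X'$ over $\ag$ induces compatible pullbacks on both sides, and the exterior product functor is natural in $V$, so the equivalences assemble into a natural transformation $e_n^*\circ\ul{\Perf}\Rightarrow\ul{\Perf}\circ\sqrt[n]{-}$ that is an objectwise equivalence.

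For the right square, set $U=\wtag$, $Y=\ag$, $p_U=e_n$ and $V=X$ locally finite type. The map $e_n$ is not representable: its fibers are $\mu_n$-gerbe-like, since the base change of $e_n$ along $\A^1\to\ag$ is $\A^1/\mu_n\to\A^1$. It is, however, a proper relative DM stack, because $\mu_n$ is finite étale in characteristic zero and $\A^1\to\A^1$, $t\mapsto t^n$, is finite. Statement (2), extended to this case by \cref{cor:dmhom}, gives $\Hom_{\Perf(\ag)}(\Perf(\wtag),\Coh(X))\simeq\Coh(\wtag\times_{\ag}X)$. The functor realizing this is the integral transform: a sheaf $\mc{K}$ on the fiber product corresponds to the functor $\mc{F}\mapsto \mathrm{pr}_{2*}(\mathrm{pr}_1^*\mc{F}\otimes\mc{K})$ from $\Perf(\wtag)$ to $\Coh(X)$. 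The $\Perf(\wtag)$-module structure on the Hom side comes from precomposition with the action on the source $\Perf(\wtag)$. This corresponds to tensoring $\mc{K}$ with pulled-back perfect complexes from $\wtag$, which is exactly the module structure of $\ul{\Coh}$ of the fiber product. Naturality in $X$ follows from base change for $\mathrm{pr}_{2*}$ along morphisms $X\to X'$ over $\ag$. Here we must restrict to morphisms for which pullback preserves $\Coh$, as the paper's convention on $\Stack$ stipulates.

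I expect the main obstacle to be the coherence upgrade: showing that the objectwise equivalences are natural as functors of 2-categories rather than merely on objects. This requires compatibility of the \cite{BZFN,BZNP} equivalences with composition of pullbacks, together with the module-structure compatibility in the $\Hom$ case. I would handle it by writing both equivalences as images of explicit kernel constructions in the symmetric monoidal $(\infty,2)$-category of correspondences, where functoriality is built in. A secondary concern is verifying the hypotheses of \cref{cor:dmhom} for $e_n$, in particular that it is proper and DM, which reduces to the local computation with $\A^1/\mu_n$ described above.
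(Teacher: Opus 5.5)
Your proposal is correct and follows the same route as the paper, whose proof simply cites \cite{BZFN} for the $\ul{\Perf}$ square, and for the $\ul{\Coh}$ square cites \cite{BZNP} extended via \cref{cor:dmhom}, because $e_n$ is a proper relative DM stack rather than an algebraic space. Your extra checks fill in details the paper leaves implicit: the local model $\A^1/\mu_n\to\A^1$, the identification of the module structures through the exterior product and the integral transform, and naturality via base change. The paper does not discuss the 2-categorical coherence issue you flag beyond asserting that the squares commute naturally.
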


\begin{proof}
    The statement involving $\ul{\Perf}$ follows from \cite{BZFN}. The statement involving $\ul{\Coh}$ follows from \cite{BZNP} (modified in \cref{subsect:bznpextend}, as $\wtag \to \ag$ is a relative DM stack; see \cref{cor:dmhom}).
\end{proof}

\subsection{SODs for categorical root stacks}

We use the following (non-Cartesian) diagram to construct an SOD of $\Perf(\wtag)$.

\begin{equation}\label{equation:square}
\begin{tikzcd}
	{\wtpg} & {\wtag} \\
	{\pt/\Gm} & {\A^1/\Gm}.
	\arrow["i", from=1-1, to=1-2]
	\arrow["h_n", from=1-1, to=2-1]
	\arrow["{e_n}"', from=1-2, to=2-2]
	\arrow["i", from=2-1, to=2-2]
\end{tikzcd}
\end{equation}

First we need a preliminary calculation on maps between objects of $\Perf(\ag)$.

\begin{lem}\label{lem:a1gmhoms}
Let $\OO$ be the structure sheaf of $\ag$, and $\OO_0=\delta_0$ be the structure sheaf for $0/\Gm$. We have \begin{align*}\Hom(\OO, \OO\langle i \rangle)=&\begin{cases}
    k & \text{if } i \ge 0 \\
    0 & \text{otherwise}
\end{cases},\\\Hom(\OO, \OO_0 \langle i \rangle)=&\begin{cases}
    k & \text{if } i =0 \\
    0 & \text{otherwise} \end{cases},\\
    \Hom(\OO_0, \OO \langle i \rangle)=&\begin{cases}
    k[-1] & \text{if } i =1 \\
    0 & \text{otherwise} \end{cases},\\
    \text{and}\\
    \Hom(\OO_0, \OO_0 \langle i \rangle)=&\begin{cases}
    k & \text{if } i =0 \\
    k[-1] & \text{if } i =1 \\
    0 & \text{otherwise} \end{cases}.\end{align*}
\end{lem}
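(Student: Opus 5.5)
The plan is to reduce everything to graded commutative algebra. Identify $\Coh(\ag)$ with the bounded derived category of finitely generated graded $k[x]$-modules, and $\Perf(\ag)$ with its perfect objects. In this picture $\OO$ is $k[x]$, the twist $\langle i \rangle$ is a shift of grading, $\OO_0$ is $k[x]/(x)$, and $\Hom$ on $\ag$ is the weight-$0$ part of the graded derived $\Hom$. This holds because pushforward to $\pg$ followed by taking $\Gm$-invariants is exact in characteristic zero.

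First I fix the sign convention using the first formula. Grade $k[x]$ so that $x$ spans $\Hom(\OO, \OO\langle 1 \rangle)$, and more generally $\Hom(\OO, \OO\langle i \rangle)$ is the degree-$i$ piece of $k[x]$. This gives $k$ for $i \ge 0$ and $0$ otherwise, with no higher cohomology since $\A^1$ is affine and $\Gm$ is reductive. Along the way I will record that, for $i,j \ge 0$, composing with $x$ gives an isomorphism $\Hom(\OO,\OO\langle i\rangle) \to \Hom(\OO,\OO\langle i+1\rangle)$. The second formula is immediate by the same reasoning: $\Hom(\OO,\OO_0\langle i\rangle)$ is the degree-$i$ part of $k[x]/(x)$, which is $k$ in degree $0$ and zero elsewhere.

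For the remaining two formulas I use the Koszul resolution of $\OO_0$: the cofiber sequence
$$\OO\langle -1\rangle \xto{x} \OO \to \OO_0$$
(the same two-term complex used earlier for $\OO_D$, possibly with the twist sign flipped to match the convention). Applying $\Hom(-,F)$ gives
$$\Hom(\OO_0,F)\simeq \fib\big(\Hom(\OO,F)\xto{x}\Hom(\OO,F\langle 1\rangle)\big).$$

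Take $F=\OO\langle i\rangle$. For $i\ge 0$ the map $k\to k$ is an isomorphism, so the fiber vanishes. For $i\le -2$ both terms are zero. Only in the single boundary case $0\to k$ does the fiber become $k[-1]$.

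Take $F=\OO_0\langle i\rangle$. Multiplication by $x$ is zero on $\OO_0$, so the fiber is $\Hom(\OO,\OO_0\langle i\rangle)\oplus \Hom(\OO,\OO_0\langle i+1\rangle)[-1]$. This gives $k$ in the weight where $\OO_0\langle i\rangle$ has a degree-$0$ part, $k[-1]$ in the adjacent weight, and $0$ elsewhere.

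The main obstacle is purely bookkeeping: making the sign of the twist $\langle 1\rangle$, and the direction of the Koszul map, consistent with the stated answer. With the naive grading the boundary case lands at weight $-1$ rather than $1$. I would resolve this by pinning down the convention for $\OO\langle 1\rangle$, meaning which character $\OO_{B\Gm}\langle 1\rangle$ is and which weight $x$ has, so that the resolution reads $\OO\langle 1\rangle\to\OO\to\OO_0$, matching its use in the earlier gluing computation. I would then recheck each of the four cases against that single convention.
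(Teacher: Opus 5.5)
Your strategy is the paper's own: its proof just applies $\Hom$ to the two-term resolution $0\to\OO\langle 1\rangle\to\OO\to\OO_0\to 0$, and your fiber computations are correct. The gap is in the step you call bookkeeping. No choice of convention makes all four formulas true at once.

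Let $\ell\in\{\pm1\}$ be the twist with $x\in\Hom(\OO,\OO\langle\ell\rangle)$. Then:
\begin{itemize}
\item $\Hom(\OO,\OO\langle i\rangle)\neq 0$ exactly for $i\in\ell\,\mathbb{Z}_{\ge 0}$;
\item the resolution reads $\OO\langle-\ell\rangle\xto{x}\OO\to\OO_0$;
\item your computation puts the $k[-1]$ in both $\Hom(\OO_0,\OO\langle i\rangle)$ and $\Hom(\OO_0,\OO_0\langle i\rangle)$ at $i=-\ell$.
\end{itemize}
The first formula forces $\ell=1$, while the third and fourth force $\ell=-1$. Concretely, the paper's resolution exhibits $x$ as a nonzero element of $\Hom(\OO\langle 1\rangle,\OO)\simeq\Hom(\OO,\OO\langle -1\rangle)$, which the first formula says vanishes. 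So your final step will not close the argument. If you flip to the convention where the resolution is $\OO\langle 1\rangle\to\OO\to\OO_0$ and recheck, you recover the second, third and fourth formulas verbatim. But the first becomes $\Hom(\OO,\OO\langle i\rangle)=k$ for $i\le 0$ and $0$ otherwise.

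That corrected statement is what your argument, and the paper's, actually proves, and it is the version the rest of the paper relies on. The convention $\ell=-1$ is the one consistent with $i_*\adjto i^*\langle -1\rangle[-1]$. It is also the one needed for the semiorthogonality in \cref{thm:a1gmsod}. With $\ell=1$ one would get $\Hom(\OO_0\langle 1\rangle,\OO_0)\simeq\Hom(\OO_0,\OO_0\langle -1\rangle)=k[-1]\neq 0$. That is a backwards map from the component $i_*h_n^*\Perf(\pg)\langle 1\rangle$ to $i_*h_n^*\Perf(\pg)$, which semiorthogonality forbids. The first formula, by contrast, enters later only through the fact that each nonzero $\Hom$ between generators is a single copy of $k$, which does not depend on the sign. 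So end your proof by stating the sign correction to the first formula explicitly. Do not claim that all four formulas hold once the convention for $\OO\langle 1\rangle$ is fixed.
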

\begin{proof}
    The calculations follow from taking the resolution $0 \to \OO\langle 1 \rangle \to \OO \to \OO_0 \to 0$ in $\Perf(\ag)$.
\end{proof}

In the rest of \cref{sect:categorical}, any grading shift $\langle i \rangle$ on a functor is understood to be with respect to the $\Gm$-action on $\wtpg$ and $\wtag$ (as opposed to $\pg$ and $\ag$; note that the weights for the two actions differ by a factor of $n$).

\begin{lem}
    The maps $e_n^*\langle i \rangle \colon \Perf(\ag) \to \Perf(\wtag)$ and $i_*h_n^*\langle i \rangle \colon \Perf(\pg) \to \Perf(\wtag)$ are fully faithful.
\end{lem}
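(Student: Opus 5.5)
Since twisting by $\langle i\rangle$ is an autoequivalence of $\Perf(\wtag)$, it suffices to treat $i=0$. We check full faithfulness on generators and then pass to all of $\Perf$ by thick closure. $\Perf(\ag)$ is thick-generated by the line bundles $\OO\langle j\rangle$, and $\Perf(\pg)$ by the characters $\OO_{\pg}\langle j\rangle$. For exact functors between idempotent-complete stable categories, it is enough that the functor induces isomorphisms on $\Hom$ complexes between generators.

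\textbf{Step 1: $e_n^*$.} On $T$-points $e_n$ sends $(\mc L,s)$ to $(\mc L^{\otimes n},s^{\otimes n})$, so $e_n^*\OO\langle j\rangle=\OO\langle nj\rangle$, in the paper's convention that weights on $\wtag$ are measured by the $\Gm$ of $\wtag$. By \cref{lem:a1gmhoms},
\[
\Hom(\OO\langle j\rangle,\OO\langle j'\rangle)\simeq \Hom(\OO,\OO\langle j'-j\rangle),
\]
which is $k$ if $j'-j\ge 0$ and $0$ otherwise. The source $\Hom$ on the target side is $\Hom(\OO,\OO\langle n(j'-j)\rangle)$, which is $k$ exactly when $n(j'-j)\ge0$, i.e. under the same condition.

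It remains to check that the map is nonzero, hence an isomorphism of one-dimensional spaces. The generator is the monomial $x^{j'-j}$, and $e_n^*$ sends it to $x^{n(j'-j)}$ on $\A^1$, which is nonzero. This amounts to the statement that $e_n^*$ on graded rings is $k[x]\to k[x]$, $x\mapsto x^n$, with gradings scaled by $n$. It is a bijection on the relevant one-dimensional weight spaces.

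\textbf{Step 2: $i_*h_n^*$.} First, $h_n^*\OO\langle j\rangle=\OO\langle nj\rangle$ on $\wtpg$, and $i_*$ sends this to $\OO_0\langle nj\rangle$. By the adjunction $i^*\dashv i_*$ on $\wtag$,
\[
\Hom(i_*\OO\langle nj\rangle, i_*\OO\langle nj'\rangle)=\Hom_{\wtpg}(i^*i_*\OO\langle nj\rangle,\OO\langle nj'\rangle).
\]
Equivalently, by \cref{lem:a1gmhoms} applied to $\wtag$, this is $\Hom(\OO_0,\OO_0\langle n(j'-j)\rangle)$. It equals $k$ when $n(j'-j)=0$, equals $k[-1]$ when $n(j'-j)=1$, and vanishes otherwise.

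Since $n\ge2$, the case $n(j'-j)=1$ never occurs; the $\mathrm{Ext}^1$ term is exactly what dilation by $n$ removes. (If $n=1$ the statement is false for $i_*h_n^*$, so I assume $n\ge 2$.) On the source side, $\Hom_{\pg}(\OO\langle j\rangle,\OO\langle j'\rangle)$ is $k$ iff $j=j'$. In that case the map is the identity-to-identity map, which is nonzero. So the functor is fully faithful on generators, and hence fully faithful.

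\textbf{Main obstacle.} The only delicate point is bookkeeping of weights. One must verify that $e_n^*$ and $h_n^*$ multiply weights by exactly $n$ in the $\wtag$ convention. One must also check that the induced maps on one-dimensional $\Hom$ spaces are nonzero rather than merely having matching dimensions. I would settle both by the explicit graded-ring description $k[x]\to k[x]$, $x\mapsto x^n$, and the Koszul resolution $\OO\langle1\rangle\to\OO\to\OO_0$.
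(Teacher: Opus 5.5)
Your proposal is correct and follows the paper's proof. Like the paper, you check Hom complexes between generators (the line bundles $\OO\langle j\rangle$ and the characters of $\pg$) using \cref{lem:a1gmhoms}, and then pass to the thick closure. You make explicit two points the paper leaves implicit: the induced maps on the one-dimensional Hom spaces are nonzero, and the $k[-1]$ in $\Hom(\OO_0,\OO_0\langle 1\rangle)$ is avoided only because the image weights lie in $n\Z$ with $n\ge 2$. Your observation that the claim fails for $n=1$ is right, and it is harmless here, since for $n=1$ no copies of $\Perf(\pg)$ appear in \cref{thm:a1gmsod}.
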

\begin{proof}
    We check on $\OO \langle j \rangle$ and $\delta_0 \langle j\rangle$, which compactly generate the respective categories. The maps between such objects are either $0$ or $k$, and are preserved under $e_n^*\langle i \rangle$ or $i_*h_n^*\langle i \rangle$ by \cref{lem:a1gmhoms}.
\end{proof}

\begin{thm}\label{thm:a1gmsod}
    Consider the following list of functors.
    \begin{align*}
        e_n^* =: I_1 &:\Perf(\ag)\to \Perf(\wtag)\\
        i_*h_n^* =: I_2 &:\Perf(\pg)\to \Perf(\wtag)\\
        i_*h_n^*\langle1\rangle =: I_3 &:\Perf(\pg)\to \Perf(\wtag)\\
        \cdots\\
        i_*h_n^*\langle n-2\rangle =: I_n &:\Perf(\pg)\to \Perf(\wtag).
    \end{align*}

    This defines an inclusion SOD of $\Perf(\wtag)$ with the following properties.
    \begin{enumerate}
        \item\label{enumerate:admissible}  The SOD is infinitely admissible. That is, each inclusion functor admits all repeated adjoints.
        \item\label{enumerate:gluing} The gluing functors $I_j^LI_i$ are zero between nonadjacent components, and are otherwise $i^* \colon \Perf(\A^1/\Gm) \to \Perf(\pt/\Gm)$ and $\id \colon \Perf(\pt/\Gm) \to \Perf(\pt/\Gm)$.
        \item\label{enumerate:periodic} If we write $\mc{D}$ for the subcategory generated by all components but the first, then the SOD $\Perf(\wtag)=\langle \Perf(\ag),\mc{D}\rangle$ is $2n$-periodic.
        \item\label{enumerate:stable} Each inclusion functor and their repeated adjoints are naturally $\Perf(\ag)$-module functors.
    \end{enumerate}
\end{thm}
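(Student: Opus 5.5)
Everything reduces to explicit computations with compact generators, using \cref{lem:a1gmhoms} and its analogue on $\wtag$. Write $\OO$ for the structure sheaf of $\wtag$ and $\delta_0$ for the structure sheaf of $\wtpg \subset \wtag$. The category $\Perf(\wtag)$ is compactly generated by the objects $\OO\langle j\rangle$ for $j\in\Z$, and $\Perf(\pg)$ by the characters $k\langle j\rangle$.

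The functors act on generators as follows. First, $e_n^*(\OO\langle j\rangle)=\OO\langle nj\rangle$. Second, $i_*h_n^*(k\langle j\rangle)=\delta_0\langle nj\rangle$, so $I_{r+2}$ hits the objects $\delta_0\langle nj+r\rangle$ for $0\le r\le n-2$.

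Semiorthogonality comes from \cref{lem:a1gmhoms} applied on $\wtag$. There, $\Hom(\delta_0,\OO\langle i\rangle)$ is nonzero only for $i=1$. Hence there are no maps from $\delta_0\langle nj+r\rangle$ to $\OO\langle nj'\rangle$, since $nj'-nj-r\equiv -r \not\equiv 1 \pmod n$ when $0\le r\le n-2$ (for $n=1$ the statement is vacuous). Likewise $\Hom(\delta_0\langle a\rangle,\delta_0\langle b\rangle)$ is nonzero only for $b-a\in\{0,1\}$. So maps from the block of residue $r'$ to the block of residue $r$ with $r'>r$ vanish, because $r-r'\in\{0,1\}$ is impossible modulo $n$ in the range $0\le r<r'\le n-2$.

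Generation comes from the triangles $\OO\langle i+1\rangle\to\OO\langle i\rangle\to\delta_0\langle i\rangle$. Starting from $\OO\langle nj\rangle$ and the objects $\delta_0\langle nj+r\rangle$, one inductively obtains every $\OO\langle nj-r-1\rangle$ for $r\le n-2$, and hence every $\OO\langle i\rangle$.

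For (1), each component is equivalent to $\Perf(\ag)$ or $\Perf(\pg)$. Both are smooth and proper in the graded sense over $\Perf(\pg)$, with Serre-type functors given by twists and shifts. So the inclusions have all iterated adjoints, which can be written explicitly: $e_n^*$ has right adjoint $e_{n*}$, whose own adjoints come from Grothendieck duality for the finite flat map $e_n$, and $i_*h_n^*$ has adjoints $h_{n*}i^!$, $h_{n*}i^*$, and so on. Alternatively, it suffices to check that each Hom between generators is finite-dimensional in total and that the twist functors permute the SOD up to mutation.

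For (4), note that $\Perf(\ag)$ acts through $e_n^*$ by tensoring. The projection formula for $e_n$ and for $i\circ h_n$ (which factors as $e_n\circ i$) shows that each inclusion is a $\Perf(\ag)$-module functor. Adjoints of module functors between dualizable modules are lax-linear, and they are strictly linear because the acting objects are dualizable.

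The main computation is (2), the gluing functors $I_j^L I_i$. For $I_1\to I_2$ one computes $I_2^L e_n^*$; by adjunction and \cref{lem:a1gmhoms}, on generators this sends $\OO\langle j\rangle$ to $k\langle j\rangle$, which is $i^*$, compatibly with the module structure. For $I_{r+2}\to I_{r+3}$ one computes the left adjoint applied to $\delta_0\langle nj+r\rangle$. The only nonzero maps out go to $\delta_0\langle nj+r+1\rangle$, with Hom $k[-1]$, so the gluing functor is $[1]$ up to twist, which is abstractly $\id$ as in \cref{subsect:abstractgluing1}. Nonadjacent gluing functors vanish by the same Hom computation. The step I expect to be most delicate is identifying these functors naturally, as $\Perf(\ag)$-module functors rather than just on objects. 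To handle this I would express each gluing functor as a kernel and use the module-linearity from (4) to reduce to computing its value on the unit.

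For (3), I would use the mutation rules from \cref{subsect:abstractgluing1}, or more directly the twist. The functor $\otimes\OO\langle n\rangle = e_n^*(\OO\langle1\rangle)\otimes-$ preserves every component. The functor $\otimes\OO\langle 1\rangle$ maps $\Perf(\wtag)=\langle \Perf(\ag),\mc D\rangle$ to its mutated form, because the window argument of \cref{thm:mainresult} (with $m=1$) goes through verbatim here. This gives $2n$ mutations returning the original decomposition.
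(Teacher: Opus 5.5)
Your proposal is correct and is essentially the paper's argument: compact generators plus \cref{lem:a1gmhoms} for semiorthogonality, generation and the gluing functors, explicit adjoint chains for infinite admissibility (the paper uses $i^*\dashv i_*\dashv i^*\langle-1\rangle[-1]$ and $e_n^*\dashv e_{n,*}\dashv e_n^*\langle -n+1\rangle$), and the projection formula for linearity. Three adjustments are needed: in (3), drop the appeal to the window argument of \cref{thm:mainresult}, which is set up for a scheme $X$ and does not literally apply to $\ag$, and instead note as the paper does that \cref{lem:a1gmhoms} directly gives $\Perf(\wtag)=\langle\mc{D},e_n^*\Perf(\ag)\langle n-1\rangle\rangle$, so two mutations amount to a twist by $\langle n-1\rangle$ and $\langle n\rangle$ fixes every component; the ``Serre functor'' aside in (1) is wrong because $\Perf(\ag)$ is not proper, though your explicit adjoints make it unnecessary; and you should state explicitly that each $I_j$ is fully faithful, which the same lemma gives.
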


\begin{proof} Note that $\Perf(\wtag)$ is generated by $\OO\langle i \rangle$ for all $i$, and $e_n^*\Perf(\A^1/\Gm)$ is generated by $\OO \langle in \rangle_{i \in \Z}$, while $i_*h_n^*\Perf(\pt/\Gm)\langle k \rangle$ is generated by $\OO_0\langle in+k \rangle_{i \in \Z}$. The SOD then follows from \cref{lem:a1gmhoms}.

To prove Statement~\ref{enumerate:admissible} it suffices to show that $e_n^*,h_n^*$ and $i_*$ admit all repeated adjoints. First, $h_n^*$ is left and right adjoint to $h_{n,*}$ so we have all repeated adjoints. A standard calculation shows that
\[i^*\adjto i_*\adjto i^*\langle-1\rangle[-1],\]
so $i_*$ has all repeated adjoints. The functor $e_n^*$ admits a right adjoint $e_{n,*}$, as this pushforward preserves coherent ($\iff$ perfect) complexes, by properness of $e_n$. Another standard calculation shows that
\[e_n^*\adjto e_{n,*}\adjto e_n^*\langle-n+1\rangle,\]
so $e_n^*$ admits all repeated adjoints.

The calculations of statement~\ref{enumerate:gluing} are straightforward.

The proof of statement~\ref{enumerate:periodic} is identical in structure to that of \cite[Theorem 4.3]{BD} and we reproduce it here. The above calculation shows that we also have an SOD
\begin{align*}
\Perf(\wtag)&=\langle i_*h_n^*\Perf(\pt/\Gm), i_*h_n^*\Perf(\pt/\Gm)\langle 1 \rangle, \dots, i_*h_n^*\Perf(\pt/\Gm)\langle n-2\rangle , e_n^*\Perf(\A^1/\Gm)\langle n-1 \rangle\rangle\\
&= \langle\mc{D},e_n^*\Perf(\ag)\langle n-1\rangle\rangle.
\end{align*}
So
\begin{align*}
    \Perf(\wtag)&=\langle e_n^*\Perf(\ag),\mc{D}\rangle\\
    &=\langle\mc{D},e_n^*\Perf(\ag)\langle n-1\rangle\rangle\\
    &=\langle e_n^*\Perf(\ag)\langle n-1\rangle,\mc{D}\langle n-1\rangle\rangle.
\end{align*}
This is the original SOD twisted by $\langle n-1 \rangle$, so $2n$ total mutations returns the original SOD.

For statement~\ref{enumerate:stable}, it suffices to produce $\Perf(\ag)$-linear structures on $i_*,h_{n,*},e_{n,*}$, and its adjoints $i^*,h_n^*,e_n^*$. The pullbacks are easy to do. The linearity structure on pushforward functors is exactly the base change formula. For example, given $\mc{F}\in\Perf(\ag)$,
\[e_n^*\mc{F}\otimes (i_*-)\simeq i_*((i^*e_n^*\mc{F})\otimes-).\]
\end{proof}

\begin{cor}\label{cor:abstractSODstar}
    Let $\Phi$ be an object of $\Perft(\A^1/\Gm)$. Define 
    \[\Psi^* :=\Perf(\pg)\otimes_{\Perf(\ag)} \Phi\] and consider the (lax-commuting) diagram of categories,
\begin{equation}\label{eqn:starsquare}
\begin{tikzcd}
	{\sqrt[n]{\Psi^*}} & {\sqrt[n,*]{\Phi}} \\
	{\Psi^*} & {\Phi}.
	\arrow["i_*", from=1-1, to=1-2]
	\arrow["h_n^*", from=2-1, to=1-1]
	\arrow["{e_n^*}"', from=2-2, to=1-2]
	\arrow["i_*", from=2-1, to=2-2]
\end{tikzcd}
\end{equation}
    Then the underlying category $\Gamma(\wtag,\sqrt[n,*]{\Phi})$ admits a $2n$-periodic SOD
    \[\Gamma(\wtag,\sqrt[n,*]{\Phi})=\langle e_n^*\Phi,i_*h_n^* \Psi^*,(i_*h_n^* \Psi^*)\langle1\rangle,\dots,(i_*h_n^* \Psi^*)\langle n-2\rangle\rangle.\]

    In other words, the components are $\Phi$ and $\Psi^*$ and the inclusion functors are obtained by applying $-\otimes_{\Perf(\ag)}\Phi$ to the $I_j$. In particular, the base-changed analogs of the first three statements in \cref{thm:a1gmsod} hold as well.
\end{cor}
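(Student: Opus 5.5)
The plan is to obtain the SOD of $\Gamma(\wtag,\sqrt[n,*]{\Phi})$ by applying the functor $-\otimes_{\Perf(\ag)}\Phi$ to the SOD of \cref{thm:a1gmsod}, viewed as an SOD of $\Perf(\ag)$-modules. First, by statement~\ref{enumerate:stable} of \cref{thm:a1gmsod}, all inclusion functors $I_j$ and their repeated adjoints are $\Perf(\ag)$-module functors. Hence the SOD of $\Perf(\wtag)$ is an SOD internal to $\Perft(\ag)$: the components $\Perf(\ag)$ and $\Perf(\pg)$ are $\Perf(\ag)$-modules, and the projection triangles are triangles of $\Perf(\ag)$-linear endofunctors. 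The general preservation result \cref{thm:preserve} from the appendix then says that $-\otimes_{\Perf(\ag)}\Phi$ carries an $\mc{A}$-stable admissible SOD to an SOD. I would verify its hypotheses directly: the semiorthogonality vanishing $I_j^R I_i\simeq 0$ for $i<j$ holds as an identity of module functors, so it persists after tensoring; the projection cofiber sequences $I_jI_j^R\to\id\to\cdots$ likewise persist, which gives generation.

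Next I identify the components. We have $\Perf(\ag)\otimes_{\Perf(\ag)}\Phi\simeq\Phi$. For the other components, $\Perf(\pg)\otimes_{\Perf(\ag)}\Phi=\Psi^*$ by definition. The functors $I_j\otimes\Phi$ are then $e_n^*$ and $i_*h_n^*\langle k\rangle$ as in \eqref{eqn:starsquare}. Here $i_*$ on the bottom row is $i_*\otimes\Phi$, and $h_n^*$ is the tensor with $\Perf(\wtpg)$, using $\sqrt[n]{\Psi^*}=\Perf(\wtpg)\otimes_{\Perf(\pg)}\Psi^*$ together with associativity of relative tensor products. The twist $\langle k\rangle$ is the action of $\OO_{\wtag}\langle k\rangle$ through the left $\Perf(\wtag)$-module structure, which commutes with tensoring over $\Perf(\ag)$ on the right.

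For the remaining properties, full faithfulness follows from the fact that the unit $\id\to I_j^RI_j$ is an equivalence of module functors, which is preserved by tensoring. Infinite admissibility follows because all repeated adjoints are linear (statement~\ref{enumerate:stable}), so their base changes remain adjoint; 2-functoriality of $-\otimes_{\Perf(\ag)}\Phi$ preserves units, counits and the triangle identities. The gluing functors are computed from adjoints of linear functors, so they base change to $i^*\otimes\Phi$, $\id$ and $0$. Periodicity comes from the same argument as in \cref{thm:a1gmsod}: the equality $\langle e_n^*\Perf(\ag),\mc{D}\rangle=\langle\mc{D},e_n^*\Perf(\ag)\langle n-1\rangle\rangle$ is an equality of SODs by linear subcategories, so it base changes, and the twist by $\langle n-1\rangle$ acts compatibly on the base-changed SOD.

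The main obstacle is the preservation step, namely that relative tensor product preserves semiorthogonal decompositions and their gluing data. This is where the infinite admissibility and linearity of all adjoints are needed, since without them the adjoints would not base change. I would handle it by presenting the SOD as a lax limit or recollement diagram of linear functors and using that $-\otimes_{\Perf(\ag)}\Phi$ is a 2-functor preserving adjunctions, as packaged in \cref{thm:preserve}.
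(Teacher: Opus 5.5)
Your proposal is correct and follows the paper's route: you base-change the SOD of \cref{thm:a1gmsod} along $-\otimes_{\Perf(\ag)}\Phi$, using the $\Perf(\ag)$-linearity of statement~(4) and the first part of \cref{thm:preserve}, which is exactly what the paper's one-line proof cites. Your treatment of the components, full faithfulness, adjoints, gluing functors and periodicity fills in the details that this citation leaves implicit.
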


\begin{proof}[Proof of \cref{cor:abstractSODstar}]
These follow from \cref{thm:a1gmsod} together with the definition of categorical root stack and the first part of \cref{thm:preserve}.
\end{proof}

We can also use \cref{thm:a1gmsod} to produce an SOD for $\Gamma(\wtag,\sqrt[n,!]{\Phi})$ (which is naturally a projection SOD with maps $(- \circ I_j)$). However this SOD has the components reversed compared to the one for $\sqrt[n, !]{\Phi}$. We use another SOD of $\Perf(\wtag)$ (naturally written as a projection SOD) to give our desired SOD of $\sqrt[n,!]{\Phi}$.

\begin{thm}\label{thm:a1gmsod2}
    Consider the following list of functors.
    \begin{align*}
        e_{n,*} =: P_1 &:\Perf(\wtag)\to \Perf(\ag)\\
        h_{n,*}i^* =: P_2 &:\Perf(\wtag)\to \Perf(\pg)\\
        h_{n,*}i^*\langle1\rangle =: P_3 &:\Perf(\wtag)\to \Perf(\pg)\\
        \cdots\\
        h_{n,*}i^*\langle n-2\rangle =: P_n &:\Perf(\wtag)\to \Perf(\pg).
    \end{align*}

    This defines an SOD of $\Perf(\wtag)$ via projection with the following properties.
    \begin{enumerate}
        \item\label{enumerate:admissible2}  The SOD is infinitely admissible. That is, each projection functor admits all repeated adjoints.
        \item\label{enumerate:gluing2} The gluing functors $P_iP_j^R$ are zero between nonadjacent components, and are otherwise $i_* \colon \Perf(\pg) \to \Perf(\ag)$ and $\id \colon \Perf(\pt/\Gm) \to \Perf(\pt/\Gm)$.
        \item\label{enumerate:periodic2} If we write $\mc{D}$ for the subcategory generated by all components but the first, then the SOD $\Perf(\wtag)=\langle \Perf(\ag),\mc{D}\rangle$ is $2n$-periodic.
        \item\label{enumerate:stable2} Each projection functor and their repeated adjoints are naturally $\Perf(\ag)$-module functors.
    \end{enumerate}
\end{thm}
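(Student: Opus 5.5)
The plan is to reduce the statement to \cref{thm:a1gmsod} by passing to adjoints. A projection SOD given by functors $P_j$ that admit fully faithful left adjoints is the same data as an inclusion SOD with inclusion functors $P_j^L$, with the order of components fixed by the convention of \cite{DKSS}. So first I will compute the left adjoints of the $P_j$ from the adjunctions recorded in the proof of \cref{thm:a1gmsod}.

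\textbf{Step 1: compute the left adjoints.}
\begin{itemize}
\item From $e_n^*\adjto e_{n,*}$ we get $P_1^L=e_n^*$.
\item From $i_*\adjto i^*\langle -1\rangle[-1]$, the left adjoint of $i^*$ is $i_*\langle 1\rangle[1]$.
\item $h_n^*$ is left adjoint to $h_{n,*}$.
\end{itemize}
Combining these, for $P_{k+2}=h_{n,*}i^*\langle k\rangle$ we get
\[
P_{k+2}^L \simeq i_*h_n^*\langle 1-k\rangle[1], \qquad 0\le k\le n-2 .
\]
The shift $[1]$ is irrelevant at the level of subcategories. Thus the candidate components are
\[
e_n^*\Perf(\ag),\quad i_*h_n^*\Perf(\pg)\langle 1\rangle,\quad i_*h_n^*\Perf(\pg)\langle 0\rangle,\quad \dots,\quad i_*h_n^*\Perf(\pg)\langle 3-n\rangle .
\]
These use $n-1$ consecutive twists. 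Full faithfulness is the lemma preceding \cref{thm:a1gmsod}.

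\textbf{Step 2: semiorthogonality and generation.}
I will test on the generators $\OO\langle in\rangle$ and $\OO_0\langle in+k\rangle$ using \cref{lem:a1gmhoms}, exactly as in the proof of \cref{thm:a1gmsod}. The required vanishings reduce to two facts:
\begin{itemize}
\item $\Hom(\OO_0,\OO_0\langle i\rangle)$ is nonzero only for $i\in\{0,1\}$;
\item the pairs involving $\OO$ are controlled by the remaining cases of \cref{lem:a1gmhoms}.
\end{itemize}
Here the main obstacle is bookkeeping rather than mathematics. I must match the direction of the projection-SOD convention, so that the decreasing twist order is the admissible one. If it turns out the other way, I will instead use right adjoints for the first component, or use the duality $\HHom(-,\OO)$ on $\Perf(\wtag)$, which reverses SODs. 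Generation holds because every residue class of twists modulo $n$ other than one is represented, together with $e_n^*$, as before.

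\textbf{Step 3: the listed properties.}
\begin{itemize}
\item Infinite admissibility (\ref{enumerate:admissible2}) is immediate, since $e_{n,*}$, $h_{n,*}$ and $i^*$ all have all repeated adjoints by the computations in \cref{thm:a1gmsod}.
\item For the gluing functors (\ref{enumerate:gluing2}), I compute $P_iP_j^R$ directly. Base change gives $e_{n,*}i_*h_n^*\simeq i_*h_{n,*}h_n^*$; the $\langle 1\rangle$-twisted summand picks out a single weight component, which yields $i_*$. Next, $i^*i_*\simeq \id\oplus\langle -1\rangle[1]$, which yields $\id$ between adjacent copies of $\Perf(\pg)$ and $0$ otherwise. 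Alternatively, I can transport the gluing functors of \cref{thm:a1gmsod} through adjunction, using the mutation rules of \cref{thm:mutation}.
\item Periodicity (\ref{enumerate:periodic2}) follows by the same argument as in \cref{thm:a1gmsod}: the twist $\langle n\rangle$ preserves each component, and one full rotation of mutations twists the SOD by a fixed $\langle c\rangle$.
\item For $\Perf(\ag)$-linearity (\ref{enumerate:stable2}), I use the projection formula for $e_{n,*}$, $h_{n,*}$ and $i^*$.
\end{itemize}
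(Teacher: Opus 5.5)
Your overall strategy is the right one and matches what the paper means by ``similar to \cref{thm:a1gmsod}'': pass to adjoints, use \cref{prop:equivalentprojectionSOD} to turn the projection SOD into the reversed inclusion SOD $\langle P_n^L(\cdot),\dots,P_1^L(\cdot)\rangle$, and check it on the generators of \cref{lem:a1gmhoms}. However, Step 1 contains a sign error, and because of it Step 2 fails as written. From $i_*\adjto i^*\langle -1\rangle[-1]$ one gets $(i^*)^L\simeq i_*\langle -1\rangle[-1]$, not $i_*\langle 1\rangle[1]$. Indeed, $\Hom(i_*(x\langle -1\rangle[-1]),y)\simeq \Hom(x\langle -1\rangle[-1],i^*y\langle -1\rangle[-1])\simeq\Hom(x,i^*y)$. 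Concretely, $\Hom(\OO_0\langle 1\rangle[1],\OO)=0$ by \cref{lem:a1gmhoms}, whereas $\Hom(\OO_{\wtpg},i^*\OO)=k$. Hence $P_{k+2}^L\simeq i_*h_n^*\langle -1-k\rangle[-1]$, so the twists are $-1,\dots,-(n-1)$, which are exactly the nonzero residues mod $n$. Your twists $1,0,\dots,3-n$ include $\langle 0\rangle$ when $n\ge 3$. Since $\Hom(\OO,\OO_0)=k$, this forbids $e_n^*\Perf(\ag)$ from being the last component of $\langle P_n^L(\cdot),\dots,P_1^L(\cdot)\rangle$. For $n\ge 4$ your list also includes $\langle -1\rangle$, and $\Hom(\OO_0\langle -1\rangle,\OO)\neq 0$, so $e_n^*\Perf(\ag)$ cannot be first either. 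Your hedges only address the ordering, so they do not repair this: the candidate subcategories themselves are wrong.

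With the corrected twists, nothing new needs checking. The SOD $\langle i_*h_n^*\Perf(\pg)\langle 1-n\rangle,\dots,i_*h_n^*\Perf(\pg)\langle -1\rangle,e_n^*\Perf(\ag)\rangle$ is the second SOD $\langle\mc{D},e_n^*\Perf(\ag)\langle n-1\rangle\rangle$ from the proof of \cref{thm:a1gmsod}, twisted by $\langle 1-n\rangle$; this uses that $e_n^*\Perf(\ag)$ is $\langle n\rangle$-stable. Right adjoints avoid the pitfall altogether and give the same result: $P_1^R\simeq e_n^*\langle 1-n\rangle$ and $P_{k+2}^R\simeq i_*h_n^*\langle -k\rangle$ yield that SOD twisted by $\langle 2-n\rangle$. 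The same slip recurs in Step 3. Restricting $\OO\langle 1\rangle\to\OO$ to the origin gives $i^*i_*\simeq\id\oplus\langle 1\rangle[1]$, not $\id\oplus\langle -1\rangle[1]$. With your formula, $P_{k+2}P_{k+3}^R\simeq h_{n,*}\langle k\rangle i^*i_*\langle -k-1\rangle h_n^*$ would vanish for $n\ge 3$. With the correct formula it equals $h_{n,*}(\langle -1\rangle\oplus[1])h_n^*\simeq[1]$, which is abstractly the identity. Likewise, $P_1P_2^R\simeq e_{n,*}i_*h_n^*\simeq i_*h_{n,*}h_n^*\simeq i_*$ simply because $h_{n,*}h_n^*\simeq\id$; no weight selection is involved.
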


\begin{proof}
    The proof is similar to that of \cref{thm:a1gmsod}.
\end{proof}

\begin{cor}\label{cor:abstractSODshriek}
    Let $\Phi$ be an object of $\Perft(\A^1/\Gm)$. Define 
    \[\Psi^! :=\Hom_{\Perf(\ag)}(\Perf(\pg),\Phi)\] and consider the (lax-commuting) diagram of categories,
\begin{equation}\label{eqn:shrieksquare}
\begin{tikzcd}
	{\sqrt[n]{\Psi^!}} & {\sqrt[n,!]{\Phi}} \\
	{\Psi^!} & {\Phi}.
	\arrow["-\circ i^*", from=1-1, to=1-2]
	\arrow["-\circ h_{n,*}", from=2-1, to=1-1]
	\arrow["-\circ P_1"', from=2-2, to=1-2]
	\arrow["- \circ i^*", from=2-1, to=2-2]
\end{tikzcd}
\end{equation}
    Then the underlying category $\Gamma(\wtag,\sqrt[n,!]{\Phi})$ admits a $2n$-periodic SOD
    \[\Gamma(\wtag,\sqrt[n,!]{\Phi})=\langle (-\circ P_1)\Phi,(-\circ P_2) \Psi^!,(-\circ P_3) \Psi^!,\dots,(-\circ P_n) \Psi^!\rangle.\]

    In other words, the components are $\Phi$ and $\Psi^!$ and the inclusion functors are $(- \circ P_j)$. In particular, the base-changed analogs of the first three statements in \cref{thm:a1gmsod2} hold as well.
\end{cor}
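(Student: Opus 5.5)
The plan is to mirror the proof of \cref{cor:abstractSODstar}, replacing $-\otimes_{\Perf(\ag)}\Phi$ with the contravariant functor $\Hom_{\Perf(\ag)}(-,\Phi)$ and starting from the projection SOD of \cref{thm:a1gmsod2} in place of the inclusion SOD of \cref{thm:a1gmsod}. By definition, $\Gamma(\wtag,\sqrt[n,!]{\Phi})=\Hom_{\Perf(\ag)}(\Perf(\wtag),\Phi)$. By statement~\ref{enumerate:stable2} of \cref{thm:a1gmsod2}, each projection $P_j$ and each of its repeated adjoints is a $\Perf(\ag)$-module functor. Hence precomposition $(-\circ P_j)$ gives a functor from $\Hom_{\Perf(\ag)}(\Perf(\ag),\Phi)\simeq\Phi$ (for $j=1$), or from $\Hom_{\Perf(\ag)}(\Perf(\pg),\Phi)=\Psi^!$ (for $j\ge 2$), to $\Gamma(\wtag,\sqrt[n,!]{\Phi})$.

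The first step is to check that these are fully faithful and form an SOD in the stated order. Here I would invoke the second ($\Hom$) part of \cref{thm:preserve}: an $\mc{A}$-linear projection SOD whose functors admit $\mc{A}$-linear adjoints is sent by $\Hom_{\mc{A}}(-,\Phi)$ to an inclusion SOD. Precomposition reverses the direction of arrows, so a projection SOD becomes an inclusion SOD. The adjunctions $P_j^R\adjto P_j$, or more precisely the unit and counit identities $P_jP_j^R\simeq\id$ on the components, transport to $(-\circ P_j^R)\circ(-\circ P_j)\simeq\id$, which gives full faithfulness. The vanishing $P_iP_j^R=0$ for the appropriate ordering transports to semiorthogonality. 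Generation follows because $\id_{\Perf(\wtag)}$ is built from the $P_j^RP_j$ (or $P_j^LP_j$) through the fiber sequences of the SOD. Precomposing any $\mc{A}$-linear functor with these sequences then exhibits it as an iterated extension of objects in the images of the $(-\circ P_j)$. One point to check carefully is the ordering: contravariance reverses semiorthogonality once, and passing from projections to inclusions reverses it once more. This is why \cref{thm:a1gmsod2} was chosen over \cref{thm:a1gmsod}, and I would verify that the resulting order is $\langle \Phi,\Psi^!,\dots,\Psi^!\rangle$.

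Next, I would identify $(-\circ P_j)$ for $j\ge2$ with $(-\circ i^*)\circ(-\circ h_{n,*})\circ\langle j-2\rangle$, which matches the lax square \eqref{eqn:shrieksquare}. This is formal from $P_j=h_{n,*}i^*\langle j-2\rangle$, where $\langle j-2 \rangle$ acts on $\Perf(\wtag)$ linearly over $\Perf(\ag)$. It explains the phrase ``components $\Phi$ and $\Psi^!$.''

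Finally, the base-changed versions of statements~\ref{enumerate:admissible2}--\ref{enumerate:periodic2} follow. Infinite admissibility holds because adjoints of $P_j$ induce adjoints of $(-\circ P_j)$, with left and right swapped. The gluing functors are precomposition with the gluing functors $i_*$, $\id$, and $0$. For periodicity, the $2n$ mutations of \cref{thm:a1gmsod2}, including the twist identification $\langle n-1\rangle$, are $\Perf(\ag)$-linear equivalences of SODs and are therefore transported by $\Hom_{\Perf(\ag)}(-,\Phi)$. I expect the main obstacle to be the $2$-functoriality needed in \cref{thm:preserve}: the adjunction data and the mutation equivalences must be coherently $\Perf(\ag)$-linear, not merely linear objectwise, so that $\Hom_{\Perf(\ag)}(-,\Phi)$ can act on them. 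I would handle this by using that, for dualizable modules, adjoints of linear functors are automatically linear (lax linear structures on adjoints being invertible).
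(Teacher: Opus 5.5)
Your proposal follows the paper's proof: the paper deduces this corollary by applying $\Hom_{\Perf(\ag)}(-,\Phi)$, via the last part of \cref{thm:preserve}, to the $\Perf(\ag)$-linear projection SOD of \cref{thm:a1gmsod2}; that part of \cref{thm:preserve} turns a projection SOD into an inclusion SOD with inclusions $(-\circ p_j)$ in the same order, and your checks of full faithfulness, semiorthogonality, generation and the transported gluing and periodicity data unpack that step. One small correction: adjoints of $\Perf(\ag)$-linear functors are automatically linear because $\Perf(\ag)$ is rigid (its objects are dualizable), not because the modules are dualizable, although statement~\ref{enumerate:stable2} of \cref{thm:a1gmsod2}, which you already cite, gives you this directly.
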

\begin{proof}
    The proof is similar to that of \cref{cor:abstractSODstar}. In this case we need to use the last part of \cref{thm:preserve}.
\end{proof}

\begin{rem}\label{rem:projectionSODcategoricalrootstack}
    We can also get SODs via projection of (omitting global sections notation for simplicity) $\sqrt[n, !]{\Phi}$ and $\sqrt[n, *]{\Phi}$ by applying \cref{thm:preserve} to \cref{thm:a1gmsod} and \cref{thm:a1gmsod2}, respectively.
\end{rem}

\subsection{SODs for root stacks}
We now deduce the sought-for SOD for geometric root stacks.

\begin{thm}\label{thm:categoricalSOD}
    Let $X$ be a perfect stack with a map $f:X\to \A^1/\Gm$ (corresponding to $D$). Then we have an SOD
    $$\Perf(\sqrt[n]{X/D})=\langle e_n^*\Perf(X), i_*h_n^*\Perf(D), \dots, i_*h_n^*\Perf(D)\langle n-2 \rangle \rangle,$$.
    
    If (instead) $X$ is a locally finite type $\ag$-stack, then we have the SOD $$\Coh(\sqrt[n]{X/D})=\langle e_n^*\Coh(X), i_*h_n^*\Coh(D), \dots, i_*h_n^*\Coh(D)\langle n-2 \rangle \rangle.$$

    In both cases, the gluing functors are zero between nonadjacent components, and are otherwise $i^*$ (for the first two components) and abstractly $\id$ (for the other adjacent pairs).
\end{thm}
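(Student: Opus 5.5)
The plan is to specialize \cref{cor:abstractSODstar} and \cref{cor:abstractSODshriek} to $\Phi=\ul{\Perf}(X)$ and $\Phi=\ul{\Coh}(X)$, and then identify every abstract component and functor with its geometric counterpart using \cref{prop:compatiblesmall} and the integral transform results of \cite{BZFN,BZNP}.

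\textbf{Perfect case.} Take $\Phi=\ul{\Perf}(X)\in\Perft(\ag)$. By \cref{prop:compatiblesmall},
\[
\sqrt[n,*]{\Phi}\simeq\ul{\Perf}(X\times_{\ag}\wtag)=\ul{\Perf}(\sqrt[n]{X/D}).
\]
By \cite{BZFN}, applied to $\pg\to\ag\leftarrow X$,
\[
\Psi^*=\Perf(\pg)\otimes_{\Perf(\ag)}\Perf(X)\simeq\Perf(D),
\]
where $D=X\times_{\ag}\pg$ is the derived zero locus of $s$. In the same way, $\sqrt[n]{\Psi^*}=\Perf(\wtpg)\otimes_{\Perf(\pg)}\Perf(D)$ is $\Perf$ of the $\mu_n$-gerbe $D\times_{\pg}\wtpg$. \cref{cor:abstractSODstar} then gives the SOD, its periodicity and its gluing functors. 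The functors in \eqref{eqn:starsquare} are $I_j\otimes_{\Perf(\ag)}\Phi$. Under the \cite{BZFN} equivalences these become the geometric $e_n^*$, $i_*$, $h_n^*$, since pullbacks and pushforwards along base-changed maps are computed by tensoring kernels. Likewise the gluing functors $i^*\otimes\Phi$ and $\id\otimes\Phi$ become $i^*\colon\Perf(X)\to\Perf(D)$ and the identity. To justify that the gluing functors tensor correctly, I would use item \ref{enumerate:stable} of \cref{thm:a1gmsod}: all inclusions and adjoints are $\Perf(\ag)$-linear, so $I_j^L I_i$ base changes to the corresponding composite, which is exactly the content of \cref{thm:preserve}.

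\textbf{Coherent case.} Take $\Phi=\ul{\Coh}(X)$. By \cref{prop:compatiblesmall}, which relies on \cref{cor:dmhom} because $e_n$ is a proper relative DM map rather than a representable one,
\[
\sqrt[n,!]{\Phi}\simeq\ul{\Coh}(\sqrt[n]{X/D}).
\]
Since $i\colon\pg\to\ag$ is a proper closed immersion, \cite{BZNP} gives $\Psi^!\simeq\Coh(D)$. Similarly, $\Hom_{\Perf(\pg)}(\Perf(\wtpg),\Coh(D))$ is $\Coh$ of the gerbe, using that $h_n$ is proper DM. \cref{cor:abstractSODshriek} then supplies the SOD, with inclusion functors $(-\circ P_j)$ and gluing functors obtained by applying $\Hom(-,\Phi)$ to the gluing data of \cref{thm:a1gmsod2}.

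The main obstacle is identifying $(-\circ P_1)=(-\circ e_{n,*})$ with geometric $e_n^*$, and $(-\circ h_{n,*}i^*\langle j\rangle)$ with $i_*h_n^*\langle j\rangle$, on $\Coh$. The difficulty is that the \cite{BZNP} equivalence is a Hom-type equivalence given by kernels, so precomposition with a pushforward corresponds to a $*$-pullback or $*$-pushforward only after tracking the relevant adjunctions. I would try to check this on kernels directly: $e_{n,*}$ corresponds to the graph of $e_n$, and under $\Hom_{\Perf(\ag)}(\Perf(\wtag),\Coh(X))\simeq\Coh(\sqrt[n]{X/D})$ precomposition by it is pullback along the base-changed map. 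For the $\Coh$ gluing functors, precomposition with $i_*$ should dualize to $i^*\colon\Coh(X)\to\Coh(D)$. That $i^*$ preserves coherence holds because $D$ is a derived Cartier divisor, so $i$ has finite Tor-dimension. The remaining gluing functors are $\id$ up to the twist $\langle1\rangle[1]$, which is why the statement says ``abstractly $\id$''.
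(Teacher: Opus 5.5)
Your proposal is correct and follows essentially the same route as the paper. You specialize \cref{cor:abstractSODstar} and \cref{cor:abstractSODshriek} to $\Phi=\ul{\Perf}(X)$ and $\Phi=\ul{\Coh}(X)$, identify the abstract squares geometrically via \cref{prop:compatiblesmall}, and in the $\Coh$ case match $(-\circ e_{n,*})\simeq e_n^*$, $(-\circ h_{n,*})\simeq h_n^*$ and $(-\circ i^*)\simeq i_*$ through the \cite{BZNP} kernel identification. Your ``check on kernels'' step is exactly the paper's computation $\mc{K}\otimes f^*e_{n,*}(-)\simeq \mc{K}\otimes e_{n,*}\tilde{f}^*(-)\simeq e_{n,*}(e_n^*\mc{K}\otimes\tilde{f}^*(-))$ by base change and the projection formula, and your identification of the first gluing functor as $(-\circ i_*)\simeq i^*$ is a slightly more precise form of the paper's closing remark.
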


\begin{proof}
    The first statement follows from \cref{cor:abstractSODstar} by taking $\Phi=\ul{\Perf}(X)$. Note that we use \cref{prop:compatiblesmall} to see that the relevant diagram (\ref{eqn:starsquare}) of categories is
    \[\begin{tikzcd}
	{\Perf(\sqrt[n]{\OO_D(D)})} & {\Perf(\sqrt[n]{X/D})} \\
	{\Perf(D)} & {\Perf(X).}
	\arrow["i_*", from=1-1, to=1-2]
	\arrow["h_n^*", from=2-1, to=1-1]
	\arrow["{e_n^*}"', from=2-2, to=1-2]
	\arrow["i_*", from=2-1, to=2-2]
    \end{tikzcd}\]

    The second statement follows from \cref{cor:abstractSODshriek} by taking $\Phi = \ul{\Coh}(X)$, via \cref{prop:compatiblesmall} in a similar manner. We compute the square (\ref{eqn:shrieksquare}) to be
    \[\begin{tikzcd}
    	{\Coh(\sqrt[n]{\OO_D(D)})} & {\Coh(\sqrt[n]{X/D})} \\
    	{\Coh(D)} & {\Coh(X)}.
    	\arrow["-\circ i^*", from=1-1, to=1-2]
    	\arrow["-\circ h_{n,*}", from=2-1, to=1-1]
    	\arrow["-\circ e_{n,*}"', from=2-2, to=1-2]
    	\arrow["- \circ i^*", from=2-1, to=2-2]
    \end{tikzcd}\]
    We explicitly calculate the functors in this case, using the integral transform identifications of \cite{BZNP}. We claim that
    \[-\circ e_{n,*}:\Coh(X)\to \Coh(\sqrt[n]{X/D})\]
    is identified with $e_n^*$. Indeed, if we unravel the above, we have
    \begin{align*}
        \Coh(X)&\overset{\mc{K}\mapsto\mc{K}\otimes f^*-}{\isomto} \Hom_{\Perf(\ag)}(\Perf(\ag,\Coh(X)))\\
        &\xto{-\circ e_{n,*}} \Hom_{\Perf(\ag)}(\Perf(\wtag,\Coh(X)))\\
        &\overset{\tilde{\mc{K}}\mapsto e_{n,*}(\tilde{\mc{K}}\otimes \tilde{f}^*-)}{\isomfrom} \Coh(\sqrt[n]{X/D}).
    \end{align*}
    We compute the composition of the first two arrows using base change and the projection formula:
    \begin{align*}
        \mc{K}&\mapsto \mc{K}\otimes (f^*e_{n,*})\\
        &\simeq \mc{K}\otimes (e_{n,*}\tilde{f}^*-)\\
        &\simeq e_{n,*}((e_n^*\mc{K})\otimes\tilde{f}^*-).
    \end{align*}
    Further composing with the inverse of the final arrow gives $\mc{K}\mapsto e_n^*\mc{K}$ as claimed.

    Similar calculations work for $-\circ h_{n,*} \simeq h_n^*:\Coh(D)\to \Coh(\sqrt[n]{\OO_D(D)})$, and to show that $-\circ i^* \simeq i_*$. The latter calculation shows that the nontrivial gluing functor $\Coh(X) \to \Coh(D)$ is $i^*$.
\end{proof}

\subsection{Embeddings of derived categories of root stacks}

Using the gluing functor calculation of \cref{thm:categoricalSOD}, we are in the same situation as in \cref{subsect:abstractgluing1}. So we have a categorical analog of the embeddings $\Coh(\sqrt[m]{X/D})\inclto\Coh(\sqrt[n]{X/D})$.

\begin{thm}\label{thm:categoricalgluing}
    Consider the SOD of $\Gamma(\ag,\sqrt[n,*]{\Phi})$ of \cref{cor:abstractSODstar}. If we take any set of elements $0 \le a_1 < a_2 < \dots < a_{m-1} \le n-2$ and mutate each $(i_*h_n^*\Psi^*)\langle a_i \rangle$ to the left, then the resulting category (generated by $e_n^*\Phi$ and the left-mutated versions of the $(i_*h_n^*\Psi^*)\langle a_i\rangle$) is equivalent to $\Gamma(\ag,\sqrt[m,*]{\Phi})$. So there is an embedding,
    \[\Gamma(\ag,\sqrt[m,*]{\Phi})\inclto \Gamma(\ag,\sqrt[n,*]{\Phi}).\]
    Moreover, the resulting SOD,
    \[\Gamma(\ag,\sqrt[n,*]{\Phi})=\langle\Gamma(\ag,\sqrt[m,*]{\Phi}),\mc{D}\rangle\]
    is $2n$-periodic.
\end{thm}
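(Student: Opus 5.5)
The plan has two parts: transport the mutation argument of \cref{thm:mutatearb1} to the categorical setting, then get periodicity from the universal case via \cref{thm:preserve}.

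\textbf{The embedding.} By \cref{cor:abstractSODstar}, the SOD of $\Gamma(\wtag,\sqrt[n,*]{\Phi})$ has components $\Phi,\Psi^*,\dots,\Psi^*$. Its gluing functors are the base changes $-\otimes_{\Perf(\ag)}\Phi$ of those in \cref{thm:a1gmsod}(\ref{enumerate:gluing}). Concretely, the gluing functor $\Phi\to\Psi^*$ is $i^*\otimes\Phi$, each gluing functor between adjacent copies of $\Psi^*$ is $\id$, and all other gluing functors vanish.

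I then run the induction from the proof of \cref{thm:mutatearb1} verbatim. That argument only uses the three mutation rules for gluing functors (\cref{thm:mutation}) and the fact that all adjoints of $\id$ and $0$ are again $\id$ and $0$. The output is a linear chain
\[\Phi\xto{i^*}\Psi^*\xto{\id}\cdots\xto{\id}\Psi^*\]
with $m-1$ copies of $\Psi^*$.

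Applying \cref{cor:abstractSODstar} with $m$ in place of $n$, $\Gamma(\ag,\sqrt[m,*]{\Phi})$ has an SOD with exactly these components and gluing functors. \cref{thm:linearSOD} then identifies the subcategory generated by $e_n^*\Phi$ and the mutated components with $\Gamma(\ag,\sqrt[m,*]{\Phi})$. This gives the embedding.

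\textbf{Periodicity.} I would not redo the periodicity computation for each $\Phi$. Instead, I would prove it universally for $\Phi=\Perf(\ag)$ and base change. The key steps are:
\begin{itemize}
\item In $\Perf(\wtag)$, mutating the components $i_*h_n^*\Perf(\pg)\langle a_i\rangle$ to the left is a $\Perf(\ag)$-linear operation, since all inclusion functors and their adjoints are $\Perf(\ag)$-linear by \cref{thm:a1gmsod}(\ref{enumerate:stable}). So the resulting two-term SOD $\Perf(\wtag)=\langle\mc{C}_0,\mc{D}_0\rangle$ is an SOD of $\Perf(\ag)$-modules.
\item The first part of \cref{thm:preserve} says $-\otimes_{\Perf(\ag)}\Phi$ preserves such SODs, including their mutations. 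So it suffices to prove $2n$-periodicity of $\langle\mc{C}_0,\mc{D}_0\rangle$.
\end{itemize}

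For the universal case, I would argue as in the proof of \cref{thm:mainresult}. The twist $\langle n\rangle$ on $\wtag$ preserves every component $i_*h_n^*\Perf(\pg)\langle k\rangle$ and also $e_n^*\Perf(\ag)$. It therefore preserves $\mc{D}_0$, which is generated by the unmutated copies, and so also preserves $\mc{C}_0={}^\perp\mc{D}_0$.

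Next, I would show that two mutations send $\langle\mc{C}_0,\mc{D}_0\rangle$ to a twist of itself. The right orthogonal of $\mc{C}_0$ is obtained from the SOD of \cref{thm:a1gmsod} and its rotated version used in the proof of statement~(\ref{enumerate:periodic}), which is twisted by $\langle n-1\rangle$. Iterating, $2n$ mutations return the SOD twisted by $\langle n(n-1)\rangle$, hence fixed by the previous point.

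Alternatively, I would invoke the abstract statement of \cite[Theorem 5.4.2]{nspherical}. The chain above is the $(n-1)$st level of the relative Waldhausen S-construction, and the two-term SOD is the restriction of its $2n$-periodic structure.

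\textbf{Main obstacle.} The step I expect to be hardest is tracking exactly how the mutated subcategory $\mc{C}_0$ behaves under the double mutation when the $a_i$ are arbitrary, that is, not consecutive. The twist argument works cleanly only when the mutated copies occupy a contiguous block of residues.

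I would try first to show that $\mc{C}_0$ does not depend on the choice of $a_i$ up to the $\langle n\rangle$-invariant data. Concretely, I would show that the left orthogonal $\mc{D}_0$ is generated by $n-m$ copies of $i_*h_n^*\Perf(\pg)$ whose twists, after mutation, shift by a constant. If that fails, I would fall back on the Waldhausen S-construction argument, using that the gluing data computed above is exactly that of the S-construction.
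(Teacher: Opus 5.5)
\textbf{The embedding.} Your argument is sound, but it takes a different route from the paper. You push the gluing functors through $-\otimes_{\Perf(\ag)}\Phi$, rerun the mutation bookkeeping of \cref{thm:mutatearb1}, and then rely on the reconstruction result \cref{thm:linearSOD}. The paper avoids reconstruction by working in the universal case with \cref{lem:arbadm}. Let $B\subset[0,n-2]$ be the set of residues of the unmutated copies. Then the subcategory $\mc{C}_0$ receiving no maps from $\mc{D}_0$ is generated by the $\OO\langle j\rangle$ with $j\notin B+1+n\Z$. Re-indexing these line bundles gives a $\Perf(\ag)$-linear equivalence of $\mc{C}_0$ with $\Perf(\wtag)$ acted on via $e_m^*$. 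Tensoring with $\Phi$ via \cref{thm:preserve} then yields $\Gamma(\ag,\sqrt[m,*]{\Phi})$ directly, and $\Perf(\ag)$-linearly rather than only abstractly.

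\textbf{Periodicity: the gap.} Your justification for the double-mutation step reads an orthogonal off the SOD of \cref{thm:a1gmsod} and its rotation. That only describes the orthogonals of the full block of all $n-1$ copies, i.e.\ the case $m=1$. For a proper subset of copies it says nothing about the other orthogonal of $\mc{D}_0$.

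Your diagnosis of the obstacle is also off: contiguity of the residues plays no role. The fallback to \cite[Theorem 5.4.2]{nspherical} does not by itself close the gap either. That result concerns the SOD separating $\Phi$ from all $n-1$ copies of $\Psi$, not $\langle\mc{C}_0,\mc{D}_0\rangle$.

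The missing input is the same explicit computation the paper uses. By \cref{lem:a1gmhoms}:
\begin{itemize}
\item $\Hom(\OO_0\langle b\rangle,\OO\langle j\rangle)\neq 0$ only for $j=b+1$;
\item $\Hom(\OO\langle j\rangle,\OO_0\langle b\rangle)\neq 0$ only for $j=b$.
\end{itemize}
So the objects with no maps from $\mc{D}_0$ are generated by $\OO\langle j\rangle$ with $j\notin B+1+n\Z$. The objects with no maps to $\mc{D}_0$ are generated by $\OO\langle j\rangle$ with $j\notin B+n\Z$. Generation holds since $|B|<n$; this is \cref{lem:arbadm}.

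The two orthogonals therefore differ exactly by the twist $\langle 1\rangle$, for every choice of the $a_i$. Hence $\langle\mc{C}_0,\mc{D}_0\rangle$ mutates to $\langle\mc{D}_0,\mc{C}_0\langle -1\rangle\rangle$, and then to $\langle\mc{C}_0\langle -1\rangle,\mc{D}_0\langle -1\rangle\rangle$. So $2n$ mutations amount to the twist $\langle -n\rangle$. This fixes both pieces, because each is generated by full residue classes mod $n$. With this in hand, your base-change step through \cref{thm:preserve} transfers periodicity to $\Gamma(\ag,\sqrt[n,*]{\Phi})$ as you intended.
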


We give a proof that uses the structure of $\Coh(\ag)$; this proof will also make clear that each embedding is $2n$-periodic under mutation. Note also that we don't need any reconstruction result like \cref{thm:linearSOD}, unlike in the proof of \cref{thm:mutatearb1}.

\begin{lem}\label{lem:arbadm}
    For a set of residues $b_1, \dots, b_{n-m}$ mod $n$, the subcategory of $\Coh(\ag)$ generated by the set of $i_*h_n^*\Coh(\pg)\langle b_i \rangle$ is admissible. The left orthogonal to this subcategory is generated by $ \OO \langle j \rangle$ for $j \not\equiv b_i$ mod $n$, while the right orthogonal is generated by $ \OO \langle j \rangle$ for $j \not\equiv b_i+1$ mod $n$.

    Furthermore, the left (or right) orthogonal, as a module over $\Coh(\ag)$ acting via $e_n^*\langle i \rangle$ (for some $i$ such that $e_n^*\Coh(\ag)\langle i \rangle$ is a subcategory of the left/right orthogonal), is equivalent to $\Coh(\sqrt[m]{(\ag)/(\pg)})$.
\end{lem}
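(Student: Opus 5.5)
The plan is to work with the explicit generators of $\Coh(\wtag)$ (the target $\ag$ in the lemma is really $\wtag$, with grading shifts taken with respect to the $\Gm$-action on $\wtag$). I would use the objects $\OO\langle j\rangle$ and $\OO_0\langle j\rangle$, together with \cref{lem:a1gmhoms} read on $\wtag$. As in the proof of \cref{thm:a1gmsod}, the subcategory $i_*h_n^*\Coh(\pg)\langle b\rangle$ is generated by the $\OO_0\langle j\rangle$ with $j\equiv b \pmod n$. Let $\mc{B}$ be the subcategory generated by $\OO_0\langle j\rangle$ for $j$ in the residue classes $b_1,\dots,b_{n-m}$.

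First I would compute the orthogonals on generators.
\begin{itemize}
\item By \cref{lem:a1gmhoms}, $\Hom(\OO\langle j\rangle,\OO_0\langle k\rangle)\neq 0$ only when $k=j$. So $\OO\langle j\rangle$ lies in the left orthogonal ${}^\perp\mc{B}$ exactly when $j\not\equiv b_i$ for all $i$.
\item Likewise $\Hom(\OO_0\langle k\rangle,\OO\langle j\rangle)\neq0$ only when $j=k+1$. So $\OO\langle j\rangle\in\mc{B}^\perp$ exactly when $j\not\equiv b_i+1$ for all $i$.
\end{itemize}

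To show admissibility and that these $\OO\langle j\rangle$ generate the orthogonals, I would exhibit semiorthogonal decompositions. Every $\OO_0\langle j\rangle$ with $j$ not in a $b$-class is a cone of $\OO\langle j+1\rangle\to\OO\langle j\rangle$. Every $\OO\langle j\rangle$ with $j\equiv b_i$ is an extension of $\OO_0\langle j\rangle\in\mc{B}$ by $\OO\langle j+1\rangle$. Iterating up to $n$ steps, each $\OO\langle j\rangle$ fits in a triangle whose terms are an object of $\mc{B}$ and some $\OO\langle j'\rangle$ with $j'\not\equiv b_i$. This shows $\Coh(\wtag)=\langle \mc{L},\mc{B}\rangle$, where $\mc{L}$ is generated by those $\OO\langle j'\rangle$. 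The terminating step is needed here: some residue must avoid the $b$'s, which holds because $m\ge1$. The symmetric argument, going downward with $j'\not\equiv b_i+1$, gives $\langle\mc{B},\mc{R}\rangle$. Since both adjoints then exist, $\mc{B}$ is admissible.

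Passing from $\Perf$ to $\Coh$ needs a word. On $\wtag$, which is smooth, $\Coh=\Perf$, so generation by these objects is fine.

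For the final statement I would pick $i$ with $i\not\equiv b_k$, so that $e_n^*\Coh(\ag)\langle i\rangle\subset{}^\perp\mc{B}$. I would then identify ${}^\perp\mc{B}$, as a $\Coh(\ag)$-module, with $\Coh(\wtag_m)$, where $\wtag_m$ is $\ag$ equipped with $e_m$. To do this I would sort the $m$ allowed residues in $[i,i+n)$ as $i=c_0<c_1<\dots<c_{m-1}$. I would then show that the map $\OO\langle c_k+\ell n\rangle\mapsto\OO\langle k+\ell m\rangle$ gives a fully faithful functor. The Hom check uses \cref{lem:a1gmhoms}: both Homs are $k$ exactly when the target index is $\ge$ the source index, and this ordering is preserved by the monotone relabeling.

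The main obstacle is making this functor exist $\infty$-categorically and $\Coh(\ag)$-linearly, rather than only on homotopy categories. To handle this I would observe that each side is a graded module over the endomorphism algebra of a generator, namely $\bigoplus_{k<m}\OO\langle k\rangle$, or its $c_k$ analogue. Both endomorphism algebras are formal and concentrated in degree $0$, since all Homs are $k$ in degree $0$. That makes them isomorphic as ordinary graded algebras with compatible $k[t]$-action, where $t$ corresponds to $\OO\langle n\rangle$, respectively $\OO\langle m\rangle$, namely the $e^*$-action of $\OO\langle1\rangle$ on $\ag$. Morita theory then gives the $\Coh(\ag)$-linear equivalence. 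Alternatively, I would appeal to \cref{thm:linearSOD} with gluing functors, as in \cref{thm:mutatearb1}.
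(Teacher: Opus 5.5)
Your proposal is correct and follows the paper's proof. Orthogonality on generators via \cref{lem:a1gmhoms}, together with generation, gives admissibility and identifies the orthogonals, and your monotone relabeling $\OO\langle c_k+\ell n\rangle\mapsto\OO\langle k+\ell m\rangle$, intertwining the $e_n^*$- and $e_m^*$-actions, is exactly the paper's abstract equivalence. Your explicit iterated triangles (including the observation that $m\ge 1$ is what makes them terminate) and your formality/Morita argument for the $\infty$-categorical, $\Coh(\ag)$-linear upgrade fill in steps the paper leaves implicit.

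One cosmetic point: in the paper's convention, where $\langle \mc{D}_1,\mc{D}_2\rangle$ means there are no maps from $\mc{D}_2$ to $\mc{D}_1$, your decompositions should be written $\langle \mc{B},\mc{L}\rangle$ and $\langle \mc{R},\mc{B}\rangle$. With that ordering they recover the two SODs in the proof of \cref{thm:a1gmsod} when $m=1$.
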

\begin{proof}
    Admissibility follows after checking the left and right orthogonals, since the resulting set of objects generates the category. The purported left and right orthogonals are indeed orthogonal using \cref{lem:a1gmhoms}, so they are each the entire orthogonal by generation.

    For the last part, WLOG suppose $i=0$ and suppose the residues of the possible $\OO \langle j \rangle$ are $a_0=0 <a_1 <  \dots < a_{m-1} < n$. We note that there is an abstract equivalence with $\Coh(\ag)$ that sends $\OO\langle kn+a_i\rangle$ to $\OO \langle km+i \rangle$ that intertwines the $e_n^*\Coh(\ag)$-action with the action of $e_m^*\Coh(\ag)$, giving the result.
\end{proof}

One can even match the copies of $\Coh(\pg)$ after mutation with \cref{thm:categoricalgluing}.
\begin{prop}
    Take $b_1, \dots, b_{n-m} \in [0, n-2]$, and let $a_1, \dots, a_{m-1}$ be the missing residues (in order). Then in the SOD
    $$\Coh(\A^1/\Gm)=\langle e_n^*\Coh(\A^1/\Gm), i_*h_n^*\Coh(\pt/\Gm), i_*h_n^*\Coh(\pt/\Gm)\langle 1 \rangle, \dots, i_*h_n^*\Coh(\pt/\Gm)\langle n-2\rangle \rangle,$$
    if we mutate each $i_*h_n^*\Coh(\pg)\langle a_i \rangle$ to the left (so that they remain in order but are to the left of the $i_*h_n^*\Coh(\pg)\langle b_j \rangle$), the mutated $i_*h_n^*\Coh(\pg)\langle a_i \rangle$ is generated by the following object and its twists by $\langle n \rangle$:
    \begin{itemize}
        \item $\cone(\OO\langle a_i+1\rangle \to \OO)$ for $i=1$; and
        \item $\cone(\OO\langle a_i+1\rangle \to \OO\langle a_{i-1}+1 \rangle)$ for $i>1$.
    \end{itemize}
\end{prop}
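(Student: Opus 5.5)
The plan is to identify each mutated component directly, using the characterization of left mutation as an orthogonal. With our convention, an SOD $\langle \mc{V}_1,\mc{V}_2\rangle$ means $\Hom(\mc{V}_2,\mc{V}_1)=0$. So moving $\mc{A}=i_*h_n^*\Coh(\pg)\langle a_i\rangle$ to the left past a block $\mc{B}$ replaces $\mc{A}$ by $\mc{A}'$. Here $\mc{A}'$ is the subcategory of $\langle \mc{B},\mc{A}\rangle$ consisting of objects $E$ with $\Hom(\mc{B},E)=0$, and the projection $\langle\mc{B},\mc{A}\rangle\to\mc{A}$ identifies $\mc{A}'\simeq\mc{A}$. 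Performing the mutations in the order $a_1,a_2,\dots$, the final SOD has the form
$$\langle e_n^*\Coh(\ag),\ \mc{A}'_1,\dots,\mc{A}'_{m-1},\ \text{the }b_j\text{-components}\rangle .$$
The component $\mc{A}'_i$ is characterized by three conditions:
\begin{enumerate}
\item it lies in the span of $e_n^*\Coh(\ag)$ and the original components indexed by residues $\le a_i$;
\item it receives no maps from $e_n^*\Coh(\ag)$, from $\mc{A}'_j$ for $j<i$, or from the components $i_*h_n^*\Coh(\pg)\langle b\rangle$ with $b<a_i$;
\item it is congruent to $\OO_0\langle a_i\rangle$ modulo those components.
\end{enumerate}
Since everything is $\langle n\rangle$-stable by \cref{thm:a1gmsod}, it suffices to exhibit one object $C_i$ satisfying these conditions. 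Its $\langle n\rangle$-twists then generate $\mc{A}'_i$, because the projection is an equivalence and $\OO_0\langle a_i+kn\rangle$ generate $\mc{A}$.

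Set $a_0+1:=0$, so that $C_i=\cone(\OO\langle a_i+1\rangle\to\OO\langle a_{i-1}+1\rangle)$ uniformly; the map is the unique nonzero one (up to scalar) from \cref{lem:a1gmhoms}. For membership and congruence, use the resolution $\OO\langle c+1\rangle\to\OO\langle c\rangle\to\OO_0\langle c\rangle$ and factor the map through $\OO\langle a_i\rangle,\OO\langle a_i-1\rangle,\dots$. The octahedral axiom then gives a filtration of $C_i$ whose graded pieces are $\OO_0\langle a_i\rangle$ together with $\OO_0\langle c\rangle$ for $a_{i-1}+1\le c\le a_i-1$, which are exactly the $b$'s lying strictly between $a_{i-1}$ and $a_i$. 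When $i=1$ the pieces $\OO_0\langle c\rangle$ for $0\le c<a_1$ also appear, and these are $b$'s as well. This simultaneously shows membership and $C_i\equiv\OO_0\langle a_i\rangle$ modulo the components it passed.

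For the vanishing, I would apply $\Hom(-,C_i)$ and use \cref{lem:a1gmhoms}.
\begin{itemize}
\item From $\OO_0\langle b+kn\rangle$: since $\Hom(\OO_0,\OO\langle j\rangle)$ is nonzero only for $j=1$, a nonzero map would require $b+kn+1\in\{a_i+1,a_{i-1}+1\}$. Because all residues lie in $[0,n-2]$, this forces $b\in\{a_i,a_{i-1}\}$, which is excluded. For $i=1$ the target $\OO\langle 0\rangle$ would need $b\equiv -1$, i.e.\ $b\equiv n-1$, which is also excluded.
\item From $\OO\langle kn\rangle$: both $\Hom(\OO\langle kn\rangle,\OO\langle a_i+1\rangle)$ and $\Hom(\OO\langle kn\rangle,\OO\langle a_{i-1}+1\rangle)$ equal $k$ exactly when $k\le 0$, since $0\le a_{i-1}+1<a_i+1\le n-1$. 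The map between them is composition with a nonzero element of a $k$, hence an isomorphism, so the cone has vanishing $\Hom$.
\item From $C_j$ with $j<i$: resolve $C_j$ by its two line bundles and reduce to the $\OO\to\OO$ computations of the previous bullet.
\end{itemize}

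The main obstacle is this last bullet, especially the case $j=i-1$. There the target and source share the twist $a_{i-1}+1$, so one must check that the relevant $2\times2$ diagram of $\Hom$-spaces between $\OO\langle\cdot\rangle$'s is bicartesian rather than merely having equal dimensions. I would handle this by observing that all maps involved are powers of the coordinate $t$, so $\Hom$-spaces are graded pieces of $k[t]$ and composition is injective. That reduces the check to comparing the degree ranges $\{a_{j-1}+1,a_j+1\}$ against $\{a_{i-1}+1,a_i+1\}$ mod $n$. Finally I would double-check the orientation of mutation against the convention of \cref{subsect:convent}, since reversing it would swap left and right orthogonals; in that case \cref{lem:arbadm} should serve as a consistency check.
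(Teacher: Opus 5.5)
There is a genuine gap. Your condition (2) is correct for the $b$-components but points the wrong way for $e_n^*\Coh(\ag)$ and the $\mc{A}'_j$, so your second and third bullets try to prove vanishings that are false.

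In the final SOD $\langle e_n^*\Coh(\ag),\mc{A}'_1,\dots,\mc{A}'_{m-1},\dots\rangle$, the components $e_n^*\Coh(\ag)$ and $\mc{A}'_j$ ($j<i$) lie to the \emph{left} of $\mc{A}'_i$. The convention therefore requires $\Hom(\mc{A}'_i,-)$ into them to vanish, not maps out of them. Maps out of them into $\mc{A}'_i$ are governed by the gluing functors, which are the nonzero functors $i^*$ and (abstractly) $\id$ of \cref{thm:mutatearb1}. Both bullets fail concretely:
\begin{itemize}
\item For $i=1$, $C_1=\cone(\OO\langle a_1+1\rangle\to\OO)$ is the structure sheaf of the thickened origin $\{t^{a_1+1}=0\}$. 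The restriction $\OO\to C_1$ is nonzero, so $\Hom(\OO,C_1)\neq 0$.
\item For $i\ge 2$, the octahedron on $\OO\langle a_i+1\rangle\to\OO\langle a_{i-1}+1\rangle\to\OO\langle a_{i-2}+1\rangle$ gives a triangle $C_i\to\OO_{\{t^{a_i-a_{i-2}}=0\}}\langle a_{i-2}+1\rangle\to C_{i-1}\to C_i[1]$. Its middle term is indecomposable, so $\Hom(C_{i-1},C_i[1])\neq 0$. This is the ``main obstacle'' you flagged, and it cannot be overcome.
\end{itemize}

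Your second bullet only seems to go through because you used the first line of \cref{lem:a1gmhoms} as printed. It says $\Hom(\OO,\OO\langle i\rangle)=k$ for $i\ge 0$, but the resolution $\OO\langle 1\rangle\to\OO\to\OO_0$ it is derived from forces $i\le 0$. Under the printed sign there would be no nonzero map $\OO\langle a_i+1\rangle\to\OO\langle a_{i-1}+1\rangle$ at all, contradicting your own construction of $C_i$. With the consistent sign, $\Hom(\OO\langle kn\rangle,\OO\langle c\rangle)\neq 0$ iff $c\le kn$, and $i=1$, $k=0$ is a counterexample.

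The fix is to drop these conditions, because they are not needed. Write $\mc{P}_c:=i_*h_n^*\Coh(\pg)\langle c\rangle$. The left mutation of $\mc{P}_{a_i}$ across the $b$-components with $b<a_i$ is exactly
$\{E\in\langle \mc{P}_b\ (b<a_i),\ \mc{P}_{a_i}\rangle : \Hom(\mc{P}_b,E)=0 \text{ for all } b<a_i\}$.
A mutated sequence is automatically an SOD, so nothing needs to be checked against $e_n^*\Coh(\ag)$ or the earlier $\mc{A}'_j$.

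Your octahedral filtration already places $C_i$ in the span of the $\mc{P}_c$ with $a_{i-1}<c\le a_i$, with projection $\OO_0\langle a_i\rangle$ to $\mc{P}_{a_i}$. Your first bullet gives the required $b$-orthogonality. So if you delete bullets two and three and narrow condition (1) to that span, you have a complete proof. It is the one-step version of the paper's argument, which mutates past one $\mc{P}_b$ at a time using \cref{lem:a1gmhoms}.
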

\begin{proof}
    This follows by induction using \cref{lem:a1gmhoms}.
\end{proof}

\begin{proof}[Proof of \cref{thm:categoricalgluing}]
    We use the method of \cref{thm:categoricalSOD} applied to \cref{lem:arbadm}.
\end{proof}

\subsection{Large categories}
We briefly discuss here what changes if we work with large categories (like $\QC$ and $\IndCoh$) instead of small categories (like $\Perf$ and $\Coh$). 

All statements involving $\otimes$ and perfect stacks generalize directly by taking ind-completions. However, there is a divergence in the $!$-version of the story. If we were to take the analogous $!$-root stack functor, we would get the same functor as the $*$-root stack functor, as $\QC(\wtag)$ is self-dual as a module over $\QC(\ag)$. However the main result \cref{thm:categoricalSOD} (as well as other results like \cref{thm:categoricalgluing}) still hold if we write $\IndCoh$ instead of $\Coh$ everywhere, using the fact that the operation $\Ind$ preserves SODs by \cref{prop:stvsst}. So $\IndCoh(\sqrt[n]{X/D})$ is not the target of a natural root stack functor applied to $\IndCoh(X)$; it is only the assignment of $\Ind \circ \sqrt[n, !]{-}$ applied to $\Coh(X)$.

The next section ``fixes'' this issue in that the two root stack functors in the small case will agree (as they do in the large case).

\section{The upgrade to schobers}
\label{sect:upgrade}
In the previous section we constructed functors $\sqrt[n,*]{-},\sqrt[n,!]{-}$ on $\Perft(\ag)$. In this section, we will construct a second pair of root stack functors:
\begin{align*}
\sqrt[n,*]{-},\sqrt[n,!]{-}:\Coht(\ag)\to\Coht(\wtag),
\end{align*}
defined on ``coherent sheaves of categories'' on $\ag$. These will be compatible with, and enhance, the root stack functors defined previously. The new root stack functors also will have the advantage that they (non-canonically) agree.

The most interesting feature of this upgrade is that $\Coht(\ag)$ appears on the B-side in the 3d mirror symmetry of \cite{3dms}:
\begin{thm}\textup{\cite{3dms}}\label{thm:3dms}
    We have an equivalence of 2-categories,
    \begin{align*}
    \Sph(\st) \simeq \Coht(\ag).
    \end{align*}
\end{thm}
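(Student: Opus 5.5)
This statement is cited from \cite{3dms} rather than proved in the present paper, so the plan below reconstructs how I would prove it, not an argument given here. Both sides should be realized as module 2-categories over the same monoidal (or rather two-object) category
\[\Am=\Coh\bigl((\ag \sqcup \pg) \times_{\ag} (\ag \sqcup \pg)\bigr),\]
with $\Coht(\ag)$ taken to mean $\Am$-modules. Under this definition the right-hand side is $\Am-\Mod$ by construction, and all the content lies in identifying $\Sph(\st)$ with $\Am-\Mod$. Concretely, $\Am$ is a $2\times 2$ matrix of categories:
\begin{itemize}
\item the diagonal entries are $\Coh(\ag)$ and $\Coh(\pg \times_{\ag} \pg)$;
\item the off-diagonal entries are two copies of $\Coh(\pg)$;
\item composition is convolution, i.e.\ pull--push along the triple fiber products.
\end{itemize}
So an $\Am$-module is a pair of categories $(\mc{M}_0,\mc{M}_1)$ with these actions between them.

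The first step is to exhibit the relevant spherical functor inside $\Am$. I would take $i^* \colon \Coh(\ag) \to \Coh(\pg)$ with right adjoint $i_*$, exactly as in the proof of \cref{thm:a1gmsod}. The computation $i^* \adjto i_* \adjto i^*\langle -1\rangle[-1]$ from that proof shows that the twist and cotwist are $\langle \pm 1\rangle$ up to shift, hence invertible. So $i^*$ is spherical, and it should be the universal spherical functor.

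The second step is to show that $\Am$ is precisely the ``walking spherical adjunction'' in its linear incarnation. That is, the $(\infty,2)$-category freely generated by two objects, a $1$-morphism $F$ with an ambidextrous-up-to-twist adjoint $R$, and invertibility of twist and cotwist, should have hom-categories $\Am_{ij}$ after stable linearization. I would present $\Coh(\pg \times_{\ag} \pg)$ via Koszul duality as graded modules over $k[\varepsilon]$ with $\varepsilon$ of weight $1$ and degree $1$. This matches the monad $RF$ on the $\mc{B}$-side being an extension of $\id$ by the cotwist. Similarly, $\Coh(\ag)$ matches the monad $FR$ with invertible twist, using the generator $\OO\langle 1\rangle \to \OO$ of \cref{lem:a1gmhoms}. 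The Hom computations of \cref{lem:a1gmhoms} supply the needed mapping spaces.

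Given that, the functor $\Sph(\st) \to \Am-\Mod$ sends a spherical $F \colon \mc{A} \to \mc{B}$ to the module $(\mc{A},\mc{B})$. The action of $\Am$ is determined by $F$, $R$, and the inverse twists. The inverse functor sends a module $(\mc{M}_0,\mc{M}_1)$ to the action of $\Coh(\pg)$ by $\mc{M}_0 \to \mc{M}_1$, which is spherical because it is the image of the spherical $i^*$ under a $2$-functor.

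The main obstacle is the universality in the second step. One must show that the coherence data of a spherical adjunction, including all higher compatibilities of units, counits and twist inverses, is equivalent to a genuine $\Am$-module structure, rather than merely that the two sides have the same homotopy-level Hom computations. I would attack this by first writing $\Sph(\st)$ as a lax limit or a category of monadic data, following \cite{DKSS}. Then I would check that the relevant monad presentations are compactly generated by $\OO\langle j\rangle$ and $\delta_0\langle j\rangle$. Finally, I would apply a Barr--Beck/Morita argument, using those generators to identify endomorphism algebras degree by degree.
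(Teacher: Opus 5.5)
The paper gives no argument for this statement; it is imported from \cite{3dms}, so your outline has to stand on its own. It does not, because everything rests on your second step, which you assert rather than prove.

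Your overall architecture is sensible: exhibit $\OO_{\pg}$ as a spherical 1-morphism inside $\Am$, then show it corepresents spherical adjunctions. But the claim that $\Am$ is the stable linearization of the walking spherical adjunction \emph{is} the theorem, and the tools you propose do not reach it. \cref{lem:a1gmhoms} and generation by the $\OO\langle j\rangle$ compute mapping complexes in $\Am$ itself. A Barr--Beck/Morita comparison needs two further things:
\begin{enumerate}
\item the hom-categories of the free stable spherical adjunction, which is a localization of the linearized walking adjunction at the twist and cotwist;
\item a proof that the canonical 2-functor from this free object to $\Am$ is an equivalence on each hom-category.
\end{enumerate}
Your plan never computes the free side, so there is nothing to identify ``degree by degree''. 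Likewise, your forward functor $\Sph(\st)\to\Coht(\ag)$ must build a coherent $\Am$-action out of $F$, $R$ and the inverse twists, so it already presupposes this universality and is not a separate step.

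To make the step concrete you would need something like the following. The linearized walking monad $k[\Delta_+]$ maps to $\Coh(\ag)$ by $[n]\mapsto\OO_0^{\otimes(n+1)}$. Using $\OO_0\otimes\OO_0\simeq\OO_0\oplus\OO_0\langle 1\rangle[1]$, the Hom spaces on both sides have dimension $\binom{m+n+1}{m+1}$, concentrated in degree $0$. You would then have to show this map is fully faithful, and that inverting the cotwist $\OO\langle 1\rangle$ recovers all of $\Coh(\ag)$. The analogous computation is needed for the $\Psi$-block, where the twist is inverted, and for the off-diagonal blocks, all simultaneously.

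Separately, the identifications you do sketch have the two diagonal blocks swapped. With $F=i^*$ and $R=i_*$:
\begin{itemize}
\item The monad $RF=i_*i^*=\OO_0\otimes-$ lives on the $\Phi$-block $\Coh(\ag)$. The sequence $\OO\langle 1\rangle\to\OO\to\OO_0$ exhibits the \emph{cotwist} $\fib(\id\to RF)$ as $\OO\langle 1\rangle\otimes-$.
\item The $\Psi$-block $\Coh(\pg\times_{\ag}\pg)$ carries the \emph{comonad} $FR=i^*i_*$, represented by the structure sheaf of $\pg\times_{\ag}\pg$. The cofiber of the counit into the unit is the \emph{twist}, a grading shift composed with $[2]$. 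Here the unit is the diagonal pushforward of $\OO_{\pg}$, i.e.\ the augmentation module of $k[\varepsilon]$, which is not perfect.
\end{itemize}
You wrote the opposite pairing.

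Also, the adjoint chain $i^*\adjto i_*\adjto i^*\langle -1\rangle[-1]$ does not by itself show that the twist and cotwist are invertible; you need the direct computation above. That computation must be done internally in $\Am$, as convolution-invertibility of objects in the diagonal blocks. That internal statement is what makes the image of $\OO_{\pg}$ under an arbitrary $\Am$-module spherical.
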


Here, we denote by $\Sph(\st)$ the 2-category of spherical adjunctions in $\st$, describing the A-side of 3d mirror symmetry.

The main objective of this section is to describe the root stack functor on the A-side under this equivalence. Viewing our root stack functor as a ``B-side pullback'', he corresponding functor on spherical adjunctions will be interpreted later in \cref{sect:schob} as an ``A-side pushforward'' of perverse schobers.

At the end of the section, we will relate these results to the previous section. In particular, we show that the new root stack functors agree with our original root stack functor, i.e. commute with inclusions
\begin{align*}
\Perft(\ag)\inclto \Coht(\ag).
\end{align*}

\subsection{Coherent sheaves of categories over a stack}
Let $Y$ be a smooth stack, and suppose we have another smooth stack $X$ with a proper map $p:X\to Y$, which we think of as an object with which to probe $Y$. Consider the following diagram of stacks.
\[\begin{tikzcd}
	& {X \times_Y X \times_Y X} & \\
	{X \times_Y X} & {X \times_Y X} & {X \times_Y X}
	\arrow["{p_{12}}", from=1-2, to=2-1]
	\arrow["{p_{13}}"', from=1-2, to=2-2]
	\arrow["{p_{23}}"', from=1-2, to=2-3]
\end{tikzcd}\]
We equip the category $\Coh(X\times_YX)$ with the convolution monoidal product,
\[\mc{F}\ast\mc{G} := p_{13,*}(p_{12}^*\mc{F}\otimes p_{23}^*\mc{G}).\]
Indeed, by the assumption that $X$ is smooth and $p$ proper, it follows that this monoidal product preserves compact objects, so it is well-defined. We will write
\[\Coht_X(Y):=\Coh(X\times_YX)-\Mod(\st).\]

\subsection{Functors}\label{subsect:functors}
We define pullback and pushforward functors between categories of coherent sheaves of categories.

Suppose we have two pairs $(X_1,Y_1)$ and $(X_2,Y_2)$ as above. Let $f:Y_1\to Y_2$ be a morphism of stacks. Write
\begin{align*}
    \mc{A}_1 &:= \Coh(X_1\times_{Y_1}X_1),\\
    \mc{A}_2 &:= \Coh(X_2\times_{Y_2}X_2).
\end{align*}
We observe that
\[{}_1\mc{M}_2 := \Coh(X_1\times_{Y_2}X_2)\]
naturally has the structure of an $(\mc{A}_1,\mc{A}_2)$-bimodule by diagrams similar to the one defining the monoidal product. Define similarly
\[{}_2\mc{M}_1 := \Coh(X_2\times_{Y_2}X_1)\]
as an $(\mc{A}_2,\mc{A}_1)$-bimodule.

We define the functors
\begin{align*}
f^* &:= {}_1\mc{M}_2\otimes_{\mc{A}_2}-:\Coht_{X_2}(Y_2)\to \Coht_{X_1}(Y_1),\\
f^! &:= \Hom_{\mc{A}_2}({}_2\mc{M}_1,-):\Coht_{X_2}(Y_2)\to \Coht_{X_1}(Y_1),\\
f_* &:= \Hom_{\mc{A}_1}({}_1\mc{M}_2,-):\Coht_{X_1}(Y_1)\to \Coht_{X_2}(Y_2),\\
f_! &:= {}_2\mc{M}_1\otimes_{\mc{A}_1}-:\Coht_{X_1}(Y_1)\to \Coht_{X_2}(Y_2).\\
\end{align*}
These come in adjoint pairs $f^*\adjto f_*$, $f_!\adjto f^!$ according to the $2$-categorical tensor-hom adjunction (though we do not use this fact except in the discussion of \cref{rem:smallvslarge}).

\begin{rem}
    We warn here that these functors behave less well than what the notation may suggest, due to the existence of the probe. For example, if we had a third pair $(X_3,Y_3)$ together with a map $g:Y_2\to Y_3$, then it need not be true that
    \[(g\circ f)^*\isomto f^*\circ g^*,(g\circ f)^!\isomfrom f^!\circ g^!\]
    though there are natural transformations in the indicated direction.

    In another example, unlike in the setting of $\Perft$, there need not be an identification $f_!\simeq f_*$.
\end{rem}

\begin{exmp}\label{eg:globalsections}
    Given $p:X\to Y$, we define \[\Gamma(Y,-):\Coht_X(Y)\to \st\]
    to be $f_*$, where $(X_2,Y_2):=(\pt,\pt)$ and $f:Y\to \pt$.
\end{exmp}

\begin{exmp}\label{exmp:truncation}
    Given $p:X\to Y$, write $(X_1,Y_1) := (X,Y)$ and $(X_2,Y_2) := (Y,Y)$ given by $\id:Y\to Y$. Then,
    \[\Coht_{X_2}(Y_2)\simeq \Perft(Y).\]
    Let $\tau:Y_1\to Y_2$ also be given by $\id$. We then have four functors,
    \[\tau_?:\Coht_X(Y)\tofrom \Perft(Y):\tau^?,\]
    where $?$ is $\ast$ or $!$.
\end{exmp}

\subsection{B-side: root stack functor}
In this subsection, we will write $X:=\ag\sqcup\pg\to Y:=\ag$. As in previous sections, we also write $\tilde{X} \to \tilde{Y}$ for the same stacks and maps.

\begin{defn}
    We call the functors
    \begin{align*}
        \sqrt[n,*]{-}:=e_n^*:\Coht(\ag)&\to\Coht(\wtag),\\
        \sqrt[n,!]{-}:=e_n^!:\Coht(\ag)&\to\Coht(\wtag)
    \end{align*}
    the $\ast$-categorical root stack functor, and the the $!$-categorical root stack functor on coherent sheaves of categories respectively.
\end{defn}

For the remainder of the section, we will omit the subscript and write $\Coht(Y)=\Coht_X(Y)$ unless stated otherwise.

\begin{rem}
    This abuse of notation is not so bad by the remarks in \cite{3dms}.
\end{rem}

\begin{prop}\label{prop:selfdual}
    There is an identification $\sqrt[n,*]{-} \simeq \sqrt[n,!]{-}$.
\end{prop}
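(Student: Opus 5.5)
By the definitions in \cref{subsect:functors}, with $(X_1,Y_1)=(\tilde X,\wtag)$ and $(X_2,Y_2)=(X,\ag)$, we have
\[e_n^*={}_1\mc{M}_2\otimes_{\mc{A}}-, \qquad e_n^!=\Hom_{\mc{A}}({}_2\mc{M}_1,-),\]
where
\[\mc{A}=\Coh(X\times_{\ag}X),\quad {}_1\mc{M}_2=\Coh(\tilde X\times_{\ag}X),\quad {}_2\mc{M}_1=\Coh(X\times_{\ag}\tilde X).\]
For a left $\mc{A}$-module $\mc{N}$ that is dualizable over $\mc{A}$ with dual the right module $\mc{N}^\vee$, one has $\Hom_{\mc{A}}(\mc{N},-)\simeq \mc{N}^\vee\otimes_{\mc{A}}-$. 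So the plan is to show that ${}_2\mc{M}_1$ is dualizable as a left $\mc{A}$-module (compatibly with the $\mc{A}_1$-actions, i.e. as a bimodule) with dual ${}_1\mc{M}_2$. The proposition then follows formally.

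\textbf{Step 1: build a duality pairing.} The evaluation is a bimodule functor
\[{}_1\mc{M}_2\otimes_{\mc{A}}{}_2\mc{M}_1\to \mc{A}_1=\Coh(\tilde X\times_{\wtag}\tilde X).\]
It is given by convolution over $X$, i.e. $!$-pushforward along the proper map $\tilde X\times_{\ag}X\times_{\ag}\tilde X\to \tilde X\times_{\ag}\tilde X$. This is followed by a correspondence to $\tilde X\times_{\wtag}\tilde X$, using the closed/proper map $\tilde X\times_{\wtag}\tilde X\to\tilde X\times_{\ag}\tilde X$, which is proper since $e_n$ is a proper relative DM map. The coevaluation $\mc{A}\to {}_2\mc{M}_1\otimes_{\mc{A}_1}{}_1\mc{M}_2$ comes from pushing forward along $X\times_{\ag}\tilde X\times_{\wtag}\tilde X\times_{\ag} X\to X\times_{\ag}X$. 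Equivalently, one can use the standard coherent pairing $\Coh(Z)^{\op}\simeq \Coh(Z)$ given by Grothendieck duality, with the dualizing complex chosen non-canonically; this is the source of the word ``non-canonically'' in the introduction to the section.

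\textbf{Step 2: identify the relative tensor products geometrically.} To verify the zigzag identities one needs a base-change statement
\[\Coh(A\times_{\ag}X)\otimes_{\mc{A}}\Coh(X\times_{\ag}B)\simeq\Coh(A\times_{\ag}B)\]
for $A,B$ among $X$ and $\tilde X$. For $\mc{A}$ itself this is the convolution description of \cite{3dms}. For $\tilde X$ I would reduce to it by noting that $\tilde X=\wtag\sqcup\wtpg$ maps properly to $\ag$, and that ${}_2\mc{M}_1$ is generated, as an $\mc{A}$-module, by pushforwards from the finitely many components. Concretely, the decomposition of \cref{thm:a1gmsod} and its $\pg$-analogue (where $\Perf(\wtpg)$ is $n$ copies of $\Perf(\pg)$) should express $\Coh(X\times_{\ag}\tilde X)$ as a finite SOD whose pieces are shifts of $\Coh(X\times_{\ag}\ag)$ and $\Coh(X\times_{\ag}\pg)$, i.e. summands/gluings of free rank-one $\mc{A}$-modules.

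\textbf{Step 3: conclude dualizability.} Once Step 2 presents ${}_2\mc{M}_1$ as an iterated extension (finite SOD glued by $\mc{A}$-linear functors) of copies of direct summands of $\mc{A}$, dualizability follows. Free and projective modules are dualizable, and dualizability is preserved under finite SODs with $\mc{A}$-linear gluing, by the lax-limit description of SODs in \cref{thm:preserve}. One then checks that the dual is ${}_1\mc{M}_2$ by comparing the same SOD on the other side, which is given by the transposed correspondence.

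\textbf{Main obstacle.} I expect the main difficulty to be Step 2 and Step 3 together, namely making the SOD of ${}_2\mc{M}_1$ genuinely $\mc{A}$-linear, so that the pieces are honestly projective $\mc{A}$-modules and not merely $\Perf(\ag)$-modules. I would first try to prove that $\Coh(X\times_{\ag}\tilde X)\simeq \mc{A}\otimes_{\Perf(\ag)}\Perf(\wtag)$ using \cite{BZFN} together with smoothness of $X$. That identification would reduce the question to dualizability of $\Perf(\wtag)$ over $\Perf(\ag)$, which is given by \cref{thm:a1gmsod}.
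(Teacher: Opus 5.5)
Your Steps 2--3 have the right skeleton, and it is the paper's skeleton. The paper writes one of the two bimodules as an $\mc{A}$-linear SOD whose pieces are the summands ${}_\Phi\mc{A},{}_\Psi\mc{A}$ of $\mc{A}$: for ${}_1\mc{M}_2$ as a right module this is $\langle{}_\Phi\mc{A},{}_\Psi\mc{A},\dots,{}_\Psi\mc{A}\rangle\oplus{}_\Psi\mc{A}^{\oplus n}$, from \cref{prop:keycalculation1} and \cref{prop:keycalculation2}. It then gets dualizability from \cref{cor:dualexists} and \cref{lem:dual}, and identifies the dual with the other bimodule. The trouble is that the two steps carrying the content are handled wrongly or not at all.

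Your proposed way around the ``main obstacle'' fails. $\Perf(\wtag)=\Coh(\wtag)$ is \emph{not} dualizable over $\Perf(\ag)$, and the paper says so explicitly. If it were, the canonical comparison $\sqrt[n,*]{\Phi}\to\sqrt[n,!]{\Phi}$ would be an equivalence on $\Perft(\ag)$. But for $\Phi=\Perf(\pg)$ it is the inclusion of $\Perf$ into $\Coh$ of $\Spec(k[t]/t^n)/\Gm$. The non-dualizable pieces are exactly the $\Perf(\pg)$-components of \cref{thm:a1gmsod}, and the point of the probe $X=\ag\sqcup\pg$ is that they become the projective module ${}_\Psi\mc{A}$. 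Your identification $\Coh(X\times_{\ag}\tilde X)\simeq\mc{A}\otimes_{\Perf(\ag)}\Perf(\wtag)$ is also wrong. It base-changes the $X$-factor along $e_n$, producing $\pg\times_{\ag}\wtag$ instead of $\wtpg$, and $\otimes$ yields $\Perf$ rather than $\Coh$ on that non-smooth piece. What works (this is \cref{thm:categoricalSOD} applied to $X\to\ag$) has three parts:
\begin{itemize}
\item use $\Coh(\tilde X\times_{\ag}X)\simeq\Hom_{\Perf(\ag)}(\Perf(\tilde X),\Coh(X))$ from \cref{cor:dmhom};
\item apply \cref{thm:preserve} to the $\Perf(\ag)$-linear SOD of $\Perf(\wtag)$ and to the splitting of $\Perf(\wtpg)$;
\item observe that $\mc{A}$ acts only through the $X$-factor, so $\mc{A}$-linearity is automatic, and the pieces are literally ${}_\Phi\mc{A}=\Coh(\ag\times_{\ag}X)$ and ${}_\Psi\mc{A}=\Coh(\pg\times_{\ag}X)$.
\end{itemize}

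Identifying the dual also takes more than ``the transposed correspondence''. \cref{cor:dualexists} gives you the \emph{reversed} SOD $\langle\mc{A}_\Psi,\dots,\mc{A}_\Psi,\mc{A}_\Phi\rangle\oplus\mc{A}_\Psi^{\oplus n}$, by projection, with the duals of the original gluing functors. To match it with the SOD of the other bimodule you must:
\begin{itemize}
\item mutate to reverse the linear chain, with all gluing functors other than the first staying identity, as in \cref{thm:mutatearb1};
\item identify the dual of the first gluing functor with the adjoint one via \cref{lem:identifyadjoint};
\item reconstruct using \cref{thm:linearSOD}.
\end{itemize}
This is where non-canonicity enters, not a choice of dualizing complex.

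Finally, Step~1 points the wrong way. To get $\Hom_{\mc{A}}({}_2\mc{M}_1,-)\simeq{}_1\mc{M}_2\otimes_{\mc{A}}-$ you need maps $\mc{A}_1\to{}_1\mc{M}_2\otimes_{\mc{A}}{}_2\mc{M}_1$ and ${}_2\mc{M}_1\otimes_{\mc{A}_1}{}_1\mc{M}_2\to\mc{A}$. Your pair would instead make $e_n^*$ left adjoint to ${}_2\mc{M}_1\otimes_{\mc{A}_1}-=e_{n,!}$, which is a statement $e_{n,!}\simeq e_{n,*}$ about pushforwards. Your evaluation would also require pulling back along $\tilde X\times_{\wtag}\tilde X\to\tilde X\times_{\ag}\tilde X$. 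This is the base change of the diagonal of $e_n$, locally a branch $\{y=\zeta x\}$ inside $\{x^n=y^n\}$. It has infinite Tor-amplitude, so pullback does not preserve $\Coh$. In the correct direction the natural candidates are proper pushforwards. Since Step~3 does not use Step~1, you can drop Step~1. Fixing it would give a more canonical proof than the paper's, but only after computing both relative tensor products via singular support.
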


This is because the bimodule defining $e_n^?$ are self-dual. We delay the proof until later.

\subsection{A-side: pushforward on spherical adjunctions}
\label{subsect:Asidepf}
In this section, we will define an operation on spherical adjunctions. We will show in \cref{subsect:rootstackms} that this is identified with the root stack functor under \cref{thm:3dms}. The objective here is to define the operation independently of mirror symmetry.

\begin{defn}\label{def:asidepf}
    The $n$\textsuperscript{th} power pushforward functor on $\Sph(\st)$ sends a spherical functor
    \[F \colon \Phi \to \Psi\]
    to the spherical functor
    \[F' \colon \Phi' \to \Psi',\]
    where
    \begin{itemize}
        \item $\Phi'$ is the category of diagrams of the form $F(\varphi) \to \psi_1 \to \psi_2 \to \dots \to \psi_{n-1}$ (along with the data of $\varphi$),
        \item $\Psi':= \Psi^{\oplus n}$, and
        \item $F'$ sends
        \[[\varphi; F(\varphi) \to \psi_1 \to \psi_2 \to \dots \to \psi_{n-1}]\mapsto \fib(F(\varphi) \to \psi_1) \oplus \fib(\psi_1 \to \psi_2) \cdots \oplus \fib(\psi_{n-2} \to \psi_{n-1}) \oplus \psi_{n-1}.\]
    \end{itemize}
\end{defn}
    
We may also describe $\Phi'$ via the SOD,
\[\Phi'=\langle \Phi, \Psi, \dots, \Psi \rangle,\]
where the nontrivial gluing functors are only between adjacent components, and are $F \colon \Phi \to \Psi$ (the original spherical functor) as well as $\id\colon \Psi \to \Psi$ abstractly. The two descriptions are related by \cref{thm:linearSOD}.

We will give a conceptual explanation for this definition in \cref{sect:schob}.

\begin{prop}\textup{\cite[Lemma 3.8]{christ}}
    The functor $F':\Phi'\to \Psi'$ is indeed spherical.
\end{prop}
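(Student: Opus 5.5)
To show that $F'\colon \Phi'\to\Psi'$ is spherical, I would use the characterization of spherical functors through $4$-periodic SODs from \cite{window2} (equivalently \cite{DKSS}). A functor $G\colon \mc{A}\to\mc{B}$ with both adjoints is spherical exactly when the lax gluing $\mc{C}_G=\langle \mc{A},\mc{B}\rangle$ along $G$ has a $4$-periodic SOD. So the task is to identify $\mc{C}_{F'}$ with a category that visibly has a $4$-periodic SOD, built out of the $2n$-periodic structure already available for $F$.

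\textbf{Step 1: the gluing of $F'$.} Write $\Psi'=\Psi^{\oplus n}$. By \cref{thm:linearSOD}, $\Phi'$ has the SOD $\langle\Phi,\Psi,\dots,\Psi\rangle$ with $n$ copies of $\Psi$ in total, counting the $\psi_{n-1}$ slot. Its gluing functors are $F$ followed by a chain of identities. The components of $F'$ are the fibers of consecutive maps together with $\psi_{n-1}$, so they are exactly the projection functors onto these $n$ copies of $\Psi$. Hence $\mc{C}_{F'}$ is a category glued from $\Phi$ and $2n$ copies of $\Psi$. The gluing pattern is:
\begin{itemize}
\item $F$ from $\Phi$ to the first $\Psi$;
\item identities along the chain of $\Psi$'s in $\Phi'$;
\item each $\Psi$ in $\Phi'$ connected by the identity to its matching summand of $\Psi'$.
\end{itemize}
Equivalently, this is the $(2n-1)$-st level $S_{2n}$ of the relative Waldhausen construction of $F$, in a shuffled form.

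\textbf{Step 2: periodicity.} Since $F$ is spherical, \cite[Theorem 5.4.2]{nspherical} shows that the relative $S_\bullet$-construction of $F$ at level $k$ carries a $2(k+1)$-periodic SOD, and all its components are again $\Phi$ or $\Psi$. I would show that mutating the $n$ summands of $\Psi'$ through the chain rearranges $\mc{C}_{F'}$ into such a level of the $S_\bullet$-construction. Under this rearrangement, the two-term decomposition $\langle\Phi',\Psi'\rangle$ corresponds to the grouping $\langle \Phi\text{ and the first }n\text{ copies},\ \text{the remaining } n\text{ copies}\rangle$. A full rotation of the large SOD moves each block by $n$ components, and that is exactly one mutation of the two-term SOD. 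So two mutations of $\langle\Phi',\Psi'\rangle$ correspond to rotating by $2n$ steps, and four correspond to a full period. To carry this out I would use the mutation rules for gluing functors listed before \cref{thm:mutatearb1}, namely the fib/cofib formulas, as in the proof of \cref{thm:mutatearb1}.

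Alternatively, I could verify the definition directly. The first thing to check is that $F'$ has both adjoints. This holds because projections in an SOD have adjoints whenever $F$ does, since the inclusions $\Psi\to\Phi'$ are $F^R$-twisted or identity-based. Then I would compute the twist $T_{\Psi'}=\cofib(F'F'^R\to\id)$. With respect to the decomposition $\Psi^{\oplus n}$, this should be a cyclic permutation matrix whose entries are identities, except for one entry equal to $T_\Psi$. The cotwist should likewise be built from $T_\Phi$ glued with shifts. Both are visibly equivalences.

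I expect the main obstacle to be the bookkeeping in Step 2. One has to match $\mc{C}_{F'}$ precisely with the Waldhausen model and confirm that the two-term mutation corresponds to a rotation by exactly $n$, rather than some other step size. If that matching resists, I would fall back on the direct twist computation, which needs only the block-matrix form of $F'F'^R$ obtained from the projection description of $F'$.
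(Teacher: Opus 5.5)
The paper gives no argument of its own here; it only cites \cite[Lemma 3.8]{christ}. Judged on its own terms, your proposal has a genuine gap, and it starts with a miscount in Step 1.

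\textbf{Step 1.} The data $\psi_1,\dots,\psi_{n-1}$ give only $n-1$ copies of $\Psi$ in $\Phi'=\langle\Phi,\Psi,\dots,\Psi\rangle$, matching the $n-1$ copies of $\Coh(D)$ in the root stack. But $F'$ has $n$ summands, so those summands cannot be the projections onto the $\Psi$-components.

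To see the actual pattern, realize the linear SOD by embedding $\Phi$ as $(\varphi;F\varphi\to 0\to\cdots\to 0)$, and the $k$-th copy of $\Psi$ as $\psi$ in slot $k$ with zeros elsewhere. Then $F'$ sends $\varphi$ to $F\varphi$ in the first summand. It sends the $k$-th copy to $\psi[-1]$ in summand $k$ plus $\psi$ in summand $k+1$. So $\mc{C}_{F'}$ consists of $\Phi$ together with $2n-1$ (not $2n$) copies of $\Psi$. They are glued in a zigzag in which each $\Psi$-copy of $\Phi'$ hits two adjacent summands, and $\Phi$ also maps into $\Psi'$ by $F$. This is not the comb you describe.

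\textbf{Step 2.} This is the more serious problem. It is asserted rather than proved, and the mechanism you propose cannot work as stated.
\begin{enumerate}
\item The summands of $\Psi'=\Psi^{\oplus n}$ are mutually orthogonal. A block of consecutive copies of $\Psi$ in a chain glued by identities is $\Fun(A_k,\Psi)$, not $\Psi^{\oplus k}$. So once you mutate into the linear model of the $S$-construction, $\langle\Phi',\Psi'\rangle$ is no longer a grouping into two consecutive blocks.
\item Even for consecutive groupings, $4$-periodicity is not a formal consequence of \cite[Theorem 5.4.2]{nspherical}. That result concerns the specific SOD $\langle\Phi,\mathcal{D}\rangle$, and the periodicity there comes from $\Phi$ moving through the copies of $\Psi$ (as in the window argument and the A-side picture), not from a rigid rotation of all components.
\item A concrete counterexample: for the spherical functor $0\to\Psi$, level $2$ is $\Fun(A_2,\Psi)=\langle\Psi,\Psi\rangle$ glued by $\id_\Psi$. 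This is $6$-periodic but not $4$-periodic for $\Psi\neq 0$, since $\id_\Psi$ has zero twist.
\end{enumerate}
So the ``rotation by $n$'' bookkeeping has nothing behind it. What is missing is a reason specific to this particular decomposition.

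\textbf{The fallback.} Your direct verification is the workable route, but as written it only predicts the answer. To complete it you would need to:
\begin{enumerate}
\item write $F'^L$ and $F'^R$ explicitly in terms of $F^L$, $F^R$ and the SOD;
\item compute that the twist on $\Psi^{\oplus n}$ really is the cyclic matrix with a single $T_\Psi$ entry (the monodromy permuting the $n$ preimage fibres);
\item verify a second condition from the 2-out-of-4 characterization, such as invertibility of the cotwist, before concluding.
\end{enumerate}
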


\begin{rem}\label{subsect:Waldhausen}
    The vanishing cycles category $\langle \Phi, \Psi, \dots, \Psi \rangle$ is equivalent to the relative Waldhausen S-construction of \cite{DKSS}; thus we know that it has a $2n$-periodic SOD by \cite[Theorem 5.4.2]{nspherical}. The SOD from \cref{prop:keycalculation1} in \cref{subsect:rootstackms} gives a new proof of periodicity, as it is periodic, and one can tensor this SOD with any $\mc{A}$-module (i.e. any spherical functor) to recover the relative Waldhausen S-construction.
\end{rem}

\subsection{Mirror symmetry}\label{subsect:rootstackms}
We will compute the root stack functor in terms of spherical adjunctions via the equivalence \cref{thm:3dms}. We will prove the following theorem.

\begin{thm}\label{thm:main}
    On the object level, the root stack functors $\sqrt[n,*]{-},\sqrt[n,!]{-}$ on $\Coht(\ag)$ are each identified with the $n$\ts{th} power pushforward functor on $\Sph(\st)$ under the mirror symmetry equivalence \cref{thm:3dms}.
\end{thm}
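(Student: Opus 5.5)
The plan is to compute both sides through the explicit form of the 3d mirror equivalence of \cref{thm:3dms}. Write $\Am=\Coh(X\times_Y X)$ with $X=\ag\sqcup\pg$ and $Y=\ag$. An $\Am$-module $\mc{M}$ corresponds to the spherical adjunction $F\colon\Phi\to\Psi$. Here $\Phi=\mc{M}_{\ag}$ and $\Psi=\mc{M}_{\pg}$ are the pieces of $\mc{M}$ cut out by the two idempotents $\OO_{\Delta_{\ag}}$ and $\OO_{\Delta_{\pg}}$ of $\Am$. The functor $F$ is the action of the off-diagonal bimodule $\Coh(\ag\times_{\ag}\pg)=\Coh(\pg)$, i.e. morally $i^*$. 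With this dictionary, $e_n^*\mc{M}={}_1\mc{M}_2\otimes_{\Am}\mc{M}$ has $\ag$-part ${}_1\mc{M}_2|_{\wtag}\otimes_{\Am}\mc{M}$. Its $\pg$-part is ${}_1\mc{M}_2|_{\wtpg}\otimes_{\Am}\mc{M}$.

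\textbf{Step 1: the $\pg$-piece.} The $\pg$-row of ${}_1\mc{M}_2$ is $\Coh(\wtpg\times_{\ag}(\ag\sqcup\pg))$. Since $e_n\circ i$ lands in the origin, this category is, as a right $\Am$-module, a sum of $n$ shifted copies of the representable module at $\pg$. This follows because $\Coh(\wtpg)$ splits into $n$ weight pieces over $\Coh(\pg)$, as noted after the definition of $h_n^*$. Tensoring with $\mc{M}$ then gives $\Psi'=\Psi^{\oplus n}$.

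\textbf{Step 2: the $\ag$-piece.} The $\ag$-row is $\Coh(\wtag\times_{\ag}(\ag\sqcup\pg))=\Coh(\wtag)\oplus\Coh(\wtag\times_{\ag}\pg)$. Here I would prove the key calculation, which the later text calls \cref{prop:keycalculation1}. It is an $\Am$-linear SOD of this right $\Am$-module whose components are one copy of the representable module $P_{\ag}$ and $n-1$ copies of $P_{\pg}$, all suitably twisted. The gluing bimodules should be the off-diagonal bimodule ($\leftrightarrow F$) between the first two components, the identity between adjacent $P_{\pg}$'s, and zero otherwise. This is the analogue of \cref{thm:a1gmsod} one level up. I would derive it from \cref{thm:a1gmsod} (resp.\ its $\Coh$ version) applied to both summands, with stability under $\Am$ and not merely $\Perf(\ag)$. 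Checking that the convolution action by $\Coh(\pg\times_{\ag}\pg)$ preserves the components is the extra work. Applying the first part of \cref{thm:preserve} to $-\otimes_{\Am}\mc{M}$ then gives $\Phi'=\langle\Phi,\Psi,\dots,\Psi\rangle$ with gluing $F,\id,\dots,\id$. By \cref{thm:linearSOD} this is the category of diagrams $F(\varphi)\to\psi_1\to\cdots\to\psi_{n-1}$ of \cref{def:asidepf}.

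\textbf{Step 3: the functor and the shriek version.} The new spherical functor $F'\colon\Phi'\to\Psi'$ is the action of the off-diagonal part of $\Coh(\wtag\times_{\ag}\wtpg)$. I would compute it on generators by restricting along $i$, i.e.\ by applying $\wtpg$-restriction to each SOD component. On the copy of $\Phi$ this gives $F$ in weight $0$. On the $k$-th copy of $\Psi$ it gives $\Psi$ in two adjacent weights, via the resolution $\OO\langle1\rangle\to\OO\to\OO_0$ (\cref{lem:a1gmhoms}). Assembling these pieces yields the formula $\fib(F\varphi\to\psi_1)\oplus\cdots\oplus\psi_{n-1}$. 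For $\sqrt[n,!]{-}$ I would invoke \cref{prop:selfdual}, proved by exhibiting ${}_2\mc{M}_1$ as the $\Am$-linear dual of ${}_1\mc{M}_2$ via Grothendieck duality for the proper map $e_n$ (the twist $\langle -n+1\rangle$ from \cref{thm:a1gmsod} is harmless). Alternatively, I would run the Hom version using the last part of \cref{thm:preserve} and the projection SOD of \cref{thm:a1gmsod2}.

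The main obstacle is Step 2. It requires the SOD on the bimodule to be $\Am$-linear, not merely $\Perf(\ag)$-linear, with the gluing identified as $\Am$-bimodules. Only this lets it pass through $\otimes_{\Am}$ for an arbitrary, non-geometric module. I would first verify this on generators $\OO\langle j\rangle$ and $\OO_0\langle j\rangle$ by explicit convolution computations.
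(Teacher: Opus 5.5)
This is essentially the paper's proof. The paper likewise splits $\Coh(\tilde X\times_Y X)$ into its $\Phi$- and $\Psi$-parts. It gets the right-$\mc{A}$-linear SOD $\langle{}_\Phi\mc{A},{}_\Psi\mc{A},\dots,{}_\Psi\mc{A}\rangle$ by applying \cref{thm:categoricalSOD} to $\ag\sqcup\pg\to\ag$, which is exactly your ``both summands'' step, and uses the weight splitting ${}_\Psi\mc{A}^{\oplus n}$. It then computes $i^*$ against these decompositions, passes to an arbitrary module via \cref{thm:preserve}, and handles the $!$-case by the analogous Hom computation.

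Of your two routes for $\sqrt[n,!]{-}$, use the Hom route, or the paper's direct proof of \cref{prop:selfdual} from this same SOD via \cref{cor:dualexists}. Grothendieck duality for $e_n$ alone cannot give the $\mc{A}$-linear self-duality in the small setting, because over $\Perft(\ag)$ the $*$- and $!$-root stacks genuinely differ.
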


\begin{rem}
    While we define our A-side pushforward on the object level, it should really be a functor between 2-categories. Forthcoming work \cite{ACJ} constructs a 2-categorical pushforward functor from schobers on the $n$-spider to schobers on the $1$-spider which agrees with ours on the object level. Note that our B-side pullbacks are already defined 2-categorically since they are tensor/hom with a bimodule. We expect that the 2-categorical B-side pullback functors should match the 2-categorical pushforward from \cite{ACJ}.
\end{rem}

We recall here that the equivalence of \cref{thm:3dms} is given as follows. Let $$\mc{A}_\Phi:=\Coh(X\times_YY), \mc{A}_{\Psi}:=\Coh(X\times_Y\pg)$$ be left $\mc{A}$-modules. These admit an adjoint pair of $\mc{A}$-linear functors,
\begin{equation}\label{eqn:adjunction}
    i^*:\mc{A}_\Phi\tofrom \mc{A}_\Psi:i_*,
\end{equation}
induced by the map $i:\pg\to Y$.

Given a left $\mc{A}$-module $\mc{C}$, we write
\begin{align*}
    {}_{\Phi}\mc{C} &:= \Hom_{\mc{A}}(\mc{A}_\Phi,\mc{C})\\
    {}_{\Psi}\mc{C} &:= \Hom_{\mc{A}}(\mc{A}_\Psi,\mc{C}).
\end{align*}
Then, the spherical adjunction associated to $\mc{C}$ is
\[{}_{\Phi}\mc{C}\tofrom {}_{\Psi}\mc{C},\]
induced by the adjunction (\ref{eqn:adjunction}). Note that $\mc{A} \simeq \mc{A}_\Phi \oplus \mc{A}_\Psi$, so that $\mc{C} \simeq {}_{\Phi}\mc{C} \oplus {}_{\Psi}\mc{C}$.

We collect here two useful lemmas and along the way, prove \cref{prop:selfdual}. In what follows, we let ${}_\Phi\mc{A}:=\Coh(Y\times_YX)$, ${}_\Psi\mc{A}:=\Coh(\pg\times_YX)$ be right $\mc{A}$-modules.

We also let ${}_\Phi E\in \mc{A}$ be given by the pushforward of the structure sheaf of $Y$ along $Y\to Y\times_Y Y \to X\times_YX$, and ${}_\Psi E\in \mc{A}$ be given by the pushforward of the structure sheaf of $\pg$ along $\pg\to \pg\times_Y\pg\to X\times_YX$. We have that $1_{\mc{A}}\simeq{}_\Phi E\oplus {}_\Psi E$. These form complementary idempotent objects in $\mc{A}$ generating ${}_\Phi\mc{A}$ and ${}_\Psi\mc{A}$ as submodules.

\begin{lem}\label{lem:dual}
    The left $\mc{A}$-module $\mc{A}_\Phi$ is dualizable, with dual ${}_\Phi\mc{A}$. In particular, for any left $\mc{A}$-module $\mc{C}$, we have equivalences
    \[{}_\Phi\mc{C}=\Hom_{\mc{A}}(\mc{A}_\Phi,\mc{C})\simeq {}_\Phi\mc{A}\otimes_{\mc{A}}\mc{C}.\]
\end{lem}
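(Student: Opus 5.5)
The plan is to exhibit explicit evaluation and coevaluation maps, using only that $\mc{A}_\Phi$ and ${}_\Phi\mc{A}$ are cut out of $\mc{A}$ by the idempotent ${}_\Phi E$. First I would identify $\mc{A}_\Phi=\Coh(X\times_Y Y)$ with the left ideal $\mc{A}\ast{}_\Phi E$, i.e. the essential image of right convolution $-\ast{}_\Phi E$ on $\mc{A}$. Concretely, ${}_\Phi E$ is the structure sheaf of the diagonal-type component $Y\times_YY\subset X\times_YX$, so convolving on the right with it restricts the second factor of $X\times_YX$ to the component $Y\subset X$. Similarly, ${}_\Phi\mc{A}\simeq {}_\Phi E\ast\mc{A}$. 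This step is a direct base change and projection formula computation, using that $Y\to X$ is the inclusion of a connected component (an open and closed embedding).

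Next I define the duality data. Take coevaluation $\mathrm{coev}\colon \st\to \mc{A}_\Phi\otimes{}_\Phi\mc{A}$ (in $(\mc{A},\mc{A})$-bimodules the relevant tensor is over $k$) by sending $k$ to ${}_\Phi E\boxtimes {}_\Phi E$. More precisely, I would regard the bimodule $\mc{A}_\Phi\otimes_k{}_\Phi\mc{A}$ and use the object ${}_\Phi E\otimes{}_\Phi E$, which is compatible because ${}_\Phi E\ast{}_\Phi E\simeq{}_\Phi E$. Take evaluation ${}_\Phi\mc{A}\otimes_{\mc{A}}\mc{A}_\Phi\to\st$ to be induced by convolution into ${}_\Phi E\ast\mc{A}\ast{}_\Phi E\simeq\Coh(Y\times_YY)=\Coh(Y)$, followed by $\Hom({}_\Phi E,-)$; that is, global sections over $Y$. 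This is suitably $\mc{A}$-balanced because convolution is associative. Then I verify the triangle (zigzag) identities. For example, the composite $\mc{A}_\Phi\to\mc{A}_\Phi\otimes{}_\Phi\mc{A}\otimes_{\mc{A}}\mc{A}_\Phi\to\mc{A}_\Phi$ is $M\mapsto M\ast{}_\Phi E\simeq M$, using idempotency. The other identity is symmetric.

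A cleaner alternative, which I would try first, avoids explicit duality data. A retract of a dualizable object is dualizable, and $\mc{A}$ is dualizable as a left module over itself with dual $\mc{A}$ as a right module. Since $\mc{A}\simeq\mc{A}_\Phi\oplus\mc{A}_\Psi$ as left modules, with the splitting induced by $1_{\mc{A}}\simeq{}_\Phi E\oplus{}_\Psi E$, the summand $\mc{A}_\Phi$ is dualizable. Its dual is the corresponding summand of the right module $\mc{A}$, which is $\mc{A}\ast$-complementary and hence ${}_\Phi E\ast\mc{A}={}_\Phi\mc{A}$. The main point needing care is matching the two splittings: the dual of the projection idempotent $-\ast{}_\Phi E$ on the left module $\mc{A}$ is left convolution ${}_\Phi E\ast-$ on the right module $\mc{A}$. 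This follows because the duality pairing $\mc{A}\otimes_{\mc{A}}\mc{A}\to\mc{A}$ is convolution, which is associative.

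Finally, the displayed equivalence is the formal consequence of dualizability. For a dualizable left module $M$ with dual $M^\vee$, we have $\Hom_{\mc{A}}(M,\mc{C})\simeq M^\vee\otimes_{\mc{A}}\mc{C}$. Concretely, both sides are identified with ${}_\Phi E\ast\mc{C}\subset\mc{C}$, the image of the idempotent acting on $\mc{C}$. I expect the main obstacle to be the idempotent-splitting step in the small (idempotent-complete, $\st$) setting, namely checking that the summands are genuinely the stated subcategories; everything else is formal.
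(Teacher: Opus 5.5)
Your preferred route (the ``cleaner alternative'') is correct, and it is the argument the paper relies on. The paper gives no separate proof of this lemma; it only records, just before the statement, that ${}_\Phi E$ and ${}_\Psi E$ are complementary idempotents with $1_{\mc{A}}\simeq{}_\Phi E\oplus{}_\Psi E$, and that $\mc{A}\simeq\mc{A}_\Phi\oplus\mc{A}_\Psi$. From this, $\mc{A}_\Phi$ is a summand of the left module $\mc{A}$, whose dual is $\mc{A}$ as a right module. Its dual is therefore the matching summand ${}_\Phi E\ast\mc{A}={}_\Phi\mc{A}$, and both sides of the displayed equivalence are the image of ${}_\Phi E\ast-$ in $\mc{C}$, as in \cref{lem:phiformula}.

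The explicit duality data in your first sketch, however, has the two tensor products swapped, so drop it or fix it. To get $\Hom_{\mc{A}}(\mc{A}_\Phi,-)\simeq{}_\Phi\mc{A}\otimes_{\mc{A}}-$ you need two pieces of data:
\begin{enumerate}
\item a coevaluation $\mathrm{coev}\colon\Perf(k)\to{}_\Phi\mc{A}\otimes_{\mc{A}}\mc{A}_\Phi\simeq\Coh(Y\times_YY)=\Coh(Y)$, sending $k$ to $\OO_Y$ (that is, to ${}_\Phi E$);
\item an evaluation $\mathrm{ev}\colon\mc{A}_\Phi\otimes_k{}_\Phi\mc{A}\to\mc{A}$, an $(\mc{A},\mc{A})$-bimodule map given by convolution.
\end{enumerate}
Your functor $\Perf(k)\to\mc{A}_\Phi\otimes_k{}_\Phi\mc{A}$ picking out ${}_\Phi E\boxtimes{}_\Phi E$ is not bimodule data. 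A global-sections pairing out of $\Coh(Y)$ is the shape of data for the opposite adjunction, which would make ${}_\Phi\mc{A}\otimes_{\mc{A}}-$ a left rather than a right adjoint of $\mc{A}_\Phi\otimes_k-$; it does not give the stated Hom--tensor formula. Note that the zigzag you actually compute, $M\mapsto M\ast{}_\Phi E\simeq M$, is the one for the corrected $\mathrm{coev}$ and $\mathrm{ev}$.

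Finally, the idempotent-splitting obstacle you anticipate does not arise. Since ${}_\Phi E\ast-$ and ${}_\Psi E\ast-$ are complementary summands of the identity functor, every $c\in\mc{C}$ already decomposes as ${}_\Phi E\ast c\oplus{}_\Psi E\ast c$ inside $\mc{C}$. So $\mc{C}$ is the product of the two essential images, and no idempotent completion is needed. For $\mc{A}$ itself this is just the decomposition $X\times_YX=(X\times_YY)\sqcup(X\times_Y\pg)$ into open and closed pieces.
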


\begin{lem}\label{lem:phiformula}
    Let $\mc{C}$ be any left $\mc{A}$-module. The submodule inclusion ${}_\Phi\mc{A}\to \mc{A}$ induces an inclusion,
    \[{}_{\Phi}\mc{C}\to \mc{C},\]
    and this identifies ${}_{\Phi}\mc{C}$ with the full subcategory generated by the image of
    \[{}_\Phi E\ast-:\mc{C}\to\mc{C}.\]
    In particular, whenever $U$ is a stack with a map to $Y$,
    \[{}_\Phi\Coh(X\times_Y U) \simeq \Coh(U).\]

    Similarly, ${}_{\Psi}\mc{C}$ is naturally identified with the full subcategory generated by the image of ${}_\Psi E\ast-$, and so
    \[{}_\Psi\Coh(X\times_Y U)\simeq \Coh(\pg\times_YU).\]
\end{lem}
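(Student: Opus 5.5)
The first step is to use the dualizability from \cref{lem:dual}. Since $\mc{A}_\Phi$ is dualizable with dual ${}_\Phi\mc{A}$, we have ${}_\Phi\mc{C}=\Hom_{\mc{A}}(\mc{A}_\Phi,\mc{C})\simeq {}_\Phi\mc{A}\otimes_{\mc{A}}\mc{C}$. The map of right modules ${}_\Phi\mc{A}\to\mc{A}$, which is the inclusion of the summand of $\mc{A}$ generated by ${}_\Phi E$, then induces a functor
\[{}_\Phi\mc{A}\otimes_{\mc{A}}\mc{C}\to \mc{A}\otimes_{\mc{A}}\mc{C}\simeq\mc{C}.\]
The decomposition $1_{\mc{A}}\simeq {}_\Phi E\oplus{}_\Psi E$ into complementary idempotents gives a splitting $\mc{A}\simeq{}_\Phi\mc{A}\oplus{}_\Psi\mc{A}$ as right $\mc{A}$-modules. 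Tensoring this splitting with $\mc{C}$ gives $\mc{C}\simeq({}_\Phi\mc{A}\otimes_{\mc{A}}\mc{C})\oplus({}_\Psi\mc{A}\otimes_{\mc{A}}\mc{C})$. So the functor above is the inclusion of a direct summand, and in particular it is fully faithful.

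Next I will identify the image of that summand. For $c\in\mc{C}$, the composite of projection and inclusion onto the first summand is the functor $c\mapsto {}_\Phi E\ast c$. This holds because the idempotent ${}_\Phi E$ acts on $\mc{A}$ by projecting onto ${}_\Phi\mc{A}$, and this projection is the functor $-\ast c$ applied after projecting. The image of ${}_\Phi\mc{C}$ is therefore the full subcategory generated by the essential image of ${}_\Phi E\ast-$. A small point to check is that the image of an idempotent splitting is closed under retracts and shifts. So "generated by" can be read as thick closure, which equals the essential image up to idempotent completion; the categories here are idempotent complete.

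For the geometric statement, take $\mc{C}=\Coh(X\times_Y U)$. Since $X=Y\sqcup\pg$, we have
\[X\times_YU\simeq U\sqcup(\pg\times_YU),\]
and therefore $\Coh(X\times_YU)\simeq\Coh(U)\oplus\Coh(\pg\times_YU)$. The convolution action of ${}_\Phi E$ is computed by the kernel $\OO_\Delta$ supported on $Y\times_YY\subset X\times_YX$, via pull-tensor-push along $X\times_YX\times_YU$. I expect base change to show that this action is projection onto the $U$-component, and I expect this base-change check to be the main obstacle. It needs properness so that pushforward preserves $\Coh$, plus the fact that the relevant fiber products are disjoint unions of open and closed pieces. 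Granting it, ${}_\Phi\Coh(X\times_YU)\simeq\Coh(U)$. The $\Psi$ statements follow by the same argument with ${}_\Psi E$, the summand $\pg$, and ${}_\Psi\mc{A}$. Dualizability of $\mc{A}_\Psi$ comes for free because it is also a retract of $\mc{A}$.
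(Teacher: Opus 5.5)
Your proposal is correct and is the argument the paper's setup points to: the paper gives no separate proof, relying on the complementary idempotents $1_{\mc{A}}\simeq{}_\Phi E\oplus{}_\Psi E$ (hence $\mc{A}\simeq{}_\Phi\mc{A}\oplus{}_\Psi\mc{A}$) together with \cref{lem:dual}, exactly as you do. The base-change check you defer is routine. Since ${}_\Phi E$ is the structure sheaf of the open-closed component $Y\times_YY\cong Y$ of $X\times_YX$, the functor ${}_\Phi E\ast-$ is literally restriction to the component $U$ of $X\times_YU$. For ${}_\Psi E$, which is pushed forward along the diagonal $\pg\to\pg\times_{\ag}\pg$ (not an isomorphism), base change and the projection formula along $\tilde\Delta\colon\pg\times_YU\to X\times_YX\times_YU$ give ${}_\Psi E\ast\mc{F}\simeq\mc{F}|_{\pg\times_YU}$, because $p_{13}\circ\tilde\Delta$ and $p_{23}\circ\tilde\Delta$ are both the inclusion of $\pg\times_YU$.
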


The idea of the proof of \cref{thm:main} is to study the $(\tilde{\mc{A}},\mc{A})$-bimodule
\[\mc{M} := \Coh(\tilde{X}\times_YX)\]
and produce structures on this object which are linear under the right $\mc{A}$-action.

We recall the left $\tilde{\mc{A}}$-modules, $\tilde{\mc{A}}_\Phi,\tilde{\mc{A}}_\Psi$ which corepresent the vanishing cycles and nearby cycles functor respectively.

By \cref{lem:phiformula}, we have identifications:
\begin{itemize}
    \item ${}_\Phi\mc{M} := \Hom_{\tilde{\mc{A}}}(\tilde{\mc{A}}_\Phi,\mc{M}) \simeq \Coh(\tilde{Y}\times_YX)$, and
    \item ${}_\Psi\mc{M} := \Hom_{\tilde{\mc{A}}}(\tilde{\mc{A}}_\Psi,\mc{M}) \simeq \Coh(\wtpg\times_YX)$,
\end{itemize}
where $\wtag\to\ag$ is given by the $n$\ts{th} power map.

\begin{prop}\label{prop:keycalculation1}
    The category ${}_\Phi\mc{M}$ admits an SOD,
    \[{}_\Phi\mc{M}\simeq\langle {}_\Phi\mc{A}, {}_\Psi\mc{A}, \dots, {}_\Psi\mc{A} \rangle,\]
    where we have nontrivial gluing functors only between adjacent components, that are abstractly a pullback ${}_\Phi\mc{A} \to {}_\Psi\mc{A}$ coming from the map $\pg \to \ag$ and the identity functor on ${}_\Psi\mc{A}$.

    Moreover, this SOD is linear for the right $\mc{A}$-action.
\end{prop}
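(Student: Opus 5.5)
The plan is to reduce to the SOD of \cref{thm:a1gmsod} (in its $\Coh$ form, or via \cref{thm:a1gmsod2}) by treating ${}_\Phi\mc{M}\simeq\Coh(\wtag\times_{\ag}X)$ as a base change along the right factor. Write $X=\ag\sqcup\pg$. Then
\[\wtag\times_{\ag}X\simeq \wtag\sqcup(\wtag\times_{\ag}\pg),\]
which gives ${}_\Phi\mc{M}\simeq\Coh(\wtag\times_{\ag}\ag)\oplus\Coh(\wtag\times_{\ag}\pg)$. Here $\wtag\times_{\ag}\pg$ is the (derived) fiber of $e_n$ over $0$. In the same way,
\[{}_\Phi\mc{A}\simeq\Coh(\ag\times_{\ag}X)\simeq\Coh(\ag)\oplus\Coh(\pg\times_{\ag}\pg).\]
Rather than working summand by summand, I would keep everything uniform in the right factor $X$, using that both sides are right $\mc{A}$-modules built from $\Coh(-\times_{\ag}X)$.

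First, I construct the SOD on the left factor. Consider the $(\Perf(\wtag),\Perf(\ag))$-type decomposition of \cref{thm:a1gmsod2}: the projections $P_1=e_{n,*}$ and $P_j=h_{n,*}i^*\langle j-2\rangle$. I base change them along the right factor to get functors
\[\Coh(\wtag\times_{\ag}X)\to\Coh(\ag\times_{\ag}X)={}_\Phi\mc{A}\quad\text{and}\quad\Coh(\wtag\times_{\ag}X)\to\Coh(\pg\times_{\ag}X)={}_\Psi\mc{A}.\]
The key point is that convolution with $\mc{A}$ acts only on the $X$ factor, while these functors act only on the $\wtag$ factor. So by base change and the projection formula (for the proper maps $e_n$, $h_n$ and the closed immersion $i$), they commute with the right $\mc{A}$-action. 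This gives the claimed right $\mc{A}$-linearity.

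Second, I show these functors define an SOD with the stated gluing functors. Here I would apply the Hom/tensor preservation result \cref{thm:preserve} with respect to the monoidal category $\Perf(\ag)$. By \cref{cor:dmhom} and the integral transform results of \cite{BZNP},
\[\Coh(\wtag\times_{\ag}X)\simeq\Hom_{\Perf(\ag)}(\Perf(\wtag),\Coh(X)),\qquad\text{and similarly}\qquad\Coh(\pg\times_{\ag}X)\simeq\Hom_{\Perf(\ag)}(\Perf(\pg),\Coh(X)).\]
Since the SOD of \cref{thm:a1gmsod2} is $\Perf(\ag)$-linear and infinitely admissible, the last part of \cref{thm:preserve} shows that $\Hom_{\Perf(\ag)}(-,\Coh(X))$ carries it to an SOD of ${}_\Phi\mc{M}$. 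The components are ${}_\Phi\mc{A}$ and $n-1$ copies of ${}_\Psi\mc{A}$, and the gluing functors are $(-\circ i_*)$ and $(-\circ\id)$. The computation in the proof of \cref{thm:categoricalSOD} identifies $(-\circ i_*)$ with the pullback along $\pg\times_{\ag}X\to\ag\times_{\ag}X$, that is, the pullback induced by $\pg\to\ag$. Nonadjacent gluing functors vanish, as in \cref{thm:a1gmsod2}. This is essentially \cref{cor:abstractSODshriek} applied to $\Phi=\ul{\Coh}(X)$, with the components then matched to ${}_\Phi\mc{A}$ and ${}_\Psi\mc{A}$ via \cref{lem:phiformula}.

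The main obstacle is making the two structures compatible: the decomposition comes from the $\Perf(\ag)$-module structure on the left factor, but linearity is required for the convolution algebra $\mc{A}=\Coh(X\times_{\ag}X)$ acting on the right. The identifications from \cite{BZNP} must therefore be shown to be $\mc{A}$-linear, not merely $\Perf(\ag)$-linear. I would handle this by noting that all inclusion and projection functors are given by integral kernels supported on the $\wtag\times_{\ag}\ag$ factor. The $\mc{A}$-linearity then reduces to base change isomorphisms on $\wtag\times_{\ag}X\times_{\ag}X$, which hold because $e_n$, $h_n$ and $i$ are proper and finite Tor-amplitude issues are absent (as $\ag$ is smooth). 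A secondary check is that the singular fiber products $\pg\times_{\ag}\pg$ cause no failure of coherence, which follows because all maps involved are proper and of finite Tor dimension.
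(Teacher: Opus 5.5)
Your proposal is correct and is essentially the paper's argument. The paper simply invokes the $\Coh$ case of \cref{thm:categoricalSOD}, which is exactly \cref{cor:abstractSODshriek} for $\ul{\Coh}(X)$ together with the \cite{BZNP}/\cref{cor:dmhom} identification, for $X=\ag\sqcup\pg\to\ag$. It then matches the components with ${}_\Phi\mc{A}=\Coh(\ag\times_{\ag}X)$ and ${}_\Psi\mc{A}=\Coh(\pg\times_{\ag}X)$ and calls right $\mc{A}$-linearity clear, whereas you spell out that linearity via base change and the projection formula.

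One small slip, which you never use later: ${}_\Phi\mc{A}=\Coh(\ag\times_{\ag}X)\simeq\Coh(\ag)\oplus\Coh(\pg)$, not $\Coh(\ag)\oplus\Coh(\pg\times_{\ag}\pg)$ (that second summand belongs to ${}_\Psi\mc{A}$).
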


\begin{proof}
    We apply \cref{thm:categoricalSOD} to $p:X\to Y$ to get an SOD,
    \[{}_\Phi\mc{M}=\langle e_n^*\Coh(Y\times_YX), i_*h_n^*\Coh(\pg\times_YX), \dots, i_*h_n^*\Coh(\pg\times_YX)\langle n-2 \rangle \rangle.\]
    The SOD is written in a way where the linearity for the right $\mc{A}$-action is clear.

    See now that we have natural identifications of right $\mc{A}$-modules,
    \begin{align*}
        \Coh(\ag\times_YX) &\simeq {}_\Phi\mc{A},\\
        \Coh(\pg\times_YX) &\simeq {}_\Psi\mc{A}.
    \end{align*}
    The abstract gluing functors are calculated similarly to previously.

    Outside characteristic zero, we cannot direct apply \cref{thm:categoricalSOD}, but we believe that this result still holds by manual calculation on the generators (see \cref{subsect:convent}).
\end{proof}

\begin{prop}\label{prop:keycalculation2}
    The category ${}_\Psi\mc{M}$ admits a decomposition,
    \[{}_\Psi\mc{M}\simeq {}_\Psi\mc{A}^{\oplus n}.\]
    Moreover, this decomposition is naturally linear for the right $\mc{A}$-action.
\end{prop}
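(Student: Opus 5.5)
By \cref{lem:phiformula}, ${}_\Psi\mc{M}\simeq\Coh(\wtpg\times_YX)$, where $\wtpg\to Y=\ag$ is the composite $\wtpg\xto{h_n}\pg\xto{i}\ag$ (equivalently $\wtpg\xto{i}\wtag\xto{e_n}\ag$). The plan is to factor the fiber product through $\pg$:
\[\wtpg\times_YX\simeq \wtpg\times_{\pg}(\pg\times_YX),\]
and then use that $h_n\colon\wtpg\to\pg$ is the map $B\Gm\to B\Gm$ induced by $t\mapsto t^n$. This map is a $\mu_n$-gerbe with trivial band, i.e.\ it is the product map $\pg\times B\mu_n\to\pg$. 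Hence $\wtpg\times_YX\simeq(\pg\times_YX)\times B\mu_n$.

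Since we are in characteristic zero, $\Coh(B\mu_n)\simeq\Vect^{\oplus n}$, splitting by the $n$ characters of $\mu_n$. Concretely, every coherent sheaf on $\wtpg\times_YX$ decomposes canonically by $\Gm$-weight modulo $n$, which is the same splitting used in \cref{subsect:abstractgluing1} for $\mu_n$-gerbes. We therefore get
\[\Coh(\wtpg\times_YX)\simeq\bigoplus_{r=0}^{n-1}\Coh(\pg\times_YX)\simeq{}_\Psi\mc{A}^{\oplus n},\]
with the $r$-th summand being the image of $h_n^*(-)\langle r\rangle$. This matches the earlier remark that $\Perf(\wtpg)$ is a direct sum of $n$ copies of $\Perf(\pg)$ as a $\Perf(\pg)$-module. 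Equivalently, we can avoid the gerbe description by checking on generators: $h_n^*\langle r\rangle$ is fully faithful for each $r$, there are no maps between distinct residues, and together the images generate.

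For right $\mc{A}$-linearity, note that the right $\mc{A}$-action on $\Coh(\wtpg\times_YX)$ is by convolution in the $X$-factor. It is computed by pullback and pushforward along maps that are base changed from $\pg\times_YX\times_Y X$ and its projections, all living over $\wtpg\to\pg$. By base change and the projection formula, $h_n^*$ and $\langle r\rangle$ commute with these functors, just as in statement~\ref{enumerate:stable} of \cref{thm:a1gmsod}. Alternatively, the decomposition comes from the idempotent decomposition $\OO_{\wtpg}$-weight projectors in the monoidal category $\Perf(\wtpg)$ acting from the left, and this action commutes with the right $\mc{A}$-action.

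The main obstacle is making the linearity natural, i.e.\ $\infty$-categorically coherent rather than objectwise. To handle this, I would phrase the splitting as coming from the $\Perf(B\mu_n)$-module structure: ${}_\Psi\mc{M}\simeq\Perf(B\mu_n)\otimes{}_\Psi\mc{A}$ as right $\mc{A}$-modules, using \cref{prop:compatiblesmall}/\cite{BZNP} for the product with $B\mu_n$. The decomposition then follows from the orthogonal idempotents of $\Perf(B\mu_n)\simeq\Perf(k)^{\oplus n}$.
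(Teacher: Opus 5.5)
The load-bearing geometric claim is false. $h_n\colon\wtpg\to\pg$ is a $\mu_n$-gerbe banded by $\mu_n$, but it is not the neutral gerbe $\pg\times B\mu_n\to\pg$. A section would be a map $B\Gm\to B\Gm$, i.e.\ a character $t\mapsto t^k$, with $t^{nk}=t$, which is impossible for $n\ge 2$. Said differently, $h_n$ is the gerbe of $n$th roots of $\OO_{\pg}\langle 1\rangle$, and that line bundle has no $n$th root on $\pg$. Even abstractly $B\Gm\not\simeq B(\Gm\times\mu_n)$, since the automorphism group of the point is connected in one case and not in the other. Moreover, $\wtpg$ is exactly the component of $\wtpg\times_YX$ lying over $\ag\subset X$. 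So $\wtpg\times_YX\simeq(\pg\times_YX)\times B\mu_n$ fails.

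Consequently your final paragraph has no geometric input to rest on: it derives coherent $\mc{A}$-linearity from a product with $B\mu_n$ via \cref{prop:compatiblesmall} and \cite{BZNP}. Your alternative via weight projectors in $(\Perf(\wtpg),\otimes)$ also does not work. The unit $\OO_{\wtpg}$ has endomorphisms $k$ and admits no idempotent splitting; tensoring with $\OO\langle r\rangle$ cyclically permutes the $\mu_n$-isotypic pieces rather than projecting onto them.

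The object-level splitting survives. Coherent sheaves on any $\mu_n$-banded gerbe split by $\mu_n$-characters, as in \cref{subsect:abstractgluing1}. The weight-$r$ piece is identified with $\Coh(\pg\times_YX)$ via $h_n^*(-)\langle r\rangle$ because $\OO_{\wtpg}\langle 1\rangle$ has $\mu_n$-weight $1$ (a root gerbe of a line bundle carries a tautological root), not because the gerbe is neutral. Your generator check and your base-change computation of convolution also go through.

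To get the linearity naturally, do what the paper does and split the probe instead of the sheaves. Write $\Perf(\wtpg)=\bigoplus_{r=0}^{n-1}h_n^*\Perf(\pg)\langle r\rangle$ as $\Perf(\pg)$-modules, hence as $\Perf(Y)$-modules, and use ${}_\Psi\mc{M}\simeq\Hom_{\Perf(Y)}(\Perf(\wtpg),\Coh(X))$ from \cref{cor:dmhom}. The right $\mc{A}$-action lives only on the second argument $\Coh(X)\simeq{}_\Phi\mc{A}$, and $\Hom_{\Perf(Y)}(-,\Coh(X))$ preserves finite direct sums. So the resulting decomposition into copies of $\Hom_{\Perf(Y)}(\Perf(\pg),\Coh(X))\simeq\Coh(\pg\times_YX)\simeq{}_\Psi\mc{A}$ is automatically $\mc{A}$-linear. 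In particular your formula ${}_\Psi\mc{M}\simeq\Perf(B\mu_n)\otimes{}_\Psi\mc{A}$ is true, but only as a consequence of this module-level splitting of $\Perf(\wtpg)$, not of a product decomposition of stacks.
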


\begin{proof}
    We first study $\Perf(\wtpg)$ as a right $\Perf(\pg)$-module, via the $n$\ts{th} power map $h_n:\wtpg\to\pg$. It is easy to see that $h_n^*:\Perf(\pg)\to\Perf(\wtpg)$ is a fully faithful embedding and that
    \[\Perf(\wtpg) = h_n^*:\Perf(\pg) \oplus h_n^*\Perf(\pg)\langle1\rangle \oplus\cdots\oplus h_n^*\Perf(\pg)\langle n-1\rangle.\]

    The same decomposition is automatically linear over $\Perf(Y)$ via pullback along $\pg\to Y$. We therefore have
    \begin{align*}
        {}_\Psi\mc{M} &\simeq \Hom_{\Perf(Y)}\left(\Perf(\wtpg),\Coh(X)\right)\\
        &\simeq \Hom_{\Perf(Y)}\left(\oplus_{i=0}^{n-1}h_n^*\Perf(\pg)\langle i\rangle ,\Coh(X)\right)\\
        &\simeq \oplus_{i=0}^{n-1}h_n^*\Hom_{\Perf(Y)}\left(\Perf(\pg),\Coh(X)\right)\langle -i\rangle\\
        &\simeq \oplus_{i=0}^{n-1}h_n^*\Coh(\pg\times_YX)\langle -i\rangle.
    \end{align*}
    
    We again use the identification $\Coh(\pg\times_YX)\simeq {}_\Psi\mc{A}$ to conclude.
\end{proof}

\begin{prop}\label{prop:keycalculation3}
    The spherical functor ${}_\Phi\mc{M}\to {}_\Psi\mc{M}$ is given by the formula from before.
\end{prop}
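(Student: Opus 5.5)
The functor to compute is the one obtained from the $\tilde{\mc{A}}$-linear adjunction $\tilde i^*:\tilde{\mc{A}}_\Phi\tofrom\tilde{\mc{A}}_\Psi:\tilde i_*$ by applying $\Hom_{\tilde{\mc{A}}}(-,\mc{M})$. By \cref{lem:phiformula} this is the restriction $\tilde i^*\colon \Coh(\tilde Y\times_Y X)\to\Coh(\wtpg\times_Y X)$ along $\wtpg\inclto\wtag$, base-changed to $X$. Equivalently, it is convolution with ${}_\Psi E$ followed by projection. So the plan is to evaluate $\tilde i^*$ on each component of the SOD of \cref{prop:keycalculation1} and to read off the answer in the decomposition of \cref{prop:keycalculation2}. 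We then check that the result agrees with the formula of \cref{def:asidepf} once ${}_\Phi\mc{M}$ is identified with the category of diagrams $F(\varphi)\to\psi_1\to\cdots\to\psi_{n-1}$ via \cref{thm:linearSOD}, where $F={}_\Phi\mc{A}\to{}_\Psi\mc{A}$ is the pullback along $\pg\to\ag$.

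Since everything is right $\mc{A}$-linear and both SODs are base-changed from $\wtag$, the first step is to do the computation universally in $\Perf(\wtag)$ (or $\Coh(\wtag)$). This amounts to computing $i^*$ on generators using \cref{lem:a1gmhoms}. We get $i^*e_n^*\mc{F}\simeq h_n^*i^*\mc{F}$, which lands in the weight-$0$ summand $h_n^*\Perf(\pg)$. For the other components, the resolution $\OO\langle1\rangle\to\OO\to\OO_0$ gives
\[i^*i_*h_n^*\mc{G}\langle k\rangle\simeq h_n^*\mc{G}\langle k\rangle\oplus h_n^*\mc{G}\langle k+1\rangle[1],\]
which is supported in the weight-$k$ and weight-$(k+1)$ summands. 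So the first component maps into one copy of $\Psi$, and the $k$-th copy of $\Psi$ maps into the two adjacent copies. The functor is therefore ``bidiagonal'' with respect to the $n$ summands of \cref{prop:keycalculation2}.

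Next we translate this into the diagram description. An object of $\langle\Phi,\Psi,\dots,\Psi\rangle$ with gluing functors $F,\id,\dots,\id$ corresponds to a chain $F(\varphi)\to\psi_1\to\cdots\to\psi_{n-1}$. The graded pieces of the SOD filtration are $\varphi$ and the successive fibers, and the bidiagonal shape above shows that $\tilde i^*$ sends this object to $\fib(F\varphi\to\psi_1)\oplus\cdots\oplus\fib(\psi_{n-2}\to\psi_{n-1})\oplus\psi_{n-1}$, after matching each weight summand $\langle k\rangle$ with one $\Psi$ factor. The shift $[1]$ and the twist $\langle1\rangle$ from the display above are absorbed by the abstract identification of the gluing functor $[1]\langle1\rangle$ with $\id$, exactly as in \cref{subsect:abstractgluing1}. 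Concretely, we will verify on the universal objects $\cone(\OO\langle k+1\rangle\to\OO\langle k\rangle)$ that the two weight summands of $i^*$ are glued by the same map that defines the chain.

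The main obstacle is the bookkeeping of this identification. We must show that the identification $\Phi'\simeq$ (category of chains) from \cref{thm:linearSOD} is compatible with the splitting of ${}_\Psi\mc{M}$ into weight summands. In particular, the $k$-th summand should receive exactly $\fib(\psi_{k-1}\to\psi_k)$, not $\psi_k$ or a cofiber, and the wrap-around at weight $n-1$ should produce the lone term $\psi_{n-1}$. To handle this, we first work out the case $n=2$ by hand on generators of $\Coh(\wtag)$, which fixes the conventions. We then argue by induction, peeling off the last copy of $\Psi$ via the mutation rules of \cref{subsect:abstractgluing1}. Linearity over $\mc{A}$ then allows the universal computation to be base-changed to an arbitrary $\mc{C}$.
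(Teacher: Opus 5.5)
Your strategy is the paper's: identify ${}_\Phi\mc{M}\to{}_\Psi\mc{M}$ with the restriction $i^*\colon\Coh(\tilde{Y}\times_YX)\to\Coh(\wtpg\times_YX)$, then compute it against the SOD of \cref{prop:keycalculation1} and the weight splitting of \cref{prop:keycalculation2}. The paper stops there and omits the computation. Your component formulas are right: $i^*e_n^*\simeq h_n^*i^*$ lands in weight $0$, and $i^*i_*h_n^*\mc{G}\langle k\rangle\simeq h_n^*\mc{G}\langle k\rangle\oplus h_n^*\mc{G}\langle k+1\rangle[1]$.

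The step that turns this bidiagonal shape into the formula is wrong as written. In the chain model of $\langle\Phi,\Psi,\dots,\Psi\rangle$, the components are the chains concentrated in a single slot: $[\varphi;F\varphi\to0\to\cdots\to0]$ and $[0;0\to\cdots\to\psi\to\cdots\to0]$. These give gluing functors $F$ and $\id$ up to shift. So the SOD filtration of $[\varphi;F\varphi\to\psi_1\to\cdots\to\psi_{n-1}]$, obtained by repeatedly truncating the last slot, has graded pieces $\varphi,\psi_1,\dots,\psi_{n-1}$, not the successive fibers. The successive (co)fibers are what the left-adjoint projections return, and they are the \emph{outputs} of $F'$. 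With your stated graded pieces, bidiagonality would make each weight summand an extension of two successive fibers, which contradicts the formula.

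Check $n=2$: $\OO\langle1\rangle$ sits in $\OO_0[-1]\to\OO\langle1\rangle\to\OO$, i.e.\ it is the chain $[\OO;\,i^*\OO\xrightarrow{\id}\OO_{\pg}]$. Its fiber is $0$ while its $\Psi$-piece is $\OO_{\pg}\neq0$, and $i^*\OO\langle1\rangle$ is indeed concentrated in weight $1$, matching $0\oplus\OO_{\pg}$. With the correct pieces, weight $k\le n-2$ sees exactly slots $k$ and $k+1$ (slot $0$ contributing $F\varphi$), and weight $n-1$ sees only $\psi_{n-1}$. There is no wrap-around.

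What remains is to identify the map inside each weight summand. Your proposed test objects $\cone(\OO\langle k+1\rangle\to\OO\langle k\rangle)\simeq\OO_0\langle k\rangle$ cannot do this, since each lies in a single component. Here is how to finish instead.
\begin{enumerate}
\item The weight-$k$ part of $i^*$ kills all components other than two adjacent ones, so it is determined by its restriction to that two-term subquotient.
\item Take an extension class $\delta\colon i_*\mc{H}\to i_*\mc{H}'[1]$ between adjacent $\Psi$-components. By weights, $i^*\delta$ has a single nonzero component, landing in the shared weight.
\item The counit $i^*i_*\mc{H}'\to\mc{H}'$ is the projection onto the summand $\mc{H}'$. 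So the adjunction formula $\tilde\delta=\epsilon\circ i^*\delta$ shows that this component is the adjoint of $\delta$.
\item Under \cref{thm:linearSOD}, that adjoint is the chain map $\psi_k\to\psi_{k+1}$. The same argument applies to $\delta\colon e_n^*\varphi\to i_*h_n^*\psi[1]$.
\end{enumerate}
This gives $\fib(\psi_k\to\psi_{k+1})$ in weight $k$ directly, with no induction through mutations.

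Finally, your aside is incorrect: convolution with ${}_\Psi E$ annihilates ${}_\Phi\mc{M}$, since the idempotents are orthogonal. The functor is convolution with the off-diagonal $\OO_{\wtpg}$, as in \cref{lem:identifyadjoint}.
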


\begin{proof}
    Via the equivalence of \cite{3dms}, this spherical functor is given as follows.
    \[\begin{tikzcd}
    	{{}_\Phi\mc{M}} & {{}_\Psi\mc{M}} \\
    	{\Coh(\tilde{Y}\times_YX)} & {\Coh(\wtpg\times_YX)}
    	\arrow[from=1-1, to=1-2]
    	\arrow["="', from=1-1, to=2-1]
    	\arrow["="', from=1-2, to=2-2]
    	\arrow["{i^*}", from=2-1, to=2-2]
    \end{tikzcd}\]
    The proof is now a matter of computing the functor $i^*$ with respect to the decompositions of \cref{prop:keycalculation1} and \cref{prop:keycalculation2}. We omit the details here.
\end{proof}

\begin{proof}[Proof of \cref{thm:main}]
    We first prove the $\ast$-version of the statement.
    
    Let $\mc{C}$ be our input left $\mc{A}$-module. The $\ast$-root stack functor outputs the left  $\tilde{\mc{A}}$-module $\mc{M}\otimes_{\mc{A}}\mc{C}$. By \cref{lem:dual}, we can compute the resulting spherical functor using tensor products instead of homs, giving us
    \[{}_\Phi\tilde{\mc{A}}\otimes_{\tilde{\mc{A}}}\mc{M}\otimes_{\mc{A}}\mc{C}\tofrom {}_\Psi\tilde{\mc{A}}\otimes_{\tilde{\mc{A}}}\mc{M}\otimes_{\mc{A}}\mc{C}.\]
    The calculations of \cref{prop:keycalculation1} and \cref{prop:keycalculation2} identify these with
    \[\langle {}_\Phi\mc{A}, {}_\Psi\mc{A}, \dots, {}_\Psi\mc{A} \rangle\otimes_{\mc{A}}\mc{C}\tofrom {}_\Psi\mc{A}^{\oplus n}\otimes_{\mc{A}}\mc{C}.\]
    The forward spherical functor is given by \cref{prop:keycalculation3}.
    Finally by \cref{thm:preserve}, this is identified with
    \[\langle {}_\Phi\mc{C}, {}_\Psi\mc{C}, \dots, {}_\Psi\mc{C} \rangle\tofrom {}_\Psi\mc{C}^{\oplus n},\]
    completing the proof for $\sqrt[n,*]{-}$.

    Let us now prove the $!$-version. The proof is largely analogous and so we will omit details here. Recall that the $!$-categorical root stack outputs the left $\tilde{\mc{A}}$-module $\Hom_{\mc{A}}(\mc{N},\mc{C})$, where $\mc{N}: = \Coh(X\times_Y\tilde{X})$. An analog of \cref{prop:keycalculation1} produces a projection SOD of $\mc{N}_{\Phi}$ via a dual version of \cref{thm:categoricalSOD} (by applying $!$-root stack on $\Perft(\ag)$ to \cref{thm:a1gmsod} instead of \cref{thm:a1gmsod2}). The analog of \cref{prop:keycalculation2} is trivial, and we then calculate the spherical functor $\mc{N}_{\Phi}\to \mc{N}_\Psi$ analogously to \cref{prop:keycalculation3}.

    Finally, applying $\Hom_{\mc{A}}(-,\mc{C})$ outputs an SOD on the vanishing cycle via inclusion and once again we can compare with the $n$\ts{th}-power pushforward to complete the proof.
\end{proof}

\begin{rem}
    It is interesting to understand to what extent the resulting identification \[\sqrt[n,*]{-}\simeq\sqrt[n,!]{-}\]
    is canonical. This is downstream of the extent to which the SODs of $\Perf(\wtag)$ via inclusion and projection from \cref{thm:a1gmsod} and \cref{thm:a1gmsod2}, respectively, are related to one another.
\end{rem}

\begin{rem}
    We recall that a stack over $\ag$ is equivalent to the information of a stack equipped with a (generalized Cartier) divisor. In \cite{seidelfukaya2}, a notion of non-commutative divisor is introduced. We expect this notion is generalized by categories over $\ag$.
\end{rem}

Note that we have proved \cref{prop:selfdual} indirectly, but we can also prove it directly by showing that $\mc{N}$ is non-canonically equivalent to the dual module to $\mc{M}$ over $\mc{A}$.

\begin{proof}[Proof of \cref{prop:selfdual}]
    By \cref{prop:keycalculation1} and \cref{prop:keycalculation2}, we have an equivalence of right $\mc{A}$-modules,
    \[\mc{M}\simeq {}_\Phi\mc{M}\oplus {}_\Psi\mc{M}\simeq\langle {}_\Phi\mc{A}, {}_\Psi\mc{A}, \dots, {}_\Psi\mc{A} \rangle\oplus{}_\Psi\mc{A}^{\oplus n}.\]
    
    By \cref{cor:dualexists}, since the components of the SOD of $\mc{M}$ are dualizable by \cref{lem:dual}, we know there is a dual. We claim that this dual is noncanonically identified with $\mc{N}$.

    By \cref{cor:dualexists}, the dual module to $\mc{M}$ has an SOD that looks like $\langle \mc{A}_{\Psi}, \dots, \mc{A}_{\Psi}, \mc{A}_{\Phi} \rangle \oplus \mc{A}_{\Psi}^{\oplus n}$, with a linear chain of gluing functors in the first component (if an SOD by inclusion is a linear chain, then after applying $\Hom_{\mc{A}}(-, \mc{A})$, the corresponding SOD by projection is also a linear chain). Now if we mutate to reverse the orientation of the linear chain, we get an SOD $\langle \mc{A}_{\Phi}, \mc{A}_{\Psi}, \dots, \mc{A}_{\Psi} \rangle$, which we can non-canonically identify with an SOD $\langle {}_\Phi\mc{A}, {}_\Psi\mc{A}, \dots, {}_\Psi\mc{A} \rangle$ with some gluing functors. Recall that in the initial SOD, all gluing functors other than the one between $\mc{A}_{\Phi}$ and the adjacent $\mc{A}_{\Psi}$ are identity or zero. Then we can use similar reasoning to \cref{thm:mutatearb1} to conclude that the mutated SOD has a linear chain where all the nonzero gluing functors are identity except for the first one.
    
    Finally, \cref{lem:identifyadjoint} shows that for the pullback case we care about, the adjoint functor of pushforward is (non-canonically) identified with the dual functor, showing that the functors $\mc{A}_\Phi \to \mc{A}_\Psi$ in our two SODs non-canonically agree, which is enough to give an abstract isomorphism of $\mc{N}$ with the dual of $\mc{M}$ using the fact that all other gluing functors are identity or zero.
\end{proof}

\begin{lem}\label{lem:identifyadjoint}
Recall that by the projection formula, the adjunction \cref{eqn:adjunction} is an $\mc{A}$-module adjunction. As ${}_\Phi\mc{A}, {}_\Psi\mc{A}$ are direct summands of $\mc{A}$, and $\Hom_{\mc{A}}(\mc{A}, \mc{A})$, the maps of the adjunction can each be identified with an element of $\mc{A}$. We claim that each map can be identified with the structure sheaf $\OO_{\pg}$ of $\Coh(\pg)$ (in the two different entries for the two maps).
\end{lem}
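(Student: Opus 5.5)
The plan is to make everything explicit inside $\mc{A}=\Coh(X\times_Y X)$ with $X=\ag\sqcup\pg$ and $Y=\ag$. I would decompose
\[X\times_Y X=(\ag\times_{\ag}\ag)\sqcup(\ag\times_{\ag}\pg)\sqcup(\pg\times_{\ag}\ag)\sqcup(\pg\times_{\ag}\pg),\]
which gives a $2\times 2$ ``matrix'' description of $\mc{A}$. The off-diagonal blocks are $\Coh(\pg)$, embedded as the graphs of $i$ and of its transpose. Then:
\begin{itemize}
\item A right $\mc{A}$-linear functor ${}_\Phi\mc{A}\to{}_\Psi\mc{A}$ is determined by the image of the generator ${}_\Phi E$. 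Using $\Hom_{\mc{A}}(\mc{A}e,\mc{A}e')\simeq e\mc{A}e'$ for the idempotents ${}_\Phi E,{}_\Psi E$, this image is an object of the off-diagonal block $\Coh(\pg\times_{\ag}\ag)\simeq\Coh(\pg)$.
\item Likewise, a functor ${}_\Psi\mc{A}\to{}_\Phi\mc{A}$ corresponds to an object of the other block $\Coh(\ag\times_{\ag}\pg)\simeq\Coh(\pg)$.
\end{itemize}
The claim is therefore that the kernels of $i^*$ and $i_*$ in (\ref{eqn:adjunction}) are both $\OO_{\pg}$, placed in the two different off-diagonal entries.

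First I would write down the functors concretely. Under \cref{lem:phiformula}, ${}_\Phi\mc{A}\simeq\Coh(Y\times_Y X)=\Coh(X)$ and ${}_\Psi\mc{A}\simeq\Coh(\pg\times_Y X)$. The functor $i^*$ is pullback along $\pg\times_Y X\to Y\times_Y X$, and $i_*$ is the corresponding pushforward; this is exactly how the adjunction (\ref{eqn:adjunction}) is induced from $i$.

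Next I would compute each functor as a convolution. The natural candidate is the correspondence kernel: the graph $\Gamma_i\subset \pg\times_Y Y$, which is just $\pg$ itself. The point is that pullback along $i$, applied to a left module of the form $\Coh(-\times_Y X)$, is left convolution with the structure sheaf of this graph, viewed as a $({}_\Psi E,{}_\Phi E)$-entry of $\mc{A}$. I would verify this by base change in the triple fiber product diagram that defines the convolution. Concretely, $p_{12}^*\OO_{\Gamma}\otimes p_{23}^*\mc{G}$ is supported on $\pg\times_Y X$, and $p_{13,*}$ restricted there is an isomorphism onto its image, so the convolution equals $i^*\mc{G}$.

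The same argument applies to $i_*$ with the transposed graph. Here the key point is that $\pg\to\pg\times_Y Y$ is an isomorphism, so the relevant pushforward involves no twist. This is also why the unit and counit are not needed: each individual functor is pinned down by its kernel.

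The main obstacle is bookkeeping. The product on $\mc{A}$ is convolution over $Y=\ag$, not over a point, so I must check that the derived fiber products $\pg\times_{\ag}\pg$ are handled correctly. These carry a derived self-intersection with a shifted $\OO\langle -1\rangle[1]$ direction. The concern is that it might introduce a twist or shift, for example $\OO_{\pg}\langle -1\rangle[-1]$, in the kernel for $i_*$, as happened in the adjunction $i_*\adjto i^*\langle-1\rangle[-1]$. I expect it does not, because the idempotents ${}_\Phi E,{}_\Psi E$ are the structure sheaves of the diagonals and the off-diagonal blocks are honest (non-derived) fiber products $\pg\times_{\ag}\ag\simeq\pg$. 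Still, this is the step I would verify most carefully, by evaluating on the generators $\OO\langle j\rangle$ and $\OO_0\langle j\rangle$ using \cref{lem:a1gmhoms}.
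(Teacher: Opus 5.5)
Your proposal is correct and is essentially the paper's argument: a module functor out of the summand cut out by an idempotent is determined by the image of that idempotent, and one finds $i^*({}_\Phi E)\simeq\OO_{\pg}$ and $i_*({}_\Psi E)\simeq\OO_{\pg}$ in the two off-diagonal blocks. Your detour through graph kernels and base change is harmless but only recomputes these images. The derived self-intersection you worry about causes no twist: ${}_\Psi E=\Delta_*\OO_{\pg}$, and the projection $\pg\times_{\ag}\pg\to\ag\times_{\ag}\pg=\pg$ composed with $\Delta$ is the identity, so $i_*({}_\Psi E)=\OO_{\pg}$ on the nose (the shift $\langle -1\rangle[-1]$ belongs to the right adjoint of $i_*$, not to $i_*$), and no check on generators via \cref{lem:a1gmhoms} is needed.
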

\begin{proof}
    Each map is identified by where the relevant idempotent object goes.
    
    The pullback is identified by where the structure sheaf of $\Coh(\ag)$ goes, i.e. to the structure sheaf fo $\pg$.
    
    The pushforward is identified by where the idempotent ${}_\Psi E \in \Coh(\pg \times_{\ag} \pg)$ goes under pushforward to $\Coh(\pg)$, which is $\OO_{\pg}$.
\end{proof}

One can view the passage to convolution categories as causing the *- and !-pullbacks to agree by making the relevant module dualizable. While $\Coh(\wtag)$ is not dualizable over $\Coh(\ag)$, it is true that $\mc{M}$ is dualizable over $\mc{A}$. In fact, we can prove this result in more generality.

\begin{thm}\label{thm:gendual}
    Suppose $Z, Y, X$ are smooth stacks with a proper maps $Z \to Y, X \to Y$. If $N^*_X(Y)=N^*_Z(Y)$ as subsets of $T^*Y$, then $\Coh(Z \times_Y X)$ is dualizable over $\Coh(X \times_Y X)$.
\end{thm}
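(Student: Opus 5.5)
We would prove dualizability by writing down an explicit candidate dual, namely $\mc{N}:=\Coh(X\times_Y Z)$ viewed as a left $\Coh(X\times_Y X)$-module. The candidate unit and counit are the convolution kernels coming from the diagonals. Write $\mc{A}:=\Coh(X\times_Y X)$ and $\mc{M}:=\Coh(Z\times_Y X)$, a right $\mc{A}$-module.

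The structure maps are as follows.
\begin{itemize}
\item The evaluation $\mc{N}\otimes \mc{M}\to \mc{A}$ is convolution over $Z$, i.e.\ pull back to $X\times_Y Z\times_Y X$, tensor, and push forward along the proper projection to $X\times_Y X$.
\item The coevaluation $\Perf(\pt)\to \mc{M}\otimes_{\mc{A}}\mc{N}$ picks out the diagonal object.
\item There is a natural comparison functor $\mc{M}\otimes_{\mc{A}}\mc{N}\to \Coh(Z\times_Y Z)$, given by convolution over $X$. The coevaluation is meant to be $\OO_{\Delta_Z}$, and it lies in $\Coh(Z\times_Y Z)$ because $Z\to Y$ is proper and $Z$ is smooth.
\end{itemize}
Once this comparison is an equivalence, the triangle identities reduce to the associativity of convolution and the unit property of the diagonal kernels. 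This is formal, exactly as in the Fourier--Mukai dualizability arguments of \cite{BZNP}.

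The substantive step is to identify the relative tensor product,
\[\Coh(Z\times_Y X)\otimes_{\Coh(X\times_Y X)}\Coh(X\times_Y Z)\simeq \Coh(Z\times_Y Z),\]
at least enough to see that $\OO_{\Delta_Z}$ lies in the image. For this we use singular support in the sense of Arinkin--Gaitsgory, extending \cite{spectralincarnation}. The key steps are these.
\begin{enumerate}
\item $\Coh(X\times_Y X)$ is identified with $\IndCoh$ compact objects having singular support in the Steinberg-type Lagrangian $N^*_X(Y)\times_{T^*Y}N^*_X(Y)$. Similarly, $\Coh(Z\times_Y X)$ corresponds to the support condition built from $N^*_Z(Y)$ and $N^*_X(Y)$.
\item The hypothesis $N^*_X(Y)=N^*_Z(Y)$ guarantees that all three convolution categories carry compatible singular support conditions inside the same subset of $T^*Y$. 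Under this hypothesis, the result of \cite{spectralincarnation} that $\IndCoh_{\Lambda}(X\times_Y X)$-modules are equivalent to $\QC(Y)$-modules with support in $\Lambda$ upgrades to a statement for mixed fiber products.
\item Concretely, $\IndCoh(Z\times_Y X)$ is the image of $\IndCoh(X\times_Y X)$ under a Morita equivalence. Consequently $\mc{M}$ is an invertible, hence dualizable, bimodule between $\Coh(Z\times_Y Z)$ and $\mc{A}$.
\end{enumerate}
Dualizability then follows, since an invertible bimodule is dualizable with dual given by its inverse. Alternatively, it follows from the triangle identities above.

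The main obstacle is step (2): proving that $\Coh(X\times_Y X)$ and $\Coh(Z\times_Y Z)$ are Morita equivalent via $\mc{M}$ when their conormal Lagrangians agree. Our first attempt would be to show that $\mc{M}$ generates $\Coh(Z\times_Y Z)$ under convolution with $\mc{A}$. To do this, we would check that the monad $\mc{M}\otimes_{\mc{A}}\mc{N}$ has no kernel, by testing on microlocal stalks: an object of $\IndCoh_{\Lambda}$ whose pushforward to $X$-probes vanishes must have singular support disjoint from $N^*_X(Y)=N^*_Z(Y)$, and is therefore zero. The characteristic zero assumption enters at this point, through the singular support theory. We would also need the small-to-large passage of \cref{prop:stvsst} to return to the small categories $\Coh$.
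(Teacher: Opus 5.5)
Your proposal is essentially the paper's argument: the paper cites the Morita equivalence given by $\Coh(Z\times_Y X)$ from \cite[Proposition 4.18]{hypertoric} rather than reproving it. It then uses the singular-support computation \cref{cor:sameconormal}, which is exactly where $N^*_XY=N^*_ZY$ enters, to get $\Coh(Z\times_Y X)\otimes_{\Coh(X\times_Y X)}\Coh(X\times_Y Z)\simeq\Coh(Z\times_Y Z)$, so $\Coh(X\times_Y Z)$ goes to the free module and is dualizable.

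Your explicit evaluation/coevaluation variant is also valid, and it needs only that tensor-product identification (with $\OO_{\Delta_Z}$ in the image of the convolution comparison), not the full Morita equivalence you flag as the main obstacle. Note, though, that your kernel test on microlocal stalks would by itself only give generation; full faithfulness of the comparison comes from the cited tensor-product theorem behind \cref{cor:sameconormal}.
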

\begin{proof}
    By \cite[Proposition 4.18]{hypertoric} (as suggested in \cite{arinkintalk1}), the bimodule $\Coh(Z \times_Y X)$ gives a Morita equivalence between $\Coh(X \times_Y X)$ and $\Coh(Z \times_Y Z)$. Under the Morita equivalence, $\Coh(X \times_Y Z) \in \Coh(X \times_Y X)-\Mod$ goes to $\Coh(Z \times_Y X) \otimes_{\Coh(X \times_Y X)} \Coh(X \times_Y Z)$ in $\Coh(Z \times_Y Z)-\Mod$.

    Then by \cref{cor:sameconormal}, we have $\Coh(Z \times_Y X) \otimes_{\Coh(X \times_Y X)} \Coh(X \times_Y Z) \simeq \Coh(Z \times_Y Z)$, which yields the result.
\end{proof}

This result suggests the following procedure. If one wants to analyze a pullback of categories over schemes via the map $Y_1 \to Y_2$, by modeling categories over $Y_2$ as $\Coh(X_2 \times_{Y_2} X_2)-\Mod$ and using the bimodule $\Coh(X_1 \times_{Y_2} X_2)$, then the two possible pullbacks agree (suggesting that it is a more genuine pullback) if $N^*_{X_1}Y_2=N^*_{X_2}Y_2$. Note that the bimodule gives an equivalence with $\Coh(X_1 \times_{Y_2} X_1)-\Mod$ in this case, not $\Coh(X_1 \times_{Y_1} X_1)$, so the root stack functor is not a Morita equivalence. 

For smooth stacks, we can essentially recover \cref{thm:categoricalSOD}. See \cref{rem:betterrecover} for more discussion.

\begin{thm}\label{thm:recover}
    Let $U$ be smooth, with a map to $\ag$. The root stack functor has the assignments
    $$\Perf((\ag \sqcup \pg) \times_{\ag} U) \mapsto \Perf((\age \sqcup \pge) \times_{\ag} U)$$
    and 
    $$\Coh((\ag \sqcup \pg) \times_{\ag} U) \mapsto \Coh((\age \sqcup \pge) \times_{\ag} U).$$
\end{thm}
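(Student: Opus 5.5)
We need to understand the notation. $\age \sqcup \pge$ presumably denotes $\wtag \sqcup \wtpg$ viewed over $\ag$ via $e_n$ and $e_n\circ i = i\circ h_n$. So the claim is that the root stack functor sends the $\mc{A}$-module $\Perf(X\times_{\ag}U)$ (respectively $\Coh(X\times_{\ag}U)$), with $X=\ag\sqcup\pg$, to $\Perf(\tilde X\times_{\ag}U)$ (respectively $\Coh(\tilde X\times_{\ag} U)$) as a $\tilde{\mc A}$-module. By \cref{prop:selfdual} it suffices to treat $\sqrt[n,*]{-}=\mc{M}\otimes_{\mc{A}}-$.

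The plan is to compute $\mc{M}\otimes_{\mc A}\Coh(X\times_Y U)$ with $\mc{M}=\Coh(\tilde X\times_Y X)$, where $Y=\ag$, through a relative tensor product formula for convolution categories. For smooth $U$, the $\mc{A}$-module $\Coh(X\times_Y U)$ is $\Coh$ of a fiber product with the proper probe $X\to Y$. I will invoke the extension of the \cite{spectralincarnation}-type result mentioned in the introduction (singular support for indcoherent convolution categories; compare \cref{cor:sameconormal}). That result should give
\[\Coh(Z\times_Y X)\otimes_{\Coh(X\times_Y X)}\Coh(X\times_Y W)\simeq \Coh(Z\times_Y W)\]
whenever the singular support constraint imposed by $X$, namely $N^*_X Y$, contains the supports relevant for $Z$ and $W$. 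Here $N^*_X Y$ is the union of the zero section and the conormal to $\pg$.

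Key steps, in order:

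\begin{enumerate}
\item Identify $\mc{M}=\Coh(\tilde X\times_Y X)$ and verify that $N^*_{\tilde X}Y=N^*_X Y$. The map $\wtag\to\ag$ is surjective, and $\wtpg\to\pg$ lands in $\pg$, so the image of the conormals is again the zero section together with the conormal of $0$. This is the hypothesis of \cref{thm:gendual}.
\item Apply the same argument with $W=U$. Since $U$ is smooth, the singular supports of $\Coh(X\times_Y U)$ and $\Coh(\tilde X\times_Y U)$ are contained in the images of $N^*_XY$ and $N^*_{\tilde X}Y$. The convolution formula above then yields $\mc{M}\otimes_{\mc A}\Coh(X\times_YU)\simeq\Coh(\tilde X\times_Y U)$, linearly for the left $\tilde{\mc A}$-action.
\item For $\Perf$, rerun the argument with singular support restricted to the zero section. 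Alternatively, note that $\Perf(X\times_Y U)$ is the image of $\Perf(U)$ under the inclusion $\tau^*$ from \cref{exmp:truncation}, and that base change holds via \cite{BZFN} as in \cref{prop:compatiblesmall}.
\item Translate back: $\tilde X\times_Y U=(\wtag\sqcup\wtpg)\times_{\ag}U$, which is the displayed target.
\end{enumerate}

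The main obstacle is step 2. It requires the convolution tensor formula with a smooth but non-proper third factor $U$, together with careful control of singular support for the fiber products $X\times_Y U$, which are singular. I would first reduce to $U\to Y$ by noting that $\Coh(X\times_YU)\simeq\Coh(X\times_YY)\otimes_{\Perf(Y)}\Perf(U)$ may fail, so instead I would use the Hom description from \cref{cor:dmhom}. I would then check the equivalence on generators via \cref{lem:phiformula}, which computes the $\Phi$- and $\Psi$-parts separately as $\Coh(\wtag\times_Y U)$ and $\Coh(\wtpg\times_Y U)$. This step is consistent with \cref{prop:keycalculation1} and \cref{prop:keycalculation2}.
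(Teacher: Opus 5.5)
This is essentially the paper's proof, which works with the large categories and then takes compact objects: the $\Perf$ case comes from the zero-section version of the singular-support tensor formula (\cref{cor:QCtensor}), and the $\Coh$ case from checking that any covector at $y$ conormal to $\tilde X$ at some point over $y$ is also conormal to $X$ at some point over $y$, so that $\Sing(\tilde X\times_Y X)*\Sing(X\times_Y U)=\Sing(\tilde X\times_Y U)$ (that formula does not require $U$ to be proper, so your fallback via \cref{cor:dmhom} is unnecessary). The one correction is that the inclusion you actually need, $N^*_{\tilde X}Y\subseteq N^*_XY$, does not follow from surjectivity of $e_n$; it follows from $e_n$ being an isomorphism over $\Gm/\Gm$, so every nonzero conormal covector lies over $\pg$, where the component $\pg$ of $X$ supplies the required point.
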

\begin{proof}
    The $\Perf$ case follows from \cref{cor:QCtensor}. For the $\Coh$ case, we need to use the preceding criterion. (Note that in both cases the small category result follows by taking compact objects in the large category version.) We just need to show that for $a \in \Omega_{\ag}|y$ for some $y$, the existence of $\bar{x} \in \age$ with $\bar{x} \mapsto y$ and $df^*_{\bar{x}}a=0$ implies that there is also $x \in \ag$ such that $df^*_xy=0$.

    Indeed, the map $\age \to \ag$ is an isomorphism above $\gm/\gm$, so the only possibility is for $y$ to be a geometric point mapping to $\pg$. But in this case any $a$ works since we can take $x$ also mapping to $\pg$ (note that we only care about the classical points of $\Sing(\ag)$).
\end{proof}

\subsection{Root stack on \texorpdfstring{$\Coht$}{2Coh} as a generalization of the usual root stack}
We describe how the root stack functors in this section are related to those from the previous section.

We recall from \cref{exmp:truncation} the functors
\[\tau^?:\Perft(\ag)\to \Coht(\ag).\]
We claim that there are natural commutative diagrams
\[\begin{tikzcd}
	{\Perft(\ag)} & {\Coht(\ag)} \\
	{\Perft(\wtag)} & {\Coht(\wtag)}
	\arrow["{\tau^?}", from=1-1, to=1-2]
	\arrow["{\sqrt[n,?]{-}}"', from=1-1, to=2-1]
	\arrow["{\sqrt[n,?]{-}}"', from=1-2, to=2-2]
	\arrow["{\tau^?}", from=2-1, to=2-2]
\end{tikzcd}\]

\begin{prop}\label{prop:commutingrootstackdiagram}
    The above diagram naturally commutes for $?$ either $*$ or $!$.
\end{prop}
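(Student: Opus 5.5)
The plan is to reduce both composites to a statement about composing bimodules. Set
\[
\mc{A}=\Coh(X\times_Y X),\qquad \tilde{\mc{A}}=\Coh(\tilde X\times_{\tilde Y}\tilde X),\qquad \mc{M}=\Coh(\tilde X\times_Y X),\qquad \mc{N}=\Coh(X\times_Y\tilde X),
\]
as in \cref{subsect:rootstackms}. The truncation functors of \cref{exmp:truncation} are also given by bimodules, since $\tau^*=\Coh(X\times_Y Y)\otimes_{\Perf(Y)}-$ and $\tau^!=\Hom_{\Perf(Y)}(\Coh(Y\times_Y X),-)$.

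For $?=*$, going around the top-right of the square gives
\[
\mc{M}\otimes_{\mc{A}}\Coh(X\times_Y Y)\otimes_{\Perf(Y)}-,
\]
while going around the bottom-left gives
\[
\Coh(\tilde X\times_{\tilde Y}\tilde Y)\otimes_{\Perf(\tilde Y)}\Perf(\tilde Y)\otimes_{\Perf(Y)}-\;\simeq\;\Coh(\tilde X)\otimes_{\Perf(Y)}-.
\]
So it suffices to produce a natural equivalence of $(\tilde{\mc{A}},\Perf(Y))$-bimodules
\[
\mc{M}\otimes_{\mc{A}}\Coh(X)\simeq\Coh(\tilde X\times_Y Y)=\Coh(\tilde X).
\]
Note that $\Coh(X\times_Y Y)=\Coh(X)={}_\Phi$-part? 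No: as a left $\mc{A}$-module, $\Coh(X\times_Y Y)=\mc{A}_\Phi$. Hence
\[
\mc{M}\otimes_{\mc{A}}\mc{A}_\Phi\simeq \mc{M}\otimes_{\mc{A}}\mc{A}\otimes_{\mc{A}}\mc{A}_\Phi ,
\]
and by the idempotent description (\cref{lem:phiformula} with ${}_\Phi E$, transposed to right modules) this is the summand of $\mc{M}$ cut out by right convolution with ${}_\Phi E$. That summand is $\Coh(\tilde X\times_Y Y)$, with its right $\Perf(Y)$-action coming from pullback along the projection to $Y$. The steps for the $*$ case are therefore:

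1. Identify $\mc{A}_\Phi$ as the module $\mc{A}\ast{}_\Phi E$.
2. Use $M\otimes_{\mc{A}}\mc{A}e\simeq Me$ for an idempotent $e$.
3. Compute $\mc{M}\ast{}_\Phi E$ geometrically by base change along $Y\to X$.
4. Check that the residual right $\Perf(Y)$-action and left $\tilde{\mc{A}}$-action agree with those on $\tau^*(\Perf(\tilde Y)\otimes_{\Perf(Y)}-)$.

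For the left action, I expect this to come down to the identification ${}_{\tilde\Phi}$ of \cref{lem:phiformula} for $\tilde{\mc{A}}$ together with the projection formula.

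For $?=!$, I run the dual argument. The top-right composite is
\[
\Hom_{\mc{A}}(\mc{N},\Hom_{\Perf(Y)}(\Coh(Y\times_YX),-))\simeq\Hom_{\Perf(Y)}(\Coh(Y\times_YX)\otimes_{\mc{A}}\mc{N},-)
\]
by tensor–hom adjunction. The bottom-left composite is $\Hom_{\Perf(Y)}(\Coh(\tilde Y\times_{\tilde Y}\tilde X)\otimes_{\Perf(\tilde Y)}\Perf(\tilde Y),-)$. So the needed input is the idempotent computation ${}_\Phi E\ast\mc{N}\simeq\Coh(Y\times_Y\tilde X)=\Coh(\tilde X)$ as $(\Perf(Y),\tilde{\mc{A}})$-bimodules, which goes exactly as before. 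Alternatively, one could deduce the $!$ case from the $*$ case via \cref{prop:selfdual}, but that identification is only non-canonical, so I prefer the direct argument.

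The main obstacle is naturality and the bimodule structures in step 4, namely that the equivalence $\mc{M}\ast{}_\Phi E\simeq\Coh(\tilde X)$ intertwines the left $\tilde{\mc{A}}$-actions. On the $\tau$ side, $\tilde{\mc{A}}$ acts on $\Coh(\tilde X\times_{\tilde Y}\tilde Y)$ by convolution over $\tilde Y$. On the $\mc{M}$ side, the fiber product is over $Y$, so the action convolves over $\tilde X\times_{\tilde Y}\tilde X\to\tilde X\times_Y\tilde X$. I would handle this by writing both actions as integral transforms and applying proper base change to the diagram comparing $\tilde X\times_{\tilde Y}\tilde X\times_{\tilde Y}\tilde Y$ with $\tilde X\times_{\tilde Y}\tilde X\times_Y Y$. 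The key observation is that once one factor is $\tilde Y$ itself, the composite fiber products coincide, since $\tilde X\times_{\tilde Y}\tilde Y\times_Y Y$ and $\tilde X\times_{\tilde Y}\tilde Y$ agree after identifying the graph of $e_n$.
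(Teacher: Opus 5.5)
Your proposal is correct and follows essentially the same route as the paper. Both reduce the $*$ case to the bimodule identification $\mc{M}\otimes_{\mc{A}}\mc{A}_\Phi\simeq\Coh(\tilde X)$, obtained from the idempotent ${}_\Phi E$ via \cref{lem:phiformula}, and compare it with $\tilde{\mc{A}}_\Phi\otimes_{\Perf(\tilde Y)}\Perf(\tilde Y)\otimes_{\Perf(Y)}(-)$, with the $!$ case treated dually; your extra checks of the left $\tilde{\mc{A}}$- and right $\Perf(Y)$-actions, which amount to the equalities $\tilde X\times_{\tilde Y}\tilde X\times_Y Y=\tilde X\times_{\tilde Y}\tilde X=\tilde X\times_{\tilde Y}\tilde X\times_{\tilde Y}\tilde Y$, just spell out what the paper leaves implicit.
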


\begin{proof}
    We do the $*$ version first. Then we have
    \begin{align*}\Coh(\tilde{X}\times_YX) \otimes_{\mc{A}} \mc{A}_\Phi \otimes_{\Perf(Y)} (-) &\simeq \Coh(\tilde{X}) \otimes_{\Perf(Y)} (-) \\&\simeq \tilde{\mc{A}}_\Phi \otimes_{\Perf(\tilde{Y})} \Perf(\tilde{Y}) \otimes_{\Perf(Y)} (-),\end{align*}
    where the first identification follows from \cref{lem:phiformula}. The $!$ version is similar.
\end{proof}

\begin{rem}\label{rem:smallvslarge}
The authors of \cite[Remark 2.7]{3dms} note that the left and right adjoints above correspond in some sense to the \textit{Kleisli} and \textit{Eilenberg--Moore} adjunctions for (spherical) monads. However a priori it is unclear that the latter give spherical adjunctions (which require both a twist and a cotwist to be invertible). To get the left and right adjoint above may require additionally inverting the cotwist. 

Now we point out a difference between the small and large cases. Recall that $\otimes$ respects $\Ind$ while $\Hom$ does not. So in the case that we start with $\Perf(X)$ or $\QC(X)$ for a perfect stack $X$, the left adjoint yields $\Perf(X) \to \Perf(D)$ or $\QC(X) \to \QC(D)$ (using \cite{BZFN}). However, the right adjoint is different. For simplicity suppose that $X$ is smooth, so $\Perf(X)=\Coh(X)$. Then the right adjoint applied to this small category yields $\Coh(X) \to \Coh(D)$ (by \cite{BZNP}), whereas on the large category $\QC(X)$ it yields $\QC(X) \to \QC(D)$ (by the dualizability of $\QC(\ag)$).
\end{rem}

\begin{rem}\label{rem:betterrecover}
    Recall that by \cref{thm:recover}, we can directly show that the root stack construction on $\Coht(\ag)$ ``recovers'' the one on $\Perft(\ag)$ for the specific case of smooth stacks, using results about singular support of coherent sheaves. Namely, for a smooth $U$, performing the $\otimes$ root stack construction on $\Perf(U)$ yields $\Perf(\sqrt[n]{U/D})$, and performing the $\Hom$ root stack construction on $\Coh(U)$ yields $\Coh(\sqrt[n]{X/D})$. However, \cref{prop:commutingrootstackdiagram} shows that the root stack construction in \cref{sect:upgrade} recovers the results of \cref{sect:categorical} in much broader generality. Namely, for a perfect $U$, performing the $*$-root stack construction on $\Perf(U)$ yields $\Perf(\sqrt[n]{U/D})$, and for a locally finite type $U$, performing the $!$-root stack construction on $\Coh(U)$ yields $\Coh(\sqrt[n]{X/D})$, by \cref{thm:categoricalSOD}.
\end{rem}

\subsection{Large categories}
If we work with large categories instead of small categories, then the story is completely analogous. By upgrading to convolution categories (instead of using $\Perft(\ag)$ as in \cref{sect:categorical}), we already have equivalent *- and !-pullback. Note though that in the large category case we have the general result \cref{thm:hom} and wouldn't need to prove \cref{prop:selfdual}.

\subsection{B-side pushforward via the map \texorpdfstring{$e_n$}{en}}\label{subsect:Bsidepb}
We now investigate the pushforward map on the B-side. (It is less clear what this corresponds to conceptually on the A-side.)

Though there are two pushforward functors $e_{n,*}$ and $e_{n,!}$ from $\Coht(\wtag)$ to $\Coht(\ag)$, by \cref{prop:selfdual} they are equivalent. So for simplicity, we work with the shriek pushforward, which is tensoring with $\mc{N}=\Coh(Y \times_X \tilde{Y})$.

By \cref{lem:phiformula}, we have identifications:
\begin{itemize}
\item ${}_\Phi\mc{N} := \Hom_{\mc{A}}(\mc{A}_\Phi,\mc{N}) \simeq \Coh(Y \times_Y \tilde{X})$, and
    \item ${}_\Psi\mc{N} := \Hom_{\mc{A}}(\mc{A}_\Psi,\mc{N}) \simeq \Coh(\pg\times_Y \tilde{X})$.
\end{itemize}

We also define ${}_\Phi\tilde{\Am}:=\Coh(\tilde{Y} \times_{\tilde{Y}} \tilde{X})$ and ${}_\Psi\tilde{\Am}:=\Coh(\wtpg \times_{\tilde{Y}} \tilde{X})$.

Note that as a right $\tilde{\Am}$-module, ${}_\Phi\mc{N}$ is equivalent to ${}_\Phi\tilde{\Am}$. We now calculate an SOD for ${}_\Psi\mc{N}$.

\begin{prop}
    The category ${}_\Psi\mc{N}$ admits an SOD,
    $${}_\Psi\mc{N} \simeq \langle {}_\Psi\tilde{\Am}, \dots, {}_\Psi\tilde{\Am}\rangle,$$

    where we have nontrivial gluing functors only between adjacent components, that are abstractly the identity functor on ${}_\Psi\tilde{\Am}$. In other words, we have $${}_\Psi\mc{N} \simeq \Fun(A_n, {}_\Psi\tilde{\Am})$$ where $A_n$ is the linear quiver on $n$ nodes. Moreover, this SOD is linear for the right $\tilde{\Am}$-action.
\end{prop}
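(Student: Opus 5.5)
Here ${}_\Psi\mc{N}\simeq\Coh(\pg\times_Y\tilde{X})$. Since $\tilde{X}=\wtag\sqcup\wtpg$ maps to $Y$ through $e_n$, I will decompose this as
\[\Coh(\pg\times_{\ag}\wtag)\oplus\Coh(\pg\times_{\ag}\wtpg).\]
I will compute both pieces geometrically, then assemble them as a right $\tilde{\Am}$-module using the idempotents ${}_\Phi E,{}_\Psi E$ of $\tilde{\Am}$.

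The first piece is the fiber of $e_n$ over the origin. This is the non-reduced stack $\Spec(k[t]/t^n)/\Gm$, which I denote $\pg\times_{\ag}\wtag$. By \cref{lem:phiformula} applied over $\tilde{Y}$, this first piece is the ${}_\Phi$-part of ${}_\Psi\mc{N}$ as a right $\tilde{\Am}$-module. The second piece, $\pg\times_{\ag}\wtpg\simeq\wtpg\times_{\wtag}(\pg\times_{\ag}\wtag)$, is its ${}_\Psi$-part.

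So the strategy is as follows. First, give $\Coh(\Spec(k[t]/t^n)/\Gm)$ the filtration by the powers of $t$. Concretely, I take the SOD generated by the pushforwards of $\Coh(\wtpg)$ twisted by $\langle 0\rangle,\dots,\langle n-1\rangle$, using the surjections $k[t]/t^n\to k[t]/t^{j}$. Equivalently, this is the SOD obtained as the base change $\Perf(\pg)\otimes_{\Perf(\ag)}-$ of the SOD of \cref{thm:a1gmsod2} (or of \cref{thm:a1gmsod}). The base change is justified by \cref{thm:preserve} together with \cref{prop:compatiblesmall}, which identifies $\Perf(\pg)\otimes_{\Perf(\ag)}\Perf(\wtag)\simeq\Perf(\pg\times_{\ag}\wtag)$, and its $\Coh$ analogue via \cref{cor:dmhom}.

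Under base change the $e_n$-component becomes $\Perf(\pg)$. Each $\pg$-component becomes $\Perf(\pg\times_{\ag}\pg)$, which is the $\Psi$-part with a derived self-intersection. I expect these to combine into $n$ copies of ${}_\Psi\tilde{\Am}$ after regrouping.

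I think it is cleaner to work directly with the right-module SOD. I will define $n$ functors ${}_\Psi\tilde{\Am}\to{}_\Psi\mc{N}$ by convolving with the objects $j_*\OO\langle k\rangle$, $k=0,\dots,n-1$. Here $j:\wtpg\times_{\tilde{Y}}\tilde{X}\to\pg\times_Y\tilde{X}$ is induced by $h_n$. Right $\tilde{\Am}$-linearity of these functors is automatic, since they are given by left convolution. Full faithfulness and semiorthogonality I will check on generators, namely the images of the idempotents ${}_\Phi E,{}_\Psi E$. This reduces to the Hom computation of \cref{lem:a1gmhoms} in $\Coh(\Spec(k[t]/t^n)/\Gm)$. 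There the modules $k\langle k\rangle$ have $\Hom(k\langle a\rangle,k\langle b\rangle)$ nonzero only for $b-a\in\{0,1\}$, with a $k[-1]$ in the degree-one case, coming from the periodic resolution. Generation follows because $k[t]/t^n$ is filtered by copies of $k\langle k\rangle$.

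Next I compute the gluing functors exactly as in \cref{subsect:abstractgluing1}. Adjacent components glue by the twist $\langle 1\rangle[1]$, which is abstractly the identity. Nonadjacent components have zero gluing, since the relevant Ext vanishes. \cref{thm:linearSOD} then identifies ${}_\Psi\mc{N}$ with $\Fun(A_n,{}_\Psi\tilde{\Am})$.

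I expect the main obstacle to be the ${}_\Psi$-part. There $\pg\times_{\ag}\wtpg$ carries the derived structure of $\pg\times_{\ag}\pg$, and the components must be shown to be genuinely ${}_\Psi\tilde{\Am}=\Coh(\wtpg\times_{\tilde{Y}}\tilde{X})$ as right modules, rather than something involving $\Coh(\pg\times_{\ag}\pg)$. My approach is to note that $\pg\times_{\ag}\wtpg\simeq\wtpg\times_{\wtag}(\pg\times_{\ag}\wtag)$, so the whole computation is base change along $\wtpg\to\wtag$ of the ${}_\Phi$-part computation. Linearity of the SOD under $\Coh(\wtag)$ then transports it, as in \cref{thm:preserve}.
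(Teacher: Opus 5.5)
The step that fails is the Hom computation you rely on for full faithfulness and semiorthogonality. \cref{lem:a1gmhoms} is a computation on $\ag$, where $\OO_0$ has a two-term resolution. On $Z:=\pg\times_{\ag}\wtag=\Spec(k[t]/t^n)/\Gm$, the simple module $k$ instead has an infinite $2$-periodic resolution, with differentials alternating between $t$ and $t^{n-1}$. So for one fixed sign $\sigma$, both $\Hom(k,k\langle\sigma mn\rangle[2m])$ and $\Hom(k,k\langle\sigma(mn+1)\rangle[2m+1])$ are nonzero for every $m\ge 0$. Your vanishing claim only holds for twists differing by less than $n$.

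Your components also cannot be windows of $n$ twists. Right $\tilde{\Am}$-linearity means the corner $\Coh(\wtag\times_{\wtag}\wtag)=\Perf(\wtag)$ of $\tilde{\Am}$ acts on the $\wtag$-summand $\Coh(Z)$ of ${}_\Psi\mc{N}$ by all twists $\langle j\rangle$. Hence every component must be closed under $\langle 1\rangle$. The twist-closure of the pushforward of $\Coh(\wtpg)$ is all of $\Coh(Z)$, and that pushforward is not fully faithful, because of the degree-one classes above.

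Imposing only closure under $\langle n\rangle$ (i.e.\ $\Perf(\ag)$-linearity) does not help. The residue-class subcategories then contain the degree-two classes, so they are not copies of $\Coh(\wtpg)$. Their Hom pattern is also cyclic: class $j$ maps nontrivially to class $j+\sigma$ mod $n$, including the wrap-around. So no ordering of them is semiorthogonal.

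This cannot be repaired, because the statement itself fails for $n\ge 2$. An SOD that is linear over $\Perf(\wtag)$ splits $K_0$ as a module over $K_0(\Perf(\wtag))=\Z[q^{\pm1}]$. But $K_0(\Coh(Z))$ is free of rank one, with basis the simples $k\langle j\rangle$, while $n$ copies of $\Coh(\wtpg)$ have rank $n$.

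On the $\wtpg$-summand, $\pg\times_{\ag}\wtpg\simeq\Spec(k[\epsilon])/\Gm$ with $|\epsilon|=-1$ and weight $n$. So $\Coh(\pg\times_{\ag}\wtpg)$ splits orthogonally, by weight mod $n$, into $n$ copies of $\Coh(\wtpg\times_{\wtag}\wtpg)$, with no gluing at all.

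What base change actually gives is the decomposition you yourself noted before proposing to ``regroup''. By \cref{thm:categoricalSOD} with $X=\pg$, $\Coh(Z)=\langle\Coh(\pg),\Coh(\pg\times_{\ag}\pg),\dots,\Coh(\pg\times_{\ag}\pg)\rangle$, with $n-1$ copies of the second factor, and this is linear only over $\Perf(\ag)$. The regrouping into copies of ${}_\Psi\tilde{\Am}$ is exactly what cannot be done.

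Your route is essentially the paper's, and it inherits the paper's error. The paper asserts $\Coh(\Spec(k[t]/t^n)/\Gm)\simeq\Fun(A_n,\Perf(\pg))$. That fails for the same reason: the left side is indecomposable, since adjacent twists of $k$ are linked by nonzero degree-one Homs, while the right side splits as $\bigoplus_{\Z}\Fun(A_n,\Perf(k))$. The $t$-adic filtration of $\OO_Z$ by twists of $\OO_{\wtpg}$ is an object-level fact that does not upgrade to a semiorthogonal decomposition. So the proposition needs to be corrected rather than proved.
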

\begin{proof}
The key is the commutative diagram

\[\begin{tikzcd}
	V & W & {\wtag \sqcup \wtpg} \\
	\wtpg & {\Spec(k[t]/t^n)/\Gm} & \wtag \\
	& \pg & \ag
	\arrow[from=1-1, to=1-2]
	\arrow[from=1-1, to=2-1]
	\arrow["\lrcorner"{anchor=center, pos=0.125}, draw=none, from=1-1, to=2-2]
	\arrow[from=1-2, to=1-3]
	\arrow[from=1-2, to=2-2]
	\arrow["\lrcorner"{anchor=center, pos=0.125}, draw=none, from=1-2, to=2-3]
	\arrow[from=1-3, to=2-3]
	\arrow[from=2-1, to=2-2]
	\arrow[from=2-2, to=2-3]
	\arrow[from=2-2, to=3-2]
	\arrow["\lrcorner"{anchor=center, pos=0.125}, draw=none, from=2-2, to=3-3]
	\arrow[from=2-3, to=3-3]
	\arrow[from=3-2, to=3-3]
\end{tikzcd}\]
where $V$ and $W$ are defined as the fiber products. Then $\Coh(\Spec (k[t]/t^n)/\Gm) \simeq \Fun(A_n, \Perf(\pg))$; pulling back yields ${}_\Psi\mc{N} \simeq \Coh(W) \simeq \Fun(A_n, \Perf(V))\simeq \Fun(A_n, {}_\Psi\tilde{\Am})$. Since $\Perf(V)$ is stable under the right action of $\tilde{\Am}$, we get that the resulting SOD of ${}_\Psi\mc{N}$ is stable as well.
\end{proof}

We can determine the spherical functor we get in this case.

\begin{prop}
    Given a spherical functor 
$$F \colon \Phi \to \Psi,$$
the B-side pushforward sends it to the spherical functor
$$F' \colon \Phi \to \Fun(A_n, \Psi),$$

where $s \in \Phi$ is sent to $F(s) \to F(s) \to \cdots \to F(s)$ (with all maps the identity).
\end{prop}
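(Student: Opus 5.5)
The plan is to follow the template of the proof of \cref{thm:main}, now with the bimodule $\mc{N}$ in place of $\mc{M}$. Let $\mc{C}$ be the $\tilde{\Am}$-module corresponding to $F\colon\Phi\to\Psi$, so that $\Phi={}_\Phi\mc{C}$, $\Psi={}_\Psi\mc{C}$, and $F$ is induced by the adjunction (\ref{eqn:adjunction}) on the tilde side. The pushforward is $\mc{N}\otimes_{\tilde{\Am}}\mc{C}$. By \cref{lem:dual}, its spherical adjunction is computed by
\[
{}_\Phi\Am\otimes_{\Am}\mc{N}\otimes_{\tilde{\Am}}\mc{C}
\;\tofrom\;
{}_\Psi\Am\otimes_{\Am}\mc{N}\otimes_{\tilde{\Am}}\mc{C},
\]
that is, by ${}_\Phi\mc{N}\otimes_{\tilde{\Am}}\mc{C}\tofrom {}_\Psi\mc{N}\otimes_{\tilde{\Am}}\mc{C}$.

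\textbf{Step 1: the two sides as categories.} Since ${}_\Phi\mc{N}\simeq{}_\Phi\tilde{\Am}$ as right $\tilde{\Am}$-modules, the first factor is ${}_\Phi\mc{C}=\Phi$. The preceding proposition gives the right-$\tilde{\Am}$-linear SOD ${}_\Psi\mc{N}\simeq\Fun(A_n,{}_\Psi\tilde{\Am})$, whose gluing functors are identities. Applying \cref{thm:preserve} (tensoring an $\tilde{\Am}$-stable SOD), together with \cref{thm:linearSOD} to reconstruct the linear chain from its gluing data, yields
\[
{}_\Psi\mc{N}\otimes_{\tilde{\Am}}\mc{C}\simeq\Fun(A_n,\Psi).
\]

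\textbf{Step 2: the spherical functor on the universal bimodule.} As in \cref{prop:keycalculation3}, the functor ${}_\Phi\mc{N}\to{}_\Psi\mc{N}$ is $*$-pullback along $\pg\times_Y\tilde X\to Y\times_Y\tilde X$, which is base change of $i\colon\pg\to\ag$. Using the factorization in the key diagram, $\pg\to\Spec(k[t]/t^n)/\Gm\to\ag$, one identifies this pullback with the composite
\[
\Coh(\wtag)\to\Coh(W)\simeq\Fun(A_n,\Perf(V)),
\]
followed by the identification of $\Perf(V)$ with ${}_\Psi\tilde{\Am}$. The central computation concerns the structure sheaf of the fat point $\Spec(k[t]/t^n)/\Gm$, which corresponds to the chain $k\to k\to\cdots\to k$. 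Pulling back $\OO$ from $\ag$, or anything pulled back from $\ag$, gives the object of $\Fun(A_n,-)$ with all terms equal to $i^*(-)$ and all maps the identity; the $\langle 1\rangle$ twists are absorbed into the equivalence $\Coh(\Spec(k[t]/t^n)/\Gm)\simeq\Fun(A_n,\Perf(\pg))$. This identification is $\tilde{\Am}$-linear, because everything is built from base change along the right factor $\tilde X$.

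\textbf{Step 3: conclude.} Tensoring the linear functor ${}_\Phi\tilde{\Am}\to\Fun(A_n,{}_\Psi\tilde{\Am})$, given by $s\mapsto(i^*s\xto{\id}\cdots\xto{\id}i^*s)$, with $\mc{C}$ produces $s\mapsto(F(s)\to\cdots\to F(s))$. This step uses \cref{thm:preserve} once more, together with the fact that $\Fun(A_n,-)$ commutes with $\otimes_{\tilde{\Am}}\mc{C}$, since $\Fun(A_n,-)$ is tensoring with a dualizable category.

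Sphericality is automatic, since the output comes from an $\Am$-module under \cref{thm:3dms}.

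\textbf{Main obstacle.} I expect Step 2 to be the main obstacle: choosing the equivalence $\Coh(\Spec(k[t]/t^n)/\Gm)\simeq\Fun(A_n,\Perf(\pg))$ so that pullback from $\ag$ becomes the constant diagram with identity maps, rather than a chain of multiplication-by-$t$ maps or their fibers. My first attempt would be to use the filtration of $\OO_{k[t]/t^n}$ by powers of $t$, check the claim on the generators $\OO\langle j\rangle$ by means of \cref{lem:a1gmhoms}, and extend by linearity.
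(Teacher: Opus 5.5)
Your outline is the paper's: reduce via \cref{lem:dual} and \cref{thm:preserve} to the universal bimodule $\mc{N}$, use ${}_\Psi\mc{N}\simeq\Fun(A_n,{}_\Psi\tilde{\Am})$ from the preceding proposition, and read off the answer from the structure sheaf of $\Spec(k[t]/t^n)/\Gm$. The step you flag as the main obstacle is not merely delicate. For $n\ge 2$ it cannot be done, because there is no equivalence $\Coh(\Spec(k[t]/t^n)/\Gm)\simeq\Fun(A_n,\Perf(\pg))$ at all. That equivalence is asserted in the proof of the preceding proposition, and both your Step 1 and the paper's two-line argument rest on it.

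Here is why it fails. $\Fun(A_n,\Perf(\pg))\simeq\bigoplus_{j\in\Z}\Fun(A_n,\Perf(k))$ splits as a product according to $\Gm$-weight. By contrast, $\Coh(\Spec(k[t]/t^n)/\Gm)$ is indecomposable:
\begin{itemize}
\item it is generated by the simple modules $k\langle j\rangle$, each with $\Hom^0(k\langle j\rangle,k\langle j\rangle)=k$;
\item for $n\ge 2$, consecutive simples are linked by a non-split extension (a twist of $k[t]/t^2$);
\item hence all generators lie in a single factor of any product decomposition, and the other factor is zero.
\end{itemize}

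This breaks the statement itself, not just the proof. Take $\mc{C}=\tilde{\Am}_\Phi$, i.e.\ $F=i^*\colon\Coh(\wtag)\to\Coh(\wtpg)$. By \cref{lem:phiformula}, $\mc{N}\otimes_{\tilde{\Am}}\tilde{\Am}_\Phi\simeq\Coh(X\times_Y\tilde{Y})$. Its spherical functor is restriction $\Coh(\wtag)\to\Coh(\pg\times_{\ag}\wtag)=\Coh(\Spec(k[t]/t^n)/\Gm)$, and by the above this target is not equivalent to $\Fun(A_n,\Coh(\wtpg))$.

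The $t$-adic filtration you planned to use shows what actually happens. $\OO_{\Spec(k[t]/t^n)/\Gm}$ is a non-split iterated extension of the simples $k\langle j\rangle$ for $0\le j\le n-1$ (up to the sign convention for $\langle\cdot\rangle$). The pieces are glued by the nonzero degree-one classes coming from multiplication by $t$. These are not degree-zero identity maps, and the twists cannot be absorbed into a choice of equivalence, since they are exactly what ties all the weights together. A correct treatment therefore has to compute ${}_\Psi\mc{N}\otimes_{\tilde{\Am}}\mc{C}$, with ${}_\Psi\mc{N}=\Coh(\pg\times_Y\tilde{X})$, directly. The answer is not $\Fun(A_n,\Psi)$ with the constant diagram.

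Two smaller slips:
\begin{itemize}
\item $i\colon\pg\to\ag$ does not factor through $\Spec(k[t]/t^n)/\Gm$; the relevant fact is $\Spec(k[t]/t^n)/\Gm\simeq\pg\times_{\ag}\wtag$.
\item The source of the restriction functor is $\Coh(\tilde{X})$, not $\Coh(\wtag)$.
\end{itemize}
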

\begin{proof}

The reasoning is similar to \cref{prop:keycalculation3}. The key step is that the pullback for the map $$\Spec(k[t]/t^n)/\Gm\to \wtpg$$ sends $\OO_{\wtpg}$ to $\OO_{\Spec(k[t]/t^n)/\Gm}$, and the latter is an iterated cone from the sequence $\OO_{\pg} \to \OO_{\pg} \to \cdots \to \OO_{\pg}$.
\end{proof}

\subsection{Functoriality for \texorpdfstring{$\pt=\gm/\gm \to \ag$}{pt=Gm/Gm->A1/Gm}}
Consider the map $\pt \to \ag$ that maps to the open set $\Gm/\Gm \subset \ag$. Under the pullback from \cref{subsect:functors}, we show that it sends $\mc{C}_\Phi \to \mc{C}_\Psi$ to $\ker(\mc{C}_\Phi \to \mc{C}_\Psi)$ in the case of large categories. We give two proofs.

\begin{proof}[Proof 1]
    First note that $\QC(\pt) \to \QC(\ag)$ is pushforward along an (affine) open embedding, so it is fully faithful with an adjoint. Thus $\QC(\pt) \otimes_{\QC(\ag)} \mc{C}_\Phi$ is a full subcategory of $\mc{C}_\Phi$ (see \cref{thm:preserve}).
    
    First suppose $\mc{F}$ is in $\QC(\pt) \otimes_{\QC(\ag)} \mc{C}_\Phi$. Then it is clearly in the kernel.

    Now suppose $\mc{F}$ is in the kernel. Then consider the element $\OO_{\pg} \in \mc{A}$, such that convolution with this element defines the spherical functor. We have $\OO_{\pg} * \OO_{\pg} * \mc{F}=\OO_{\pg} * 0=0$, but one can calculate $\OO_{\pg} * \OO_{\pg}=i_*\OO_{\pg}$, so $i_*\OO_{\pg} * \mc{F}=0$, meaning it is in $\QC(\pt) \otimes_{\QC(\ag)} \mc{C}$. (Note that $i_*\OO_{\pg} \in \QC(\ag) \in \mc{A}$.) 
\end{proof}

We also have a proof that uses the dualizability of $\mc{A}$.

\begin{proof}[Proof 2]
    Using \cref{lem:duals} the dual $\QC(\pt)$ is $\QC(\pt)$, and we can compute $\QC(\pt) \otimes_{\Am} (\mc{C}_\Phi \to \mc{C}_\Psi)$ as 
    $\Hom_{\Am}(\QC(\pt), (\mc{C}_\Phi \to \mc{C}_\Psi))$. Using the equivalence of \cite{3dms}, we want maps of spherical functors from $\QC(\pt) \to 0$ to $\mc{C}_\Phi \to \mc{C}_\Psi$, which is just the kernel of $\mc{C}_\Phi \to \mc{C}_\Psi$.
\end{proof}

The possible singular support of an object in $\Coht(\ag)$ is the set $\ag$ along with a one-dimensional cotangent fiber at $\pg$. (Note that this notion of singular support is at a higher categorical level than the notion in \cref{subsect:singsupp}.) Then one can think of the pullback to $\Perf(\pt)=\Perf(\gm/\gm)$ as taking the zero-section singular support component. Taking the Fourier transform and then the kernel corresponds to taking the singular support component above $\pg$. Note that the kernel of the pushforward $\Perf(D) \to \Perf(X)$ is zero; this then corresponds to the fact that the spherical functor $\Perf(X) \to \Perf(D)$ has no singular support outside the zero section and comes from the $\Perf(\ag)$-module $\Perf(X)$.

If $\mc{C}_\Phi \to \mc{C}_\Psi$ categorifies a perverse sheaf on $(\C, 0)$, then one can think of its kernel as categorifying the (co)stalk at zero, which also gives global (co)sections. So pullback to $\Perf(\gm/\gm)$ on the B-side corresponds to pushforward to a point on the A-side (see \cref{sect:schob} for more on this perspective).

\begin{rem}
Note that B-side pullback for $\pt \to \ag$ corresponds to restriction to a semistable locus, which corresponds by \cite{hypertoric} to stop removal on the A-side, suggesting that stop removal should be thought of as a pushforward in this context.
\end{rem}

\section{Functoriality of schobers}\label{sect:schob}
Under the equivalence from \cite{3dms}, we should think of the root stack construction on the B-side (which corresponds to a \textit{pullback} via a map $\A^1/\Gm \to \A^1/\Gm$) as dual to a \textit{pushforward} of perverse schobers. We can take this as the definition of the pushforward of a perverse schober on $(\mb{C}, 0)$ via the map $z \mapsto z^n$.

It would be nice, however, to have a intrinsic definition of a pushforward of a perverse schober, or at least one defined without reference to 3d mirror symmetry. In this section we give an ad hoc definition for the pushforward that is motivated purely on the A-side. We argue that this definition yields an SOD, as well as $2n$-periodicity, in a way that is manifest geometrically in the case of a Landau--Ginzburg model.

A Landau--Ginzburg model $(X, W)$ with the only critical value of $W$ at $0$ gives rise to a Lefschetz schober on the disk stratified at $0$, as explained for example in \cite[Section 3.6]{infrared}. If we fix a path from $0$ to $+\infty$, the vanishing cycles category $\Phi$ is the Fukaya-Seidel category of $(X, W)$ (i.e. the wrapped Fukaya category of $X$ with a stop at $+\infty$), and the nearby cycles category $\Psi$ is given by the Fukaya category of a smooth fiber. The map $\Phi \to \Psi$ is restriction to a point along the path. See \cref{fig:fibration} for a picture.

\begin{figure}
    \centering
\begin{tikzpicture}[
    >=stealth,
    fibre box/.style={draw=black!70, thin, fill=white, rounded corners=10pt}
]

    \draw[fibre box] (-3, 1.5) rectangle (3, 6);
    \node[anchor=west, xshift=10pt] at (3, 3.75) {\huge $X$};

    \begin{scope}[shift={(0, 3.75)}]
        \def\coneradius{0.8}
        \def\coneheight{1.5}

        \shade[shading=axis, bottom color=fibregrey!40, top color=fibregrey!10] 
            (0, -\coneheight) circle [x radius=\coneradius, y radius=\coneradius/3];

        \shade[shading=axis, bottom color=fibregrey!30, top color=white, fill opacity=0.85] 
            (0,0) -- (-\coneradius, -\coneheight) arc (180:360:{\coneradius} and {\coneradius/3}) -- cycle;
        
        \draw[black!70, thin, dashed] (\coneradius, -\coneheight) arc (0:180:{\coneradius} and {\coneradius/3});
        \draw[black!70, thin] (-\coneradius, -\coneheight) arc (180:360:{\coneradius} and {\coneradius/3});
        
        \draw[black!70, thin] (-\coneradius, -\coneheight) -- (0,0) -- (\coneradius, -\coneheight);

        \shade[shading=axis, top color=fibregrey!30, bottom color=white] 
            (0,0) -- (-\coneradius, \coneheight) arc (180:360:{\coneradius} and {\coneradius/3}) -- cycle;
        
        \shadedraw[shading=axis, bottom color=white, top color=fibregrey!30, draw=black!70, thin] 
            (0, \coneheight) circle [x radius=\coneradius, y radius=\coneradius/3];
            
        \draw[black!70, thin] (-\coneradius, \coneheight) -- (0,0) -- (\coneradius, \coneheight);

        \fill[black] (0,0) circle (2pt);
    \end{scope}

    \begin{scope}[shift={(1.5, 3.75)}]
        \def\cylradius{0.5}
        \def\cylheight{1.2}

        \shade[shading=axis, bottom color=fibrepurple!30, top color=fibrepurple!10] 
            (0, -\cylheight) circle [x radius=\cylradius, y radius=\cylradius/3];

        \shade[shading=axis, bottom color=fibrepurple!10, top color=white, fill opacity=0.85] 
            (-\cylradius, -\cylheight) rectangle (\cylradius, \cylheight);
        
        \draw[fibrepurple!70, thin, dashed] (\cylradius, -\cylheight) arc (0:180:{\cylradius} and {\cylradius/3});
        \draw[fibrepurple!70, thin] (-\cylradius, -\cylheight) arc (180:360:{\cylradius} and {\cylradius/3});
        
        \draw[fibrepurple!70, thin] (-\cylradius, -\cylheight) -- (-\cylradius, \cylheight);
        \draw[fibrepurple!70, thin] (\cylradius, -\cylheight) -- (\cylradius, \cylheight);

        \shadedraw[shading=axis, bottom color=fibrepurple!20, top color=white, draw=fibrepurple!70, thin] 
            (0, \cylheight) circle [x radius=\cylradius, y radius=\cylradius/3];
    \end{scope}

    \draw[->, ultra thick, black] (0, 1.3) -- (0, -0.8);

    \draw[fibre box] (-3, -1.5) rectangle (3, -5);
    \node[anchor=west, xshift=10pt] at (3, -3.25) {\huge $\mathbb{C}$};

    \begin{scope}[shift={(0, -3.25)}]
        \draw[thin, black!70] (-2.5, 0) -- (2.5, 0); 

        \fill[black] (0, 0) circle (2pt);
        \node[anchor=north, yshift=-4pt] at (0, 0) {\huge 0};

        \draw[fibrepurple, ultra thick] (1.4, 0) -- (1.6, 0);
    \end{scope}

\end{tikzpicture}
\caption{A map $W \colon X \to \C$ giving a Landau--Ginzburg model, where the only critical point of $W$ is at $0$. The associated Fukaya--Seidel category, which is the vanishing cycles category $\Phi$, is the wrapped Fukaya category of $X$ with a stop at $+\infty$, whereas the nearby cycles category $\Psi$ is the Fukaya category of the preimage of a regular point (shown as the purple fiber; we have drawn the purple point as slightly horizontal to emphasize that it is not a critical point). The map $\Phi \to \Psi$ applied to a Lagrangian $L \subset X$ can be computed by first wrapping $L$ to the ``skeleton'' that is the preimage of the ray from $0$ to $+\infty$, and then restricting to the purple fiber.}
\label{fig:fibration}
\end{figure}

Now we want to determine the schober associated to $(X, (z \mapsto z^n) \circ W)$. The smooth fiber is now $n$ copies of the previous one, so the new nearby cycles category is $\Psi^{\oplus n}$. To find the vanishing cycles category, the path from $0$ to $+\infty$ for the $(z \mapsto z^n) \circ W$ fibration becomes an $n$-pointed star ``skeleton'' for the $W$ fibration, and we want to determine the wrapped Fukaya category associated to this skeleton. Alternatively, we have $n$ stops corresponding to $((z \mapsto z^n) \circ W)^{-1}(+\infty)=W^{-1}(e^{2\pi i/n}\infty)_{0 \le i \le n-1}$ and want to calculate the wrapped Fukaya category for $X$ with these $n$ stops. See \cref{fig:nstar} for a picture.

\begin{figure}
    \centering

\begin{tikzpicture}[>=stealth]
    
    \begin{scope}[shift={(0,0)}]
        \draw[thick, black!80] (0,0) circle (2.5);
        
        \fill[black] (0,0) circle (2.5pt);
        
        \foreach \angle in {25, 85, 145, 325} {
            \draw[thick, black!80] (0,0) -- (\angle:2.5);
            \draw[fibrepurple, ultra thick] (\angle:1) -- (\angle:1.2);
        }
        
        \foreach \angle in {205, 265} {
            \draw[thick, black!80, dashed] (\angle:0.5) -- (\angle:2.3);
        }
        
        \node at (0, -3.2) {\huge $\mathbb{C}$};
    \end{scope}
    
    \draw[->, ultra thick, black] (3.2, 0) -- (5.2, 0) node[midway, above=8pt] {\Large $z \mapsto z^n$};
    
    \begin{scope}[shift={(8.4,0)}]
        \draw[thick, black!80] (0,0) circle (2.5);
        
        \fill[black] (0,0) circle (2.5pt);
        
        \draw[thick, black!80] (0,0) -- (0:2.5);
        
        \draw[fibrepurple, ultra thick] (0:1) -- (0:1.2);
        
        \node at (0, -3.2) {\huge $\mathbb{C}$};
    \end{scope}
\end{tikzpicture}
    \caption{The vanishing cycles category associated to $(z \mapsto z^n) \circ W$ can be computed directly using the map $W \colon X \to \C$ using the left diagram. The vanishing cycles category $\Phi$ is the wrapped Fukaya category of $X$ with stops at the preimages of the $n$ ray endpoints (i.e. $n$ roots of $+\infty$). Alternatively, $\Phi$ can be thought of as the wrapped Fukaya category for the $n$-pointed star skeleton with a critical point at $0$. The nearby cycles category is the Fukaya category of the preimage of the $n$th roots of the purple point from \cref{fig:fibration}, which is $n$ copies of the original purple fiber.}
    \label{fig:nstar}
\end{figure}

As a warmup, let us recall the wrapped Fukaya category associated to the Liouville domain on $D^2$ with $n$ stops. This category has an exceptional collection of $n-1$ generators and is $2n$-periodic under mutation. It is equivalent to $\Fun(A_{n-1}, \Vect)$ (this is a folklore expectation shown rigorously in \cite{gps2}). Note that \cite[Section 5.1]{nspherical} demonstrates $2n$-periodicity for $\Fun(A_{n-1}, \mc{D})$ for a general stable category $\mc{D}$.

\begin{align*}
\mathrm{Fuk}\left( \diskmark \right) &= \mathrm{Fuk}\left( \crossdash \right) \\
&= \mathrm{Fuk}\left( \crossskel \text{ skeleton} \right) \\
&= \langle \mathrm{Fuk}(\cornerBL), \mathrm{Fuk}(\cornerBR), \mathrm{Fuk}(\cornerTR) \rangle \\
&= \langle \mathrm{Fuk}(\cornerBR), \mathrm{Fuk}(\cornerTR), \mathrm{Fuk}(\cornerTL) \rangle \\
&= \langle \mathrm{Fuk}(\cornerTL), \mathrm{Fuk}(\cornerTR), \mathrm{Fuk}(\cornerBR) \rangle \\
&= \dots \\
&= \langle \mathrm{Fuk}(\cornerBL), \mathrm{Fuk}(\cornerBR), \mathrm{Fuk}(\cornerTR) \rangle
\end{align*}

Specifically, $\Fuk(\cornerBL)$ is the subcategory consisting of Lagrangians tranverse to the skeleton $\crossskel$ that, when wrapped, lie on $\cornerBL$ (or equivalently just the wrapped Fukaya category for the skeleton $\cornerBL$). This category is generated by the Lagrangian that is transverse to the ray pointing up (if wrapping is clockwise), and is equivalent to $\Vect$. That Lagrangian is an exceptional generator for $\Fuk(\crossskel)$.

Extrapolating from this calculation, if we have a Landau--Ginzburg model with no critical points, and we ask for the wrapped Fukaya category after adding $n$ stops, then the category would be equivalent to $\Fun(A_{n-1}, \Fuk(\text{fiber}))=\Fun(A_{n-1}, \Psi)$. The same skeleton decomposition shows that the category is $2n$-periodic under mutation (giving a geometric proof of the result from \cite[Section 5.1]{nspherical}).

Now in our case where there is a critical point at $0$, by perturbing the skeleton, we can write our Fukaya category as a pushout of $\Phi$ and $\Fun(A_n, \Psi)$. Elements of the pushout are equivalent to the data of a sequence $b_1 \to b_2 \to \cdots \to b_n$ for $b_i \in \Psi$, along with an isomorphism $G(b_n) \simeq a$ for some $a \in \Phi$. We can also take adjoints to get that our category is a pullback; in this case we have $F(a) \simeq b_1$ along with $b_1 \to b_2 \to \cdots \to b_n$. This description is exactly the $(n-1)$st level of the Waldhausen S-construction for $F \colon \Phi \to \Psi$ (see \cref{subsect:Waldhausen}).

We can decompose the skeleton into a path from $0$ to $+\infty$ (containing the critical point), and $n-1$ chords as shown below. The category above the two types of path are $\Phi$ and $\Psi$, respectively. By wrapping onto the skeleton, we see that there are only homs between adjacent components, showing that the total wrapped Fukaya category is made up of adjacent gluing functors $\Phi \to \Psi \to \cdots \to \Psi$. An rigorous derivation of this SOD appears in \cite{nadlerlg} for the case of the Landau--Ginzburg model $(\C^n, z_1\cdots z_n)$.

Now under mutation, the $\Phi$ path moves clockwise one sector. So after $2n$ mutations (i.e. $n$ changes of $\Phi$), we end up at the original SOD.

\begin{align*}
\mathrm{Fuk}\left( \diskmarkphi \right) &= \mathrm{Fuk}\left( \crossdashphi \right) \\
&= \mathrm{Fuk}\left( \crossskelphi \text{ skeleton} \right) \\
&= \langle \mathrm{Fuk}(\dotlineR), \mathrm{Fuk}(\cornerBL), \mathrm{Fuk}(\cornerBR), \mathrm{Fuk}(\cornerTR) \rangle \\
&= \langle \mathrm{Fuk}(\cornerBL), \mathrm{Fuk}(\cornerBR), \mathrm{Fuk}(\cornerTR), \mathrm{Fuk}(\dotlineU) \rangle \\
&= \langle \mathrm{Fuk}(\dotlineU), \mathrm{Fuk}(\cornerTL), \mathrm{Fuk}(\cornerBL), \mathrm{Fuk}(\cornerBR) \rangle \\
&= \dots \\
&= \langle \mathrm{Fuk}(\dotlineR), \mathrm{Fuk}(\cornerBL), \mathrm{Fuk}(\cornerBR), \mathrm{Fuk}(\cornerTR) \rangle
\end{align*}

Note that this description matches up with the ad-hoc categorification in \cite[Section 3]{christ}.

\appendix

\section{SODs \texorpdfstring{$\infty$}{infinity}-categorically}\label{sect:inftySOD}

In this section we give various characterizations of $n$-term semiorthogonal decompositions using the language of $(\infty, 2)$-categories. We let $\st$ be the category of small stable categories and exact functors, and $\sti$ be the subcategory where we take idempotent-complete objects. For large categories, we let $\St$ be the category of compactly-generated presentable stable categories, and $\Stc$ be the subcategory where we take quasi-proper functors. Then $\Ind$ gives an equivalence $\sti \simeq \Stc$, and the inclusion $\Stc \subset \St$ is colimit preserving. See \cref{subsect:convent} for our conventions regarding SOD notation and gluing functors.

All categories are assumed to be over a ring $k$. (For applications, we will assume $k$ is characteristic zero, though see \cref{subsect:convent}.)

\subsection{The internal characterization}

Let $\mc{C}$ be a stable $\infty$-category with strictly full subcategories $\mc{A}_1, \dots, \mc{A}_n$. For arbitrary stable subcategories $\mc{M}$, $\mc{N}$ of $\mc{C}$, \cite[Section 2.2.B]{DKSS} define a category $\langle\mc{M}, \mc{N}\rangle$ which is a subcategory of $\mc{C}$ if $\mc{N}$ is left orthogonal to $\mc{M}$, and say $\mc{M}, \mc{N}$ form a $2$-term SOD of $\mc{C}$ if the inclusion $\langle \mc{M}, \mc{N} \rangle \hookrightarrow \mc{C}$ is an equivalence. Section 2.6 of \cite{christ} extends their characterization to $n$-term SODs, and notes that such an SOD is also a repeated $2$-term SOD. Thus we also have the following easy generalization of \cite[Corollary 2.5.5]{DKSS}.
\begin{cor}\label{cor:htpycat}
    Strictly full stable subcategories $\mc{A}_1, \dots, \mc{A}_n$ of a stable category $\mc{C}$ form an SOD if and only if the same is true regarding their homotopy categories.
\end{cor}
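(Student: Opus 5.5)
The claim is that strictly full stable subcategories $\mc{A}_1, \dots, \mc{A}_n$ of $\mc{C}$ form an SOD if and only if their homotopy categories form an SOD of $h\mc{C}$. I would prove it by induction on $n$, using the remark from \cite[Section 2.6]{christ} that an $n$-term SOD is the same as a repeated $2$-term SOD. The base case $n=2$ is exactly \cite[Corollary 2.5.5]{DKSS}.

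\textbf{Reduction to two terms.} First I would make the reduction precise. The subcategories $\mc{A}_1, \dots, \mc{A}_n$ form an SOD of $\mc{C}$ if and only if two things hold:
\begin{itemize}
\item $\mc{A}_2, \dots, \mc{A}_n$ form an SOD of the full subcategory $\mc{C}' := \langle \mc{A}_2, \dots, \mc{A}_n\rangle$;
\item the pair $(\mc{A}_1, \mc{C}')$ forms a $2$-term SOD of $\mc{C}$.
\end{itemize}
Here I take $\mc{C}'$ to be the smallest strictly full stable subcategory containing $\mc{A}_2, \dots, \mc{A}_n$, i.e. closed under cones and shifts. In the homotopy category this should correspond to the smallest strictly full triangulated subcategory $h\mc{C}'$ containing the $h\mc{A}_i$. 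Two facts make this work:
\begin{itemize}
\item Strictly full stable subcategories of $\mc{C}$ correspond bijectively to strictly full triangulated subcategories of $h\mc{C}$, because cofiber sequences in $\mc{C}$ map to distinguished triangles and fullness is detected on $\pi_0$ of mapping spaces.
\item Under this correspondence, the homotopy category of $\mc{C}'$ is the triangulated subcategory generated by the $h\mc{A}_i$.
\end{itemize}

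\textbf{Inductive step.} Orthogonality conditions are statements that $\pi_0\mathrm{Map}(a_j, a_i[k]) = 0$ for all $k$, so they are the same in $\mc{C}$ and $h\mc{C}$. The generation condition is also the same: $\mc{C}$ is the smallest stable subcategory containing the pieces if and only if $h\mc{C}$ is the smallest triangulated subcategory containing their homotopy categories. Given this, the induction hypothesis applied to $\mc{C}'$ and the base case applied to $(\mc{A}_1, \mc{C}')$ give both directions of the equivalence.

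\textbf{Main obstacle.} The delicate point is the correspondence between strictly full stable subcategories and triangulated subcategories of the homotopy category. For this I would use that a full subcategory of a stable $\infty$-category is stable iff its homotopy category is a triangulated subcategory, which is \cite[Lemma 1.1.3.3]{Lurie-HA}-style and standard. With strict fullness, the $\infty$-subcategory is then determined by its set of objects, which matches the set of objects of the corresponding triangulated subcategory.
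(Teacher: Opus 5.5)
Your proposal is correct and follows the paper's own route. The paper likewise writes an $n$-term SOD as a repeated $2$-term SOD, citing \cite[Section 2.6]{christ}, and applies \cite[Corollary 2.5.5]{DKSS} at each stage. Your bookkeeping on generated stable subcategories versus triangulated subcategories of the homotopy category spells out what the paper leaves implicit as an ``easy generalization.''
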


We give an alternative construction of $\langle \Am_1, \dots, \Am_n \rangle$ based on the Waldhausen S-construction.
Let $\mc{A}_1, \dots, \mc{A}_n$ be subcategories of $\mc{C}$ (all categories are stable). We define $\langle \mc{A}_1, \dots, \mc{A}_n \rangle$ to be the category of diagrams of maps in $\mc{C}$ of the form
\[\begin{tikzcd}
	0 & {a_n} & {a_{n-1, n}} & \cdots & {a_{1,2,\dots,n}} \\
	& 0 & {a_{n-1}} & \cdots & {a_{1,2,\dots, n-1}} \\
	&& 0 & \ddots & \vdots \\
	&&& \ddots & {a_1} \\
	&&&& 0
	\arrow[from=1-1, to=1-2]
	\arrow[from=1-2, to=1-3]
	\arrow[from=1-2, to=2-2]
	\arrow[from=1-3, to=1-4]
	\arrow[from=1-3, to=2-3]
	\arrow[from=1-4, to=1-5]
	\arrow[from=1-4, to=2-4]
	\arrow[from=1-5, to=2-5]
	\arrow[from=2-2, to=2-3]
	\arrow[from=2-3, to=2-4]
	\arrow[from=2-3, to=3-3]
	\arrow[from=2-4, to=2-5]
	\arrow[from=2-4, to=3-4]
	\arrow[from=2-5, to=3-5]
	\arrow[from=3-3, to=3-4]
	\arrow[from=3-4, to=3-5]
	\arrow[from=3-4, to=4-4]
	\arrow[from=3-5, to=4-5]
	\arrow[from=4-4, to=4-5]
	\arrow[from=4-5, to=5-5]
\end{tikzcd}\] where $a_i \in \mc{A}_i$ for all $i$, and all squares are Cartesian.

By the standard theory of the Waldhausen S-construction, we can remove everything except the top row and get an equivalence (see \cite[Proposition 3.1.3]{DKSS}. But the data of the top row is equivalent to \cite[Definition 2.25]{christ}, yielding the following result.

\begin{prop}
    The category $\mc{C}$ has an SOD $\langle \mc{A}_1, \dots, \mc{A}_n \rangle$ iff the functor to $\mc{C}$ given by restriction to the top right vertex is an equivalence.
\end{prop}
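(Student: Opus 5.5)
We prove both directions by comparing the Waldhausen-type diagram category $\langle \mc{A}_1, \dots, \mc{A}_n \rangle$ just defined with the iterated $2$-term characterization of \cite[Section 2.2.B]{DKSS} and \cite[Section 2.6]{christ}. The first step is to check that restriction of a diagram to its top row is an equivalence onto the category of filtered objects
\[0 \to a_n \to a_{n-1,n} \to \cdots \to a_{1,\dots,n}\]
in $\mc{C}$ whose successive cofibers lie in $\mc{A}_{n-1}, \dots, \mc{A}_1$. The inverse fills in the lower rows by iterated cofibers. This is the standard fact \cite[Proposition 3.1.3]{DKSS} that, when all squares are bicartesian and the diagonal is zero, the diagram is right Kan extended from its top row. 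Under this equivalence the top-right vertex is the total object $a_{1,\dots,n}$, and the top row is exactly the data in \cite[Definition 2.25]{christ}.

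\textbf{($\Leftarrow$)} Suppose evaluation at the top-right vertex is an equivalence. Then every $c\in\mc{C}$ has a filtration with graded pieces $a_i \in \mc{A}_i$, and such a filtration is functorial in $c$. We then verify semiorthogonality, i.e. $\Hom(\mc{A}_j,\mc{A}_i)=0$ for $j>i$. Take $a\in\mc{A}_j$. It has the tautological filtration concentrated in slot $j$, and likewise any $b\in\mc{A}_i$ has one concentrated in slot $i$. Full faithfulness of evaluation identifies $\Hom(a,b)$ with maps of filtered diagrams. Such a map must be compatible with the filtration steps, and the slot-$j$ step of $b$'s diagram is zero, so it factors through zero. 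It follows that the $\mc{A}_i$ generate $\mc{C}$ semiorthogonally. Since \cite[Section 2.6]{christ} says an $n$-term SOD is a repeated $2$-term SOD, we conclude by induction on $n$, using the $n=2$ case from \cite[Section 2.2.B]{DKSS}.

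\textbf{($\Rightarrow$)} Conversely, given an SOD, the $2$-term statement applied repeatedly to $\langle \mc{A}_1,\langle\mc{A}_2,\dots\rangle\rangle$ produces for each object a unique functorial filtration, with pieces obtained from the adjoints to the inclusions. This gives an inverse to the evaluation functor. Here we invoke \cite[Definition 2.25]{christ} and \cite[Section 2.6]{christ} directly, since they identify the $n$-term SOD category with the filtered-object category.

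The main obstacle is the ($\Leftarrow$) direction, specifically the orthogonality computation, because full faithfulness has to be applied to mapping spaces of diagram categories. The first thing to try is to compute those mapping spaces via the right Kan extension description, reducing them to an end over the top row. With the diagrams concentrated in a single slot, that end collapses to zero.
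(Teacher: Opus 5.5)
Your proposal is correct and follows the paper's route: restrict the staircase to its top row via \cite[Proposition 3.1.3]{DKSS}, recognize that row as the data of \cite[Definition 2.25]{christ}, and invoke the DKSS/Christ characterization of SODs; you also explicitly check that full faithfulness of evaluation forces $\Hom(\mc{A}_j,\mc{A}_i)=0$ for $j>i$ (the one-slot diagram is freely generated at that vertex), a point the paper leaves implicit. One small correction: the staircase is \emph{left} Kan extended from the top row together with the zero diagonal, since the lower entries are iterated cofibers (equivalently, it is right Kan extended from the last column), but your argument only uses that restriction to the top row is an equivalence, so nothing changes.
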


\begin{rem}
    The usual Waldhausen S-construction is for $\Fun(A_n, \mc{C})$ for some category $\mc{C}$; so we can think of the top row as picking out the graded pieces of the construction. However our construction above has the graded pieces on the diagonal, so it somewhat different in flavor. It has a fully faithful map into the Waldhausen S-construction for $\Fun(A_n, \mc{C})$.
\end{rem}

\subsection{SODs via inclusion and projection} 
\begin{defn}
Given an SOD $\mc{C}=\langle \mc{D}_1, \mc{D}_2, \dots, \mc{D}_n \rangle$ with inclusions $i_j \colon \mc{D}_j \to \mc{C}$, we say that the sequence of functors $i_j$ exhibits an SOD of $\mc{C}$ by inclusion.
\end{defn}

The reason for this terminology is to introduce the dual notion. Suppose that we are given an SOD $\mc{C}=\langle \mc{D}_1, \mc{D}_2, \dots, \mc{D}_n \rangle$, and suppose that there exist functors $p_j:\mc{C}\to\mc{D}_j$ such that $p_j$ is zero on $\mc{D}_i$ whenever $i\neq j$, and $p_j\circ i_j\simeq \id_{\mc{D}_j}$.

\begin{defn}
    In the above setting, we say that the sequence of functors $p_j \colon \mc{C} \to \mc{D}_j$ exhibits the SOD $\mc{C}=\langle \mc{D}_1, \mc{D}_2, \dots, \mc{D}_n \rangle$ by projection.
\end{defn}

Note that for a fixed $i$, the intersection of $\ker(p_j)$ for $j \neq i$ is equal to $\mc{D}_i$. Thus one can recover the inclusions $i_j$ from the projections $p_j$.

This definition will be helpful for us when we do categorical representation theory for SODs (see \cref{thm:preserve} and \cref{thm:a1gmsod2}).

\begin{rem}
    If an SOD $\mc{C}=\langle \mc{D}_1, \mc{D}_2, \dots, \mc{D}_n \rangle$ is infinitely admissible, we can always exhibit the SOD by projection functors as follows. It may help to observe that $p_n := i_n^R$ (for $i_n \colon \mc{D}_n \to \mc{C})$ satisfies $p_n i_j = 0$ whenever $1\neq j$.

    We apply a mutation by $\delta\in\operatorname{Br}_n$, the half-twist element, to get a new SOD
    $\mc{C} = \langle\mc{D}'_n,\dots,\mc{D}'_1\rangle$, exhibited by inclusion functors $i_j':\mc{D}'_j\to \mc{C}$ (note that $i_n=i_n'$ as $\mc{D}_n=\mc{D}_n'$).

    Then, the functors $p_j := i_j^{'R}:\mc{C}\to \mc{D}'_j\simeq \mc{D}_j$ exhibit the original SOD by projections.

    Similarly, we can take the negative half-twist and get projections as left adjoints to inclusions. 
\end{rem}

We collate the above remarks into the following result, which allows us to be agnostic about the actual SOD to an extent and not choose specific inclusion functors. For simplicity we assume our SODs are infinitely admissible in the following result.
\begin{prop}\label{prop:equivalentprojectionSOD}
Suppose we have a stable category $\mc{C}$ with projection functors $p_i \colon \mc{C} \to \mc{D}_i$. Then the following are equivalent:
\begin{itemize}
    \item The sequence of functors $p_i$ exhibits $\mc{C}=\langle \mc{D}_1, \dots \mc{D}_n \rangle$ by projection (where the components of the SOD are then the intersections of the kernels of all but one $p_i$).
    \item The sequence of functors $p_i^L$ exhibits $\mc{C}=\langle \mc{D}_n, \dots \mc{D}_1 \rangle$ by inclusion.
    \item The sequence of functors $p_i^R$ exhibits $\mc{C}=\langle \mc{D}_n, \dots \mc{D}_1 \rangle$ by inclusion.
\end{itemize}
\end{prop}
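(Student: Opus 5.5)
We prove the three conditions of \cref{prop:equivalentprojectionSOD} equivalent by treating the first as the hub and comparing it with each of the other two. The main tool is the correspondence, under infinite admissibility, between an SOD exhibited by inclusions and its mutation by the (positive or negative) half-twist $\delta^{\pm 1}\in\operatorname{Br}_n$, as in the preceding remark.

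For (1)$\Rightarrow$(3), suppose the $p_i$ exhibit $\mc{C}=\langle\mc{D}_1,\dots,\mc{D}_n\rangle$ by projection, with inclusions $i_j$. By infinite admissibility each $p_i$ has a right adjoint $p_i^R$. We show $p_i^R$ is fully faithful using $p_i i_i\simeq\id$ and the fact that $p_i$ kills $\mc{D}_j$ for $j\ne i$. Then:
\begin{itemize}
\item The unit of $p_i\dashv p_i^R$ restricted to $\mc{D}_i$ is an equivalence, because maps out of $\mc{C}$ are detected on the SOD filtration.
\item The counit $p_ip_i^R\to\id$ is therefore an equivalence.
\end{itemize}
Semiorthogonality in the reversed order comes from adjunction: for $i<j$,
\[
\Hom(p_i^R x,\,p_j^R y)\simeq\Hom(p_jp_i^R x,\,y),
\]
so we must show $p_jp_i^R=0$ for $i<j$. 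Here the image of $p_i^R$ is exactly the right orthogonal of $\bigcap$-complement data, namely the objects $c$ with $\Hom(\mc{D}_k,c)=0$ for $k\ne i$. This identifies $p_i^R\mc{D}_i$ with the component $\mc{D}'_i$ of the half-twist mutation $\langle\mc{D}'_n,\dots,\mc{D}'_1\rangle$, which is constructed precisely as iterated right orthogonals. Generation then follows from that mutated SOD, whose existence is guaranteed by admissibility. The direction (1)$\Rightarrow$(2) is the mirror argument: use left adjoints, left orthogonals, and the negative half-twist.

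For the converses (2)$\Rightarrow$(1) and (3)$\Rightarrow$(1), start from an inclusion SOD $\langle\mc{D}_n,\dots,\mc{D}_1\rangle$ given by, say, $p_i^R$. Apply the inverse half-twist to obtain an SOD in the original order $\langle\mc{D}_1,\dots,\mc{D}_n\rangle$. Then check that the left adjoints $p_i=(p_i^R)^L$ satisfy:
\begin{itemize}
\item $p_i$ vanishes on the other mutated components, since those are left orthogonal to $p_i^R\mc{D}_i$ by construction of the mutation;
\item $p_i$ restricts to an equivalence on the $i$-th mutated component.
\end{itemize}
Together these are exactly the defining conditions for an SOD exhibited by projection. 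The components are recovered as $\bigcap_{j\ne i}\ker p_j$, as noted before the statement.

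The main obstacle is identifying the image of $p_i^R$ (respectively $p_i^L$) with the correct half-twist-mutated component, and proving that these images generate $\mc{C}$. We would argue by induction on $n$. The case $n=2$ is the standard fact that for $\mc{C}=\langle\mc{A},\mc{B}\rangle$ with $\mc{A}$ admissible, $\mc{C}=\langle\mc{B},\mc{A}^{\perp}\rangle$, and the projection to $\mc{A}$ has fully faithful right adjoint with image $\mc{A}$ up to this mutation. For the inductive step, peel off $\mc{D}_n$ using $p_n=i_n^R$ and apply the induction hypothesis inside ${}^{\perp}\mc{D}_n$.
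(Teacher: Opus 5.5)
Your strategy matches the paper's. The paper gives no argument beyond collating the preceding remark, which realizes the images of the $p_j^L$ and of the $p_j^R$ as the components of the half-twist and negative half-twist mutations. Your labelling of the two twists is the reverse of the paper's, where $\delta$ fixes $\mc{D}_n$ and yields $p_j=(i'_j)^R$, but that is only a convention.

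However, your justification of full faithfulness is false as written. The unit of $p_i\dashv p_i^R$ restricted to $\mc{D}_i$ is not in general an equivalence for $i>1$. If it were, $\mc{D}_i$ would lie in $\operatorname{im}p_i^R=\langle\mc{D}_k\rangle_{k\neq i}^{\perp}$, forcing $\Hom(\mc{D}_k,\mc{D}_i)=0$ for $k<i$. This contradicts your own identification of $\operatorname{im}p_i^R$ with a mutated component: for $n=2$, $p_2=i_2^R$ and $p_2^R=i_2^{RR}$ has image $\mc{D}_1^{\perp}$, which differs from $\mc{D}_2$ unless the decomposition is orthogonal. The correct reason is that $p_i$ kills $\langle\mc{D}_k\rangle_{k\neq i}$ and, because $p_ii_i\simeq\id$, is the Verdier localization $\mc{C}\to\mc{C}/\langle\mc{D}_k\rangle_{k\neq i}\simeq\mc{D}_i$. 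Hence its right adjoint is fully faithful with image $\langle\mc{D}_k\rangle_{k\neq i}^{\perp}$.

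Two further claims rest on an observation you never state: the vanishing $p_jp_i^R=0$ for $i<j$, and the identification of this image with the mutated component. The observation is that an object right orthogonal to $\mc{D}_{i+1},\dots,\mc{D}_n$ lies in $\langle\mc{D}_1,\dots,\mc{D}_i\rangle\subset\ker p_j$. So $\operatorname{im}p_i^R$ is the right orthogonal of $\langle\mc{D}_1,\dots,\mc{D}_{i-1}\rangle$ inside $\langle\mc{D}_1,\dots,\mc{D}_i\rangle$, which is exactly the iterated right mutation.

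Second, your base case is misstated. For $\mc{C}=\langle\mc{A},\mc{B}\rangle$, the pair $\langle\mc{B},\mc{A}^{\perp}\rangle$ is generally not semiorthogonal. In $\Perf(\mathbb{P}^1)=\langle\OO,\OO(1)\rangle$ one has $\mc{A}^{\perp}=\langle\OO(-1)\rangle$ and $\Hom(\OO(-1),\OO(1))\neq0$. Also, the projection to $\mc{A}$ is $i_{\mc{A}}^L$, whose right adjoint $i_{\mc{A}}$ has image exactly $\mc{A}$, with no mutation. The correct two-term statements are:
\begin{itemize}
\item for the right-adjoint criterion, $\mc{C}=\langle\mc{A}^{\perp},\mc{A}\rangle$, with $i_{\mc{B}}^{RR}$ having image $\mc{A}^{\perp}$;
\item for the left-adjoint criterion, $\mc{C}=\langle\mc{B},{}^{\perp}\mc{B}\rangle$, with $i_{\mc{A}}^{LL}$ having image ${}^{\perp}\mc{B}$.
\end{itemize}
Correspondingly, your inductive step (peel off $\mc{D}_n$ and induct inside ${}^{\perp}\mc{D}_n$) is the right one for the $p_j^L$, since $p_n^L=i_n$ and the other $p_j^L$ land in ${}^{\perp}\mc{D}_n$. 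For the $p_j^R$, which is the case you treat in detail, $\operatorname{im}p_n^R=\langle\mc{D}_1,\dots,\mc{D}_{n-1}\rangle^{\perp}$. The other $p_j^R$ land in $\mc{D}_n^{\perp}=\langle\mc{D}_1,\dots,\mc{D}_{n-1}\rangle$, so the induction has to be run there. With these repairs your argument becomes a correct and more detailed version of the paper's.
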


\begin{rem}\label{rem:howtoshowprojectionSOD}
    Given a candidate series of projections $p_i \colon \mc{C} \to \mc{D}_i$, to show that they give an SOD by projection, it is enough to show both orthogonality and generation (after e.g. showing that their left, equivalently right, adjoints are inclusions).
    
    For orthogonality, one can use the second or third criteria of \cref{prop:equivalentprojectionSOD} by showing that gluing functors are zero. For $i<j$, the second criterion requires showing that $p_ip_j^L \simeq 0$ (or equivalently $p_j^{LL}p_i^L \simeq 0$), whereas the third criterion requires showing that $p_i^{RR}p_j^R \simeq 0$ (or equivalently $p_jp_i^R \simeq 0$). Of course $(p_ip_j^L)^R=p_jp_i^R$ so the two criteria are equivalent.

    For generation, one can use the first criterion and simply show that the intersection of $\ker(p_i)$ over all $i$ is just the zero object in $\mc{C}$.
\end{rem}

\begin{rem}
    We consider a simple analog of the notion of an SOD via projection. Recall that a decomposition of a vector space $V$ is a collection of subspaces $V_i$ which generate and such that $V_i\cap V_j = 0$ whenever $i\neq j$. This decomposition is exhibited by inclusions $V_i\inclto V$.

    We can also choose to exhibit the decomposition by the natural projections $V\onto V_i$.
\end{rem}

\subsection{The external characterization}
We want a result saying that we can reconstruct the ambient category $\mc{C}= \langle \mc{A}_1, \dots, \mc{A}_n \rangle$ up to equivalence from the data of a ``quasitriangular (co)monad'' over the components $\mc{A}_i$ in the sense of Section 3 of \cite{infrared}, e.g. by saying that $\mc{C}$ is the (op)lax limit (or (op)lax colimit) of this (co)monad. Such a result will allow us to prove abstract equivalences of root stack-type derived categories (and their SODs) when no explicit functor is available.

However for our applications we only need the case where all nonadjacent gluing functors are zero (so all higher coherence data is also zero). Note that for two term SODs $\langle \mc{A}, \mc{B} \rangle$ with a gluing functor $F$, the paper \cite{lax} shows that the lax limit, oplax limit, lax colimit, and oplax colimit of $F$ are all equivalent to $\langle \mc{A}, \mc{B} \rangle$. So we prove a reconstruction result in this case, which only requires taking (op)lax (co)limits over diagrams that are $(\infty, 1)$-categories (rather than $(\infty, 2)$-categories); note however that \cref{lem:length3SOD} additionally uses the $(\infty,2)$-categorical universal property of lax limit.

\begin{lem}\label{lem:proj}
    For $\mc{A}$ a part of an SOD of a category $\mc{C}$, we have $i_{\mc{A}}^Li_{\mc{A}} \simeq i_{\mc{A}}^Ri_{\mc{A}} \simeq \id$ when the adjoints to $i_{\mc{A}}$ exist.
\end{lem}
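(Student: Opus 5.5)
We need to show: if $\mc{A}$ is a component of an SOD of $\mc{C}$ with fully faithful inclusion $i_{\mc{A}}\colon\mc{A}\to\mc{C}$, then the unit $\id\to i_{\mc{A}}^Ri_{\mc{A}}$ and the counit $i_{\mc{A}}^Li_{\mc{A}}\to\id$ are equivalences whenever the adjoints exist. The plan is to deduce this from full faithfulness alone via the Yoneda lemma, so the SOD structure is used only to guarantee that $i_{\mc{A}}$ is the inclusion of a strictly full subcategory.

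For the right adjoint, take $a,b\in\mc{A}$. The composite
\[
\Hom_{\mc{A}}(b,a)\xto{i_{\mc{A}}}\Hom_{\mc{C}}(i_{\mc{A}}b,i_{\mc{A}}a)\simeq\Hom_{\mc{A}}(b,i_{\mc{A}}^Ri_{\mc{A}}a)
\]
is postcomposition with the unit $\eta_a\colon a\to i_{\mc{A}}^Ri_{\mc{A}}a$, by the triangle identities. Since $i_{\mc{A}}$ is fully faithful (it is an inclusion of a full subcategory), the first map is an equivalence of mapping spectra. The second map is the adjunction equivalence. Hence postcomposition with $\eta_a$ is an equivalence for every $b$. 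By Yoneda in the stable $\infty$-category $\mc{A}$, $\eta_a$ is an equivalence, naturally in $a$, which gives $i_{\mc{A}}^Ri_{\mc{A}}\simeq\id$.

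The left adjoint is dual. The counit $i_{\mc{A}}^Li_{\mc{A}}a\to a$ induces precomposition maps
\[
\Hom_{\mc{A}}(a,b)\to\Hom_{\mc{A}}(i_{\mc{A}}^Li_{\mc{A}}a,b)\simeq\Hom_{\mc{C}}(i_{\mc{A}}a,i_{\mc{A}}b),
\]
and this composite is again the action of $i_{\mc{A}}$ on mapping spaces. So the counit is an equivalence by co-Yoneda.

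There is no real obstacle. The only point needing care is that in the $\infty$-categorical setting the identification of these composites with the functor's action on mapping spaces holds coherently, and this is standard (e.g. Lurie, HTT 5.2.7). Notably, we never need the other components of the SOD. The lemma is thus a general statement about fully faithful functors, which is presumably why it is phrased conditionally on the adjoints existing.
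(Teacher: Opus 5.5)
Your proposal is correct and takes the same route as the paper, whose entire proof is that the statement follows from $i_{\mc{A}}$ being fully faithful; your Yoneda argument just spells out that standard fact. One small precision: in the right-adjoint case, identifying the composite with postcomposition by $\eta_a$ uses naturality of the unit rather than the triangle identities, and this does not affect the argument.
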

\begin{proof}
    This follows from the inclusion being fully faithful.
\end{proof}
\begin{lem}\label{lem:sodexact}
    Given $\mc{C}=\langle \mc{A}, \mc{B} \rangle$, with $\mc{A}$ left admissible and $\mc{B}$ right admissible, we have an exact sequence of functors $i_{\mc{B}}i_{\mc{B}}^R \to \id_{\mc{C}} \to i_{\mc{A}}i_{\mc{A}}^L$ from $\mc{C}$ to $\mc{C}$.
\end{lem}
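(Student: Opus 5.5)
The plan is to derive the exact sequence directly from the two adjunctions. Fix $c \in \mc{C}$. Since $\mc{B}$ is right admissible, the inclusion $i_{\mc{B}}$ has a right adjoint $i_{\mc{B}}^R$, and the counit $i_{\mc{B}}i_{\mc{B}}^R c \to c$ is natural in $c$. Let $F(c)$ denote its cofiber. The goal is to show that $c \to F(c)$ is a unit map for $i_{\mc{A}} \dashv i_{\mc{A}}^L$, i.e.\ that the counit/unit sequence $i_{\mc{B}}i_{\mc{B}}^R \to \id \to i_{\mc{A}}i_{\mc{A}}^L$ is a cofiber sequence of exact functors.

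The first step is to show that $F(c)$ lies in $\mc{A}$. For this I would use the characterization of an SOD $\langle \mc{A}, \mc{B}\rangle$ in the conventions of \cref{subsect:convent}: there are no maps from $\mc{B}$ to $\mc{A}$, and $\mc{A} = \mc{B}^{\perp}$ is the right orthogonal, i.e.\ the objects $x$ with $\Hom(b,x)=0$ for all $b\in\mc{B}$. Applying $\Hom(b,-)$ to the cofiber sequence $i_{\mc{B}}i_{\mc{B}}^Rc \to c \to F(c)$ gives a long exact sequence. By the adjunction and \cref{lem:proj} (full faithfulness of $i_{\mc{B}}$), the first map $\Hom(b, i_{\mc{B}}i_{\mc{B}}^Rc)\simeq \Hom(b,i_{\mc{B}}^Rc)\to\Hom(b,c)$ is an isomorphism. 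Hence $\Hom(b,F(c))=0$, so $F(c)\in\mc{A}$.

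The second step is to show that $c \to F(c)$ is initial among maps from $c$ into $\mc{A}$. For $a\in\mc{A}$, apply $\Hom(-,a)$ to the same cofiber sequence. Semiorthogonality gives $\Hom(i_{\mc{B}}i_{\mc{B}}^Rc, a)=0$ (including its shifts), so restriction $\Hom(F(c),a)\to\Hom(c,a)$ is an isomorphism. This exhibits $F(c)\simeq i_{\mc{A}}i_{\mc{A}}^Lc$, with the map $c\to F(c)$ identified with the unit. Uniqueness of left adjoints then makes this identification natural in $c$.

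The main subtlety is $\infty$-categorical naturality: I need the cofiber sequence of functors, not just objectwise sequences. To get this, I would form the cofiber in the stable functor category $\Fun(\mc{C},\mc{C})$ of the counit transformation $i_{\mc{B}}i_{\mc{B}}^R\to\id$. This is computed pointwise, so the objectwise argument shows that the cofiber functor lands in $\mc{A}$ and that the induced transformation $\id\to F$ is a unit transformation. A functor into $\mc{A}$ together with a transformation that is objectwise initial determines the left adjoint, giving $F\simeq i_{\mc{A}}i_{\mc{A}}^L$ compatibly with the maps. Left admissibility of $\mc{A}$ only guarantees existence; it also independently confirms the identification.
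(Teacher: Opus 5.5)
Your argument is correct, but it takes a different route from the paper. The paper's proof is essentially a citation. It checks the sequence pointwise using the \cite{DKSS} model of $\langle \mc{A}, \mc{B}\rangle$ as Cartesian squares exhibiting $b \to c \to a$ with $a\in\mc{A}$, $b\in\mc{B}$ (their (2.2.1)). The equivalence of that category of squares with $\mc{C}$ supplies each decomposition triangle, and the proof of \cite[Proposition 2.3.2]{DKSS} identifies $b$ and $a$ with $i_{\mc{B}}i_{\mc{B}}^R c$ and $i_{\mc{A}}i_{\mc{A}}^L c$.

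You build the triangle yourself instead. You start from the counit of $i_{\mc{B}}\dashv i_{\mc{B}}^R$, show its cofiber lies in $\mc{B}^\perp=\mc{A}$, and verify the universal property of the unit of $i_{\mc{A}}\dashv i_{\mc{A}}^L$ by a direct Hom computation. You then get naturality by forming the cofiber in $\Fun(\mc{C},\mc{C})$ and using that an objectwise reflection determines the left adjoint together with its unit.

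What each approach buys:
\begin{itemize}
\item Your version is self-contained and makes the $\infty$-categorical coherence explicit; the paper's phrase ``checked pointwise'' leaves implicit where the coherent transformations come from.
\item Your version also shows that left admissibility of $\mc{A}$ follows from right admissibility of $\mc{B}$, so that hypothesis is not really needed.
\item The cost is the inclusion $\mc{B}^\perp\subseteq\mc{A}$, which is where the generation half of the SOD enters, and which is itself proved by decomposing objects.
\item The paper's version gets existence of the triangles and their functoriality packaged together in the DKSS equivalence.
\end{itemize}
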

\begin{proof}
    The exact sequence can be checked pointwise, and then it follows from the description of $\mc{C}$ via diagrams of the form \cite[(2.2.1)]{DKSS}, as well as the proof of \cite[Proposition 2.3.2]{DKSS} which identifies the adjoints to the inclusions $i_{\mc{B}}, i_{\mc{B}}$.
\end{proof}

\begin{prop}\begin{enumerate}
    \item 
    Suppose $\mc{C}$ has a length-$2$ SOD $\langle\mc{D}_1, \mc{D}_2 \rangle$, exhibited by inclusions $i_1,i_2$. Then the oplax cocone

\[\begin{tikzcd}
	{\mc{D}_1} && {\mc{D}_2} \\
	& {\mc{C}}
	\arrow["{i_2^Li_1}", from=1-1, to=1-3]
	\arrow[""{name=0, anchor=center, inner sep=0}, "{i_1}"', from=1-1, to=2-2]
	\arrow["{i_2}", from=1-3, to=2-2]
	\arrow[shorten <=15pt, shorten >=15pt, Rightarrow, from=0, to=1-3]
\end{tikzcd}\]

exhibits $\mc{C}$ as an oplax colimit.

\item The oplax cone

\[\begin{tikzcd}
	& {\mc{C}} & \\
	{\mc{D}_1} && {\mc{D}_2}
	\arrow["{i_1^L}"', from=1-2, to=2-1]
	\arrow[""{name=0, anchor=center, inner sep=0}, "{i_2^L}", from=1-2, to=2-3]
	\arrow["{i_2^Li_1}"', from=2-1, to=2-3]
	\arrow[shorten <=15pt, shorten >=15pt, Rightarrow, from=0, to=2-1]
\end{tikzcd}\]

exhibits $\mc{C}$ as an oplax limit.
\end{enumerate}
\end{prop}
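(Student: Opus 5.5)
The plan is to verify the universal property of an oplax colimit (resp. oplax limit) of a single functor $F := i_2^L i_1 \colon \mc{D}_1 \to \mc{D}_2$ directly, via the DKSS description of a two-term SOD as a category of gluing data. The starting point is the explicit model of the oplax colimit of $F \colon \mc{D}_1 \to \mc{D}_2$ in stable categories, which \cite{lax} identifies with the category of triples $(d_1, d_2, \alpha \colon F(d_1) \to d_2)$ (or its fibered version). This is exactly the recollement/gluing category that \cite[Section 2.2]{DKSS} uses to describe $\langle \mc{D}_1, \mc{D}_2\rangle$. So the argument is to build the comparison functor from the universal property, and then to check that it agrees with the DKSS equivalence.

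\textbf{Part (1).} First I will check that the displayed triangle really is an oplax cocone. This requires a 2-cell $i_1 \Rightarrow i_2 \circ (i_2^L i_1)$. I take it to be the unit $\id_{\mc{C}} \to i_2 i_2^L$ whiskered with $i_1$. The unit exists because $\mc{D}_2$ is left admissible, which is where the gluing functor $i_2^L i_1$ is defined.

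The universal property of the oplax colimit then produces a functor $\Theta$ from the oplax colimit $\mathcal{L}$ to $\mc{C}$. On objects it sends $(d_1, d_2, \alpha\colon F d_1 \to d_2)$ to the cofiber of the composite
\[ i_2 d_2[-1] \to \cdots \]
More precisely, it sends the triple to the pushout of $i_1 d_1 \to i_2 i_2^L i_1 d_1 \xrightarrow{i_2\alpha} i_2 d_2$. Equivalently, it is the object $c$ fitting into $i_2 d_2 \to c \to i_1 d_1$ with connecting map determined by $\alpha$.

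To show $\Theta$ is an equivalence, I will use \cref{lem:sodexact}. Every $c \in \mc{C}$ sits in an exact triangle $i_2 i_2^R c \to c \to i_1 i_1^L c$, and this gives essential surjectivity. For full faithfulness, I will compute $\Hom_{\mc{C}}(\Theta(x), \Theta(y))$ by dévissage along these triangles. The relevant inputs are semiorthogonality ($\Hom(i_2 -, i_1 -) = 0$), the identity $\Hom(i_1 d_1, i_2 d_2) \simeq \Hom(F d_1, d_2)$ given by adjunction, and \cref{lem:proj}. The result matches the mapping spaces in the lax-colimit model, which are fibers of the form $\Hom(d_1,d_1') \times_{\Hom(Fd_1, d_2')} \Hom(d_2,d_2')$.

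Rather than redoing this computation from scratch, I would cite the proof of \cite[Proposition 2.3.2]{DKSS} for the equivalence of $\mc{C}$ with the gluing category. Then the only thing left to prove is that the DKSS equivalence is the functor induced by our cocone. This amounts to checking that the restrictions of $\Theta$ along the two structure maps $\mc{D}_j \to \mathcal{L}$ are $i_1$ and $i_2$, and that the 2-cell is the unit.

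\textbf{Part (2).} I will argue dually. The maps $i_1^L, i_2^L$ together with the 2-cell $i_2^L \Rightarrow i_2^L i_1 i_1^L$ (obtained by applying $i_2^L$ to the unit $\id \to i_1 i_1^L$) form an oplax cone. The induced functor $\mc{C} \to \operatorname{oplaxlim}(F)$ sends $c \mapsto (i_1^L c,\, i_2^L c,\, i_2^L c \to F i_1^L c)$.

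I would then identify the oplax limit model, again via \cite{lax}, with the same gluing category, now presented with its arrow pointing the other way. The key observation is that this functor is left adjoint / inverse to $\Theta$. This follows from $i_2^L \Theta(d_1,d_2,\alpha) \simeq d_2$ and $i_1^L \Theta(\ldots) \simeq d_1$ (using $i_1^L i_2 = 0$ and \cref{lem:proj}). Alternatively, I would invoke the result of \cite{lax} that all four (op)lax (co)limits of $F$ coincide, and check that they coincide compatibly with these specific (co)cones.

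\textbf{Main obstacle.} I expect the main difficulty to be the $\infty$-categorical bookkeeping: showing that the functor induced by the universal property coincides with the explicit DKSS equivalence, including the direction and identity of the 2-cell. The conventions for lax versus oplax are easy to flip. The first thing I would try is to fix the model from \cite{lax} as the category of sections of the cocartesian fibration over $\Delta^1$ classified by $F$, and to check compatibility on the two objects of $\Delta^1$ together with the single edge, since that data determines the functor.
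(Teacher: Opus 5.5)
Your overall strategy is correct, but it does by hand what the paper outsources. The paper's proof is just a pointer to \cite[Section 2.4]{complexes}, plus a note on what the structure maps become in the gluing model: the $\mc{D}_2$-leg of the cocone is shifted by $1$, and $i_2^L$ becomes a fiber of the unit. You instead take an explicit model of $\oplaxcolim(F)$ in $\St$ from \cite{lax}, produce the comparison functor $\Theta$ from the universal property, and prove it is an equivalence by d\'evissage along $i_2i_2^Rc\to c\to i_1i_1^Lc$. This is more self-contained than the citation. In fact the d\'evissage alone finishes part (1). Once $\Theta$ restricts to $i_1,i_2$ and sends the universal 2-cell to the unit, it induces $\Hom(Fd_1,d_2)\simeq\Hom(i_1d_1,i_2d_2)$ on cross-terms. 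Hence it is fully faithful, and it is essentially surjective since its image contains both components. So you do not need to match $\Theta$ with any equivalence from \cite{DKSS}.

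Two computations need fixing, and both are exactly the shifts the paper's proof flags.

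First, exactness forces $\Theta(d_1,d_2,\alpha)\simeq\fib\bigl(i_1d_1\xrightarrow{\eta}i_2Fd_1\xrightarrow{i_2\alpha}i_2d_2\bigr)$, which sits in $i_2d_2[-1]\to c\to i_1d_1$. It is not a pushout of a composable pair. Nor is it an extension $i_2d_2\to c\to i_1d_1$, since the connecting map of such an extension would be $Fd_1\to d_2[1]$. Accordingly, in the triples model the universal cocone is $d_1\mapsto(d_1,0,0)$ and $d_2\mapsto(0,d_2[1],0)$. Only with this shift does $\Theta$ restrict to $i_2$.

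Second, in part (2) your claim $i_2^L\Theta(d_1,d_2,\alpha)\simeq d_2$ is false. Since $i_2^Li_1=F\neq0$, you get $i_2^L\Theta(d_1,d_2,\alpha)\simeq\fib(\alpha)$; it is $i_2^R$, not $i_2^L$, that annihilates $\mc{D}_1$. The argument still closes. Let $\Lambda\colon c\mapsto(i_1^Lc,\,i_2^Lc,\,i_2^Lc\to Fi_1^Lc)$. Then $\Lambda\Theta(d_1,d_2,\alpha)\simeq(d_1,\fib\alpha,\fib\alpha\to Fd_1)$. This is the image of $(d_1,d_2,\alpha)$ under the fiber/cofiber equivalence $\{(d_1,d_2,Fd_1\to d_2)\}\simeq\{(d_1,b,b\to Fd_1)\}$ that you already invoke. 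Hence $\Lambda$ is an equivalence by part (1). This is precisely the paper's remark that $i_2^L$ is $\fib(\eta)$ up to shift.
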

\begin{proof}
    This is essentially shown in \cite[Section 2.4]{complexes} (above their Remark 2.4.3). The oplax colimit construction uses $i_1$ and $i_2$ (the latter with a shift by $1$), and the oplax limit construction uses $i_1^L$ (which has kernel $i_2\mc{D}_2$) and $i_2^L$ (which is $\fib(\eta)$ up to a shift by $1$).
\end{proof}
\begin{rem}
    Of course, the analogous statement holds for lax (co)limit using right adjoints and Cartesian gluing functors. 
\end{rem}

\begin{lem}\label{lem:length3SOD}
    Given $\mc{C}=\langle \mc{D}_1, \mc{D}_2, \mc{D}_3 \rangle$, the gluing functor $\mc{D}_1 \to \langle \mc{D}_2, \mc{D}_3 \rangle$ is the canonical functor coming from the lax cone

\[\begin{tikzcd}
	& {\mc{D}_1} & \\
	{\mc{D}_2} && {\mc{D}_3}
	\arrow["{i_2^Li_1}"', from=1-2, to=2-1]
	\arrow[""{name=0, anchor=center, inner sep=0}, "{i_3^Li_1}", from=1-2, to=2-3]
	\arrow["{i_3^Li_2}"', from=2-1, to=2-3]
	\arrow[shorten <=15pt, shorten >=15pt, Rightarrow, from=0, to=2-1]
\end{tikzcd}\]
    
    so $\mc{C}=\oplaxlim(\mc{D}_1 \to \langle \mc{D}_2, \mc{D}_3 \rangle)$.

    Similarly, the gluing functor $\langle \mc{D}_1, \mc{D}_2 \rangle \to \mc{D}_3$ is the canonical functor coming from the lax cocone between $\mc{D}_1, \mc{D}_2, \mc{D}_3$, and then $\mc{C}=\oplaxcolim(\langle \mc{D}_1, \mc{D}_2 \rangle \to \mc{D}_3)$.
\end{lem}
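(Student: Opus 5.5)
We reduce to the two-term case by regrouping. Set $\mc{E}:=\langle \mc{D}_2,\mc{D}_3\rangle\subset\mc{C}$ with inclusion $i_{\mc{E}}$, so that $\mc{C}=\langle \mc{D}_1,\mc{E}\rangle$ is a length-$2$ SOD. Its gluing functor is $i_{\mc{E}}^Li_1\colon \mc{D}_1\to\mc{E}$. Once this functor is identified with the functor induced by the displayed cone, part (2) of the preceding proposition, applied to $\langle \mc{D}_1,\mc{E}\rangle$, gives $\mc{C}\simeq\oplaxlim(\mc{D}_1\to\mc{E})$ directly. So the content of the lemma is the identification of the gluing functor.

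For that, we first apply part (2) of the preceding proposition to $\mc{E}=\langle\mc{D}_2,\mc{D}_3\rangle$. This exhibits $\mc{E}$ as the oplax limit of $i_3^Li_2\colon\mc{D}_2\to\mc{D}_3$. The cone legs are the left adjoints $j_2^L, j_3^L$ of the inclusions $j_2,j_3$ of $\mc{D}_2,\mc{D}_3$ into $\mc{E}$, and the $2$-cell is built from the unit of $j_2^L\adjto j_2$ via \cref{lem:sodexact}. By the $(\infty,2)$-categorical universal property of the oplax limit, a functor $\mc{D}_1\to\mc{E}$ is the same as a pair of functors $\mc{D}_1\to\mc{D}_2$, $\mc{D}_1\to\mc{D}_3$ together with a compatible $2$-cell. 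It therefore suffices to compute these three pieces of data for $i_{\mc{E}}^Li_1$.

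Since $i_k=i_{\mc{E}}j_k$ for $k=2,3$, adjoints compose and $i_k^L\simeq j_k^Li_{\mc{E}}^L$. Precomposing with $i_1$ shows that the two legs are $j_k^Li_{\mc{E}}^Li_1\simeq i_k^Li_1$, which are exactly the legs of the displayed cone. For the $2$-cell, we whisker the structural $2$-cell of $\mc{E}$ (coming from the unit $\id\to j_2j_2^L$, followed by $j_3^L$) with $i_{\mc{E}}^Li_1$. Using $i_{\mc{E}}^L i_{\mc{E}}\simeq\id$ (\cref{lem:proj}), this becomes $i_3^L$ applied to the unit $\id\to i_2i_2^L$, whiskered with $i_1$. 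That is the $2$-cell $i_3^Li_1\Rightarrow i_3^Li_2\,i_2^Li_1$ in the diagram. The statement for $\langle\mc{D}_1,\mc{D}_2\rangle\to\mc{D}_3$ is formally dual: we regroup as $\langle\mc{E}',\mc{D}_3\rangle$ with $\mc{E}'=\langle\mc{D}_1,\mc{D}_2\rangle$, use part (1) of the proposition (oplax colimit with inclusions) for $\mc{E}'$, and identify $i_3^Li_{\mc{E}'}$ restricted along $i_1,i_2$ with $i_3^Li_1$ and $i_3^Li_2$.

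The main obstacle is the coherence in the $2$-cell step. We must check that the whiskered unit really is the $2$-cell the oplax-limit structure of $\mc{E}$ uses, with the correct direction: lax versus oplax conventions, and the shift by $[1]$ appearing in the proof of the proposition. We also need the $(\infty,2)$-universal property, not just the $(\infty,1)$ one, to conclude that cone data determines the functor up to equivalence. I would handle this by writing the $2$-cell explicitly via the cofiber sequence $j_3j_3^R\to\id_{\mc{E}}\to j_2j_2^L$ of \cref{lem:sodexact}, and comparing it with the corresponding sequence on $\mc{C}$ restricted to $i_1\mc{D}_1$.
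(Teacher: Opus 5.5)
Your proposal is correct and follows essentially the same route as the paper. Like the paper, you regroup $\mc{C}$ as $\langle \mc{D}_1, \langle \mc{D}_2,\mc{D}_3\rangle\rangle$, identify the legs via $i_k^L i_1 \simeq (i_k^L i_{\mc{E}})\, i_{\mc{E}}^L i_1$, factor the $2$-cell $i_3^L i_1 \Rightarrow i_3^L i_2 i_2^L i_1$ through $\mc{E}$, and conclude by the $(\infty,2)$-universal property of the oplax limit. Your whiskering of the unit using $i_{\mc{E}}^L i_{\mc{E}} \simeq \id$ is simply a more explicit version of the paper's factorization step.
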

\begin{proof}
    We do the first case as the second is similar.
    Consider the gluing functor $\mc{D}_1 \to \langle \mc{D}_2, \mc{D}_3 \rangle$ in $\mc{C}$. We know that composing this functor with $i_2^L$ or $i_3^L$ yields the gluing functors $i_2^Li_1 \colon \mc{D}_1 \to \mc{D}_2$ and $i_3^Li_1 \colon \mc{D}_1 \to \mc{D}_3$.

    Now note that we have natural isomorphisms $i_3^Li_1\xRightarrow{\sim}i_3^Li_{23}i_{23}^Li_1$ (where $i_{23} \colon \langle \mc{D}_2, \mc{D}_3 \rangle \to \mc{C}$), and $i_3^Li_2i_2^Li_1 \xRightarrow{\sim} i_3^Li_2i_2^Li_{23}i_{23}^Li_1$. 
    
    Then the natural transformation $i_3^Li_1 \Rightarrow i_3^Li_2i_2^Li_1$ is equivalent to $i^L_3i_{23}i^L_{23}i_1 \Rightarrow i^L_3i_2i^L_2i_{23}i^L_{23}i_1$, i.e. there is a lax cone factoring

\[\begin{tikzcd}
	{\mc{D}_1} && \\
	& {\langle \mc{D}_2, \mc{D}_3 \rangle} \\
	& {\mc{D}_2} & {\mc{D}_3.}
	\arrow["{i^L_{23}i_1}", from=1-1, to=2-2]
	\arrow["{i^L_2i_{23}}"', from=2-2, to=3-2]
	\arrow[""{name=0, anchor=center, inner sep=0}, "{i^L_3i_{23}}", from=2-2, to=3-3]
	\arrow["{i^L_3i_2}"', from=3-2, to=3-3]
	\arrow[shorten <=6pt, Rightarrow, from=0, to=3-2]
\end{tikzcd}\]
    
    So by the universal property of oplax limit, the gluing functor $\mc{D}_1 \to \langle \mc{D}_2, \mc{D}_3 \rangle$ is the functor coming from the oplax triangle between $\mc{D}_1, \mc{D}_2, \mc{D}_3$ given by the gluing functors.
\end{proof}

\begin{prop} \label{thm:linearSOD}
    Suppose we have an SOD $\mc{C}=\langle \mc{D}_1, \dots, \mc{D}_n \rangle$ such that the pairwise gluing functor from $\mc{D}_i$ to $\mc{D}_j$ exists for $i<j$, and is zero unless $j=i+1$. Then $\langle \mc{D}_1, \dots, \mc{D}_n \rangle$ can be constructed as $\oplaxlim(\mc{D}_1 \to \langle \mc{D}_2, \dots, \mc{D}_n \rangle)$ for a canonical gluing functor $\mc{D}_1 \to \langle \mc{D}_2, \dots, \mc{D}_n \rangle$ (exhibiting a projection SOD). Similarly, it can be constructed as $\oplaxcolim( \langle \mc{D}_1,  \dots \mc{D}_{n-1} \rangle \to \mc{D}_n) $ for a canonical gluing functor $\langle \mc{D}_1,  \dots \mc{D}_{n-1} \rangle  \to \mc{D}_n$ (exhibiting an inclusion SOD).
    \end{prop}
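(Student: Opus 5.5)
I would prove this by induction on $n$, using \cref{lem:length3SOD} as the inductive engine and the length-2 reconstruction proposition (oplax limit / oplax colimit for two-term SODs) as the base case. I treat the oplax limit statement in detail; the oplax colimit statement is formally dual (use the second half of \cref{lem:length3SOD}, left adjoints replaced by inclusions, cones by cocones).

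\textbf{Step 1: reduce to a two-term SOD.} Group the SOD as $\mc{C}=\langle \mc{D}_1, \mc{E}\rangle$ with $\mc{E}:=\langle \mc{D}_2,\dots,\mc{D}_n\rangle$. This is a two-term SOD. The subcategory $\mc{E}$ is right admissible, since it is the left orthogonal of $\mc{D}_1$ and $\mc{D}_1$ has the left adjoint to its inclusion coming from the existence of the gluing functors. The gluing functor $i_{\mc{E}}^L i_1\colon \mc{D}_1\to\mc{E}$ then exists. The two-term proposition gives $\mc{C}\simeq \oplaxlim(\mc{D}_1\to\mc{E})$, with the cone legs $i_1^L$ and $i_{\mc{E}}^L$ exhibiting a projection SOD.

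\textbf{Step 2: identify the gluing functor canonically.} I would show that $G:=i_{\mc{E}}^L i_1$ is determined by the pairwise gluing data. Applying $i_j^L$ (computed inside $\mc{E}$) to $G$ recovers $i_j^L i_1$, which vanishes for $j\ge 3$ and equals $M_{12}$ for $j=2$. Now apply \cref{lem:length3SOD} to the three-term SOD $\mc{C}=\langle \mc{D}_1,\mc{D}_2,\mc{E}'\rangle$, where $\mc{E}'=\langle\mc{D}_3,\dots,\mc{D}_n\rangle$. By that lemma, $G$ is the functor induced by the oplax universal property of $\mc{E}=\oplaxlim(\mc{D}_2\to\mc{E}')$ applied to the lax cone with components $M_{12}$ and $i_{\mc{E}'}^L i_1$, together with the 2-cell $i_{\mc{E}'}^Li_1\Rightarrow (i_{\mc{E}'}^L i_2)M_{12}$.

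The inductive hypothesis applies to $\mc{E}$ and $\mc{E}'$, whose SODs still have only adjacent gluing functors. It shows that $\mc{E}$ is the canonical oplax limit. The key point is that $i_{\mc{E}'}^L i_1 = 0$: its composites with every projection $i_j^L$, $j\ge3$, vanish, and the projection SOD of $\mc{E}'$ detects zero objects by the generation criterion in \cref{rem:howtoshowprojectionSOD}. The cone is therefore $(M_{12},0)$ with the unique 2-cell out of $0$, so $G$ is canonically "$M_{12}$ followed by the inclusion of $\mc{D}_2$ as the first leg." This is what "canonical gluing functor" should mean, and it completes the induction.

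\textbf{Main obstacle.} I expect the main difficulty to be Step 2's vanishing and canonicity claim: justifying that a functor into an oplax limit whose projections to all components vanish is itself zero, coherently rather than just objectwise. It has to be zero as an object of the functor $\infty$-category, so that the induced map from the universal property is independent of choices. I would handle this with the $(\infty,2)$-universal property, namely that maps into $\oplaxlim$ are equivalent to oplax cones. The space of oplax cones with a zero leg and zero 2-cells is contractible once the other legs are fixed, which gives the required uniqueness.
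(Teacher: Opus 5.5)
Your proposal is correct and follows essentially the paper's argument: induct on $n$, show that the gluing functor $\mc{D}_1 \to \langle \mc{D}_3,\dots,\mc{D}_n\rangle$ vanishes because $\mc{D}_1$ has no maps into any $\mc{D}_j$ with $j\ge 3$ (the paper phrases this as ``no homs, hence no homs into the generated subcategory'' rather than via projections jointly detecting zero), and then reduce to the three-term case handled by \cref{lem:length3SOD}. The ``main obstacle'' you flag is milder than you suggest: an exact functor into a stable category that vanishes on every object is already equivalent to the zero functor, since the unique natural transformation to $0$ is then a pointwise equivalence. So no extra $(\infty,2)$-categorical coherence argument is needed.
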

    \begin{proof}
    We do the first case as the second is similar. We use induction on $n$. Then we know that there is a unique reconstruction $\langle \mc{D}_3, \dots,\mc{D}_n \rangle$. Since the gluing functor from $\mc{D}_1$ to each $\mc{D}_j$ for $j \ge 3$ is zero, there are no homs between $\mc{D}_1$ and such $\mc{D}_j$. Thus there are no homs either between $\mc{D}_1$ and $\langle \mc{D}_3, \dots,\mc{D}_n \rangle$, so the gluing functor $\mc{D}_1 \to \langle \mc{D}_3, \dots,\mc{D}_n \rangle$ is zero.

    Then we are reduced to the $n=3$ case and are done by \cref{lem:length3SOD}.
    \end{proof}

\begin{rem}
    We believe that a more general reconstruction result holds: namely, one can take an oplax limit over a general quasitriangular monad (not just a linear SOD). In other words, one takes an oplax limit over the $(\infty, 2)$-diagram $\Delta_n^{(\infty, 2)}$, which is like the usual simplex $\Delta_n$ except we have noninvertible $2$-morphisms. One could then prove canonical reconstruction by showing that this oplax limit is equivalent to an iterated oplax limit of just two categories at a time, as suggested in \cite[Remark 2.4.4]{complexes}, using the universal property of lax limits from \cite[Corollary 5.1.7]{laxlim}. Note also that the analog of \cref{lem:computeafterforgetting} holds in this case using the fact from \cite{ahm} that (op)lax (co)limits in $(\infty, 2)$-categories are weighted (co)limits. However we do not check the details of this method carefully as it is not necessary for our purposes.
\end{rem}

\subsection{SODs for small versus large categories}
\begin{prop}\label{prop:stvsst}
    Suppose we have an SOD $\mc{C}=\langle \mc{A}_1, \dots, \mc{A}_n \rangle$ in $\st$. Then we have an SOD $\Ind(\mc{C})=\langle \Ind(\mc{A}_1), \dots, \Ind(\mc{A}_n) \rangle$ in $\St$.
\end{prop}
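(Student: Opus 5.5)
The plan is to verify the two defining properties of an SOD for the ind-completed subcategories: semiorthogonality and generation. We use that $\Ind\colon \sti \to \Stc$ is an equivalence. We also use that the compact objects of $\Ind(\mc{C})$ are the idempotent completion of $\mc{C}$. First, $\Ind$ of a fully faithful exact functor between small stable categories is fully faithful. Its image is the localizing subcategory generated by the image of the small one. So each $\Ind(\mc{A}_i)$ is a full subcategory of $\Ind(\mc{C})$, closed under colimits, and its inclusion $\Ind(i_j)$ preserves compact objects and has a colimit-preserving right adjoint.

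Next comes semiorthogonality. For $j>i$ we need $\Hom(\Ind(\mc{A}_j),\Ind(\mc{A}_i))=0$. Every object of $\Ind(\mc{A}_j)$ is a filtered colimit of objects of $\mc{A}_j$, which turns $\Hom$ out of it into a limit. Every object of $\Ind(\mc{A}_i)$ is a filtered colimit of objects of $\mc{A}_i$, and objects of $\mc{A}_j$ are compact in $\Ind(\mc{C})$. Together these reduce the vanishing to $\Hom_{\mc{C}}(a_j,a_i)=0$, which holds by hypothesis.

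Generation is the step needing care. I would argue by induction on $n$ using the two-term case $\mc{C}=\langle\mc{A}_1,\mc{B}\rangle$ with $\mc{B}=\langle\mc{A}_2,\dots,\mc{A}_n\rangle$. The inclusion $\Ind(\mc{B})\to\Ind(\mc{C})$ preserves colimits, so it has a right adjoint $R$ by the adjoint functor theorem. For $x\in\Ind(\mc{C})$, form the triangle $\Ind(i)R(x)\to x\to y$. Then $y$ lies in the right orthogonal of $\Ind(\mc{B})$, and it remains to show that $y\in\Ind(\mc{A}_1)$. Here I expect the main obstacle. The plan is to show that $\Ind(\mc{B})^{\perp}=\Ind(\mc{A}_1)$ directly by taking ind-completion of the left-adjoint projection. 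In the small category every $c\in\mc{C}$ sits in a triangle $b\to c\to a$ with $a\in\mc{A}_1$ and $b\in\mc{B}$, functorially, which gives exact functors $\mc{C}\to\mc{A}_1$ and $\mc{C}\to\mc{B}$ (the projections). Apply $\Ind$ to these functors and to the functorial triangle. This produces, for every object of $\Ind(\mc{C})$, a filtered colimit of such triangles, hence a triangle with outer terms in $\Ind(\mc{B})$ and $\Ind(\mc{A}_1)$; filtered colimits are exact in the stable setting. This exhibits the SOD directly, and the orthogonal computation is unnecessary. Compatibility of the colimit extension with the triangle follows because $\Ind$ of an exact sequence of functors is exact. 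This uses only functoriality of $\Ind$ on exact functors, not any admissibility.

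Finally, combine this with \cref{cor:htpycat}-style reasoning, or the internal characterization of \cite[Section 2.2]{DKSS}, to conclude that the $n$ subcategories form an SOD of $\Ind(\mc{C})$. Iterating the two-term case along $\mc{B}=\langle\mc{A}_2,\dots,\mc{A}_n\rangle$ gives the full statement.
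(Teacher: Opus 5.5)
Your proof is correct, but it takes a different route from the paper's.

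\textbf{The paper's route.} The paper does $n=2$ and then inducts, and it never uses the decomposition triangles of $\mc{C}$. It shows three things:
\begin{enumerate}
\item $\Ind(\mc{A}_2)^\perp$ is closed under colimits in $\Ind(\mc{C})$, because objects of $\mc{A}_2$ are compact.
\item $\mc{A}_1$ sits inside $\Ind(\mc{A}_2)^\perp$ as compact objects.
\item Any $y\in\Ind(\mc{A}_2)^\perp$ with $\Hom(\mc{A}_1,y)=0$ receives no maps from $\mc{C}$, hence is zero.
\end{enumerate}
So $\Ind(\mc{A}_2)^\perp$ is compactly generated by $\mc{A}_1$ and equals $\Ind(\mc{A}_1)$. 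Right admissibility of $\Ind(\mc{A}_2)$ (its inclusion preserves colimits) then gives the SOD. In other words, the paper carries out exactly the ``orthogonal computation'' you set aside.

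\textbf{Your route.} You instead Ind-extend the functorial cofiber sequence $i_{\mc{B}}i_{\mc{B}}^R\to\id\to i_{\mc{A}_1}i_{\mc{A}_1}^L$ of \cref{lem:sodexact}. This is legitimate because $\Fun^{\mathrm{ex}}(\mc{C},\Ind(\mc{C}))\simeq\Fun^{L}(\Ind(\mc{C}),\Ind(\mc{C}))$ is an equivalence of stable categories in which colimits are computed pointwise. Your semiorthogonality check via compactness and filtered colimits is also fine.

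\textbf{What each buys.} The paper's argument uses only semiorthogonality and weak generation of $\Ind(\mc{C})$ by $\mc{A}_1\cup\mc{A}_2$. So it would still work if $\mc{C}$ were merely the thick closure of its components. Yours needs the decomposition triangle for every object of $\mc{C}$, which the DKSS definition of SOD supplies. In return it gives more: the projection functors of the large SOD are $\Ind(i_{\mc{A}_1}^L)$ and $\Ind(i_{\mc{B}}^R)$. Hence the large gluing functors are Ind-extensions of the small ones. That is precisely what the paper implicitly needs when it transfers the gluing-functor statements of \cref{thm:categoricalSOD} to $\IndCoh$.

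\textbf{Redundant steps.} You can drop the preliminary detour through the abstract right adjoint $R$ and the closing appeal to \cref{cor:htpycat}. Semiorthogonality plus the triangle for each object already is the SOD.
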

\begin{proof}
    We will do the $n=2$ case; the general case follows from induction. First we claim that small colimits taken in $\Ind(\mc{A}_2)^\perp$ are the same as in $\Ind(\mc{C})$. To show this, suppose we have a small colimit of objects $c \in \Ind(\mc{A}_2)^\perp$. Then for any $a \in \mc{A}_2$, we have $\Hom(a, c) \simeq 0$, and $\Hom(a, -)$ preserves small colimits, so we have $\Hom(a, \colim c)=\colim \Hom(a, c) \simeq 0$.

    Using this fact, we get that $\mc{A}_1 \subset (\Ind(\mc{A}_2)^\perp)^c$, as homs out of $\mc{A}_1$ preserve small colimits in $\Ind(\mc{C})$, and thus in $\Ind(\mc{A}_2)^\perp$.

    Since $\Ind(\mc{C})$ is compactly generated by $\mc{C}$, it is compactly generated by the collection of elements in $\mc{A}_1$ and $\mc{A}_2$, implying that $\Ind(\mc{A}_2)^\perp$ is compactly generated by $\mc{A}_1$. So we get that $\Ind(\mc{A}_1) = \Ind(\mc{A}_2)^\perp$ as subcategories of $\mc{C}$, yielding the result.
\end{proof}

\subsection{Mutation and gluing functors}
We now compute how gluing functors change under mutation. Note that the formulas match \cite[3.4.12]{infrared}.

\begin{prop}\label{thm:mutation}
    Given a semiorthogonal decomposition $\langle \mc{A}, \mc{B}, \mc{C} \rangle$, if we mutate $\mc{C}$ over $\mc{B}$ to $\mc{C}'$, then under the equivalence $i_{\mc{C}'}^Li_{\mc{C}} \colon \mc{C} \xrightarrow{\sim} \mc{C}'$, the new gluing functor from $A$ to $\mc{C}$ is $\fib(i_{\mc{C}}^Li_{\mc{A}} \to i_{\mc{C}}^Li_{\mc{B}}i_{\mc{B}}^Li_{\mc{A}})$. 

    Given a semiorthogonal decomposition $\langle \mc{B}, \mc{C}, \mc{D} \rangle$, if we mutate $C$ over $\mc{B}$ to $\mc{C}'$, then under the equivalence $i_{\mc{C}'}^Li_{\mc{C}} \colon \mc{C} \xrightarrow{\sim} \mc{C}'$, the new gluing functor from $\mc{C}$ to $\mc{D}$ is $\cofib(i_{\mc{D}}^Li_{\mc{B}}i_{\mc{B}}^Ri_{\mc{C}} \to i_{\mc{D}}^Li_{\mc{C}})$.
\end{prop}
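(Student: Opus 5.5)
The approach is to compute both new gluing functors directly from the definition $i_{\mc{X}}^Li_{\mc{Y}}$, using the exact triangle of \cref{lem:sodexact} applied to the two-term decomposition $\langle \mc{B}, \mc{C}\rangle$ and its mutation. Recall that left mutating $\mc{C}$ over $\mc{B}$ replaces $\langle \mc{B},\mc{C}\rangle$ by $\langle \mc{C}',\mc{B}\rangle$, where $\mc{C}'={}^\perp\mc{B}\cap\langle\mc{B},\mc{C}\rangle$. For $c\in\mc{C}$ its image in $\mc{C}'$ is $\fib(i_{\mc{C}}c \to i_{\mc{B}}i_{\mc{B}}^R i_{\mc{C}}c)$ up to the convention on which adjoint is used. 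The first step is to check this identification carefully: the equivalence $i_{\mc{C}'}^Li_{\mc{C}}$ should agree with the mutation functor $L_{\mc{B}}$ restricted to $\mc{C}$ (or $R$; I will fix the direction so that the stated formulas come out). This is done by applying \cref{lem:sodexact} to $\langle\mc{C}',\mc{B}\rangle$ and noting that the $\mc{B}$-part of $i_{\mc{C}}c$ is killed by $i_{\mc{C}'}^L$.

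For the first formula, take $\langle\mc{A},\mc{B},\mc{C}\rangle\to\langle\mc{A},\mc{C}',\mc{B}\rangle$. The new gluing functor is $i_{\mc{C}'}^Li_{\mc{A}}$, transported to $\mc{C}$ via the inverse of $i_{\mc{C}'}^Li_{\mc{C}}$. For $a\in\mc{A}$, I will apply the triangle $i_{\mc{B}\mc{C}}i_{\mc{B}\mc{C}}^R\to \id\to i_{\mc{A}}i_{\mc{A}}^L$ to $i_{\mc{A}}a$. Since $i_{\mc{C}'}^L$ kills $\mc{A}$ (left orthogonality in the new decomposition), this reduces to computing $i_{\mc{C}'}^L$ on the $\langle\mc{B},\mc{C}\rangle$-component. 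Inside $\langle\mc{B},\mc{C}\rangle$, \cref{lem:sodexact} for $\langle \mc{B},\mc{C}\rangle$ gives a triangle $i_{\mc{C}}(\cdot)\to x\to i_{\mc{B}}i_{\mc{B}}^L x$. Applying $i_{\mc{C}}^L$ and $i_{\mc{B}}^L$ to $i_{\mc{A}}a$ yields the two terms $i_{\mc{C}}^Li_{\mc{A}}$ and $i_{\mc{C}}^Li_{\mc{B}}i_{\mc{B}}^Li_{\mc{A}}$. The map between them is the unit of $i_{\mc{B}}\adjto i_{\mc{B}}^L$ whiskered by $i_{\mc{C}}^L$ and $i_{\mc{A}}$. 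The fiber of this map is precisely the part of $i_{\mc{A}}a$ that lies in $\mc{C}'$ after identification with $\mc{C}$, giving $\fib(i_{\mc{C}}^Li_{\mc{A}}\to i_{\mc{C}}^Li_{\mc{B}}i_{\mc{B}}^Li_{\mc{A}})$.

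The second formula, for $\langle\mc{B},\mc{C},\mc{D}\rangle\to\langle\mc{C}',\mc{B},\mc{D}\rangle$, is handled dually. Here I compute $i_{\mc{D}}^Li_{\mc{C}'}$ composed with $i_{\mc{C}'}^Li_{\mc{C}}$. Writing the mutated object as the cofiber/fiber of $i_{\mc{B}}i_{\mc{B}}^Ri_{\mc{C}}c\to i_{\mc{C}}c$ (counit of $i_{\mc{B}}\adjto i_{\mc{B}}^R$), I apply the exact functor $i_{\mc{D}}^L$ to get the cofiber of $i_{\mc{D}}^Li_{\mc{B}}i_{\mc{B}}^Ri_{\mc{C}}\to i_{\mc{D}}^Li_{\mc{C}}$. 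No further correction is needed, since $i_{\mc{D}}^L$ is already the relevant projection.

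The main obstacle is bookkeeping: fixing sign and shift conventions so that the fiber/cofiber (rather than a shifted version) appears, and checking that the comparison equivalence $i_{\mc{C}'}^Li_{\mc{C}}$ really is the mutation functor rather than its inverse. These must also be made natural at the $\infty$-categorical level, where I would rely on the diagrammatic description of \cite[(2.2.1)]{DKSS} so that the triangles are functorial.
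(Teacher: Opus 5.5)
Your treatment of the second formula is correct and complete. You identify $i_{\mc{C}'}i_{\mc{C}'}^Li_{\mc{C}}c$ with $\cofib(i_{\mc{B}}i_{\mc{B}}^Ri_{\mc{C}}c\to i_{\mc{C}}c)$ via \cref{lem:sodexact} for $\langle\mc{C}',\mc{B}\rangle$, then apply the exact functor $i_{\mc{D}}^L$, and this gives exactly the stated cofiber. The paper only says this case is ``similar''. This computation also shows that your first-paragraph formula should be this cofiber, not $\fib(i_{\mc{C}}c\to i_{\mc{B}}i_{\mc{B}}^Ri_{\mc{C}}c)$: there is no natural map in that direction.

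The first formula, however, has a genuine gap.

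\textbf{The reduction step fails.} The claim that $i_{\mc{C}'}^L$ kills $\mc{A}$ is false. In $\langle\mc{A},\mc{C}',\mc{B}\rangle$ only $\Hom(\mc{C}',\mc{A})=0$ holds, so it is $i_{\mc{C}'}^R$ that kills $\mc{A}$. Meanwhile $i_{\mc{C}'}^Li_{\mc{A}}$ is precisely the gluing functor you are computing, so taken literally your reduction makes it zero. Likewise, applying $i_{\langle\mc{B},\mc{C}\rangle}i_{\langle\mc{B},\mc{C}\rangle}^R\to\id\to i_{\mc{A}}i_{\mc{A}}^L$ to $i_{\mc{A}}a$ is vacuous, since $i_{\langle\mc{B},\mc{C}\rangle}^Ri_{\mc{A}}=0$. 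The correct reduction is that $i_{\mc{C}'}^L$ factors through the left projection $i_{\langle\mc{B},\mc{C}\rangle}^L$, because $\mc{C}'\subset\langle\mc{B},\mc{C}\rangle$ and left adjoints compose. So you should work with $y=i_{\langle\mc{B},\mc{C}\rangle}^Li_{\mc{A}}a$, taking the remaining adjoints relative to $\langle\mc{B},\mc{C}\rangle$.

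\textbf{The key identification is asserted, not proved.} Your sentence that the fiber ``is precisely the part of $i_{\mc{A}}a$ that lies in $\mc{C}'$ after identification with $\mc{C}$'' is the entire content of the statement. The paper proves it in two steps, both using the single triangle $i_{\mc{C}}i_{\mc{C}}^Ry\to y\to i_{\mc{B}}i_{\mc{B}}^Ly$:
\begin{itemize}
\item Applying $i_{\mc{C}}^L$ and using $i_{\mc{C}}^Li_{\mc{C}}\simeq\id$ identifies $\fib(i_{\mc{C}}^Li_{\mc{A}}\to i_{\mc{C}}^Li_{\mc{B}}i_{\mc{B}}^Li_{\mc{A}})$ with $i_{\mc{C}}^Ry$.
\item Applying $i_{\mc{C}'}^L$ instead, and using $i_{\mc{C}'}^Li_{\mc{B}}=0$ (as $\Hom(\mc{B},\mc{C}')=0$), gives $i_{\mc{C}'}^Li_{\mc{A}}a\simeq i_{\mc{C}'}^Ly\simeq(i_{\mc{C}'}^Li_{\mc{C}})(i_{\mc{C}}^Ry)$.
\end{itemize}
Combining the two yields the first formula.

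A minor point: the map here is the unit of $i_{\mc{B}}^L\adjto i_{\mc{B}}$, not of $i_{\mc{B}}\adjto i_{\mc{B}}^L$.
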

\begin{proof}
    We prove the first statement as the second is similar. On $\langle \mc{B}, \mc{C} \rangle$, using we have \cref{lem:sodexact} we have an exact sequence $i_{\mc{C}}i_{\mc{C}}^R \to \id \to i_{\mc{B}}i_{\mc{B}}^L$. So our cone $\fib(i_{\mc{C}}^Li_{\mc{A}} \to i_{\mc{C}}^Li_{\mc{B}}i_{\mc{B}}^Li_{\mc{A}})$ is then $i_{\mc{C}}^Li_{\mc{C}}i_{\mc{C}}^Ri_{\mc{A}}=i_{\mc{C}}^Ri_{\mc{A}}$.

    Now we claim that $i_{\mc{C}}^R=i_{\mc{C}'}^L$ (or more precisely $i_{\mc{C}'}^Li_{\mc{C}}i_{\mc{C}}^R=i_{\mc{C}'}^L$). The cone between these two functors is $i_{\mc{C}'}^Li_{\mc{B}}i_{\mc{B}}^L$. But $i_{\mc{C}'}^Li_{\mc{B}}=0$ by \cite[Proposition 2.3.2(a)]{DKSS}.
\end{proof}

\section{Categorical representation theory for SODs}
In this section we develop the basic theory of SODs where each component is now a module for a monoidal category.

Let $\mc{A}$ be a monoidal category in $\St$ or $\st$.

\begin{defn}
    An $\mc{A}$-module $\mc{C}$ has a $2$-term SOD of $\mc{A}$-modules $\mc{D}_1, \mc{D}_2$ if there is an SOD after forgetting the $\mc{A}$-module structure, and the left and right adjoints, respectively, of the inclusions $\mc{D}_1 \to \mc{C}$ and $\mc{D}_2 \to \mc{C}$ (of respectively left and right admissible subcategories) are $\mc{A}$-module functors. An $\mc{A}$-module $\mc{C}$ has an $n$-term SOD of $\mc{A}$-modules $\mc{D}_1, \dots, \mc{D}_n$ if there is an SOD after forgetting the $\mc{A}$-module structure, and there is some casting of the $n$-term SOD of a repeated $2$-term SOD of $\mc{A}$-modules as above.
\end{defn}

All the SODs we use will be infinitely admissible and have adjoints (that are module functors) to all possible inclusion and gluing functors, so it is possible that a more restrictive definition is better in general. For simplicity, we assume our SODs are infintely admissible in the rest of this section.

\begin{prop}\label{prop:adjpreserve}
    Suppose we are given $\mc{A}$-modules $\mc{D}_1$ and $\mc{D}_2$ with adjoint functors between them (in either the small or large cases). For a right $\mc{A}$-module $\mc{M}$ or a left $\mc{A}$-module $\mc{N}$, applying the operation $\Hom_{\mc{A}}(\mc{N}, -),$ $\Hom_{\mc{A}}(-, \mc{N}),$ or $\mc{M} \otimes_{\mc{A}} (-)$ to this adjoint pair yields an adjoint pair (and keeps smallness/largeness).
\end{prop}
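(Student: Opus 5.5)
The plan is to use that $\Hom_{\mc{A}}(\mc{N},-)$, $\Hom_{\mc{A}}(-,\mc{N})$ and $\mc{M}\otimes_{\mc{A}}(-)$ are all $2$-functors on the relevant $(\infty,2)$-category of $\mc{A}$-modules: each sends $\mc{A}$-module functors to functors and $\mc{A}$-linear natural transformations to natural transformations, compatibly with composition and identities. Adjunctions are purely $2$-categorical data — unit and counit subject to the triangle identities. Hence any $2$-functor carries an adjunction to an adjunction; for the contravariant $\Hom_{\mc{A}}(-,\mc{N})$ the roles of left and right adjoint swap, because the $2$-functor reverses $1$-morphisms but not $2$-morphisms.

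First I will make precise that the adjunction $L\adjto R$ between $\mc{D}_1$ and $\mc{D}_2$ is an adjunction \emph{internal to} $\mc{A}$-modules. By hypothesis $L$ and $R$ are $\mc{A}$-module functors, and a standard fact (Lurie, or the doctrinal adjunction argument) says that when $L$ is strong $\mc{A}$-linear, its right adjoint inherits a lax, here strong, $\mc{A}$-linear structure, and the unit and counit are $\mc{A}$-linear transformations. I will check that the structure on $R$ that we are given agrees with the induced one, or else simply work with the induced structure. This yields an adjunction in the $(\infty,2)$-category $\mc{A}\text{-}\Mod(\st)$ (respectively $\St$).

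Next I will verify that each of the three operations is a $2$-functor:
\begin{itemize}
\item $\Hom_{\mc{A}}(\mc{N},-)$ acts by postcomposition and whiskering.
\item $\Hom_{\mc{A}}(-,\mc{N})$ acts by precomposition.
\item $\mc{M}\otimes_{\mc{A}}(-)$ acts via the bar construction, i.e. as a geometric realization of $\mc{M}\otimes\mc{A}^{\otimes\bullet}\otimes(-)$, which is functorial in $2$-morphisms levelwise.
\end{itemize}
Applying each one to the unit, the counit and the triangle identities gives the result. For the claim about size: in the small case the relative tensor product is computed in $\st$ (idempotent-completed), and Hom categories of small modules remain small, so everything stays within the same ambient world.

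The main obstacle I expect is the tensor case in $\st$ (and likewise in $\St$ with colimit-preserving functors). There $\mc{M}\otimes_{\mc{A}}(-)$ is only a $2$-functor on the subcategory of morphisms it is defined on, and the right adjoint $R$ must live in the same world as $L$: exact in the small case, colimit-preserving in the large case. In the large case I will first use that the SOD adjoints are assumed to exist in $\St$, i.e. that they are continuous. I will then argue via the bar construction that the $2$-functoriality of colimits in $\St$ (as an $(\infty,2)$-category) transfers the triangle identities levelwise, and that the geometric realization respects them.
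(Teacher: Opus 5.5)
Your proposal is correct and is essentially the paper's argument: the $\Hom$ cases reduce to checking the unit/counit triangle identities after pointwise post- or precomposition, and the tensor case follows because $\mc{M}\otimes_{\mc{A}}(-)$ is a $2$-functor. The one real difference is the small tensor case, where the paper applies $\Ind$ (so that $\Ind R$ is continuous and the adjunction lives in $\St$), uses the large-case $2$-functoriality, and then restricts to compact objects, whereas you argue $2$-functoriality directly in $\st$ via the bar construction; either route works.
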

\begin{rem}
    Recall that for functor categories between small categories we always take exact functors, and for large categories we take continuous (i.e. colimit-preserving) functors. Also the functor category between small categories is still small.
\end{rem}
\begin{proof}[Proof of \cref{prop:adjpreserve}]
    First let us do the $\Hom$ case. Given $F \colon \mc{D}_1 \to \mc{D}_2$, we define $\Hom_{\mc{A}}(\mc{N}, \mc{D}_1) \to \Hom_{\mc{A}}(\mc{N}, \mc{D}_2)$ pointwise. Checking the adjunction boils down to checking diagrams involving the unit and counit (see \cite[I.1.4.4]{dag}), which is straightforward. The other Hom case is similar.

    Now let us do the $\otimes$ case. Then we know that $\Ind$ commutes with $\otimes$. In the large case it is known that $\mc{M} \otimes_{\mc{A}} (-)$ is a $2$-functor, so it preserves adjunctions. Then the small case follows from taking compact objects (note that a functor preserves adjoints if its right adjoint is continuous, which is preserved under $\otimes_{\mc{A}}$).
\end{proof}
\begin{rem}
    This proof also shows that gluing functors of an SOD are preserved under $\otimes_{\mc{A}}$ and $\Hom_{\mc{A}}$ (using the result \cref{thm:preserve} that these operations indeed yield an SOD).
\end{rem}

\begin{prop}\label{thm:preserve}
    Suppose we are given an $\mc{A}$-module SOD $\mc{C}=\langle \mc{D}_1, \dots, \mc{D}_n \rangle$, with inclusion functors $i_j \colon \mc{D}_j \to \mc{C}$.

    \begin{enumerate}
        \item For a right $\mc{A}$-module $\mc{M}$, applying the operation $\mc{M} \otimes_{\mc{A}} (-)$ yields an SOD of $\mc{M} 
        \otimes_{\mc{A}} \mc{C}$ with inclusion functors induced by the tensor product.
        \item For a left $\mc{A}$-module $\mc{N}$, applying the operation $\Hom_{\mc{A}}(-, \mc{N})$ yields an SOD of $\Hom_{\mc{A}}(\mc{C}, \mc{N})$ by projection, with projection functors $(- \circ i_j) \colon \Hom_{\mc{A}}(\mc{C}, \mc{N}) \to \Hom_{\mc{A}}(\mc{D}_i, \mc{N})$.
    \end{enumerate}

    Suppose instead we are given an $\mc{A}$-module SOD $\mc{C}$ via projection functors $p_i \colon \mc{C} \to \mc{D}_i$.

    \begin{enumerate}
        \item For a right $\mc{A}$-module $\mc{M}$, applying the operation $\mc{M} \otimes_{\mc{A}} (-)$ yields an SOD of $\mc{M} 
        \otimes_{\mc{A}} \mc{C}$ via projection, with projection functors induced by the tensor product.
        \item For a left $\mc{A}$-module $\mc{N}$, applying the operation $\Hom_{\mc{A}}(-, \mc{N})$ yields an SOD of $\Hom_{\mc{A}}(\mc{C}, \mc{N})$ by inclusion, with inclusion functors $(- \circ p_j) \colon \Hom_{\mc{A}}(\mc{D}_i, \mc{N}) \to \Hom_{\mc{A}}(\mc{C}, \mc{N})$.
    \end{enumerate}
\end{prop}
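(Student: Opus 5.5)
We reduce to the $2$-term case and induct, because an $\mc{A}$-module SOD is by definition a repeated $2$-term $\mc{A}$-module SOD. Suppose $\mc{C}=\langle \mc{D}_1,\mc{D}_2\rangle$ with inclusions $i_1,i_2$, and suppose $i_1^L$ and $i_2^R$ are $\mc{A}$-module functors. Infinite admissibility gives the further adjoints, and each of these is again an $\mc{A}$-module functor, being adjoint to one. By \cref{lem:sodexact} there is a fiber sequence of $\mc{A}$-module endofunctors
\[ i_2 i_2^R \to \id_{\mc{C}} \to i_1 i_1^L. \]
The SOD is encoded by three kinds of data: this fiber sequence, the identities $i_j^L i_j\simeq \id$ and $i_j^R i_j\simeq\id$ (from \cref{lem:proj}), and the vanishing $i_1^L i_2\simeq 0$ (equivalently $i_2^R i_1\simeq 0$). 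The strategy is to push all of this data through the $2$-functors $\mc{M}\otimes_{\mc{A}}(-)$ and $\Hom_{\mc{A}}(-,\mc{N})$.

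\textbf{Tensor case.} The functor $\mc{M}\otimes_{\mc{A}}(-)$ is a $2$-functor on $\mc{A}$-modules. It is exact on mapping categories, since the relative tensor product is a colimit of a bar construction and therefore commutes with finite colimits of functors. It consequently preserves zero functors, compositions, fiber sequences of natural transformations, and, by \cref{prop:adjpreserve}, adjunctions. Applying it, $\mc{M}\otimes i_j$ is fully faithful, because its unit or counit is the image of an equivalence. Semiorthogonality holds because $(\mc{M}\otimes i_1^L)(\mc{M}\otimes i_2)\simeq 0$. Generation follows from the image fiber sequence $(\mc{M}\otimes i_2)(\mc{M}\otimes i_2^R)\to\id\to(\mc{M}\otimes i_1)(\mc{M}\otimes i_1^L)$, which decomposes every object. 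By \cref{prop:adjpreserve} the new adjoints are again $\mc{A}'$-linear if needed, so the induction proceeds. In the small case we pass through $\Ind$, using \cref{prop:stvsst}, and then restrict to compact objects. For the projection version we instead apply the $2$-functor to the data $p_j$ together with the adjoints $p_j^L$ and $p_j^R$. By \cref{prop:equivalentprojectionSOD} it suffices to know that the images of the $p_j^L$ form an inclusion SOD, and this is the case just treated.

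\textbf{Hom case.} The functor $\Hom_{\mc{A}}(-,\mc{N})$ is a contravariant $2$-functor, exact in functors, because the constructions are pointwise. It reverses composition order while preserving adjunctions, with left and right adjoints exchanged. The fiber sequence therefore becomes
\[ (-\circ i_1^L)(-\circ i_1)\to \id \to (-\circ i_2^R)(-\circ i_2) \]
on $\Hom_{\mc{A}}(\mc{C},\mc{N})$. We also have $(-\circ i_j)(-\circ i_j^{L})\simeq\id$ and $(-\circ i_2)(-\circ i_1^L)\simeq 0$. These say exactly that the functors $(-\circ i_j)$ are projections with sections $(-\circ i_j^L)$ and $(-\circ i_j^R)$, that each $(-\circ i_j)$ vanishes on the other component, and that the kernels jointly vanish. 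This is an SOD by projection in the sense of \cref{rem:howtoshowprojectionSOD}, with the same ordering as the original. The projection-to-inclusion case is dual: applying $\Hom_{\mc{A}}(-,\mc{N})$ to the $p_j$ gives functors $(-\circ p_j)$. These are fully faithful because $p_j i_j\simeq\id$ transports to them, and orthogonality and generation transfer in the same way.

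\textbf{Main obstacle.} The delicate point is the bookkeeping of order and of which adjoints are $\mc{A}$-linear. The definition only guarantees linearity of $i_1^L$ and $i_2^R$, so every transported identity must be expressed purely in terms of $i_1,i_2,i_1^L,i_2^R$. I would first check that the fiber sequence and the orthogonality relation use only these four functors, and then that contravariance correctly produces the projection presentation in the same order as the original SOD.
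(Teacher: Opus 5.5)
Your proof is correct, and its skeleton matches the paper's. Both use \cref{prop:adjpreserve} to see that $\mc{M}\otimes_{\mc{A}}(-)$ preserves (co)localizations while $\Hom_{\mc{A}}(-,\mc{N})$ swaps them. Both get orthogonality from transported vanishing composites such as $(-\circ i_j)(-\circ i_k)^L\simeq(-\circ i_k^R i_j)\simeq 0$.

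The real difference is generation in the tensor case. The paper argues objectwise: $\mc{M}\otimes_{\mc{A}}\mc{C}$ is compactly generated by pure tensors $m\otimes c$, and each $c$ sits in a triangle with ends in $\mc{D}_1,\mc{D}_2$. You instead push the functorial triangle $i_2i_2^R\to\id\to i_1i_1^L$, viewed as a cofiber sequence in $\Fun_{\mc{A}}(\mc{C},\mc{C})$, through a $2$-functor that is exact on mapping categories. This gives one uniform argument for all four cases, and for $\Hom_{\mc{A}}(\mc{N},-)$ as well. It is not more elementary, though. You justify the exactness of $F\mapsto\mc{M}\otimes_{\mc{A}}F$ only loosely, via the bar construction. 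It is most cleanly proved by testing on the generators $m\otimes c$, where $(\mc{M}\otimes_{\mc{A}}F)(m\otimes c)\simeq m\otimes F(c)$, which is exactly the input the paper uses directly.

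There are three small corrections to the bookkeeping you flagged.

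First, precomposition transports $2$-cells covariantly, so the Hom-side sequence is
\[(-\circ i_2^R)(-\circ i_2)\to\id\to(-\circ i_1^L)(-\circ i_1).\]
The sequence you display has its arrows pointing the wrong way, since there is no canonical map $i_1i_1^L\to\id$. In the corrected sequence, $(-\circ i_1^L)$ is right adjoint to $(-\circ i_1)$ and $(-\circ i_2^R)$ is left adjoint to $(-\circ i_2)$. This is what actually yields your (correct) conclusion that the order is preserved, with components the images of $(-\circ i_1^L)$ and $(-\circ i_2^R)$.

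Second, adjoints of $\mc{A}$-linear functors are in general only lax or oplax linear. Your sentence saying the further adjoints are linear `being adjoint to one' should be replaced by the paper's standing assumption that all adjoints in play are module functors.

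Third, full faithfulness of $(-\circ p_j)$ comes from $p_j i_j\simeq\id$ being the unit or counit of an adjunction; in the $2$-term case $p_1=i_1^L$ and $p_2=i_2^R$. A one-sided inverse alone does not give it.
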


\begin{proof}
    Note that by \cref{prop:adjpreserve}, the operations $\Hom_{\mc{A}}(-, \mc{N})$ and $\mc{M} \otimes_{\mc{A}} (-)$ preserve (co)localizations, i.e. adjoints with a one-sided inverse are preserved. So in particular, $\mc{M} \otimes_{\mc{A}} (-)$ preserves inclusions and projections (adjoints to inclusions) in the SODs above, and $\Hom_{\mc{A}}(-, \mc{N})$ switches inclusion and projection functors. So we just need to show orthogonality and generation in each case.

    \begin{enumerate}
        \item  \begin{enumerate}
            \item We use compact generation of tensor products (see \cite[I.1.7.4]{dag}). Then $\mc{M} \otimes_{\mc{A}} \mc{C}$ is compactly generated by $m \otimes c$ for $m, c$ compact. Each $c$ fits into an exact triangle with objects from $\mc{D}_1$ and $\mc{D}_2$. Thus $\mc{C}$ is compactly generated by the $\mc{M} \otimes_{\mc{A}} \mc{D}_i$ giving the result.
            \item We use \cref{rem:howtoshowprojectionSOD}. For $j<k$, we have $$(- \circ i_j)(- \circ i_k)^L = (- \circ i_k^Ri_j)=0,$$
            giving orthogonality.

            For generation, if we have a map $\mc{C} \to \mc{N}$ such that each $\mc{D}_i \to \mc{C} \to \mc{N}$ is zero, then clearly $\mc{C} \to \mc{N}$ is zero.
        \end{enumerate}
        \item \begin{enumerate}
            \item After taking adjoints the result is equivalent to part 1a.
            \item After taking adjoints the result is equivalent to part 1b. Note that for $j<k$, we have $$(- \circ p_k)^R(- \circ p_j) = (- \circ p_jp_k^L)=0,$$
            giving orthogonality in the correct direction.
        \end{enumerate}
    \end{enumerate}
\end{proof}
Of course, the same proof shows that $\Hom_{\mc{A}}(\mc{N}, -)$ preserves SODs by inclusion and projection.

We can now easily calculate the dual module to an $\mc{A}$-module given an SOD with dualizable components.

\begin{cor}\label{cor:dualexists}
An $\mc{A}$-module $\mc{M}$ that possesses an (infinitely admissible) SOD with $\mc{A}$-dualizable components is $\mc{A}$-dualizable.

In particular, suppose we have an $\mc{A}$-module SOD of $\mc{M}$ via inclusion functors $i_j \colon \mc{D}_j \to \mc{M}$. and that each $\mc{D}_i$ is dualizable as an $\mc{A}$-module (with dual then given by $\Hom_{\mc{A}}(\mc{D}_i, \mc{A})$). Then $\mc{M}$ has a dual $\Hom_{\mc{A}}(\mc{M}, \mc{A})$ with an SOD via projection functors $(- \circ i_j)$.
\end{cor}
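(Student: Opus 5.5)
We argue by induction on the length $n$ of the SOD, reducing to the two-term case. For $n=2$, write $\mc{M}=\langle \mc{D}_1,\mc{D}_2\rangle$ with inclusions $i_1,i_2$ that are $\mc{A}$-module functors with $\mc{A}$-linear adjoints (by the definition of an $\mc{A}$-module SOD). Set $\mc{D}_j^\vee:=\Hom_{\mc{A}}(\mc{D}_j,\mc{A})$ and $\mc{M}^\vee:=\Hom_{\mc{A}}(\mc{M},\mc{A})$. By the second part of \cref{thm:preserve} (with $\mc{N}=\mc{A}$), $\mc{M}^\vee$ has an SOD via the projection functors $(-\circ i_j)\colon \mc{M}^\vee\to \mc{D}_j^\vee$. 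This settles the structural claim about the dual; what remains is to show that $\mc{M}^\vee$ really is a dual of $\mc{M}$, i.e.\ that the natural comparison
\[\Phi_{\mc{N}}\colon \mc{M}^\vee\otimes_{\mc{A}}\mc{N}\to \Hom_{\mc{A}}(\mc{M},\mc{N})\]
is an equivalence for every left $\mc{A}$-module $\mc{N}$, naturally in $\mc{N}$. Together with an evaluation map, this characterizes dualizability.

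To check that $\Phi_{\mc{N}}$ is an equivalence, apply \cref{thm:preserve} to both sides. On the left, tensoring the projection SOD of $\mc{M}^\vee$ with $\mc{N}$ (third item of \cref{thm:preserve}) gives an SOD via projection with components $\mc{D}_j^\vee\otimes_{\mc{A}}\mc{N}$. On the right, $\Hom_{\mc{A}}(\mc{M},\mc{N})$ has an SOD via projection $(-\circ i_j)$ with components $\Hom_{\mc{A}}(\mc{D}_j,\mc{N})$. The map $\Phi$ is natural in the first variable, so it intertwines the projections, using the fact that $i_j^\vee=(-\circ i_j)$ is compatible with $\Phi$ for $\mc{D}_j$. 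On components it restricts to $\Phi^{\mc{D}_j}_{\mc{N}}$, which are equivalences because each $\mc{D}_j$ is dualizable. By the remark following \cref{prop:adjpreserve}, both tensoring and Hom preserve gluing functors. The two gluing functors $\mc{D}_1^\vee\otimes\mc{N}\to\mc{D}_2^\vee\otimes\mc{N}$ and $\Hom(\mc{D}_1,\mc{N})\to\Hom(\mc{D}_2,\mc{N})$ are both induced by the same $\mc{A}$-linear gluing functor $i_2^Li_1$ (or its adjoint) of $\mc{M}$, so they match under the component equivalences.

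A functor between two-term SODs that respects projections, is an equivalence on components, and intertwines the gluing functors is an equivalence. To see this, use the oplax limit description of a two-term SOD (the proposition preceding \cref{lem:length3SOD}), or argue directly: the functor is fully faithful by the fiber sequence of \cref{lem:sodexact}, and essentially surjective because it hits both components. Induction on $n$ then finishes the argument, writing the $n$-term SOD as $\langle \mc{D}_1,\langle \mc{D}_2,\dots,\mc{D}_n\rangle\rangle$ as the definition of an $\mc{A}$-module SOD allows. The induction hypothesis gives dualizability of the second piece together with the description of its dual.

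The main obstacle is the compatibility step: we must show that $\Phi$ genuinely commutes with the projection functors and the gluing data coherently, and not just objectwise. The plan for this is to construct $\Phi$ as a natural transformation of functors in $\mc{M}$, from $\Hom_{\mc{A}}(-,\mc{A})\otimes_{\mc{A}}\mc{N}$ to $\Hom_{\mc{A}}(-,\mc{N})$, on the $2$-category of $\mc{A}$-modules. Naturality with respect to the $\mc{A}$-linear functors $i_j$ and their adjoints then yields all the needed compatibilities at once.
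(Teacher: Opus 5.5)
Your proposal is correct and follows essentially the same route as the paper: decompose both $\Hom_{\mc{A}}(\mc{M},\mc{N})$ and $\Hom_{\mc{A}}(\mc{M},\mc{A})\otimes_{\mc{A}}\mc{N}$ via \cref{thm:preserve}, then identify them componentwise using the dualizability of the $\mc{D}_j$. You are more careful than the paper's chain of equivalences in two ways: you build a single global comparison map $\Phi_{\mc{N}}$ and check that it respects the gluing data (genuinely needed, since matching components alone does not give an equivalence), and you argue by induction on $n$ where the paper computes the $n$-term case directly.
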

\begin{proof}
    For an arbitrary $\mc{A}$-module $\mc{C}$, we can calculate \begin{align*}\Hom_{\mc{A}}(\mc{M}, \mc{C})\simeq& \Hom_{\mc{A}}(\langle \mc{D}_1, \dots, \mc{D}_n\rangle, \mc{C})\\
    \simeq& \langle \Hom_{\mc{A}}(\mc{D}_n, \mc{C}), \dots, \Hom_{\mc{A}}(\mc{D}_1, \mc{C})\rangle\\
    \simeq& \langle \Hom_{\mc{A}}(\mc{D}_n, \mc{A}) \otimes_{\mc{A}} \mc{C}, \dots, \Hom_{\mc{A}}(\mc{D}_1, \mc{A}) \otimes_{\mc{A}} \mc{C} \rangle\\
    \simeq& \langle \Hom_{\mc{A}}(\mc{D}_n, \mc{A}), \dots, \Hom_{\mc{A}}(\mc{D}_1, \mc{A})\rangle \otimes_{\mc{A}} \mc{C} \\
    \simeq& \Hom_{\mc{A}}(\mc{M}, \mc{A}) \otimes_{\mc{A}} \mc{C}.\end{align*}

    The relevant projection functors are obtained via \cref{thm:preserve}.
\end{proof}

In the $\Hom$ case, we can also give a more abstract proof of \cref{thm:preserve} using (op)lax (co)limits, which we explain one version of below.

\begin{prop}\label{thm:preserve2}
    Suppose we are given an $\mc{A}$-module SOD $\mc{C}=\langle \mc{D}_1, \dots, \mc{D}_n \rangle$, with inclusion functors $i_j \colon \mc{D}_j \to \mc{C}$.

    Suppose that we have gluing functors $F_{ij} \colon \mc{D}_i \to \mc{D}_j$ (recall that $F_{ij}=i_j^Li_i$), and take a left $\mc{A}$-module $\mc{N}$.
        \begin{enumerate}
            \item  The category $\Hom_{\mc{A}}(\mc{N}, \mc{C})$ has an SOD 
            $$\langle \Hom_{\mc{A}}(\mc{N}, \mc{D}_1), \dots, \Hom_{\mc{A}}(\mc{N}, \mc{D}_n) \rangle $$ with inclusion functors $(i_j \circ -) \colon \Hom_{\mc{A}}(\mc{N}, \mc{D}_j) \to \Hom_{\mc{A}}(\mc{N}, \mc{C})$ and gluing functors $(F_j \circ -)$.
            \item The category $\Hom_{\mc{A}}(\mc{C}, \mc{N})$ has an SOD $$\langle \Hom_{\mc{A}}(\mc{D}_n, \mc{N}), \dots, \Hom_{\mc{A}}(\mc{D}_1, \mc{N}) \rangle,$$ with inclusion functors $(- \circ i_j^L) \colon \Hom_{\mc{A}}(\mc{D}_j, \mc{N}) \to \Hom_{\mc{A}}(\mc{C}, \mc{N})$ and gluing functors $(- \circ F_{ij}) \colon \Hom_{\mc{A}}(\mc{D}_j, \mc{N}) \to \Hom_{\mc{A}}(\mc{D}_i, \mc{N})$. (Most naturally, it turns an inclusion SOD into a projection SOD (with projections $(- \circ i_j)$) with reversed order.)
        \end{enumerate}
\end{prop}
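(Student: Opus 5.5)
The plan is to reduce both parts to \cref{thm:preserve} and \cref{prop:adjpreserve}, and then identify the gluing functors by a pointwise calculation. The key observation is that post-composition and pre-composition with $\mc{A}$-module functors are $2$-functorial in the module variable. Adjunctions between module functors therefore go to adjunctions, and (co)localizations go to (co)localizations.

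For part (1), apply $\Hom_{\mc{A}}(\mc{N},-)$ to the inclusions $i_j$. By \cref{prop:adjpreserve}, the functors $(i_j\circ -)$ are fully faithful, since the unit $\id\to i_j^Ri_j$ or counit $i_j^Li_j\to\id$ is an equivalence and this property is preserved. Their adjoints are $(i_j^L\circ -)$ and $(i_j^R\circ -)$.

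Semiorthogonality holds because, for $j<k$, we have $(i_j^R\circ-)(i_k\circ -) = (i_j^Ri_k\circ -)=0$. For generation, take $\phi\in\Hom_{\mc{A}}(\mc{N},\mc{C})$ and apply the filtration of $\id_{\mc{C}}$ from \cref{lem:sodexact}, iterated across the $n$ terms. Since the exact sequences of functors $i_{\mc{B}}i_{\mc{B}}^R\to\id\to i_{\mc{A}}i_{\mc{A}}^L$ are sequences of $\mc{A}$-module functors, post-composing them with $\phi$ exhibits $\phi$ as an iterated extension of objects in the images of the $(i_j\circ-)$.

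The gluing functors are then $(i_k\circ-)^L(i_j\circ -)=(i_k^Li_j\circ -)=(F_{jk}\circ -)$. This is again computed pointwise via \cref{prop:adjpreserve}.

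For part (2), apply $\Hom_{\mc{A}}(-,\mc{N})$. Precomposition reverses the direction of adjunctions. If $i_j^L\dashv i_j$, then $(-\circ i_j)\dashv(-\circ i_j^L)$, so $(-\circ i_j^L)$ is fully faithful because $i_j^Li_j\simeq\id$ (\cref{lem:proj}). The second part of \cref{thm:preserve} already gives that the $(-\circ i_j)$ exhibit a projection SOD with the order reversed. Applying \cref{prop:equivalentprojectionSOD}, the left adjoints $(-\circ i_j)^L=(-\circ i_j^L)$ exhibit the inclusion SOD $\langle \Hom_{\mc{A}}(\mc{D}_n,\mc{N}),\dots,\Hom_{\mc{A}}(\mc{D}_1,\mc{N})\rangle$.

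The gluing functor from the $\mc{D}_j$-slot to the $\mc{D}_i$-slot (for $i<j$) is $(-\circ i_i^L)^L(-\circ i_j^L)$. Since $(-\circ i_i^L)$ has left adjoint $(-\circ i_i^{LL})$ only when that adjoint exists, I would instead use the adjunction $(-\circ i_i)\dashv(-\circ i_i^L)$ directly. The left adjoint of $(-\circ i_i^L)$ is then computed as $(-\circ i_i)$ up to the identification given by full faithfulness, which yields $(-\circ i_j^Li_i)=(-\circ F_{ij})$.

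The main obstacle I expect is this last identification of adjoints. Adjoints of precomposition functors swap sides, so one must check carefully that the relevant left adjoint of the inclusion $(-\circ i_i^L)$ is really $(-\circ i_i)$ and not $(-\circ i_i^{LL})$. I would settle this by writing out units and counits explicitly from those of $i_i^L\dashv i_i$, using infinite admissibility to guarantee that all the adjoints involved are $\mc{A}$-linear.
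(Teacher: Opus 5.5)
Your argument is correct in substance, but it takes a different route from the paper.

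\textbf{The paper's route.} It reduces to $n=2$ by induction and argues $2$-categorically. By \cite{lax}, a two-term SOD is both the oplax limit and the oplax colimit of its gluing functor. These are weighted (co)limits, and the forgetful functor from $\mc{A}$-modules preserves them (\cref{lem:computeafterforgetting}). Hence $\Hom_{\mc{A}}(\mc{N},-)$ carries the oplax-limit presentation of $\mc{C}$ to an oplax limit. Likewise $\Hom_{\mc{A}}(-,\mc{N})$ carries the oplax-colimit presentation to $\oplaxlim(\Hom_{\mc{A}}(\mc{D}_2,\mc{N}),\Hom_{\mc{A}}(\mc{D}_1,\mc{N}))$. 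The components, the reversal of order and the gluing data are then read off from the transported diagram. The only hand computation is $(-\circ i_i^L)^L(-\circ i_j^L)=(-\circ i_i)(-\circ i_j^L)=(-\circ F_{ij})$, which is also your last step.

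\textbf{Your route.} You check full faithfulness, semiorthogonality and generation directly. For part (2) you quote \cref{thm:preserve} and convert with \cref{prop:equivalentprojectionSOD}. This avoids the identification of SODs with oplax (co)limits and handles all $n$ components at once. The price is that \cref{thm:preserve2} becomes a corollary of \cref{thm:preserve}, rather than the independent, more conceptual proof the paper intends (which explains why $\Hom_{\mc{A}}(-,\mc{N})$ reverses the order). You also need $\mc{A}$-linearity of the adjoints, and of their units and counits, at every step. The standing infinite-admissibility assumption supplies this.

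\textbf{Slips to fix.} Several adjoint directions in your write-up are wrong, and since part (2) is essentially bookkeeping of adjoints, they matter.
\begin{enumerate}
\item In part (1), the vanishing you need for $j<k$ is $i_k^Ri_j\simeq 0$ (or $i_j^Li_k\simeq 0$). This gives $\Hom(i_k\circ\psi,\,i_j\circ\phi)\simeq\Hom(\psi,\,i_k^Ri_j\circ\phi)\simeq 0$. The composite $i_j^Ri_k$ you wrote is the right adjoint of the gluing functor $F_{jk}$, and it is nonzero in general.
\item In part (2), your correct adjunction $(-\circ i_j)\dashv(-\circ i_j^L)$ makes $(-\circ i_j^L)$ the \emph{right} adjoint of $(-\circ i_j)$; its left adjoint is $(-\circ i_j^R)$. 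So you need the third criterion of \cref{prop:equivalentprojectionSOD}, not the second.
\item The proof of \cref{thm:preserve} actually checks $(-\circ i_j)(-\circ i_k)^L\simeq 0$ for $j<k$. By \cref{rem:howtoshowprojectionSOD}, that is a projection SOD in the original order. It is the inclusion SOD by the right adjoints that comes out reversed, as the statement requires. If you started from a reversed projection SOD, \cref{prop:equivalentprojectionSOD} would un-reverse it.
\item The obstacle you anticipate does not exist. Precomposition reverses adjunctions, so $i_i^L\dashv i_i$ gives $(-\circ i_i)\dashv(-\circ i_i^L)$ directly. Meanwhile $i_i^{LL}\dashv i_i^L$ makes $(-\circ i_i^{LL})$ the right adjoint of $(-\circ i_i^L)$, never a competing left adjoint. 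No appeal to full faithfulness is needed.
\end{enumerate}
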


\begin{proof}
    By induction we only need to show this for $n=2$. Then note that by \cref{prop:adjpreserve}, the operations $\Hom_{\mc{A}}(\mc{N}, -)$ and $\Hom_{\mc{A}}(-, \mc{N})$ preserve (co)localizations, i.e. adjoints with a one-sided inverse are preserved. So the fact that $\mc{D}_1$ includes into $\mc{C}$ with a right adjoint and $\mc{D}_2$ includes into $\mc{C}$ with a left adjoint is preserved.

        \begin{enumerate}
            \item We view the SOD as a projection SOD and use the universal property of the oplax limit (using the fact that it is a weighted limit, see\cite[Definition 2.9]{ghn}): $$\Hom_{\mc{A}}(\mc{N}, \oplaxlim(\mc{D}_1, \mc{D}_2))\simeq \oplaxlim(\Hom_{\mc{A}}(\mc{N}, \mc{D}_1), \Hom_{\mc{A}}(\mc{N}, \mc{D}_2).$$

    The oplax limit cone with projection maps $p_1, p_2$ becomes an oplax limit cone with projection maps $(p_1 \circ -), (p_2 \circ -)$. Note that the inclusion maps are the right adjoints to these, which are $(i_1 \circ -)$ and $(i_2 \circ -)$.
    \item Let us check that the description makes sense. Indeed, we know that $F_{ij}=i_j^Li_i$ by definition, and we want to show that $(- \circ F_{ij})=(- \circ i_i^L)^L(- \circ i_j^L)$. The latter functor is $(- \circ i_i)(- \circ i_j^L)=(- \circ i_j^Li_i)=(- \circ F_{ij})$.
    
    Now we use the fact from \cite{lax} that we can write $\mc{C}=\langle \mc{D}_1, \mc{D}_2 \rangle$ as $\oplaxcolim(\mc{D}_1, \mc{D}_2)$, as well as the universal property of (op)lax colimits (using the fact that it is a weighted limit, see\cite[Definition 2.9]{ghn}).
    
    Then we have $$\Hom_{\mc{A}}(\oplaxcolim(\mc{D}_1, \mc{D}_2), \mc{N}) \simeq \oplaxlim(\Hom_{\mc{A}}(\mc{D}_2, \mc{N}), \Hom_{\mc{A}}(\mc{D}_1, \mc{N})).$$

    So this operation most naturally turns an (inclusion) SOD with maps $i_j$ into a projection SOD with maps $(- \circ i_j)$; the equivalent inclusion SOD has maps $(- \circ i_j)^R=(- \circ i_j^L)=(- \circ p_j)$.
        \end{enumerate}
\end{proof}

Note that the above proof uses the following lemma.

\begin{lem}\label{lem:computeafterforgetting}
    (Op)lax (co)limits in $\mc{A}-\Mod(\St)$ can be computed in $\St$ (i.e. they commute with the forgetful functor).
\end{lem}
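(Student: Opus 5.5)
The plan is to reduce the statement to two facts: the forgetful functor $\mc{A}\text{-}\Mod(\St)\to\St$ is itself a right adjoint (and a left adjoint), and its right adjoint is sufficiently $2$-functorial to transport the universal properties of (op)lax (co)limits. I would treat the large case first, where $\mc{A}\text{-}\Mod(\St)$ is the category of modules over an algebra object in the symmetric monoidal $(\infty,2)$-category $\St$. The small case then follows by passing through $\Ind$, using $\sti\simeq\Stc$.

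First I would recall, following \cite{ahm} and \cite{ghn}, that (op)lax limits and colimits of a diagram $F\colon I\to\mc{K}$ in an $(\infty,2)$-category $\mc{K}$ are weighted (co)limits. Concretely, they are the (co)limits weighted by the lax/oplax slice weights $I_{/-}$ or $I_{-/}$. For the conical parts this already shows the following: since the forgetful functor $U\colon \mc{A}\text{-}\Mod(\St)\to\St$ has both adjoints ($\mc{A}\otimes-$ and $\Hom(\mc{A},-)$), it preserves ordinary limits and colimits. Both adjoints are $\St$-enriched, because $\St$ is closed symmetric monoidal, so the adjunctions are $(\infty,2)$-adjunctions.

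The second step is to upgrade this from conical to weighted (co)limits. A weighted limit $\{W,F\}$ in a $\St$-tensored and cotensored $(\infty,2)$-category can be written as an end $\int_i F(i)^{W(i)}$ of cotensors by the $(\infty,1)$-categories $W(i)$. Dually, a weighted colimit is a coend $\int^i W(i)\otimes F(i)$. I would therefore check two things:
\begin{enumerate}
\item $U$ preserves cotensors and tensors by small $(\infty,1)$-categories $K$. Here the cotensor of an $\mc{A}$-module $\mc{C}$ by $K$ is $\Fun(K,\mc{C})$ with pointwise action, and the tensor is $\Fun(K^{\op},\mc{C})$ in the presentable setting (the colimit of the constant diagram), which is again $\Fun$ with pointwise action.
\item $U$ preserves the conical (co)limits computing ends and coends, which follows from the first step.
\end{enumerate}
Combining these, $U$ commutes with all weighted (co)limits, hence with all (op)lax (co)limits.

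The main obstacle I anticipate is the second step: verifying that the pointwise $\mc{A}$-action on $\Fun(K,\mc{C})$ really gives the tensor and cotensor in $\mc{A}\text{-}\Mod(\St)$. This requires producing the equivalence $\Hom_{\mc{A}}(\mc{D},\Fun(K,\mc{C}))\simeq\Fun(K,\Hom_{\mc{A}}(\mc{D},\mc{C}))$ naturally. I would get it by writing $\Hom_{\mc{A}}$ as the totalization of the bar-type cosimplicial object $\Fun^L(\mc{A}^{\otimes\bullet}\otimes\mc{D},\mc{C})$, and commuting $\Fun(K,-)$ with that limit and with $\Fun^L$. The colimit version follows because in $\St$ colimits are computed as limits along right adjoints, which identifies tensors with cotensors.

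For the small case, I would apply $\Ind$ together with \cref{prop:stvsst}, restricting to diagrams whose functors are quasi-proper. This restriction is what we use in the applications: the gluing functors all admit adjoints by infinite admissibility.
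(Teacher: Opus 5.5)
Your proposal is correct and follows the same route as the paper. The paper's entire proof is your first step: the forgetful functor is both a left and a right adjoint, and (op)lax (co)limits are weighted (co)limits \cite{ghn}, so they are preserved.

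Your extra verification via tensors, cotensors and (co)ends makes explicit something the paper's one-line proof leaves implicit. The adjunctions must be $(\infty,2)$-adjunctions, not just $(\infty,1)$-adjunctions, for weighted (co)limits to be preserved. Once that enrichment is granted, preservation follows directly by the Yoneda argument, so the end decomposition is optional. Your small-category paragraph is not needed either, since the lemma only concerns $\St$.
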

\begin{proof}
Since the forgetful functor is both a left and right adjoint, it preserves weighted (co)limits, of which (op)lax (co)limits are an example (see \cite[Definition 2.9]{ghn}).
\end{proof}

\subsection{Extending \cite{BZNP} to DM stacks} \label{subsect:bznpextend}

In this section, for completeness, we indicate how to extend \cite[Theorem 3.0.2]{BZNP} from relative algebraic spaces to relative DM stacks. We somewhat follow the strategy given in the footnote to \cite[Remark 3.0.4]{BZNP}.

We need to show the following (if $\mc{Z} \to S$ is a Noetherian separated $S$-algebraic space (without assuming further finiteness conditions), this is Proposition 3.0.11 in \cite{BZNP}).
\begin{prop}\label{prop:properdm} Suppose $p \colon \mc{Z} \to S =\Spec A$ is a separated (derived) DM stack of finite type and with finite diagonal, over a Noetherian base $S=\Spec A$ (both living over a field of characteristic $0$). Then the following conditions on $\mc{F} \in \QC(\mc{Z})$ are equivalent:
\begin{enumerate}
    \item $\mc{F} \in \Coh(\mc{Z})$ with support proper over $S$\footnote{Assuming that $\mc{F}$ is coherent, we define proper support to mean that $\pi_*(\mc{P} \otimes \mc{F})$ has proper support for all $\mc{P} \in \Perf(\mc{Z})$. The usual definition for $\text{Supp}(\mc{F})$ is the closed substack defined by $\mc{I}:= \ker(\mc{O}_{\mc{Z}} \to \mc{H}om(\mc{F}, \mc{F}))$, which also defines a closed subscheme $Z' \xhookrightarrow{} Z$ since $\mc{I}$ descends to the coarse space. Set theoretically, $Z'$ contains points $p$ such that $\mc{F}|_{\{p\} \times_Z \mc{Z}} \neq 0$. We note that our definition of having proper support agrees with the statement that $Z'$ is proper over $S$. };

    \item $\Gamma(\mc{Z}, \mc{P} \otimes \mc{F}) \in \Coh S$ for all $\mc{P} \in \Perf(\mc{Z})$.
\end{enumerate}
\end{prop}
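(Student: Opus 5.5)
The plan is to reduce to the algebraic space case of \cite[Proposition 3.0.11]{BZNP} through the coarse moduli space. Since $\mc{Z}$ is a separated DM stack of finite type over $S$ with finite diagonal, the Keel--Mori theorem gives a coarse moduli space $\pi \colon \mc{Z} \to Z$. Here $Z$ is a separated algebraic space of finite type over $S$ and $\pi$ is proper and quasi-finite. Because we are in characteristic zero, $\mc{Z}$ is tame, so $\pi_*$ is exact on quasicoherent sheaves. It preserves coherence, since $\pi$ is proper and cohomologically affine, and it commutes with base change along flat maps $Z' \to Z$. We also have $\Gamma(\mc{Z}, -) \simeq \Gamma(Z, \pi_*(-))$.

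For (1) $\Rightarrow$ (2), let $\mc{F}$ be coherent with proper support and take $\mc{P} \in \Perf(\mc{Z})$. Then $\mc{P} \otimes \mc{F}$ is coherent, so $\pi_*(\mc{P} \otimes \mc{F})$ is coherent on $Z$. By the footnoted definition it also has support proper over $S$. Applying the algebraic space case of \cite[Proposition 3.0.11]{BZNP} with the perfect complex $\mc{O}_Z$, we get $\Gamma(Z, \pi_*(\mc{P}\otimes\mc{F})) \in \Coh S$. This equals $\Gamma(\mc{Z}, \mc{P}\otimes \mc{F})$.

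For (2) $\Rightarrow$ (1), coherence is the main obstacle. First, for any $\mc{Q} \in \Perf(Z)$ the projection formula gives $\Gamma(Z, \mc{Q} \otimes \pi_*(\mc{P}\otimes\mc{F})) = \Gamma(\mc{Z}, \pi^*\mc{Q} \otimes \mc{P} \otimes \mc{F}) \in \Coh S$. The BZNP criterion on $Z$ then says $\pi_*(\mc{P}\otimes\mc{F})$ is coherent with proper support, for every $\mc{P} \in \Perf(\mc{Z})$.

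It remains to deduce that $\mc{F}$ itself is coherent. The question is étale local on $Z$, and $\pi_*$ commutes with étale base change. So we may assume $\mc{Z} = [U/G]$, with $U$ affine, $G$ a finite group, and $Z = U/\!/G$ affine; this is the local structure theorem for tame DM stacks with finite inertia. Take $\mc{P} = \mc{O}_U \otimes V$, the pushforward of $\mc{O}_U$ along $U \to [U/G]$, which is perfect because the map is finite étale. Then $\pi_*(\mc{P} \otimes \mc{F})$ is the module $\Gamma(U, \mc{F}|_U)$ viewed over $\mathcal{O}(U)^G$, and it is coherent. Since $\mathcal{O}(U)$ is finite over $\mathcal{O}(U)^G$, the cohomology of $\mc{F}|_U$ is finitely generated and bounded over $\mathcal{O}(U)$. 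Hence $\mc{F}|_U$ is coherent, so $\mc{F}$ is coherent.

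For the support, the set-theoretic support of $\mc{F}$ maps onto the support $Z'$ described in the footnote. Taking $\mc{P} = \mc{O}_U \otimes V$ again, the support of $\pi_*(\mc{P}\otimes\mc{F})$ is exactly the image of $\operatorname{Supp}\mc{F}$, which we have shown is proper over $S$. The remaining bookkeeping is the derived (non-classical) version. There I would pass to the classical truncation using the t-structure, since coherence and support are detected on homotopy sheaves. I would also check that the above perfect objects glue to global ones, or else argue locally on $Z$ as the BZNP proof already does for algebraic spaces.
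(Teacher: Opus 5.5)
Your overall route (coarse space $\pi\colon\mathcal{Z}\to Z$, projection formula, the algebraic-space criterion of \cite{BZNP} on $Z$, then a quotient chart $[U/G]$ and the object $u_*\mathcal{O}_U$) matches the paper's. However, the coherence step in (2)$\Rightarrow$(1) has a genuine gap.

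Hypothesis (2), and hence the coherence of $\pi_*(\mathcal{P}\otimes\mathcal{F})$ that you extract from it, concerns only $\mathcal{P}\in\Perf(\mathcal{Z})$. You then pass to an affine \'etale chart $g\colon Z'\to Z$ with $\mathcal{Z}':=\mathcal{Z}\times_Z Z'\simeq[U/G]$. There the complex $u_*\mathcal{O}_U$, with $u\colon U\to[U/G]$, is perfect on $\mathcal{Z}'$. In general it is not the restriction of any object of $\Perf(\mathcal{Z})$. So nothing you have proved says that $\Gamma(U,\mathcal{F}|_U)\simeq\pi'_*(u_*\mathcal{O}_U\otimes\mathcal{F}|_{\mathcal{Z}'})$ is coherent over $\mathcal{O}(U)^G$. ``The question is \'etale local on $Z$'' lets you check the \emph{conclusion} locally, but it does not transport the \emph{hypothesis} to the chart.

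Neither remedy you mention closes this. The local objects $u_*\mathcal{O}_U$ do not glue to a global perfect complex in general. ``Arguing locally'' needs exactly the input that is missing here: a global perfect object that still generates after restriction to a chart.

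The fix, which is what the paper does, is to take a single perfect compact generator $\mathcal{R}$ of $\QC(\mathcal{Z})$; it exists by \cite[Theorem A]{hallrydh}.
\begin{enumerate}
\item The map $f\colon\mathcal{Z}'\to\mathcal{Z}$ is affine, so $f_*$ is conservative. Hence $f^*\mathcal{R}$ is still a compact generator of $\QC(\mathcal{Z}')$.
\item By Neeman's theorem \cite{Neeman}, $u_*\mathcal{O}_U$ lies in the thick subcategory generated by $f^*\mathcal{R}$.
\item Therefore $\pi'_*(u_*\mathcal{O}_U\otimes\mathcal{F}|_{\mathcal{Z}'})$ lies in the thick subcategory generated by $\pi'_*f^*(\mathcal{R}\otimes\mathcal{F})\simeq g^*\pi_*(\mathcal{R}\otimes\mathcal{F})$, using flat base change.
\item This last object is coherent by your first step, and $\Coh(Z')$ is thick.
\end{enumerate}
With this in place, your finiteness argument over $\mathcal{O}(U)^G$ goes through.

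Separately, with the footnote's definition, properness of the support is already given by applying the criterion on $Z$ to $\pi_*(\mathcal{P}\otimes\mathcal{F})$ for every $\mathcal{P}\in\Perf(\mathcal{Z})$. Your extra support argument is therefore unnecessary, and as written it relies on the same non-global $\mathcal{P}$.
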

\begin{proof}
    We will use the map $\pi \colon \mc{Z} \to Z$ of $\mc{Z}$ to its coarse space to reduce to Proposition 3.0.11 of \cite{BZNP}. Let $q \colon Z\to S$ so $p=q \circ \pi$. First note that (1) implies (2) since objects in $\Perf \mc{Z}$ have finite Tor-amplitude and proper pushforward preserves coherent objects (note that $\pi_*$ is locally taking invariants, so it preserves coherence).

    We just need to show that (2) implies (1). Assuming (2) holds for a quasicoherent sheaf $\mc{F} \in \QC(\mc{Z})$, we know it holds for $\mc{P}\otimes \mc{F}$ for $\mc{P} \in \Perf(\mc{Z})$. Then by the projection formula, (2) holds for $\pi_*(\mc{P} \otimes \mc{F})$ with respect to $q \colon Z \to S$, meaning that $\pi_*(\mc{P} \otimes \mc{F}) \in \Coh(Z)$ with support proper over $S$ by \cite[Proposition 3.0.11]{BZNP}.

    For coherence, we use the existence of a compact generator\footnote{We thank GPT 5.6-Sol for suggesting this argument.} $\mc{R}$ of $\QC(\mc{Z})$ by \cite[Theorem A]{hallrydh} (generalizing \cite[Lemma 3.0.8]{BZNP} to DM stacks). This is an \'etale-local and classical statement (and $\mc{R}$ is still a generator on an \'etale cover), so (e.g. by \cite[Lemma 2.2.3]{av}; see also \cite{keelmori}) we can assume the map $\pi$ is $(\Spec A)/G \to \Spec A^G$ for a finite flat group scheme $G$ and ordinary ring $A$. Let $M$ be the $G$-equivariant $A$-module corresponding to $\mc{F}$. Then since $\mc{R}$ is a (weak) generator of $\QC(\Spec A/G)$, it is also a classical generator of the subcategory of perfect=compact objects as $\QC(\Spec A/G)$ is compactly generated (see \cite{Neeman}, especially Lemma 2.2). Thus we can build the perfect module $A \otimes k[G]$ out of $\mc{R}$ (via finite direct sums, shifts, direct summands, and cones), meaning that $((A \otimes k[G])^\vee \otimes M)^G \simeq M$ is finitely generated over $A^G$. Thus $M$ is finitely generated over $A$, so $\mc{F}$ is coherent.\end{proof}

    Note that we only use the finite diagonal and locally finite type assumptions to use the coarse moduli space description.

We then have the following result.

\begin{cor}\label{cor:dmhom}
Suppose that $S$ is a perfect stack in characteristic $0$; that $p_X \colon X \to S$ is a quasi-compact
and separated $S$-relative DM stack locally of finite type with finite diagonal; and that $Y$ is a locally finite type $S$-stack. The $*$-integral transform construction provides an equivalence
$$\Coh_{\text{prop}/Y}(X \times_S Y) \xrightarrow{\sim} \Hom^{\text{ex}}_{\Perf S}(\Perf X, \Coh Y).$$
\end{cor}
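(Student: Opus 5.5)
We follow the proof of \cite[Theorem 3.0.2]{BZNP} essentially verbatim. The only place it uses that $p_X$ is a relative algebraic space is \cite[Proposition 3.0.11]{BZNP}, and we replace that input with \cref{prop:properdm}. The construction goes as follows. The $*$-integral transform sends a kernel $\mc{K} \in \Coh_{\text{prop}/Y}(X\times_S Y)$ to the functor $\mc{P} \mapsto \pi_{Y,*}(\pi_X^*\mc{P}\otimes \mc{K})$. This functor is $\Perf S$-linear by base change and the projection formula. It lands in $\Coh Y$ because $\pi_X^*\mc{P}$ has finite Tor-amplitude and $\mc{K}$ has support proper over $Y$; for the DM case this uses that $\pi_*$ to the coarse space is locally taking invariants. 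First we check that this assignment is well-defined and fully faithful. For fully faithfulness we pass to ind-completions and use \cite{BZFN}. Since $X$ is perfect (quasi-compact DM with finite diagonal in characteristic $0$, via \cite{hallrydh}), we have $\Fun^L_{\QC S}(\QC X,\QC Y)\simeq \QC(X\times_S Y)$, and under this equivalence the integral transform is the restriction to compact sources.

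Next we reduce essential surjectivity to the case $Y=\Spec B$ affine of finite type over an affine base. The reduction works as in \cite{BZNP}: both sides satisfy descent in $Y$, since $\Coh$ and $\Hom_{\Perf S}(\Perf X,-)$ commute with the limits defining $Y$ from an atlas, and both sides satisfy descent in $S$ via base change $\Perf(X)\otimes_{\Perf S}\Perf(S')\simeq \Perf(X\times_S S')$. Note that $X\times_S Y\to Y$ is again a separated DM stack of finite type with finite diagonal, so after base change we may assume $S=Y=\Spec B$ Noetherian and $X\to \Spec B$ is such a DM stack.

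In this affine situation, take an exact functor $F\colon \Perf X\to \Coh(\Spec B)$. Its continuous extension $\QC X\to \QC(\Spec B)$ is $\QC(B)$-linear. By duality of $\QC X$ over $\QC S$, this extension is given by a kernel $\mc{K}\in \QC(X)$, with $F(\mc{P})=\Gamma(X,\mc{P}\otimes\mc{K})$ (up to the standard dualization of $\mc{P}$). Because $F$ lands in $\Coh B$, the object $\mc{K}$ satisfies condition (2) of \cref{prop:properdm}. That proposition then gives that $\mc{K}$ is coherent with support proper over $B$, so $F$ is in the image. This closes the argument.

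The main obstacle is the descent step. We must check that coherence with proper support of the kernel glues along an étale or smooth atlas of $Y$, and that the finite diagonal and separatedness hypotheses are stable under the base changes used. We would try to handle this exactly as in \cite[Section 3]{BZNP}, pointing out that their argument only uses properties that are local on $Y$ and $S$. The DM-specific content is then entirely isolated in \cref{prop:properdm}.
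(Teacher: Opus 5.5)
Your proposal is the paper's argument: the paper's proof is the single sentence that the proof of \cite[Theorem 3.0.2]{BZNP} goes through verbatim with \cref{prop:properdm} substituted for \cite[Proposition 3.0.11]{BZNP}, and your sketch is a faithful unpacking of exactly that. One small slip: $X$ is only DM relative to $S$ (e.g.\ $X=\wtag$ over $S=\ag$ in the application), so its perfectness should either come from a relative statement or be assumed outright, as the paper does when it invokes this result at the start of \cref{sect:categorical}, rather than from $X$ being an absolute quasi-compact DM stack.
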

\begin{proof}
    The proof is identical to that of \cite[Theorem 3.0.2]{BZNP}, except we use \cref{prop:properdm} in place of \cite[Proposition 3.0.11]{BZNP}.
\end{proof}

\section{Indcoherent convolution categories and singular support}

In this section we use the shriek pullback (i.e. $!$-pullback) for categories of coherent sheaves and convolution categories. As our $\Am$ (with $*$-convolution) is equivalent to the analogous category with $!$-convolution (via a relative Serre duality), the results in this section still apply to $\Am$. 

\subsection{Singular support and tensor prodcuts}\label{subsect:singsupp}

For a quasismooth QCA stack $X$, a coherent sheaf has a singular support (defined in \cite{singsupp}) in $\Sing(X)=T^{*-1}X$. Given a map $p \colon X \to Y$, we have a correspondence

\[\begin{tikzcd}
	& {T^{*-1}Y \times_Y X} & \\
	{T^{*-1}X} && {T^{*-1}Y}
	\arrow["{dp^*}", from=1-2, to=2-1]
	\arrow["{\tilde{p}}"', from=1-2, to=2-3]
\end{tikzcd}\]

with maps of sets $p^!=dp^* \circ \tilde{p}^{-1} \colon T^{*-1}Y \to T^{*-1}X$ and $p_*=\tilde{p} \circ (dp^*)^{-1} \colon T^{*-1}X \to T^{*-1}Y$. Let $\supp$ denote the singular support of a coherent sheaf. Then we have the following.

\begin{prop}\textup{\cite[Proposition 7.1.3 and Lemma 8.3.2]{singsupp}}
    We have $\supp p^!(\mc{F}) \subset p^! (\supp \mc{F})$. For $p$ schematic we have $\supp p_*(\mc{F}) \subset p_*(\supp \mc{F})$.
\end{prop}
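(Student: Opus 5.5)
This is a cited result, \cite[Proposition 7.1.3 and Lemma 8.3.2]{singsupp}, so the plan below reconstructs the standard argument rather than something new. Singular support of $\mc{F}$ on a quasismooth stack is defined through the action of Hochschild cochains. Concretely, $\mathrm{Sym}(H^1(T_X))$, i.e. functions on $T^{*-1}X$, acts on $\Hom(\mc{F},\mc{F})$. Then $\supp \mc{F}$ is the support of $\mathrm{End}(\mc{F})$ as a graded module over this ring. The proof is local in the smooth topology, since both singular support and the sets $p^!(-)$, $p_*(-)$ are compatible with smooth base change. So first I reduce to affine quasismooth $X$ and $Y$. Each is presented as a derived zero locus inside a smooth scheme, $X = V \times_{\A^m} 0$ and similarly for $Y$. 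The key input is functoriality of the action: the pullback $p^!$ (respectively the pushforward $p_*$) intertwines the action of $\OO(T^{*-1}Y)$ on $\mc{F}$ with its action on $p^!\mc{F}$ through $dp^* \colon \OO(T^{*-1}Y\times_Y X)\leftarrow \ldots$. This follows from naturality of Hochschild cochains along the map of Koszul presentations.

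For pullback, the argument proceeds in three steps. First, write the induced map on cotangent complexes and the correspondence $T^{*-1}X \xleftarrow{dp^*} T^{*-1}Y\times_Y X \xrightarrow{\tilde p} T^{*-1}Y$. Second, show that the action of $\OO(T^{*-1}Y)$ on $\mathrm{End}(p^!\mc{F})$ factors through $\OO(T^{*-1}X)$ via $dp^*$. Third, observe that $\mathrm{End}(p^!\mc{F})$ is (locally) the pullback along $\tilde{p}$ of the module $\mathrm{End}(\mc{F})$ over $T^{*-1}Y$. Its support over $T^{*-1}Y\times_Y X$ is therefore contained in $\tilde{p}^{-1}(\supp\mc{F})$. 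Pushing that support forward along $dp^*$ lands in $dp^*\tilde{p}^{-1}(\supp \mc{F}) = p^!(\supp\mc{F})$.

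The pushforward case is dual. Here $p$ schematic, hence representable, keeps the situation controlled locally. The action on $p_*\mc{F}$ of elements coming from $T^{*-1}Y$ is the pushforward of their action on $\mc{F}$ pulled back via $dp^*$. If a point $\xi\in T^{*-1}Y$ is not in $\tilde{p}((dp^*)^{-1}\supp\mc{F})$, then a suitable homogeneous function acts nilpotently in a neighbourhood of $\xi$. One then concludes by a Koszul/local cohomology argument: localize at $\xi$ and show that $p_*\mc{F}$ becomes zero, which places $\xi$ outside $\supp p_*\mc{F}$.

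The main obstacle I expect is in the pushforward case. I need the conic closedness of $\tilde{p}((dp^*)^{-1}\supp\mc{F})$, which requires a properness-type statement for $\tilde p$ restricted to $(dp^*)^{-1}$ of a conic closed set. Without that, the image need not be closed, and the localization step fails. I would try first to handle this by working with the graded (conic) structure and the finiteness of $\mathrm{End}(\mc{F})$ as a module over $\OO(T^{*-1}X)$. As a fallback, I would accept the statement only up to closure, which is all that is needed for the containment as written.
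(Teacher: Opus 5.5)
The paper gives no argument and simply cites \cite{singsupp}. Your overall shape (compatibility of the $\mathrm{Sym}(H^1T)$-actions, then commutative algebra of supports) is the right one, but the step that carries the content is mis-stated, and the support bookkeeping has errors. There is no ring map $\OO(T^{*-1}Y)\to\OO(T^{*-1}X)$ for the action on $\mathrm{End}(p^!\mc{F})$ to ``factor through''. Likewise, in the pushforward paragraph, functions on $T^{*-1}Y$ cannot be ``pulled back via $dp^*$''. The two rings only meet in $B:=\OO(T^{*-1}Y\times_Y X)$, via $\tilde p^\#$ and $(dp^*)^\#$. The lemma you need is that $B$ acts on the \emph{functors} $p^!$ and $p_*$, by natural transformations of degree $2k$ (morally the relative Hochschild cochains $\Gamma(X,\mathrm{Sym}(p^*T_Y[-1]))$). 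It must satisfy two compatibilities on $p^!\mc{F}$: $\tilde p^\#(a)$ acts by $p^!(a_{\mc{F}})$, and $(dp^*)^\#(b)$ acts by the intrinsic $b_{p^!\mc{F}}$. Dually, on $p_*\mc{G}$, $\tilde p^\#(a)$ acts intrinsically and $(dp^*)^\#(b)$ acts by $p_*(b_{\mc{G}})$. Saying ``naturality of Hochschild cochains along Koszul presentations'' asserts this lemma rather than proving it.

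Your Step 3 should be dropped. Read with the classical cone $T^{*-1}Y\times_Y X$ on which supports live, it is false. Take $p\colon X=\pt\times_{\A^1}\pt\to Y=\pt$ and $\mc{F}=k$. The cone is a point, so pulling back $\mathrm{End}(k)=k$ gives $k$, whereas $p^!k\simeq\OO_X[-1]$ has $\mathrm{End}=\OO(X)\simeq k\oplus k[1]$. Given the lemma, no identification is needed: $a_{\mc{F}}=0$ forces $\tilde p^\#(a)$ to kill $\mathrm{End}(p^!\mc{F})$, so its $B$-support lies in $\tilde p^{-1}(\supp\mc{F})$.

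Even then, ``pushing forward along $dp^*$'' only bounds the support by the closure of $dp^*\tilde p^{-1}(\supp\mc{F})$. Fibrewise-linear images of conic closed sets need not be closed: project $\{ac=b^2\}\subset\A^3$ to the $(a,b)$-plane. To get the set-theoretic image you need $\mathrm{End}(p^!\mc{F})$ to be finite over $\OO(T^{*-1}X)$. This holds by the finiteness theorem of \cite{singsupp} when $p^!\mc{F}$ is coherent. It makes $B/\mathrm{Ann}_B$ integral over $\OO(T^{*-1}X)$, hence $dp^*$ closed on the support.

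Finally, your pushforward fallback runs backwards: containment in the closure is \emph{weaker} than the containment as written. For proper $p$, which is the only case the paper uses (in \cref{prop:lambdasuperset}), $\tilde p$ is proper. So $\tilde p((dp^*)^{-1}\supp\mc{F})$ is already closed and conic, and your nilpotence argument works once the $B$-action on $p_*$ is in hand. For merely schematic $p$, $p_*\mc{F}$ is only ind-coherent. There you must either allow the closure or use the pointwise (non-closed) support of ind-coherent objects.
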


Let $p \colon X \to Y$ be a proper surjective schematic morphism of smooth QCA stacks over a field $k$ of characteristic zero.

Let $U$ and $V$ be smooth, with maps $f \colon U \to Y, g \colon V \to Y$.

\begin{defn}
    For $\Lambda_{12} \subset \Sing(U \times_{Y} X)$ and $\Lambda_{23} \subset \Sing(X \times_Y V)$, we have a convolution product $\Lambda_{12} * \Lambda_{23}$ defined as $\pi_{13,*}(\pi_{12}^!\Lambda_{12} \cap \pi_{23}^!\Lambda_{23})$.
\end{defn}

\begin{prop}\label{prop:lambdasuperset}
    For $U=X$ and $\Lambda_{12}=\Sing(X \times_Y X)$, we have $\Lambda_{12} * \Lambda_{23} \supset \Lambda_{23}$.
\end{prop}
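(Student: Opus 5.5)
The plan is to produce the required covectors by hand at a diagonal point of the triple fiber product $X \times_Y X \times_Y V$. The starting observation is that all three stacks $X \times_Y X$, $X \times_Y V$ and $X \times_Y X \times_Y V$ are fiber products of smooth stacks over the smooth stack $Y$. They are therefore quasismooth, cut out by the derived intersection of smooth maps. Their $(-1)$-shifted cotangent complexes are computed from $T^*Y$.

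\textbf{Step 1: describe $\Sing$ of the fiber products.} Concretely, for the classical points I will show the following.
\begin{itemize}
\item $\Sing(X \times_Y V)$ at a point $(x,v)$ over $y$ is $\{a \in T^*_yY : dp_x^*a = 0,\ dg_v^*a = 0\}$.
\item Analogously, $\Sing(X\times_Y X)$ at $(x,x')$ is $\{a \in T^*_yY : dp_x^*a = dp_{x'}^*a=0\}$.
\item $\Sing(X\times_Y X\times_Y V)$ at $(x,x',v)$ is $\{a : dp_x^*a = dp_{x'}^*a = dg_v^*a = 0\}$.
\end{itemize}
This comes from the standard computation of $H^{1}$ of the tangent complex (equivalently $H^{-1}$ of the cotangent complex) of a fiber product of smooth stacks, as the kernel of $T^*Y \to T^*(\text{factors})$.

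\textbf{Step 2: identify the correspondence maps.} Next I will check what the maps $\pi_{12}^!,\pi_{23}^!,\pi_{13,*}$ from the correspondence preceding the statement do in these terms. Each projection $\pi_{ij}$ is itself a base change of a smooth map over $Y$. So $d\pi_{ij}^*$ on $(-1)$-shifted covectors is just the inclusion of the corresponding kernel subspaces of $T^*_yY$, and $\tilde\pi_{ij}$ is the base-point map. Hence $\pi_{23}^!$ sends $(x,v,a)$ to all points $(x',x,v,a)$ with $a$ additionally killed by $dp_{x'}^*$. Also $\pi_{13,*}$ sends $(x,x',v,a)$ to $(x,v,a)$, keeping the same $a$.

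\textbf{Step 3: the diagonal argument.} Given $\xi=(x,v,a) \in \Lambda_{23}$, consider the point $(x,x,v)$ and the same covector $a$. Since $dp_x^*a=0$ and $dg_v^*a=0$, the following hold.
\begin{itemize}
\item $(x,x,v,a)\in \pi_{23}^!\Lambda_{23}$.
\item $(x,x,a) \in \Sing(X\times_Y X)=\Lambda_{12}$, so $(x,x,v,a)\in\pi_{12}^!\Lambda_{12}$.
\item Applying $\pi_{13,*}$ gives back $(x,v,a)=\xi$.
\end{itemize}
Thus $\xi \in \Lambda_{12}*\Lambda_{23}$, which is the claimed inclusion. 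Surjectivity or properness of $p$ is not needed for this direction; it only ensures the relevant stacks fit the hypotheses of the earlier proposition.

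\textbf{Main obstacle.} I expect the main obstacle to be Step 2, namely verifying carefully that $d\pi_{ij}^*$ on $H^{-1}$ of the cotangent complexes really is this identity-on-$a$ inclusion, with matching indexing of the three copies. I would first do this by writing each fiber product as a pullback along the diagonal $Y \to Y\times Y$ (or $Y\to Y^3$) and using functoriality of $H^{-1}(\mathbb{L})$ for base changes of smooth maps. I only need this on classical points, as noted in the proof of \cref{thm:recover}.
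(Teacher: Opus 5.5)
You follow the paper's overall strategy: work on classical points and use the diagonal point $(x,x,v)$. The paper's proof ends with the same observation, that the $\Lambda_{23}$ condition can be met by taking $x=x'$. However, the third bullet of your Step 1 is wrong, and Steps 2--3 rest on it.

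The triple fiber product $B=X\times_YX\times_YV$ is cut out by two independent sets of equations, $p(x)=p(x')$ and $p(x')=g(v)$, and each contributes its own $(-1)$-shifted covector. Writing $B=(X\times X\times V)\times_{Y^3}Y$ gives
\[
\Sing(B)_{(x,x',v)}=\{(a_1,a_2)\in (T^*_yY)^2 : dp_x^*a_1=0,\ dp_{x'}^*a_1=dp_{x'}^*a_2,\ dg_v^*a_2=0\}.
\]
This is all of $(T^*_yY)^2$ when the three differentials vanish, whereas your formula gives a single copy of $T^*_yY$. In these coordinates $d\pi_{12}^*a=(a,0)$, $d\pi_{23}^*a=(0,a)$ and $d\pi_{13}^*a=(a,a)$.

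Your Step 2 already shows the strain. In your model, $d\pi_{12}^*$ would have to map $\{a: dp_x^*a=dp_{x'}^*a=0\}$ into the smaller set where also $dg_v^*a=0$, so it cannot be an inclusion. Also, $\pi_{12}$ is a base change of $g$ and $\pi_{13}$ a base change of $p$, and neither of these need be smooth.

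Consequently the key claim of Step 3 fails. Over $(x,x,v)$ the covectors $d\pi_{12}^*a$ and $d\pi_{23}^*a$ are different, and the one that $\pi_{13,*}$ sends to $(x,v,a)$ is their sum $(a,a)$. In fact $\pi_{12}^!\Lambda_{12}\cap\pi_{23}^!\Lambda_{23}$ is contained in the zero section, so no argument through a literal intersection can work.

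What the paper's proof actually computes is $\pi_{13,*}\Delta^!(\Lambda_{12}\boxtimes\Lambda_{23})$ for $\Delta\colon B\to(X\times_YX)\times(X\times_YV)$. This is also what controls the singular support of $\pi_{13,*}(\pi_{12}^!\mathcal F\otimes^!\pi_{23}^!\mathcal G)$. It consists of pairs $(a_1,a_2)$ with $a_1\in\Lambda_{12}|_{(x,x')}$ and $a_2\in\Lambda_{23}|_{(x',v)}$, i.e. the fiberwise sum of the two pullbacks rather than their intersection. Taking the preimage under $d\pi_{13}^*$ forces $a_1=a_2=a$. Then your diagonal choice finishes the argument: $(x,x,a)\in\Sing(X\times_YX)=\Lambda_{12}$ since $dp_x^*a=0$, and $(x,v,a)\in\Lambda_{23}$ by hypothesis. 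With Step 1 corrected and the intersection replaced by $\Delta^!$ of the exterior product, your argument becomes the paper's.
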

\begin{proof}
    We check the assertion on geometric $k$-points, following the strategy of \cite[Section 3.3.1]{spectralincarnation}. Let $A=X \times_Y X \times X \times_Y X, B=X \times_Y X 
    \times_Y X, C=X \times_Y V$. Then $\Lambda_{12} \boxtimes \Lambda_{23}$ is a subset of $\Sing(A)$; we want to determine the image after we pull this back to $\Sing(B)$ and then pushforward to $\Sing(C)$.

    A geometric point of $A$ is a tuple $(x_1, x_2, x_3 \in X; v \in V; y_1, y_2 \in Y)$ such that $p(x_1)=p(x_2)=y_1$ and $p(x_3)=g(v)=y_2$. Then over this point, the fiber of $\Sing(A)$ is $\{a_1 \in \Omega_Y|_{y_1}, a_2 \in \Omega_Y|_{y_2} :  dp^*_{x_1}a_1=dp^*_{x_2}a_1=0, dp^*_{x_3}a_2=dg^*_{v}a_2=0\}.$ The subset $\Lambda_{12} \boxtimes \Lambda_{23}$ comes from an additional condition for $a_2$ imposed by $\Lambda_{23}$.

    We have a correspondence diagram 
\[\begin{tikzcd}
	& {T^{*-1}A \times_A B} && {T^{*-1}C\times_C B} & \\
	{T^{*-1}A} & {} & {T^{*-1}B} && {T^{*-1}C.}
	\arrow["{\tilde{\Delta}}", from=1-2, to=2-1]
	\arrow["{d\Delta^*}"', from=1-2, to=2-3]
	\arrow["{dp_{13}^*}", from=1-4, to=2-3]
	\arrow["{\tilde{p_{13}}}"', from=1-4, to=2-5]
\end{tikzcd}\]

We first find the pullback of $\Lambda_{12} \boxtimes \Lambda_{23}$ to $T^{*-1}A \times_A B$. Geometric points of the latter are the same as for $T^{*-1}$ except with the extra condition that $x_2=x_3$, and the image of the pullback $\tilde{\Delta}^{-1}$ from $T^{*-1}A$ to $T^{*-1}A \times_A B$ is just points satisfying this condition.

To find geometric points of $T^{*-1}B$, we note that the base $B$ is the same as $A$ but with the $x_2=x_3$ condition (and then $y_1=y_2$; let this be $y$). The fiber of $T^{*-1}$ over a given point is now $\{a_1, a_2 \in \Omega_Y|_y :  dp^*_{x_1}a_1=0; dp^*_{x_2}a_1=dp^*_{x_3}a_2; dg^*_{v}a_2=0\}.$ So the composition $\Delta^!(\Lambda_{12} \boxtimes \Lambda_{23})=d\Delta^*(\tilde{\Delta}^{-1}(\Lambda_{12} \boxtimes \Lambda_{23}))$ yields points with $dp^*_{x_2}a_1=dp^*_{x_3}a_2=0$ (along with the condition on $\Lambda_{23}$, which is a condition on $x_2=x_3, v, y, a_2$).

Now we tackle the right half of the diagram. A geometric point of $C$ is a tuple $(x \in X, v \in V, y \in Y)$ such that $p(x)=g(v)=y$. The geometric points of the fiber of $T^{*-1}$ over a given point form the set $\{a \in \Omega_Y|_y : dp^*_xa=dg^*_va=0.\}$. The only difference with $T^{*-1}C \times_C B$ is that we have an extra $x' \in X$ with $p(x')=y$.

The pullback $(dp_{13}^*)^{-1}$ from $T^{*-1}B$ to $T^{*-1}C \times_C B$ of $\Delta^!(\Lambda_{12} \boxtimes \Lambda_{23})$ yields points with $(x, x', v, y, a)$ such that $a=a_1=a_2$, along with the $\Lambda_{23}$ condition (which is now a condition on $(x', y, v, a)$). Finally the pushforward $\tilde{p_{13}}$ just yields points $(x, v, y, a)$. Note that the $\Lambda_{23}$ condition now applies only to $v, y, a$, which is less strict than adding a condition on $x$ (because the $\Lambda_{23}$ condition on $x$ could have been satisfied by e.g. just taking $x=x'$). 
\end{proof}

We have a monoidal product of $\IndCoh(X \times_Y X)$ by convolution (by shriek pullback and star pushforward). Then any $\IndCoh(X \times_Y V)$ is a module over $\IndCoh(X \times_Y X)$.

The end of the proof  of \cref{cor:lambdapreserve} gives the following corollary.
\begin{cor}\label{cor:lambdapreserve}
If $\Lambda_{23}$ is ``independent of $X$'', i.e. (in the terminology of the proof of \cref{prop:lambdasuperset}) $(x, v, y, a) \in \Lambda_{12}$ implies $(x', v, y, a) \in \Lambda_{12}$ for any $x' \in X$, then $\Lambda_{12} * \Lambda_{23} = \Lambda_{23}$. In this case, $\IndCoh_{\Lambda_{23}}(X \times_Y V)$ is an $\IndCoh(X \times_Y X)$-module. In particular, we can take $\Lambda_{23}$ to be everything (giving $\IndCoh(X \times_Y V)$) or the zero section (giving $\QC(X \times_Y V)$).
\end{cor}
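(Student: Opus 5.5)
The plan is to obtain both inclusions between $\Lambda_{12} * \Lambda_{23}$ and $\Lambda_{23}$ from the pointwise computation in the proof of \cref{prop:lambdasuperset}, and then to deduce the module statement from the functoriality estimates for singular support quoted from \cite{singsupp}.

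\textbf{Step 1 (the reverse inclusion).} \cref{prop:lambdasuperset} already gives $\Lambda_{12} * \Lambda_{23} \supset \Lambda_{23}$. For $\subset$, I will reread the last paragraph of that proof. A geometric point of $\Lambda_{12}*\Lambda_{23}$ is a tuple $(x,v,y,a)$ with $p(x)=g(v)=y$ and $dp^*_x a = dg^*_v a = 0$. For such a tuple there is some $x' \in X$ over $y$ with $dp^*_{x'}a=0$ and $(x',v,y,a) \in \Lambda_{23}$; this $x'$ is the middle point $x_2=x_3$. The independence hypothesis (read for $\Lambda_{23}$; the ``$\Lambda_{12}$'' in the statement is a typo) then gives $(x,v,y,a) \in \Lambda_{23}$. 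Since $\Lambda_{12}=\Sing(X\times_Y X)$ imposes no condition beyond $dp^*a=0$ at both points, this is all that is needed. Because the proof of \cref{prop:lambdasuperset} works with geometric $k$-points, and singular supports are conic closed subsets determined by their geometric points, this gives equality of sets.

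\textbf{Step 2 (the module structure).} Recall that $\IndCoh_{\Lambda}(X\times_Y V)$ is the full subcategory of $\IndCoh(X\times_Y V)$ compactly generated by coherent sheaves with singular support in $\Lambda$, and it is closed under colimits. Convolution $\mc{F}*\mc{G} = \pi_{13,*}\Delta^!(\mc{F}\boxtimes\mc{G})$ is continuous in each variable. So it suffices to check that for coherent $\mc{F}$ on $X\times_Y X$ and coherent $\mc{G}$ with $\supp\mc{G}\subset\Lambda_{23}$, the object $\mc{F}*\mc{G}$ lies in $\IndCoh_{\Lambda_{23}}$. I will chain the following estimates:
\begin{itemize}
\item $\supp(\mc{F}\boxtimes\mc{G}) \subset \supp\mc{F}\times\supp\mc{G}$, which I would cite from \cite{singsupp}, or else check directly by computing the action of $\mathrm{HH}^2$ of a product;
\item $\supp\Delta^!(-) \subset \Delta^!(\supp -)$;
\item $\supp\pi_{13,*}(-)\subset\pi_{13,*}(\supp -)$, which applies because $\pi_{13}$ is a base change of $p$ and hence schematic.
\end{itemize}
Together these give $\supp(\mc{F}*\mc{G}) \subset \Lambda_{12}*\Lambda_{23} = \Lambda_{23}$. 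Properness of $p$ ensures that $\pi_{13,*}$ preserves coherence. The associativity and unit data are inherited from $\IndCoh(X\times_Y V)$, because the subcategory is full and stable under the action.

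\textbf{Step 3 (the special cases).} Taking $\Lambda_{23}=\Sing(X\times_Y V)$, independence of $X$ is automatic, since membership is just the condition $dp^*_x a = dg^*_v a=0$. (Strictly, the $dp^*_{x'}a=0$ condition needs independence from $x$ as well, but Step 1 already supplies $dp^*_x a=0$ for the output point.) Taking $\Lambda_{23}$ to be the zero section is also trivially independent, and for quasismooth stacks $\IndCoh_{0}=\QC$ by \cite{singsupp}.

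I expect the main obstacle to be the external product estimate and the careful identification of the composite correspondence with the pointwise description used in \cref{prop:lambdasuperset}. In particular, one must check that working with geometric points over the non-reduced base $X\times_Y X$ loses nothing; I would handle this by noting that singular support only sees classical geometric points.
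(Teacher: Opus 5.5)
Your proposal is correct and follows the paper's route. The paper obtains $\Lambda_{12}*\Lambda_{23}=\Lambda_{23}$ directly from the final paragraph of the proof of \cref{prop:lambdasuperset}: after pushforward, the $\Lambda_{23}$ condition only constrains $(v,y,a)$ through an auxiliary $x'$. This is your Step 1, and you correctly read the hypothesis as a condition on $\Lambda_{23}$. Your Step 2 chains the external-product, $\Delta^!$ and schematic $\pi_{13,*}$ singular-support estimates to get the module structure, which makes explicit what the paper leaves implicit after quoting the functoriality results of \cite{singsupp}.
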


For arbitrary (smooth) $U, V$, and $\Lambda_{12}, \Lambda_{23}$ ``independent of $X$'' (in the setting of \cref{cor:lambdapreserve}), we can ask what the tensor product $\IndCoh_{\Lambda_{12}}(U \times_Y X) \otimes_{\IndCoh(X \times_Y X)} \IndCoh_{\Lambda_{23}}(X \times_Y V)$ is. Then we have the following result, which (after \cref{cor:lambdapreserve}) is a basic application of the method used to prove \cite[Theorem 3.3.1]{spectralincarnation}; note that $U, V$ do not actually need to be proper over $Y$.

\begin{thm}\textup{\cite[Proposition 3.30]{BZCHN}\cite[Theorem 3.2.13]{CD}}
    The tensor product $$\IndCoh_{\Lambda_{12}}(U \times_Y X) \otimes_{\IndCoh(X \times_Y X)} \IndCoh_{\Lambda_{23}}(X \times_Y V)$$ is equivalent to $$\IndCoh_{\Lambda_{12}*\Lambda_{23}}(U \times_Y V).$$
\end{thm}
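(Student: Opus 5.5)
The plan is to follow the strategy of \cite[Theorem 3.3.1]{spectralincarnation}: reduce the relative tensor product to a geometric colimit (bar construction), identify each term with IndCoh on an iterated fiber product with singular support conditions, and then use descent along a proper surjection to compute the colimit.

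First, I would write the relative tensor product as the geometric realization of the bar construction
\[
\IndCoh_{\Lambda_{12}}(U \times_Y X) \otimes \IndCoh(X \times_Y X)^{\otimes \bullet} \otimes \IndCoh_{\Lambda_{23}}(X \times_Y V).
\]
By \cref{cor:lambdapreserve}, both outer categories are modules over $\IndCoh(X\times_Y X)$. The absolute tensor products of IndCoh categories with singular support over $k$ are computed by the Künneth-type result of \cite{singsupp}: $\IndCoh_{\Lambda}(S)\otimes \IndCoh_{\Lambda'}(S')\simeq \IndCoh_{\Lambda\times\Lambda'}(S\times S')$ for quasismooth QCA stacks. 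The simplicial terms are therefore IndCoh of products, with product singular support.

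Next, I would identify the realization. I would use $!$-pullback along the diagonal-type closed embeddings into the iterated fiber products
\[
U\times_Y X\times_Y X\times_Y\cdots\times_Y X\times_Y V,
\]
followed by $*$-pushforward along the projections, which are proper since $p$ is. The realization should then become the $*$-pushforward Čech descent diagram for the proper surjection $U\times_Y X\times_Y V\to U\times_Y V$. Tracking singular support through these operations with the estimates $\supp p^!\mc{F}\subset p^!\supp\mc{F}$ and $\supp p_*\mc{F}\subset p_*\supp\mc{F}$ gives exactly the set-level convolution $\Lambda_{12}*\Lambda_{23}=\pi_{13,*}(\pi_{12}^!\Lambda_{12}\cap\pi_{23}^!\Lambda_{23})$. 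For the upper bound, the computation is the same geometric-point analysis as in \cref{prop:lambdasuperset}.

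Then I would apply proper descent for IndCoh with support: $\IndCoh_{\Lambda}(U\times_Y V)$ is the colimit under $*$-pushforward of the Čech nerve of a proper surjection, restricted to the preimage supports. This uses the fact that the $!$-pullback of the support condition is conservative on the relevant subcategory.

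I expect the main obstacle to be matching the support conditions exactly, in particular showing that the essential image generates all of $\IndCoh_{\Lambda_{12}*\Lambda_{23}}$ rather than a smaller category. For this I would try the approach of \cite[Theorem 3.3.1]{spectralincarnation}. Compact objects supported on a conical $\Lambda$ are generated by pushforwards of $!$-pullbacks from the cover, by their generation result for singular support under proper surjective maps. The ``independence of $X$'' hypothesis is used at exactly this point: any covector in $\Lambda_{12}*\Lambda_{23}$ lifts to one in the intersection $\pi_{12}^!\Lambda_{12}\cap\pi_{23}^!\Lambda_{23}$ at every point of the fiber, so the descent diagram detects it.
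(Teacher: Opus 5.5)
You take the route the paper points to. The paper gives no proof of its own: it cites \cite{BZCHN,CD} and the method of \cite[Theorem 3.3.1]{spectralincarnation}, once \cref{cor:lambdapreserve} supplies the module structures. But the step that carries all the content is asserted rather than proved.

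Your bar construction is the absolute one, so by K\"unneth its $k$-th term is $\IndCoh$ of the product $(U\times_Y X)\times(X\times_Y X)^{k}\times(X\times_Y V)$, which has $2k+2$ copies of $X$. The $k$-th \v{C}ech term for $U\times_Y X\times_Y V\to U\times_Y V$ is instead $\IndCoh(U\times_Y X^{\times_Y(k+1)}\times_Y V)$. The restrictions $\delta_k^!$ go along base changes of $\Delta_X$, which for a stack such as $\pg$ is not a closed embedding. By base change they do form a map of simplicial diagrams compatible with the augmentations. However, they are not termwise equivalences, so by themselves they say nothing about the induced map on geometric realizations. Your sentence ``the realization should then become the \v{C}ech descent diagram'' is precisely the theorem.

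One way to make the terms actually agree is to compute the tensor product relative to $\QC(X)=\IndCoh(X)$, which maps monoidally into $\IndCoh(X\times_Y X)$ by diagonal pushforward. That is, compute it as $|\mc{M}\otimes_{\QC(X)}\IndCoh(X\times_Y X)^{\otimes_{\QC(X)}\bullet}\otimes_{\QC(X)}\mc{N}|$, with $\mc{M},\mc{N}$ the outer modules, and use a K\"unneth formula with singular support over the smooth base $X$. Alternatively, compare monads directly via Barr--Beck--Lurie.

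Even after that repair the supports do not match termwise, so proper descent ``restricted to the preimage supports'' is not available. Let $\pi\colon U\times_Y X\times_Y V\to U\times_Y V$.
\begin{itemize}
\item In degree $0$ the tensor-product side gives $\IndCoh$ with singular support in the image $\Lambda_0$ of $\Lambda_{12}\times_X\Lambda_{23}$ in $\Sing(U\times_Y X\times_Y V)$.
\item On the other side, $\pi^!$ sends $\IndCoh_{\Lambda_{12}*\Lambda_{23}}(U\times_Y V)$ to objects with support in $\pi^!(\Lambda_{12}*\Lambda_{23})$.
\item In general neither set contains the other. The pullback of a covector $a$ at $(u,v)$ to $(u,x,v)$ lies in $\Lambda_0$ only if $dp_x^*a=0$. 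Conversely, $\Lambda_0$ contains covectors built from pairs $(a_1,a_2)$ with $a_1\neq a_2$, which are not pulled back from $U\times_Y V$.
\end{itemize}
This already happens when $\Lambda_{12}$ and $\Lambda_{23}$ are everything.

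For the same reason, your lifting claim is false as stated. A covector in $\Lambda_{12}*\Lambda_{23}$ lifts only at points $x$ of the fiber with $dp_x^*a=0$. Independence of $X$ upgrades one such $x$ to all such $x$, not to every point of the fiber.

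So two things are still missing. First, an actual proof that the realization of the support-restricted \v{C}ech diagram maps fully faithfully into $\IndCoh(U\times_Y V)$. For example, you could compute Homs between images of compact objects $\mc{F}\boxtimes\mc{G}$ on both sides via base change through $U\times_Y X\times_Y X\times_Y V$. Second, a generation argument that uses only the existence of one such $x$.
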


Note that in the language of the references, ``independent of $X$'' means ``$Z_{22}$-stable''.

Using the technique of \cref{prop:lambdasuperset} we can also compute some cases of $\Lambda_{12} * \Lambda_{23}$.
The same proof as in \cref{prop:lambdasuperset} tells us that $\Lambda_{12} * \Lambda_{23}$ in general is $(u \in U, v \in V, y \in Y, a \in \Omega_Y|_y)$ with $f(u)=g(v)=y, df^*_ua=dg^*_va=0$ along with the condition that there is some $x \in X$ such that $(u, x, y, a) \in \Lambda_{12}$ and $(x, v, y, a) \in \Lambda_{23}$. Note that if $\Lambda_{12}$ is the entire $\Sing(X \times_Y X)$ then as long as there is some $u$ with $df^*_ua=0$, the condition is just the $\Lambda_{23}$ condition (including the existence of $x$ for which $dp^*_xa=0$). In particular, using surjectivity of $X \to Y$, we have the following:

\begin{cor}\label{cor:QCtensor}
    The tensor product $$\IndCoh_{\Lambda_{12}}(U \times_Y X) \otimes_{\IndCoh(X \times_Y X)} \QC(X \times_Y V)$$ is equivalent to $\QC(U \times_Y V)$ for any $\Lambda_{12}$ containing the zero section.
\end{cor}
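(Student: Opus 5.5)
The plan is to specialize the tensor product theorem just stated (\cite[Proposition 3.30]{BZCHN}, \cite[Theorem 3.2.13]{CD}) to $\Lambda_{23}$ equal to the zero section of $\Sing(X \times_Y V)$, and then compute $\Lambda_{12} * \Lambda_{23}$ by hand using the pointwise description given after that theorem.

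\textbf{Step 1: apply the theorem.} By \cref{cor:lambdapreserve}, the zero section is ``independent of $X$'', since the condition $a=0$ involves no $x$. Moreover $\IndCoh_{\{0\}}(X \times_Y V) \simeq \QC(X \times_Y V)$, and this is a module over $\IndCoh(X \times_Y X)$. The theorem then identifies the tensor product in question with
\[
\IndCoh_{\Lambda_{12} * \Lambda_{23}}(U \times_Y V).
\]
It therefore suffices to show that $\Lambda_{12} * \Lambda_{23}$ is exactly the zero section of $\Sing(U \times_Y V)$. Once that holds, $\IndCoh$ with zero-section support is $\QC(U \times_Y V)$.

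\textbf{Step 2: compute $\Lambda_{12} * \Lambda_{23}$.} I would follow the geometric-point analysis from the proof of \cref{prop:lambdasuperset}. A point of $\Lambda_{12} * \Lambda_{23}$ is a tuple $(u,v,y,a)$ with $f(u)=g(v)=y$ and $df^*_u a = dg^*_v a = 0$, together with some $x \in X$ such that $(u,x,y,a) \in \Lambda_{12}$ and $(x,v,y,a) \in \Lambda_{23}$.
\begin{itemize}
\item The $\Lambda_{23}$ condition forces $a=0$, so the convolution lies inside the zero section.
\item Conversely, take any $(u,v,y)$ with $f(u)=g(v)=y$. Surjectivity of $p \colon X \to Y$ gives a geometric point $x$ with $p(x)=y$. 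The point $(u,x,y,0)$ lies in $\Lambda_{12}$ because $\Lambda_{12}$ contains the zero section, and $(x,v,y,0)$ lies in $\Lambda_{23}$ trivially. So every point $(u,v,y,0)$ is attained.
\end{itemize}
Singular support is a conical closed condition determined on geometric points, so this identifies $\Lambda_{12} * \Lambda_{23}$ with the zero section.

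\textbf{Main obstacle: the hypotheses of the theorem.} As stated above, the theorem is phrased for $\Lambda_{12}$ and $\Lambda_{23}$ both ``independent of $X$'' ($Z_{22}$-stable), whereas the corollary allows an arbitrary $\Lambda_{12}$ containing the zero section. I would first check whether the cited argument (the method of \cite[Theorem 3.3.1]{spectralincarnation}) only uses $Z_{22}$-stability of the module being tensored against, here $\Lambda_{23}$. That seems plausible, because a right module $\IndCoh_{\Lambda_{12}}(U \times_Y X)$ is automatically stable under the right $\IndCoh(X \times_Y X)$-action.

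If that fails, the fallback is to replace $\Lambda_{12}$ by its $Z_{22}$-saturation $\Lambda_{12} * \Sing(X \times_Y X)$. This saturation still contains the zero section, and the same computation in Step 2 goes through unchanged, since that computation only used the zero-section part of $\Lambda_{12}$.
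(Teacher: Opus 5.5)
This is the paper's argument: take $\Lambda_{23}$ to be the zero section (so the right factor is $\QC(X\times_Y V)$ by \cref{cor:lambdapreserve}), apply the tensor-product theorem, and compute $\Lambda_{12}*\Lambda_{23}$ pointwise, where $a=0$ is forced by $\Lambda_{23}$ and the needed $x$ comes from surjectivity of $X\to Y$ together with $\Lambda_{12}$ containing the zero section. Your hypothesis worry is moot, since the paper states the corollary under the standing assumption that $\Lambda_{12}$ is ``independent of $X$''; your fallback would not help in any case, because saturating $\Lambda_{12}$ changes the left-hand module instead of computing the given tensor product.
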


We also have another simple corollary using the above calculation.

\begin{cor}\label{cor:sameconormal}
    Suppose $X, Z$ are smooth, with proper surjective maps to $Y$. If $N_X^*Y=N_Z^*Y$ as subsets of $T^*Y$, then 
    $$\IndCoh(Z \times_Y X) \otimes_{\IndCoh(X \times_Y X)} \IndCoh(X \times_Y Z) \simeq \IndCoh(Z \times_Y Z).$$
\end{cor}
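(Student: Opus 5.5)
This follows from the singular-support formula for the tensor product, the theorem of \cite{BZCHN}/\cite{CD} just recalled, applied with $U=Z$, $V=Z$ and $\Lambda_{12}=\Sing(Z\times_Y X)$, $\Lambda_{23}=\Sing(X\times_Y Z)$, i.e.\ no support condition on either side. These full singular support sets are ``independent of $X$'' in the sense of \cref{cor:lambdapreserve}: the defining condition on $(x,v,y,a)$ is $dp^*_x a=0$ and $dg^*_v a=0$, and the condition imposed at the $X$-slot is exactly membership of $a$ in $N^*_X Y|_y$, which does not depend on which $x$ is chosen. So the theorem gives
$$\IndCoh(Z\times_Y X)\otimes_{\IndCoh(X\times_Y X)}\IndCoh(X\times_Y Z)\simeq \IndCoh_{\Lambda_{12}*\Lambda_{23}}(Z\times_Y Z),$$
and it remains to show that $\Lambda_{12}*\Lambda_{23}=\Sing(Z\times_Y Z)$.

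To do this, I would use the pointwise description of the convolution computed after \cref{prop:lambdasuperset}. A geometric point of $\Lambda_{12}*\Lambda_{23}$ is a tuple $(z,z',y,a)$ with $f(z)=f(z')=y$, $a\in\Omega_Y|_y$, and $df^*_z a=df^*_{z'}a=0$, together with the requirement that there exist $x\in X$ over $y$ with $(z,x,y,a)\in\Lambda_{12}$ and $(x,z',y,a)\in\Lambda_{23}$. Since both $\Lambda$'s are full, that requirement amounts to the existence of $x\in p^{-1}(y)$ with $dp^*_x a=0$.

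On the other side, a point of $\Sing(Z\times_Y Z)$ is $(z,z',y,a)$ with $df^*_z a=df^*_{z'}a=0$; this says $(y,a)\in N^*_Z Y$, the union over $z$ of the kernels of $df^*_z$. The hypothesis $N^*_Z Y=N^*_X Y$ then supplies some $x$ over $y$ with $dp^*_x a=0$, which is exactly the missing existence condition. Hence the two sets agree on geometric points. Because singular support is a closed conical condition determined on classical geometric points, this equality of sets suffices.

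The main obstacle is justifying the appeal to the tensor-product theorem in this setting. Its hypotheses must hold (quasismooth QCA stacks, the $Z_{22}$-stability noted above, surjectivity of $X\to Y$), and $Z$ needs no properness. One also has to make sure that $N^*_XY$ is understood fiberwise, as the union over points of $X$ above $y$, so that the existential quantifier over $x$ matches it exactly. Surjectivity of $Z\to Y$ is used to make sure that both sides are taken over the same base points.
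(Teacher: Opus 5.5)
Your proof is correct and is essentially the paper's argument: you reduce via the singular-support tensor product formula to showing $\Sing(Z\times_Y X)*\Sing(X\times_Y Z)=\Sing(Z\times_Y Z)$, and then use $N^*_ZY=N^*_XY$ to produce the intermediate point $x$ with $dp^*_x a=0$. One small quibble: the condition $dp^*_x a=0$ does depend on $x$, so the right reason the full singular support is admissible is that it is stable under convolution with $\Sing(X\times_Y X)$ (take the intermediate point to be $x$ itself), which \cref{cor:lambdapreserve} already records.
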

\begin{proof}
    We just need to show that $\Sing(Z \times_Y X) * \Sing(X \times_Y Z)$ is the entire $\Sing(Z \times_Y Z)$. Using the above criterion, we just need to show that given $(u,v,y,a)$ (i.e. the criterion for $\Sing(Z \times_Y Z)$), there always exists some $x$ (the criterion for $\Sing(Z \times_Y X) * \Sing(X \times_Y Z)$). But we know that $a \in N_Z^*Y$ if we can pull it back to $u$ and $v$ and get zero. So there must exist some $x$ for which this is also true since $a \in N_X^*Y$.
\end{proof}

We can use this result to show that $\Coh(X \times_Y Z)$ is dualizable over $\Coh(Z \times_Y Z)$ in this case, which is shown in \cref{thm:gendual}.

\subsection{Hom formula}

Let $\mc{A}=\IndCoh(X \times_Y X)$ as a monoidal category, for $X \to Y$ a proper schematic surjective morphism of QCA stacks. We recall from \cite{BZCHN} that $\mc{A}$ is rigid. So a left $\mc{A}$-module $\mc{M}$ is dualizable if and only if the underlying category $\mc{M}$ is dualizable. Then the $\mc{A}$-dual is identified with $\mc{M}^{\vee}_{RR}$.

It turns out that the right $\mc{A}$-module $\mc{M}^{\vee}_{RR}$ is equivalent to the right $\mc{A}$-module $\mc{M}^{\vee}$. Indeed, $\mc{M}^{\vee}_{RR}$ is just $\mc{M}^{\vee}$ with the $\mc{A}$-action twisted by the monoidal autoequivalence $(-)^{RR}$ on $\mc{A}$. But \cite[Section 4]{yuji} shows that this monoidal autoequivalence is conjugation by the invertible element $\mb{D}1_{\mc{A}}$ of $\mc{A}$. Calling this element $x$, the map \begin{align*}\mc{M}^{\vee} &\to \mc{M}^{\vee}_{RR}\\ m &\mapsto m*x\end{align*}
is an $\mc{A}$-module equivalence. Note that the above references use $!$-pullback convolution rather than $*$-pullback convolution which we use; however the results are the same (though the element $1_\mc{A}$ is different).

Note also that if we have an action of a monoidal category $\mc{B}$ on the left of $\mc{M}^{\vee}$, this module equivalence preserves the $\mc{B}$-action.

\begin{thm}\label{thm:hom}
    There is a natural identification
    \[\Hom_{\mc{A}}\left(\IndCoh_{\Lambda_{12}}(X\times_YU),\IndCoh_{\Lambda_{23}}(X\times_YV)\right)\simeq \IndCoh_{\Lambda_{12}*\Lambda_{23}}(U\times_YV)\]
    (assuming $\Lambda_{12}, \Lambda_{23}$ are such that these are indeed modules). This equivalence is also a module equivalence over any $\IndCoh_{\Lambda}(U \times_{Y'} U)$-action defined via a map $Y' \to Y$.
\end{thm}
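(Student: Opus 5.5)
The plan is to reduce the Hom computation to the tensor product computation already established. That tensor result is the theorem of \cite{BZCHN, CD} quoted above, giving $\IndCoh_{\Lambda_{12}}(U \times_Y X) \otimes_{\mc{A}} \IndCoh_{\Lambda_{23}}(X \times_Y V) \simeq \IndCoh_{\Lambda_{12}*\Lambda_{23}}(U\times_Y V)$. We use rigidity of $\mc{A}$ and the identification of $\mc{A}$-duals described just before the statement.

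\textbf{Step 1 (dualizability).} Set $\mc{M} := \IndCoh_{\Lambda_{12}}(X\times_Y U)$, a left $\mc{A}$-module. Its underlying category is compactly generated, hence dualizable in $\St$. Since $\mc{A}$ is rigid, $\mc{M}$ is therefore dualizable as an $\mc{A}$-module, with $\mc{A}$-dual $\mc{M}^\vee_{RR}$. By the twisting argument via $x=\mb{D}1_{\mc{A}}$ recalled above, $\mc{M}^\vee_{RR}\simeq \mc{M}^\vee$ as right $\mc{A}$-modules. Consequently
\[\Hom_{\mc{A}}(\mc{M}, \mc{N}) \simeq \mc{M}^\vee \otimes_{\mc{A}} \mc{N}\]
for every left $\mc{A}$-module $\mc{N}$.

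\textbf{Step 2 (identify the dual).} We show that $\mc{M}^\vee \simeq \IndCoh_{\Lambda_{12}'}(U\times_Y X)$ as right $\mc{A}$-modules, where $\Lambda'_{12}$ is the image of $\Lambda_{12}$ under the swap of factors. The underlying identification is Serre duality on compact objects, $\Coh_{\Lambda}(X\times_Y U)^{\op}\simeq \Coh_{\Lambda}(X\times_Y U)$. Singular support is preserved by Serre duality, since it is invariant under $\mb{D}$ in the sense of \cite{singsupp}. Combining this with the flip $X\times_Y U\simeq U\times_Y X$ gives the desired object.

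The right $\mc{A}$-action must also be checked. Dualizing the left convolution action gives a right action by the transposed kernels together with Serre duality twists. These twists should be absorbed exactly as in the $x=\mb{D}1_{\mc{A}}$ argument, possibly after composing with the monoidal equivalence $\mc{A}\simeq\mc{A}^{\mathrm{rev}}$ given by transposition.

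\textbf{Step 3 (conclude).} Apply the tensor product theorem with $\Lambda'_{12}$ and $\Lambda_{23}$ to get $\IndCoh_{\Lambda_{12}*\Lambda_{23}}(U\times_Y V)$, after matching the convolution of the transposed support with $\Lambda_{12}$. For the equivariance statement, note that any $\IndCoh_\Lambda(U\times_{Y'}U)$-action acts on the $U$ factor of $\mc{M}$. This action commutes with the $\mc{A}$-action, so it passes to $\Hom_{\mc{A}}$, to the dual, and to the tensor product. The equivalences in Steps 1–3 preserve it by the remark that the $\mc{M}^\vee\to\mc{M}^\vee_{RR}$ equivalence preserves left actions, and by the naturality of the integral transform in \cite{BZCHN}.

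The main obstacle is Step 2: proving that the $\mc{A}$-linear dual with its right action is exactly the flipped category with support condition. This requires keeping track of the Serre duality twist, the $!$- versus $*$-convolution conventions, and the fact that duality only preserves the support condition up to the antipodal (scaling) map, which is harmless since $\Lambda$ is conic. I would first verify this on compact generators of the form $\Delta_*\mc{F}$ and then extend the result by continuity.
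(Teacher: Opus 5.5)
Your proposal is correct and follows the paper's proof: rigidity of $\mc{A}$ identifies the $\mc{A}$-dual of $\IndCoh_{\Lambda_{12}}(X\times_Y U)$ with $\mc{M}^\vee_{RR}\simeq\mc{M}^\vee$ (via $\mb{D}1_{\mc{A}}$, compatibly with the $\IndCoh_{\Lambda}(U\times_{Y'}U)$-action), \cref{lem:duals} identifies this plain dual with $\IndCoh_{\Lambda_{12}}(U\times_Y X)$ as a right $\mc{A}$-module, and the tensor product theorem of \cite{BZCHN,CD} finishes the argument. Your Step 2 is a sketch of \cref{lem:duals}, which the paper states without proof; there the swap alone absorbs the transposition, because the dual action of $K$ is left convolution by the transposed kernel and the swap turns this into right convolution by $K$, so you should not also compose with $\mc{A}\simeq\mc{A}^{\mathrm{rev}}$, since that would give the wrong right module.
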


We first compute the $\mc{A}$-module structure on the plain dual of $\IndCoh(X\times_YU)$.

\begin{lem}\label{lem:duals}
    Consider the natural self-duality,
    \[\IndCoh_{\Lambda}(X\times_YU)^{\vee}\isomfrom \IndCoh_{\Lambda}(X\times_YU),\]
    under which $\IndCoh_{\Lambda}(X\times_YU)$ obtains the structure of a right $\mc{A}$-module. Consider also $\IndCoh_{\Lambda}(U\times_YX)$ with its usual right $\mc{A}$-module structure (abusing notation for $\Lambda$).
    
    The swap equivalence,
    \[\sigma:\IndCoh_{\Lambda}(X\times_YU)\to \IndCoh_{\Lambda}(U\times_YX),\]
    intertwines these right $\mc{A}$-module structures.
\end{lem}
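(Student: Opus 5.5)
We establish the identification directly from the definitions: the monoidal structure on $\mc{A}$ is convolution, $\mc{F}\ast\mc{G}=p_{13,*}(p_{12}^!\mc{F}\otimes p_{23}^!\mc{G})$, computed on $X\times_YX\times_YX$. Under the natural self-duality $\IndCoh_\Lambda(X\times_YU)^\vee\simeq\IndCoh_\Lambda(X\times_YU)$, given by the pairing $(\mc{F},\mc{G})\mapsto\Gamma(\Delta^!(\mc{F}\boxtimes\mc{G}))$ (Serre/Grothendieck duality for $\IndCoh$), the dual of the left action $a\otimes m\mapsto a\ast m$ is a right action. Its formula is obtained by transposing the integral kernel. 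Concretely, the action map $\mc{A}\otimes\IndCoh(X\times_YU)\to\IndCoh(X\times_YU)$ is the integral transform along the correspondence
\[X\times_YX\times X\times_YU\longleftarrow X\times_YX\times_YU\longrightarrow X\times_YU,\]
namely a $!$-pullback along the diagonal-type map followed by $*$-pushforward along $p_{13}$.

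The plan has three steps.

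\textbf{Step 1.} Use that, for $\IndCoh$, the dual of $f_*$ is $f^!$ and the dual of $f^!$ is $f_*$ under the self-duality of $\IndCoh$ (valid for proper/schematic maps between QCA stacks). This shows that the dual of the action functor is the transposed correspondence. The resulting right action of $a$ on $m\in\IndCoh(X\times_YU)$ is then
\[p_{23,*}\bigl(p_{12}^!a\otimes p_{13}^!m\bigr),\]
that is, $a$ is convolved on the first ($X$-) factor, but with the roles of the $X$ coordinates swapped.

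\textbf{Step 2.} Apply the swap $\sigma\colon X\times_YU\to U\times_YX$ and compare with the usual right action on $\IndCoh(U\times_YX)$, which is $m'\ast a=p_{13,*}(p_{12}^!m'\otimes p_{23}^!a)$ on $U\times_YX\times_YX$. The two formulas match after composing with the evident isomorphism $X\times_YX\times_YU\simeq U\times_YX\times_YX$ that reorders factors. This requires base change for $!$-pullback against $*$-pushforward along proper maps, together with the fact that swapping the two $X$-factors of $X\times_YX$ is compatible with $a$. The compatibility is clear because both constructions use the same kernel $a$ on $X\times_YX$, read in the correct orientation.

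\textbf{Step 3.} Check that singular support conditions $\Lambda$ are preserved under $\sigma$ and under the duality. This follows from the definitions of $p^!$ and $p_*$ on singular support, and from the fact that the self-duality of $\IndCoh_\Lambda$ preserves $\Lambda$.

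The main obstacle is the coherence of the module structure, not the object-level formula: the identification must hold as $\mc{A}$-modules, with all higher associativity data. I would handle this by realizing everything inside the symmetric monoidal $(\infty,2)$-category of correspondences, using the Gaitsgory--Rozenblyum formalism in which $\IndCoh$ is a functor out of correspondences. In that setting the dual of an object acted on through a correspondence is acted on through the transposed correspondence, and the swap $\sigma$ is a map of algebra-module data in correspondences. This gives the intertwining functorially, and Steps 1--2 reduce to checking that these geometric correspondences agree.
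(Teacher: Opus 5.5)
The paper states this lemma without proof; it is used directly in the proof of \cref{thm:hom}. So there is no argument of the authors to compare with. Your route is the natural one and is correct in substance. The defining identity $\langle n\cdot a,m\rangle=\langle n,a\ast m\rangle$, together with the projection formula for the proper maps $p_{13}$ and $p_{23}$, gives $\Gamma(n\otimes^!(a\ast m))\simeq\Gamma(p_{13}^!n\otimes^!p_{12}^!a\otimes^!p_{23}^!m)\simeq\Gamma\bigl(p_{23,*}(p_{12}^!a\otimes^!p_{13}^!n)\otimes^!m\bigr)$. This is your formula. Transporting along the reordering isomorphism $X\times_YX\times_YU\simeq U\times_YX\times_YX$ then turns it into $\sigma(n)\ast a$.

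A few things to tighten.

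\textbf{Step 2.} You need neither base change nor any symmetry of $a$. In both formulas $a$ lives on $(x_1,x_2)$ with the same orientation: it is contracted against $n$ along $x_1$, and the output lives on $x_2$. So drop the clause about swapping the two $X$-factors being ``compatible with $a$''. Read literally it is false, since $a$ is not swap-invariant.

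\textbf{Coherence.} The Gaitsgory--Rozenblyum extension of $\IndCoh$ to correspondences only encodes $*$-pushforward along (ind-)schematic maps. But the counit of the self-duality of $\IndCoh(X\times_YU)$ involves pushforward along the non-schematic map $X\times_YU\to\pt$. So the self-duality itself should be taken from Drinfeld--Gaitsgory's theory for QCA stacks, as the ind-extension of the Serre-duality pairing on $\Coh$. What you actually need from the correspondence formalism is the compatibility $(f_*)^\vee\simeq f^!$ for the schematic maps in the action correspondence. These are base changes of $X\to Y$ and of the affine diagonal of $X$.

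\textbf{Step 3.} The real input is that the transposed action preserves $\IndCoh_\Lambda$, i.e.\ that the swapped $\Lambda$ is stable under right convolution. This is the hypothesis hidden in ``abusing notation for $\Lambda$''. You also need that Serre duality preserves singular support, so that the self-duality of $\IndCoh_\Lambda$ is the restriction of that of $\IndCoh$. Without this, the dual of the submodule $\IndCoh_\Lambda$ is a quotient, and your formula would have to be followed by a projection.
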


\begin{proof}[Proof of \cref{thm:hom}]
By the above, we have an identification of the $\mc{A}$-dual of $\IndCoh_{\Lambda_{12}}(X\times_YU)$ with its plain dual (and this preserves any $\IndCoh_{\Lambda}(U \times_{Y'} U)$-action). This is identified with $\IndCoh_{\Lambda_{12}}(U\times_YX)$ by \cref{lem:duals}. It follows that
\begin{align*}
\Hom_{\mc{A}}\left(\IndCoh_{\Lambda_{12}}(X\times_YU),\IndCoh_{\Lambda_{23}}(X\times_YV)\right) &\simeq \IndCoh_{\Lambda_{12}}(U\times_YX)\otimes_{\mc{A}}\IndCoh_{\Lambda_{23}}(X\times_YV)\\
&\simeq \IndCoh_{\Lambda_{12}*\Lambda_{23}}(U\times_YV).
\end{align*}
\end{proof}

\printbibliography
\end{document}